\newtheorem{theorem}{Theorem}[section]
\newtheorem{lemma}[theorem]{Lemma}
\newtheorem{proposition}[theorem]{Proposition}
\newtheorem{definition-theorem}[theorem]{Definition-Theorem}
\newtheorem{definition-proposition}[theorem]{Definition-Proposition}
\newtheorem{problem}[theorem]{Problem}
\theoremstyle{definition}
\newtheorem{definition}[theorem]{Definition}
\newtheorem{assumption}[theorem]{Assumption}
\newtheorem{remark}[theorem]{Remark}
\newtheorem{example}[theorem]{Example}
\newtheorem{construction}[theorem]{Construction}
\renewcommand{\P}{{\rm P}}
\newcommand{\Z}{\mathbb{Z}}
\newcommand{\R}{\mathbb{R}}
\newcommand{\Hom}{\operatorname{Hom}\nolimits}
\newcommand{\rad}{\operatorname{rad}\nolimits}
\newcommand{\End}{\operatorname{End}\nolimits}
\newcommand{\op}{\operatorname{op}\nolimits}
\newcommand{\RHom}{\mathbf{R}\strut\kern-.2em\operatorname{Hom}\nolimits}
\newcommand{\GL}{\operatorname{GL}\nolimits}
\newcommand{\Ann}{\operatorname{Ann}\nolimits}
\DeclareMathOperator{\proj}{\mathsf{proj}}
\DeclareMathOperator{\thick}{\mathsf{thick}}
\DeclareMathOperator{\per}{\mathsf{per}}
\DeclareMathOperator{\add}{\mathsf{add}}
\newcommand{\cut}{\ar@{-}@[|(5)]}
\newcommand{\silt}{\mathsf{silt}}
\newcommand{\psilt}{\mathsf{psilt}}
\newcommand{\twosilt}{\mathsf{2\mbox{-}silt}}
\newcommand{\tscfan}{\mathsf{c}\mbox{-}\mathsf{Fan}_{\rm sc}(2)}
\newcommand{\tscfanpm}{\mathsf{c}\mbox{-}\mathsf{Fan}_{\rm sc}^{+-}(2)}
\newcommand{\tscfanmp}{\mathsf{c}\mbox{-}\mathsf{Fan}_{\rm sc}^{-+}(2)}
\newcommand{\itscfan}{\mathsf{Fan}_{\rm sc}(2)}
\newcommand{\itscfanpm}{\mathsf{Fan}_{\rm sc}^{+-}(2)}
\newcommand{\itscfanmp}{\mathsf{Fan}_{\rm sc}^{-+}(2)}
\newcommand{\gfan}{\mathsf{c}\mbox{-}\mathsf{Fan}_k}
\newcommand{\gelfan}{\mathsf{c}\mbox{-}\mathsf{Fan}_{k,{\rm el}}}
\newcommand{\igfan}{\mathsf{Fan}_k}
\newcommand{\twopsilt}{\mathsf{2\mbox{-}psilt}}
\newcommand{\Kb}{\mathsf{K}^{\rm b}}
\newcommand{\df}{\mathrm{s}}
\newcommand{\conv}{\operatorname{conv}\nolimits}
\newcommand{\cone}{\operatorname{cone}\nolimits}
\numberwithin{equation}{section}
\newcommand{\svdots}{\raisebox{3pt}{\scalebox{.75}{\vdots}}}
\newcommand{\Hasse}{\operatorname{Hasse}\nolimits}
\begin{document}

\title[Fans and polytopes in tilting theory II: $g$-fans of rank 2]{Fans and polytopes in tilting theory II: $g$-fans of rank 2}

\author{Toshitaka Aoki}
\address{Graduate School of Information Science and Technology,
Osaka University, 1-5 Yamadaoka, Suita, Osaka 565-0871, Japan}
\email{aoki-t@ist.osaka-u.ac.jp}
\thanks{}

\author{Akihiro Higashitani}
\address{Department of Pure and Applied Mathematics, Graduate School of Information Science and Technology, Osaka University, 1-5 Yamadaoka, Suita, Osaka 565-0871, Japan}
\email{higashitani@ist.osaka-u.ac.jp}
\thanks{}

\author{Osamu Iyama}
\address{Graduate School of Mathematical Sciences, University of Tokyo, 3-8-1 Komaba Meguro-ku Tokyo 153-8914, Japan}
\email{iyama@ms.u-tokyo.ac.jp}
\thanks{}

\author{Ryoichi Kase}
\address{Department of Information Science and Engineering, Okayama University of Science, 1-1 Ridaicho, Kita-ku, Okayama 700-0005, Japan}
\email{r-kase@ous.ac.jp}
\thanks{}

\author{Yuya Mizuno}
\address{Faculty of Liberal Arts, Sciences and Global Education / Graduate School of Science, Osaka Metropolitan University, 1-1 Gakuen-cho, Naka-ku, Sakai, Osaka 599-8531, Japan}
\email{yuya.mizuno@omu.ac.jp}

\begin{abstract}
The $g$-fan of a finite dimensional algebra is a fan in its real Grothendieck group defined by tilting theory. We give a classification of complete $g$-fans of rank 2. More explicitly, our first main result asserts that every complete sign-coherent fan of rank 2 is a $g$-fan of some finite dimensional algebra. Our proof is based on three fundamental results, Gluing Theorem, Rotation Theorem and Subdivision Theorem, which realize basic operations on fans in the level of finite dimensional algebras.
For each of 16 convex sign-coherent fans $\Sigma$ of rank 2, our second main result gives a characterization of algebras $A$ of rank 2 satisfying $\Sigma(A)=\Sigma$. 
As a by-product of our method, we prove that for each positive integer $N$, there exists a finite dimensional algebra $A$ of rank 2 such that the Hasse quiver of the poset of 2-term silting complexes of $A$ has precisely $N$ connected components.
\end{abstract}

\maketitle
\setcounter{tocdepth}{1}
\tableofcontents

\section{Introduction}

The notion of tilting complexes is central to control equivalences of derived categories.
The class of silting complexes \cite{KV} gives a completion of the class of tilting complexes with respect to mutation, which is an operation to replace a direct summand of a given silting complex to construct a new silting complex \cite{AiI}.
The subclass of 2-term silting complexes enjoys remarkable properties \cite{AIR,DF}. They give rise to a fan $\Sigma(A)$ called the \emph{$g$-fan} in the real Grothendieck group of a finite dimensional algebra $A$, see e.g. \cite{H1,H2,DIJ,BST,AY,AsI,M,PY,P,B,As}. In our previous article \cite{AHIKM}, we developed a basic theory of $g$-fans and the associated $g$-polytopes (see Section \ref{section 3.1}). In particular, the following class of fans was introduced.

\begin{definition}\label{define sign-coherent}
A \emph{sign-coherent fan} is a pair $(\Sigma,\sigma_+)$ satisfying the following conditions.
\begin{enumerate}[\rm(a)]
\item $\Sigma$ is a nonsingular fan in $\mathbb{R}^d$, and $\sigma_+,-\sigma_+\in\Sigma_d$.
\item Take $e_1,\dots,e_d\in\R^d$ such that $\sigma_+=\cone\{e_i\mid 1\le i\le d\}$. 
Then for each $\sigma\in\Sigma$, there exists $\epsilon_1,\ldots,\epsilon_d\in\{1,-1\}$ such that $\sigma\subseteq\cone\{\epsilon_1e_1,\dots,\epsilon_de_d\}$.
\item Each cone of dimension $d-1$ is contained in precisely two cones of dimension $d$.
\item Each maximal cone has dimension $d$. 
\end{enumerate}
\end{definition}

The $g$-fan $\Sigma(A)$ of a finite dimensional algebra $A$ over a field $k$ is sign-coherent \cite{AHIKM}, and $\Sigma(A)$ is complete if and only if $A$ is $g$-finite \cite{As,DIJ} (Proposition \ref{characterize g-finite}). The following problem is central in the study of $g$-fans.

\begin{problem}\label{characterize g-fan}
Characterize complete sign-coherent fans in $\R^d$ which can be realized as $g$-fans of some finite dimensional algebras.
\end{problem}

The main aim of this paper is to give a complete answer to this problem for the case $d=2$. We denote by $\mathsf{Fan}_{\rm sc}(2)$ (respectively, $\tscfan$) the set of sign-coherent fans  (respectively, the set of complete sign-coherent fans) in $\mathbb{R}^2$ (Definition \ref{define fan_sc}), and by 
$\igfan(2)$ (respectively,
$\gfan(2)$) the set of $g$-fans (respectively, the set of  complete $g$-fans) of some finite dimensional $k$-algebras $A$ of rank $2$ (Definition \ref{g-fan}).
Identifying $K_0(\proj A)_\R$ with $\R^2$ by $P_1\mapsto(1,0)$ and $P_2\mapsto(0,1)$ where $P_1$ and $P_2$ are indecomposable projective $A$-modules, we obtain inclusions
\[\begin{array}{ccc}
\itscfan&\supset&\igfan(2)\\
\cup&&\cup\\
\tscfan&\supset&\gfan(2).
\end{array}\]
The following answer to Problem \ref{characterize g-fan} for the case $d=2$ was very simple and a surprise to us. We state it as our first main result.

\begin{theorem}[Theorem \ref{main theorem}]\label{theorem:sc fan = g fan intro}Let $k$ be an arbitrary field. Then we have
\[\tscfan=\gfan(2).\]
Thus any complete sign-coherent fan in $\mathbb{R}^2$ can be realized as a $g$-fan of some finite dimensional $k$-algebra. 
\end{theorem}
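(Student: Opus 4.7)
The plan is to prove the theorem by induction on the number of rays of a complete sign-coherent fan $\Sigma\in\tscfan$, making crucial use of the three realization theorems previewed in the abstract. Each of the Gluing, Rotation, and Subdivision Theorems should take as input one or more finite dimensional algebras whose $g$-fans are known and produce a new algebra whose $g$-fan is obtained from those inputs by the corresponding geometric operation on fans.

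For the base case, the unique complete sign-coherent fan in $\R^2$ with exactly four rays consists of the four coordinate half-lines, with the four closed quadrants as the maximal cones. This is the $g$-fan of the semisimple algebra $k\times k$, whose four 2-term silting complexes are $\varepsilon_1 P_1\oplus\varepsilon_2 P_2$ for $\varepsilon_i\in\{+,-\}$.

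For the inductive step, suppose $\Sigma$ has at least five rays, so that by condition (a) of Definition \ref{define sign-coherent} and sign-coherence at least one of the second or fourth quadrants has been subdivided. First I would carry out a purely combinatorial analysis to locate a ray $\rho$ of $\Sigma$ whose removal yields a strictly smaller fan $\Sigma'\in\tscfan$, while preserving the requirement that $\sigma_+$ and $-\sigma_+$ remain among the maximal cones. Depending on the position of $\rho$, the inverse operation of inserting $\rho$ back is realized as either a subdivision of a maximal $2$-dimensional cone of $\Sigma'$, a rotation of $\Sigma'$, or a gluing of $\Sigma'$ with a second complete sign-coherent fan along a common half-line. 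By the induction hypothesis (applied to $\Sigma'$, and in the gluing case also to the auxiliary input fan), each ingredient is realized by some algebra, and the appropriate realization theorem then produces an algebra $A$ with $\Sigma(A)=\Sigma$.

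The main obstacle, I expect, is the algebraic content of the Subdivision Theorem rather than the combinatorial bookkeeping. Subdividing a chosen maximal $2$-dimensional cone of the $g$-fan of a known algebra $A'$ along a prescribed ray requires modifying $A'$ (plausibly via a one-point extension, a mutation in a larger ambient silting category, or a related construction) in such a way that a new indecomposable 2-term silting summand appears whose $g$-vector lies on exactly the desired ray, and that no further unwanted rays are introduced in the process. Carrying this out uniformly over an arbitrary field $k$, and controlling the resulting $g$-fan precisely, is where the substantive work of the argument is likely concentrated; the combinatorial reduction to base cases should, by comparison, amount to a careful case analysis of which quadrant contains $\rho$ and how the neighboring rays constrain the choice.
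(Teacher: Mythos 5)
Your base case and your assessment that the Subdivision Theorem carries the essential algebraic weight are both on target, but the inductive step as you describe it does not match how the three theorems actually interact. Rotation does not change the number of rays of a fan---it is a bijective transformation on $\itscfanpm$ permuting the cones (Proposition~\ref{sr sequence}(a))---so it can never be ``the inverse operation of deleting a ray.'' Gluing likewise is not a single ray insertion: $\Sigma\ast\Sigma'$ replaces the undivided second quadrant of $\Sigma$ with the entire second-quadrant subdivision of $\Sigma'$, adding many rays at once, and is better thought of as a parallel composition than as an inductive step. So the proposed case split ``remove one ray, recover it by subdivision or rotation or gluing'' cannot be carried out as stated.

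The actual logical structure is as follows. First one applies gluing once, as a decomposition: any $\Sigma\in\tscfan$ equals $\Sigma^{+-}\ast\Sigma^{-+}$ with $\Sigma^{+-}\in\tscfanpm$ and $\Sigma^{-+}\in\tscfanmp$, so it suffices to realize each piece by an elementary algebra. Second, within $\tscfanpm$ one does an induction on the number of rays, but every inductive step is a subdivision: Lemma~\ref{lemma:quiddity sequence contains 1} produces a ray $v_i$ with quiddity entry $a_i=1$ whose removal yields a smaller $\Sigma'\in\tscfanpm$ with $\Sigma=D_\sigma(\Sigma')$. The subtlety you are missing, and the reason rotation is indispensable, is that Theorem~\ref{theorem:subdivision} only realizes subdivisions of the two cones in the fourth quadrant adjacent to $\sigma_+$ and $\sigma_-$ (with rays through $(\ell,-1)$ and $(1,-r)$). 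To subdivide an interior cone, one first rotates to bring it to the boundary, subdivides there, and rotates back---this is exactly the content of Proposition~\ref{sr sequence}(b). So rotation is a prerequisite that conjugates a general subdivision to one the Subdivision Theorem can handle, not an alternative way of inserting a ray.
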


We explain our method to prove Theorem \ref{theorem:sc fan = g fan intro}. 
Each sign-coherent fan of rank 2 is obtained by gluing two fans of the following form, where each triangle represents a cone in the fan, and a triangle containing $+$ (respectively, $-$) represents the cone $\sigma_+$ (respectively, $-\sigma_+$).
\[{\Sigma=\begin{xy}
0;<4pt,0pt>:<0pt,4pt>::
(0,-5)="0",
(-5,0)="1",
(0,0)*{\bullet},
(0,0)="2",
(5,0)="3",
(0,5)="4",
(0,-4)="a",
(4,0)="b",
(1.5,1.5)*{{\scriptstyle +}},
(-1.5,-1.5)*{{\scriptstyle -}},
\ar@{-}"0";"1",
\ar@{-}"1";"4",
\ar@{-}"4";"3",
\ar@{-}"2";"0",
\ar@{-}"2";"1",
\ar@{-}"2";"3",
\ar@{-}"2";"4",
\ar@/^-2mm/@{.} "a";"b",
\end{xy}}\ \ \ \ \  
{\Sigma'=\begin{xy}
0;<4pt,0pt>:<0pt,4pt>::
(0,-5)="0",
(-5,0)="1",
(0,0)*{\bullet},
(0,0)="2",
(5,0)="3",
(0,5)="4",
(0,4)="c",
(-4,0)="d",
(1.5,1.5)*{{\scriptstyle +}},
(-1.5,-1.5)*{{\scriptstyle -}},
\ar@{-}"0";"1",
\ar@{-}"0";"3",
\ar@{-}"4";"3",
\ar@{-}"2";"0",
\ar@{-}"2";"1",
\ar@{-}"2";"3",
\ar@{-}"2";"4",
\ar@/^-2mm/@{.} "c";"d",
\end{xy}}\]
Recall that a finite dimensional $k$-algebra $A$ is \emph{elementary} if the $k$-algebra $A/J_A$ is isomorphic to a product of $k$. This is automatic if $A$ is basic and $k$ is algebraically closed. 
We prove
\begin{enumerate}[$\bullet$]
\item \emph{Gluing Theorem \ref{theorem:gluing1},} which asserts that if both $\Sigma$ and $\Sigma'$ are $g$-fans of finite dimensional elementary $k$-algebras, then so is their gluing.
\end{enumerate}
Therefore by symmetry, it suffices to consider sign-coherent fans $\Sigma$ of the form above. Now such $\Sigma$ can be obtained from the fan
\[\begin{xy}
		0;<4pt,0pt>:<0pt,4pt>::
		(0,-5)="0",
		(-5,0)="1",
		(0,0)*{\bullet},
		(0,0)="2",
		(5,0)="3",
		(0,5)="4",
		(1.5,1.5)*{{\scriptstyle +}},
		(-1.5,-1.5)*{{\scriptstyle -}},
		\ar@{-}"0";"1",
		\ar@{-}"1";"4",
		\ar@{-}"4";"3",
		\ar@{-}"2";"0",
		\ar@{-}"2";"1",
		\ar@{-}"2";"3",
		\ar@{-}"2";"4",
		\ar@{-}"3";"0",
	\end{xy}
\]
by applying subdivision in the fourth quadrant repeatedly.
We prove
\begin{enumerate}[$\bullet$]
\item \emph{Rotation Theorem \ref{theorem:rotation}} and
\item \emph{Subdivision Theorem \ref{theorem:subdivision}},
\end{enumerate}
which imply that if $\Sigma$ is a $g$-fan of a finite dimensional $k$-algebra, then so are the subdivisions of $\Sigma$ in the fourth quadrant. 
We denote by $\tscfanpm$ the subsets of $\tscfan$ respectively which consist of fans containing $\cone\{(1,0),(0,-1)\}$.  
Figure \ref{fig:tree diagram} gives fans in $\tscfanpm$ with at most 8 facet, where each edge shows a subdivision. Figure \ref{fig:tree diagram 2} gives examples of algebras whose $g$-fans are given in Figure \ref{fig:tree diagram} (see Subsection \ref{Sign-coherent fans of rank $2$} for the parametrization of fans).

\begin{figure}
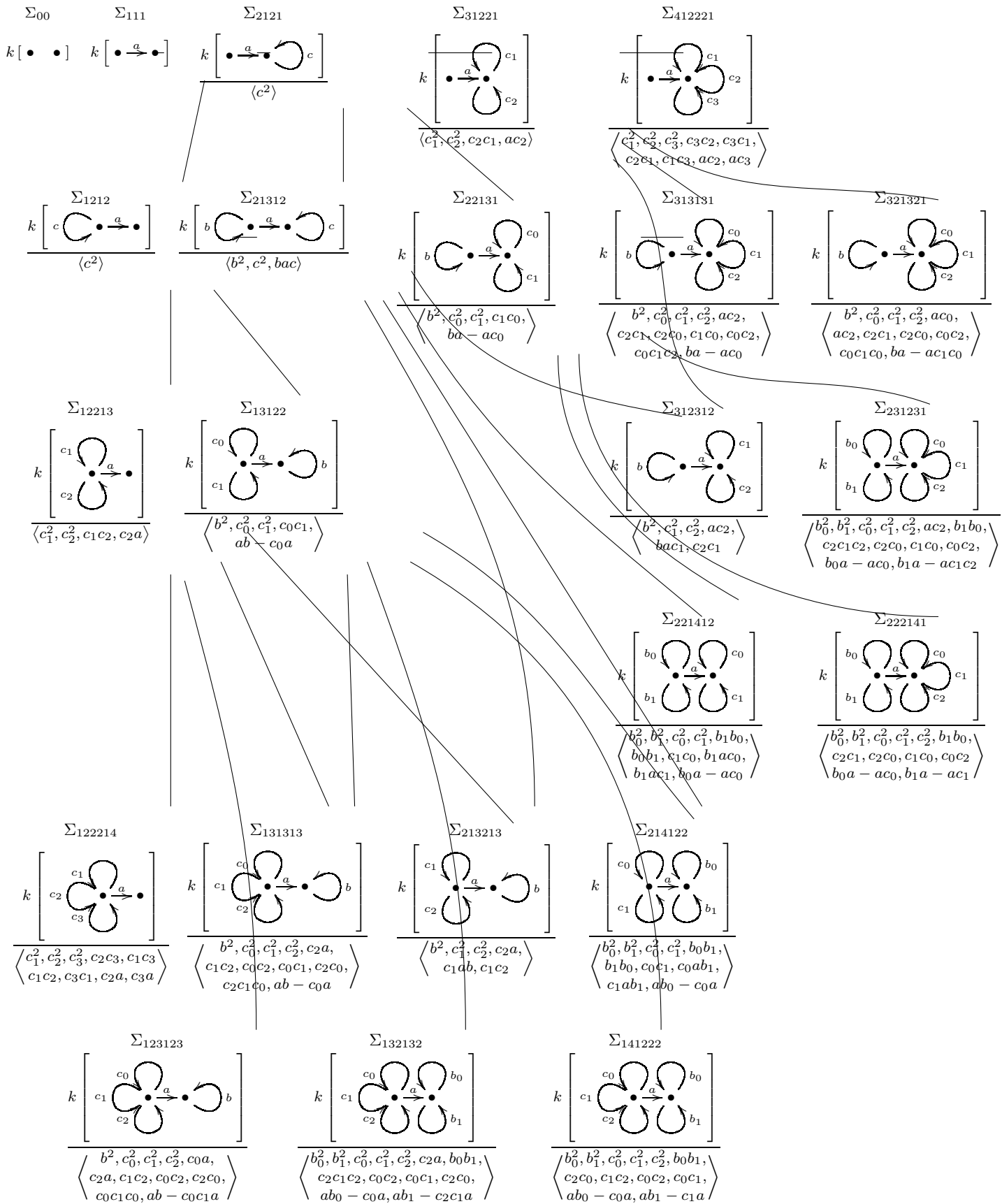


    \caption{Algebras whose $g$-fans are given in Figure \ref{fig:tree diagram}}
    \label{fig:tree diagram 2}
\end{figure}

\medskip
Since the $g$-fan $\Sigma(A)$ gives a combinatorial invariant of the tilting theory of $A$, it is important to study the following problem.

\begin{problem}\label{characterize A}
Let $\Sigma$ be a sign-coherent fan. Give a characterization of algebras $A$ satisfying $\Sigma(A)=\Sigma$.
\end{problem}

For a nonsingular fan $\Sigma$, we denote by $\P(\Sigma)$ the gluing of each simplex associated with the cones in $\Sigma$ (see Definition \ref{from fan to polytope}). If $\P(\Sigma)$ is convex, we call $\Sigma$ \emph{convex}. In Theorem \ref{theorem:charactarization_g-convex_ranktwo intro} below, we give a complete answer to Problem \ref{characterize A} when $\Sigma$ is rank $2$ and convex.
A finite dimensional algebra $A$ is called \emph{$g$-convex} if $\Sigma(A)$ is convex. For example, Brauer tree algebras $A$ are $g$-convex, and this fact plays an important role in the classification of 2-term tilting complexes of $A$ \cite{AMN}. From tilting theoretic point of view, $g$-convex algebras can be regarded as one of the most fundamental classes. Therefore it is important to study the following special case of Problem \ref{characterize g-fan}.

\begin{problem}\label{classify convex}
Classify convex $g$-fans in $\R^d$.
\end{problem}

An answer to the case $d=2$ was given in \cite[Theorem 6.3]{AHIKM}: There are precisely 7 convex $g$-fans up to isomorphism of $g$-fans.
	\[{\begin{xy}
			0;<3.5pt,0pt>:<0pt,3.5pt>::
			(0,-5)="0",
			(-5,0)="1",
			(0,0)*{\bullet},
			(0,0)="2",
			(5,0)="3",
			(0,5)="4",
			(1.5,1.5)*{{\scriptstyle +}},
			(-1.5,-1.5)*{{\scriptstyle -}},
			\ar@{-}"0";"1",
			\ar@{-}"1";"4",
			\ar@{-}"4";"3",
			\ar@{-}"3";"0",
			\ar@{-}"2";"0",
			\ar@{-}"2";"1",
			\ar@{-}"2";"3",
			\ar@{-}"2";"4",
	\end{xy}}\ \ \ 
	{\begin{xy}
			0;<3.5pt,0pt>:<0pt,3.5pt>::
			(0,-5)="0",
			(-5,0)="1",
			(0,0)*{\bullet},
			(0,0)="2",
			(5,0)="3",
			(-5,5)="4",
			(0,5)="5",
			(1.5,1.5)*{{\scriptstyle +}},
			(-1.5,-1.5)*{{\scriptstyle -}},
			\ar@{-}"0";"1",
			\ar@{-}"1";"4",
			\ar@{-}"4";"5",
			\ar@{-}"5";"3",
			\ar@{-}"3";"0",
			\ar@{-}"2";"0",
			\ar@{-}"2";"1",
			\ar@{-}"2";"3",
			\ar@{-}"2";"4",
			\ar@{-}"2";"5",
	\end{xy}}\ \ \ 
	{\begin{xy}
			0;<3.5pt,0pt>:<0pt,3.5pt>::
			(0,-5)="0",
			(5,-5)="1",
			(-5,0)="2",
			(0,0)*{\bullet},
			(0,0)="3",
			(5,0)="4",
			(-5,5)="5",
			(0,5)="6",
			(1.5,1.5)*{{\scriptstyle +}},
			(-1.5,-1.5)*{{\scriptstyle -}},
			\ar@{-}"0";"2",
			\ar@{-}"2";"5",
			\ar@{-}"5";"6",
			\ar@{-}"6";"4",
			\ar@{-}"4";"1",
			\ar@{-}"1";"0",
			\ar@{-}"3";"0",
			\ar@{-}"3";"1",
			\ar@{-}"3";"2",
			\ar@{-}"3";"4",
			\ar@{-}"3";"5",
			\ar@{-}"3";"6",
	\end{xy}}\ \ \ 
	{\begin{xy}
			0;<3.5pt,0pt>:<0pt,3.5pt>::
			(0,-5)="0",
			(-5,0)="1",
			(0,0)*{\bullet},
			(0,0)="2",
			(5,0)="3",
			(-10,5)="4",
			(-5,5)="5",
			(0,5)="6",
			(1.5,1.5)*{{\scriptstyle +}},
			(-1.5,-1.5)*{{\scriptstyle -}},
			\ar@{-}"0";"3",
			\ar@{-}"3";"6",
			\ar@{-}"6";"4",
			\ar@{-}"4";"0",
			\ar@{-}"2";"0",
			\ar@{-}"2";"1",
			\ar@{-}"2";"3",
			\ar@{-}"2";"4",
			\ar@{-}"2";"5",
			\ar@{-}"2";"6",
	\end{xy}}\ \ \ 
	{\begin{xy}
			0;<3.5pt,0pt>:<0pt,3.5pt>::
			(0,-5)="0",
			(5,-5)="1",
			(-5,0)="2",
			(0,0)*{\bullet},
			(0,0)="3",
			(5,0)="4",
			(-10,5)="5",
			(-5,5)="6",
			(0,5)="7",
			(1.5,1.5)*{{\scriptstyle +}},
			(-1.5,-1.5)*{{\scriptstyle -}},
			\ar@{-}"0";"1",
			\ar@{-}"1";"4",
			\ar@{-}"4";"7",
			\ar@{-}"7";"5",
			\ar@{-}"5";"0",
			\ar@{-}"3";"0",
			\ar@{-}"3";"1",
			\ar@{-}"3";"2",
			\ar@{-}"3";"4",
			\ar@{-}"3";"5",
			\ar@{-}"3";"6",
			\ar@{-}"3";"7",
	\end{xy}}\ \ \ 
	{\begin{xy}
			0;<3.5pt,0pt>:<0pt,3.5pt>::
			(0,-5)="0",
			(5,-5)="1",
			(10,-5)="2",
			(-5,0)="3",
			(0,0)*{\bullet},
			(0,0)="4",
			(5,0)="5",
			(-10,5)="6",
			(-5,5)="7",
			(0,5)="8",
			(1.5,1.5)*{{\scriptstyle +}},
			(-1.5,-1.5)*{{\scriptstyle -}},
			\ar@{-}"0";"2",
			\ar@{-}"2";"8",
			\ar@{-}"8";"6",
			\ar@{-}"6";"0",
			\ar@{-}"4";"0",
			\ar@{-}"4";"1",
			\ar@{-}"4";"2",
			\ar@{-}"4";"3",
			\ar@{-}"4";"5",
			\ar@{-}"4";"6",
			\ar@{-}"4";"7",
			\ar@{-}"4";"8",
	\end{xy}}
	{\begin{xy}
			0;<3.5pt,0pt>:<0pt,3.5pt>::
			(0,-2.5)="0",
			(5,-7.5)="1",
			(5,-2.5)="2",
			(-5,2.5)="3",
			(0,2.5)*{\bullet},
			(0,2.5)="4",
			(5,2.5)="5",
			(-10,7.5)="6",
			(0,7.5)="7",
			(-5,7.5)="8",
			(1.5,4)*{{\scriptstyle +}},
			(-1.5,1)*{{\scriptstyle -}},
			\ar@{-}"6";"1",
			\ar@{-}"1";"5",
			\ar@{-}"5";"7",
			\ar@{-}"7";"6",
			\ar@{-}"4";"0",
			\ar@{-}"4";"1",
			\ar@{-}"4";"2",
			\ar@{-}"4";"3",
			\ar@{-}"4";"5",
			\ar@{-}"4";"6",
			\ar@{-}"4";"7",
			\ar@{-}"4";"8",
	\end{xy}}
	\]
In Section 5, we give a more explicit result. Our second main result below shows that there are 16 convex $g$-fans $\Sigma_{a;b}$ with $a,b\in \{(0,0),(1,1,1),(1,2,1,2),(2,1,2,1)\}$ in $\tscfan$, and also gives a characterization of algebras whose $g$-fans are one of them. Here we denote by $t(M)_{A}$ (respectively, $t_{A}(M)$)  the minimal number of generators of a right (respectively, left) $A$-module $M$, and ${}^t\Sigma$ is the transpose of $\Sigma$ (Lemma \ref{transpose fan}).  

\begin{theorem}[Theorem \ref{theorem:charactarization_g-convex_ranktwo}]\label{theorem:charactarization_g-convex_ranktwo intro}
	Let $A$ be a basic finite dimensional algebra, $\{e_1, e_2\}$ a complete set of primitive orthogonal idempotents in $A$, and $P_i=e_i A$ $(i=1,2)$. 
	\begin{enumerate}[\rm(a)]
		\item $A$ is $g$-convex if and only if 
			\[\Sigma(A)=\Sigma_{a;b}\ \mbox{ for some }\ a,b\in \{(0,0),(1,1,1),(1,2,1,2),(2,1,2,1)\}.\]
		\item Let $a\in\{(0,0),(1,1,1),(1,2,1,2),(2,1,2,1)\}$. Then $\Sigma(A)=\Sigma_a\ast\Sigma$ holds for some $\Sigma\in\itscfanmp$ if and only if certain explicit conditions on $t_{e_1Ae_1}(e_1Ae_2)$, $t(e_1Ae_2)_{e_2Ae_2}$ and some other invariant is satisfied (see Theorem \ref{theorem:charactarization_g-convex_ranktwo}(b) for details). 
		\[
		{\begin{xy}
				0;<4pt,0pt>:<0pt,4pt>::
				( 0,9) *+{\Sigma_{00}\ast\Sigma},
				(  0,0) *{\bullet}="(0,0)",
				( 0,5) ="(0,10)",
				( 0,6) *+{{\scriptstyle P_2}},
				( 5,0) ="(10,0)",
				( 6,0) *+{{\scriptstyle P_1}},
				( 0,-5) ="(0,-10)",
				(  -5,0) ="(-10,0)",
				(0,3)*{}="(0,5)",
				(-3,0)*{}="(-5,0)",
				\ar@{-} "(10,0)";"(-10,0)"
				\ar@{-} "(0,10)";"(0,-10)"
				\ar@{-} "(10,0)";"(0,10)"
				\ar@{-} "(-10,0)";"(0,-10)"
				\ar@{-} "(10,0)";"(0,-10)"
				\ar@/^-2mm/@{.} "(0,5)";"(-5,0)"
		\end{xy}}
		\ \ \ {\begin{xy}
				0;<2pt,0pt>:<0pt,2pt>::
				( 0,18) *+{\Sigma_{111}\ast\Sigma},
				(  0,0) *{\bullet},
				(  0,0) ="(0,0)",
				( 0,10) ="(0,10)",
				( 0,12) *+{{\scriptstyle P_2}},
				( 10,0) ="(10,0)",
				( 12.5,0) *+{{\scriptstyle P_1}},
				( 0,-10)="(0,-10)",
				(  -10,0)="(-10,0)",
				(10,-10)="(10,-10)",
				(0,6)*{}="(0,5)",
				(-6,0)*{}="(-5,0)",
				\ar@{-} "(10,0)";"(-10,0)"
				\ar@{-} "(0,10)";"(0,-10)"
				\ar@{-} "(0,0)";"(10,-10)"
				\ar@{-} "(10,0)";"(0,10)"
				\ar@{-} "(-10,0)";"(0,-10)"
				\ar@{-} "(10,0)";"(10,-10)"
				\ar@{-} "(10,-10)";"(0,-10)"
				\ar@/^-2mm/@{.} "(0,5)";"(-5,0)"
		\end{xy}}
		\ \ \ {\begin{xy}
				0;<2pt,0pt>:<0pt,2pt>::
				( 0,18) *+{\Sigma_{1212}\ast\Sigma},
				(  0,0) *{\bullet},
				(  0,0) ="(0,0)",
				( 0,10) ="(0,10)",
				( 0,12) *+{{\scriptstyle P_2}},
				( 10,0) ="(10,0)",
				( 12.5,0) *+{{\scriptstyle P_1}},
				( 0,-10)="(0,-10)",
				(  -10,0)="(-10,0)",
				(10,-10)="(10,-10)",
				(10,-20)="(10,-20)",
				(0,6)*{}="(0,5)",
				(-6,0)*{}="(-5,0)",
				\ar@{-} "(10,0)";"(-10,0)"
				\ar@{-} "(0,10)";"(0,-10)"
				\ar@{-} "(0,0)";"(10,-10)"
				\ar@{-} "(0,0)";"(10,-20)"
				\ar@{-} "(10,0)";"(0,10)"
				\ar@{-} "(-10,0)";"(0,-10)"
				\ar@{-} "(10,0)";"(10,-10)"
				\ar@{-} "(10,-10)";"(10,-20)"
				\ar@{-} "(10,-20)";"(0,-10)"
				\ar@/^-2mm/@{.} "(0,5)";"(-5,0)"
			\end{xy}
		}\ \ \ {\begin{xy}
				0;<2pt,0pt>:<0pt,2pt>::
				( 0,18) *+{\Sigma_{2121}\ast\Sigma},
				(  0,0) *{\bullet},
				(  0,0) ="(0,0)",
				( 0,10) ="(0,10)",
				( 0,12) *+{{\scriptstyle P_2}},
				( 10,0) ="(10,0)",
				( 13,0) *+{{\scriptstyle P_1}},
				( 0,-10)="(0,-10)",
				(  -10,0)="(-10,0)",
				(20,-10)="(20,-10)",
				(10,-10)="(10,-10)",
				(0,5)*{}="(0,5)",
				(-5,0)*{}="(-5,0)",
				\ar@{-} "(10,0)";"(-10,0)"
				\ar@{-} "(0,10)";"(0,-10)"
				\ar@{-} "(0,0)";"(20,-10)"
				\ar@{-} "(0,0)";"(10,-10)"
				\ar@{-} "(10,0)";"(0,10)"
				\ar@{-} "(-10,0)";"(0,-10)"
				\ar@{-} "(0,-10)";"(20,-10)"
				\ar@{-} "(20,-10)";"(10,0)"
				\ar@/^-2mm/@{.} "(0,5)";"(-5,0)"
		\end{xy}}
		\]
		\item Let $b\in\{(0,0),(1,1,1),(1,2,1,2),(2,1,2,1)\}$. Then $\Sigma(A)=\Sigma\ast{}^t\Sigma_b$ holds for some $\Sigma\in\itscfanpm$ if and only if certain explicit conditions on $t_{e_2Ae_2}(e_2Ae_1)$, $t(e_2Ae_1)_{e_1Ae_1}$ and some other invariant is satisfied (see Theorem \ref{theorem:charactarization_g-convex_ranktwo}(c) for details).
		\[
		{\begin{xy}
				0;<0pt,4pt>:<4pt,0pt>::
				( 11.5,0) *+{\Sigma\ast{}^t\Sigma_{00}},
				(  0,0) *{\bullet}="(0,0)",
				( 0,5) ="(0,10)",
				( 0,6) *+{{\scriptstyle P_1}},
				( 5,0) ="(10,0)",
				( 6,0) *+{{\scriptstyle P_2}},
				( 0,-5) ="(0,-10)",
				(  -5,0) ="(-10,0)",
				(0,3)*{}="(0,5)",
				(-3,0)*{}="(-5,0)",
				\ar@{-} "(10,0)";"(-10,0)"
				\ar@{-} "(0,10)";"(0,-10)"
				\ar@{-} "(10,0)";"(0,10)"
				\ar@{-} "(-10,0)";"(0,-10)"
				\ar@{-} "(10,0)";"(0,-10)"
				\ar@/^2mm/@{.} "(0,5)";"(-5,0)"
		\end{xy}}
		\ \ \ {\begin{xy}
				0;<0pt,2pt>:<2pt,0pt>::
				( 23,0) *+{\Sigma\ast{}^t\Sigma_{111}},
				(  0,0) *{\bullet},
				(  0,0) ="(0,0)",
				( 0,10) ="(0,10)",
				( 0,12) *+{{\scriptstyle P_1}},
				( 10,0) ="(10,0)",
				( 12.5,0) *+{{\scriptstyle P_2}},
				( 0,-10)="(0,-10)",
				(  -10,0)="(-10,0)",
				(10,-10)="(10,-10)",
				(0,6)*{}="(0,5)",
				(-6,0)*{}="(-5,0)",
				\ar@{-} "(10,0)";"(-10,0)"
				\ar@{-} "(0,10)";"(0,-10)"
				\ar@{-} "(0,0)";"(10,-10)"
				\ar@{-} "(10,0)";"(0,10)"
				\ar@{-} "(-10,0)";"(0,-10)"
				\ar@{-} "(10,0)";"(10,-10)"
				\ar@{-} "(10,-10)";"(0,-10)"
				\ar@/^2mm/@{.} "(0,5)";"(-5,0)"
		\end{xy}}
		\ \ \ {\begin{xy}
				0;<0pt,2pt>:<2pt,0pt>::
				( 23,0) *+{\Sigma\ast{}^t\Sigma_{1212}},
				(  0,0) *{\bullet},
				(  0,0) ="(0,0)",
				( 0,10) ="(0,10)",
				( 0,12) *+{{\scriptstyle P_1}},
				( 10,0) ="(10,0)",
				( 12.5,0) *+{{\scriptstyle P_2}},
				( 0,-10)="(0,-10)",
				(  -10,0)="(-10,0)",
				(10,-10)="(10,-10)",
				(10,-20)="(10,-20)",
				(0,6)*{}="(0,5)",
				(-6,0)*{}="(-5,0)",
				\ar@{-} "(10,0)";"(-10,0)"
				\ar@{-} "(0,10)";"(0,-10)"
				\ar@{-} "(0,0)";"(10,-10)"
				\ar@{-} "(0,0)";"(10,-20)"
				\ar@{-} "(10,0)";"(0,10)"
				\ar@{-} "(-10,0)";"(0,-10)"
				\ar@{-} "(10,0)";"(10,-10)"
				\ar@{-} "(10,-10)";"(10,-20)"
				\ar@{-} "(10,-20)";"(0,-10)"
				\ar@/^2mm/@{.} "(0,5)";"(-5,0)"
			\end{xy}
		}\ \ \ {\begin{xy}
				0;<0pt,2pt>:<2pt,0pt>::
				( 23,0) *+{\Sigma\ast{}^t\Sigma_{2121}},
				(  0,0) *{\bullet},
				(  0,0) ="(0,0)",
				( 0,10) ="(0,10)",
				( 0,12) *+{{\scriptstyle P_1}},
				( 10,0) ="(10,0)",
				( 13,0) *+{{\scriptstyle P_2}},
				( 0,-10)="(0,-10)",
				(  -10,0)="(-10,0)",
				(20,-10)="(20,-10)",
				(10,-10)="(10,-10)",
				(0,5)*{}="(0,5)",
				(-5,0)*{}="(-5,0)",
				\ar@{-} "(10,0)";"(-10,0)"
				\ar@{-} "(0,10)";"(0,-10)"
				\ar@{-} "(0,0)";"(20,-10)"
				\ar@{-} "(0,0)";"(10,-10)"
				\ar@{-} "(10,0)";"(0,10)"
				\ar@{-} "(-10,0)";"(0,-10)"
				\ar@{-} "(0,-10)";"(20,-10)"
				\ar@{-} "(20,-10)";"(10,0)"
				\ar@/^2mm/@{.} "(0,5)";"(-5,0)"
		\end{xy}}
		\]
	\end{enumerate}
\end{theorem}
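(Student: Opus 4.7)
The plan is to prove (b) first, deduce (c) from (b) by applying the opposite-algebra symmetry (since $\Sigma(A^{\op})={}^t\Sigma(A)$ by Lemma \ref{transpose fan}), and then obtain (a) as a corollary of (b) and (c). The starting observation is that any complete sign-coherent fan $\Sigma\in\tscfan$ decomposes uniquely as $\Sigma=\Sigma'\ast\Sigma''$ along the coordinate axes, where $\Sigma'$ encodes the subdivision of the fourth quadrant $\cone\{e_1,-e_2\}$ and $\Sigma''$ encodes that of the second quadrant $\cone\{-e_1,e_2\}$. Convexity of $\PP(\Sigma)$ forces convexity in each quadrant separately, and combined with \cite[Theorem 6.3]{AHIKM} only the four shapes indexed by $(0,0),(1,1,1),(1,2,1,2),(2,1,2,1)$ can occur in each piece; this produces the list of $16$ convex fans $\Sigma_{a;b}$ claimed in (a), once we know (b) and (c).

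For (b), the condition $\Sigma(A)=\Sigma_a\ast\Sigma$ with $\Sigma\in\itscfanmp$ is equivalent to the rays and cones of $\Sigma(A)$ inside the fourth quadrant matching the pattern prescribed by $a$. The key dictionary is that rays of $\Sigma(A)$ in $\cone\{e_1,-e_2\}$ are the $g$-vectors of indecomposable two-term presilting complexes of the form $P_2^{\,n}\to P_1^{\,m}$, classified up to homotopy, and these correspond bijectively to minimal projective presentations of modules supported between $P_1$ and $P_2$. Concretely, the ray of $\Sigma(A)$ immediately adjacent to $\R_{\ge 0}(1,0)$ is $(1,-t(e_1Ae_2)_{e_2Ae_2})$, and the ray immediately adjacent to $\R_{\ge 0}(0,-1)$ is $(t_{e_1Ae_1}(e_1Ae_2),-1)$; these are read off from the minimal right $e_2Ae_2$-generation of $e_1Ae_2$ and its minimal left $e_1Ae_1$-generation respectively. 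Matching these two quantities (and, in the cases $\Sigma_{1212}$ and $\Sigma_{2121}$, an additional structural invariant measuring whether an intermediate presentation such as $P_2^{\,n}\to P_1^{\,m}$ is again silting) against the shape $a$ gives the explicit iff conditions of (b). Sufficiency in each of the four cases is confirmed by exhibiting an algebra of the prescribed type whose fan is $\Sigma_a\ast\Sigma$; this uses the Gluing Theorem \ref{theorem:gluing1} to paste the $a$-part with any prescribed $\Sigma\in\itscfanmp$, which in turn is realized by Theorem \ref{main theorem}.

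Part (c) then follows formally by applying (b) to $A^{\op}$: passing to the opposite algebra swaps $e_1Ae_1\leftrightarrow (e_1Ae_1)^{\op}$, exchanges left and right minimal generation of bimodules, and transposes the fan by Lemma \ref{transpose fan}, so the condition $\Sigma(A)=\Sigma\ast{}^t\Sigma_b$ translates term by term into the one obtained from (b) with $1$ and $2$ swapped and "left" and "right" interchanged. Part (a) then follows immediately: if $A$ is $g$-convex, the fourth-quadrant and second-quadrant pieces of $\Sigma(A)$ are each $g$-convex rank-$2$ sub-pieces and hence, by (b) and (c), must belong to the four-element list, giving $\Sigma(A)=\Sigma_{a;b}$ for some $a,b$; the converse existence is furnished by Theorem \ref{main theorem}.

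The main obstacle will be the intermediate cases $\Sigma_{1212}$ and $\Sigma_{2121}$, because there the mere values of $t_{e_1Ae_1}(e_1Ae_2)$ and $t(e_1Ae_2)_{e_2Ae_2}$ do not determine the shape: one must additionally check that a certain composite of bimodule maps (equivalently, a two-term complex obtained by mutating twice from $P_1\oplus P_2$) is itself presilting. Isolating the correct "additional invariant" in purely algebraic terms, and proving that it exactly distinguishes the two shapes while preventing any further rays from appearing, is where most of the technical work goes; the rest is bookkeeping from the three foundational theorems (Gluing, Rotation, Subdivision) together with Theorem \ref{main theorem}.
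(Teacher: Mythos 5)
Your overall strategy mirrors the paper's: establish (b) by matching the fourth-quadrant rays of $\Sigma(A)$ to algebraic invariants of $A$, deduce (c) by transposition/opposite algebra, and get (a) from a combinatorial convexity observation. However, there are two substantive problems in the proposal as written.

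First, the ray-invariant dictionary is swapped. By Proposition~\ref{first mutation}, the ray adjacent to $\R_{\ge0}(1,0)$ (coming from $\mu_2^-(\Lambda)$) is $(\ell,-1)$ with $\ell=t_{e_1Ae_1}(e_1Ae_2)$, while the ray adjacent to $\R_{\ge0}(0,-1)$ (coming from $\mu_1^+(\Lambda[1])$) is $(1,-r)$ with $r=t(e_1Ae_2)_{e_2Ae_2}$. You have these exactly reversed, writing $(1,-t(e_1Ae_2)_{e_2Ae_2})$ adjacent to $(1,0)$ and $(t_{e_1Ae_1}(e_1Ae_2),-1)$ adjacent to $(0,-1)$. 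With your assignment, $\Sigma_{1212}$ (fourth-quadrant rays $(1,0),(1,-1),(1,-2),(0,-1)$) would require the ray adjacent to $(0,-1)$ to be of the form $(\ell,-1)$, but it is $(1,-2)$, so the bookkeeping collapses. Fixing this is straightforward but necessary.

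Second, the ``sufficiency'' step of (b) is mis-directed. You propose to confirm it ``by exhibiting an algebra of the prescribed type whose fan is $\Sigma_a\ast\Sigma$'' via the Gluing Theorem and Theorem~\ref{main theorem}. This proves existence, but the statement to be proved is an equivalence about the \emph{given} $A$: if $A$ satisfies the listed invariant conditions, then $\Sigma(A)=\Sigma_a\ast\Sigma$. Producing a different algebra with that fan does not establish this; you need a direct argument that these three invariant conditions on $A$ force the interior cone $\cone\{(1,-1),(1,-2)\}$ (resp.\ $\cone\{(2,-1),(1,-1)\}$) to lie in $\Sigma(A)$ while excluding any additional rays. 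This is precisely the content of the paper's Proposition~\ref{proposition:(1,0)+(1,-1),(1,-1)+(1,-2)}: for a left generator $x$ of $e_1Ae_2$, one shows $\cone\{(1,-1),(1,-2)\}\in\Sigma(A)$ iff $t_{L_x}(e_1Ae_1)=2$, using Proposition~\ref{Ext 1} and Proposition~\ref{proposition:full rank} (both directions require the matrix calculus of Section~\ref{section: Matrices and Presilting complexes}). You correctly identify that there is an ``additional structural invariant'' to isolate here and that this is where the technical work is, but the proposal leaves exactly this step unproved, and the Gluing/Subdivision machinery you invoke is not the right tool for it.

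Minor: you present (a) as a corollary of (b) and (c), but the paper's Proposition~\ref{proposition:sharp of convex g-fan} obtains it purely combinatorially (convexity of $\P(\Sigma)$ forces $a_i\le2$, then Proposition~\ref{proposition:F=F'} pins down the four allowed quiddity sequences). Your route works but must still explicitly invoke this $a_i\le2$ bound, which you gesture at; it does not automatically drop out of (b) and (c) alone.
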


Further, in a forthcoming paper \cite{AHIKM2}, we will give a complete answer to Problem \ref{classify convex} for $d=3$.

In Section 6, we give another application of methods developed in this paper. For a finite dimensional $k$-algebra $A$, we denote by  $\Hasse(\twosilt A)$ the Hasse quiver of 2-term basic silting complexes of $A$. 
It is important to study the following problem.

\begin{problem}
For a given finite dimensional algebra $A$, determine the number of connected components of $\Hasse(\twosilt A)$.
\end{problem}

This question is a 2-term version of transitivity problem of iterated silting mutation \cite{AiI,D} and also related to connectedness of the space of Bridgeland stability conditions \cite{B}, e.g.\ \cite{AMY,BPP,QW}. 

If $A$ is either hereditary \cite{BMRRT} or $g$-finite \cite{AIR}, then $\Hasse(\twosilt A)$ is connected.
There are several examples of $A$ such that $\Hasse(\twosilt A)$ has precisely 2 connected components \cite[Theorems 6.17, 6.18]{DIJ}\cite{T,KM}.
There are examples of $A$ such that $\Hasse(\twosilt A)$ has at least 4 connected components \cite{T}.
The following our third main result of this paper shows that the number of connected components of $\Hasse(\twosilt A)$ can be arbitrary large.

\begin{theorem}[Theorem \ref{many}]\label{many in intro}
Let $k$ be an arbitrary field. For each positive integer $N\ge1$, there exists a finite dimensional $k$-algebra $A_N$ with $|A_N|=2$ such that $\Hasse(\twosilt A_N)$ has precisely $N$ connected components.
\end{theorem}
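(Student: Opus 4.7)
The plan is to interpret $\Hasse(\twosilt A_N)$ geometrically. Its vertex set is the set of maximal cones of $\Sigma(A_N)$, and two maximal cones are connected by an edge iff they share a $1$-dimensional facet (i.e. they differ by a single silting mutation). By condition (c) of Definition \ref{define sign-coherent}, every ray of $\Sigma(A_N)$ lies in precisely two maximal cones, so this adjacency graph is $2$-regular and decomposes into a disjoint union of infinite ``arcs'' (bi-infinite chains of consecutive maximal cones in the angular order on $S^1$) together with at most one cycle; a cycle would cover all of $S^1$ and force $\Sigma(A_N)$ to be complete. Hence the number of connected components of $\Hasse(\twosilt A_N)$ equals exactly the number of arcs (or $1$ in the complete case).

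Since $\sigma_+$ and $-\sigma_+$ always lie in $\Sigma(A_N)$, each is contained in some arc. Each bi-infinite arc has an angular range which is a proper subinterval of $S^1$, and distinct arcs have disjoint angular ranges, so additional arcs must be entirely contained in the open second or fourth quadrant and bounded on both ends by irrational accumulation rays lying outside $\Sigma(A_N)$. To produce exactly $N$ components, I would construct $A_N$ so that $\Sigma(A_N)$ decomposes into exactly $N$ arcs with accumulation rays placed in prescribed disjoint angular windows of the fourth (and, if needed, second) quadrant.

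The construction would proceed by assembling $A_N$ from Kronecker-type building blocks, each chosen so that its $g$-fan contributes a single bi-infinite arc of cones accumulating between two prescribed irrational rays. Using Gluing Theorem \ref{theorem:gluing1}, Rotation Theorem \ref{theorem:rotation}, and Subdivision Theorem \ref{theorem:subdivision}---which realize at the algebraic level the basic geometric operations on sign-coherent fans---I would place each new block in its own angular window without disturbing the existing arcs. Iterating this $N-1$ times, starting from a small base algebra with one arc, produces an algebra $A_N$ of rank $2$ whose $g$-fan has the required $N$ arcs.

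The main obstacle is the exact count: one must verify that no spurious $2$-term silting complex bridges two intended arcs (which would decrease the number of components) and that no intended one is missing (which would split an arc). Via the Gluing Theorem this reduces to a local computation inside each Kronecker-type block, combined with a sign-coherence check showing that the accumulation rays of distinct blocks do not interact. Provided the angular windows are chosen to be pairwise disjoint, this bookkeeping is essentially combinatorial; the hard technical step is therefore the explicit classification of $2$-term silting complexes for the Kronecker-type building blocks and a careful verification that their mutation graphs glue together into exactly $N$ disjoint arcs, which I expect to follow from the explicit algebraic descriptions afforded by the three subdivision/rotation/gluing theorems.
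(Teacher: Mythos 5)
Your geometric setup is right: for rank $2$, $\Hasse(\twosilt A_N)$ is the adjacency graph of the $2$-dimensional cones, it is $2$-regular, and its connected components correspond exactly to the ``arcs'' separated by the gaps (accumulation rays) in $\bigcup_{\sigma\in\Sigma(A_N)}\sigma$. The reduction of the problem to producing a fan with exactly $N$ gaps in prescribed angular windows is also what the paper does (Theorem \ref{many}(a)).

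There is, however, a genuine gap in the mechanism you propose. You claim that Gluing Theorem \ref{theorem:gluing1}, Rotation Theorem \ref{theorem:rotation}, and Subdivision Theorem \ref{theorem:subdivision} suffice to ``place each new block in its own angular window'' and to ``iterate this $N-1$ times.'' They do not. The operation $\ast$ of Gluing Theorem \ref{theorem:gluing1} combines one algebra with $\Sigma(\Lambda)\in\itscfanpm$ and one with $\Sigma(\Lambda')\in\itscfanmp$; the output $\Sigma(\Lambda)\ast\Sigma(\Lambda')$ lies in $\itscfan$ but generically in neither $\itscfanpm$ nor $\itscfanmp$, so it cannot be fed back into that theorem. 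Rotation and Subdivision do not change the number of gaps. Consequently, from building blocks with one gap each, the three Section~4 tools give at most two accumulation rays (one in the fourth and one in the second quadrant), not $N$. The paper fills exactly this hole by introducing a second, different gluing operation $\circ$ (Theorem \ref{theorem:gluing2}), which glues two fans that are both in $\itscfanpm$ along a common cone inside the fourth quadrant, and which is iterable because its output is again in $\itscfanpm$. The inductive construction $\Lambda_{N+1}:=\Lambda\circ\rho(\Lambda_N)$ then adds one new gap per step. Without something like Theorem \ref{theorem:gluing2}, your inductive step has no algebraic realization.

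A secondary, smaller inaccuracy: the accumulation rays need not be irrational. The base algebra used in the paper is $\Lambda=k[1\to2,\ a:2\to2]/(a^4)$, whose single accumulation ray is the rational ray $\R_{>0}(2,-1)$ (Lemma \ref{wang}, due to Wang), and iterated rotation carries this to other rational rays $\R_{>0}(2,-1)\left[\begin{smallmatrix}0&1\\ -1&4\end{smallmatrix}\right]^i$. Your plan also tacitly relies on an analogue of Lemma \ref{wang} (connectedness of $\Hasse(\twosilt\Lambda)$ for the chosen building block); identifying such a block and proving this is a nontrivial input that your sketch leaves unspecified.
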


We do not know if there exists a finite dimensional algebra $A$ such that $\Hasse(\twosilt A)$ has infinitely many connected components.

\section{Preliminaries}

\subsection{Preliminaries on fans}

We recall some fundamental materials on fans. 
We refer the reader to e.g.\ \cite{F,BR,BP} for these materials. 

\medskip
Let $\R^d$ be an Euclidean space with inner product $\langle \cdot, \cdot \rangle$. A \textit{convex polyhedral cone} $\sigma$ is a set of the form
\[\sigma=\cone\{v_1,\ldots,v_s\}:=\{ \sum_{i=1}^s r_i v_i \mid r_i \geq 0\}\subset\R^d,\] 
where $v_1,\ldots,v_s \in \mathbb{R}^d$. For example, $\{0\}$ is a convex polyhedral cone. We collect some notions concerning convex polyhedral cones. Let $\sigma$ be a convex polyhedral cone. 
\begin{enumerate}[$\bullet$]
\item The dimension of $\sigma$ is the dimension of the linear space generated by $\sigma$. 
\item We say that $\sigma$ is \textit{strongly convex} if $\sigma \cap (-\sigma)=\{0\}$ holds, 
i.e., $\sigma$ does not contain a linear subspace of positive dimension. 
\item We call $\sigma$ \textit{rational} if each $v_i$ can be taken from $\mathbb{Q}^d$. 
\item A \emph{supporting hyperplane} of $\sigma$ is a hyperplane $\{v \in \sigma \mid \langle u,v \rangle=0\}$ in $\R^d$ given by some $u \in \mathbb{R}^d$ satisfying $\sigma\subset\{v \in \R^d \mid \langle u,v \rangle\ge0\}$.
\item A \textit{face} of $\sigma$ is the intersection of $\sigma$ with a supporting hyperplane of $\sigma$.
\end{enumerate}

\emph{In what follows, a cone means a strongly convex rational polyhedral cone for short.}

\begin{definition}
A \textit{fan} $\Sigma$ in $\mathbb{R}^d$ is a collection of cones in $\mathbb{R}^d$ such that 
\begin{enumerate}[\rm(a)]
\item each face of a cone in $\Sigma$ is also contained in $\Sigma$, and 
\item the intersection of two cones in $\Sigma$ is a face of each of those two cones. 
\end{enumerate}
For each $i\ge0$, we denote by $\Sigma_i$ the subset of cones of dimension $i$. For example, $\Sigma_0$ consists of the trivial cone $\{0\}$. We call each element in $\Sigma_1$ a \emph{ray} of $\Sigma$.
\end{definition}

We collect some notions concerning fans used in this paper. Let $\Sigma$ be a fan in $\mathbb{R}^d$. 
\begin{enumerate}[$\bullet$]
\item We call $\Sigma$ \textit{finite} if it consists of a finite number of cones. 
\item We call $\Sigma$ \textit{complete} if $\bigcup_{\sigma \in \Sigma}\sigma=\mathbb{R}^d$. 
\item We call $\Sigma$ \textit{nonsingular} (or \textit{smooth}) if each maximal cone in $\Sigma$ is generated by a $\Z$-basis for $\Z^d$. 
\end{enumerate}

We prepare some notions which will be used in this paper.

\begin{definition}
Let $\Sigma$ be a nonsingular fan in $\R^d$. We call $\Sigma$ \emph{pairwise positive} if the following condition is satisfied.
\begin{enumerate}[$\bullet$]
\item For each two adjacent maximal cones $\sigma,\tau\in\Sigma_d$, take $\Z$-basis $\{v_1,\ldots,v_{d-1},v_d\}$ and $\{v_1,\ldots,v_{d-1},v'_d\}$ of $\Z^d$ such that $\sigma=\cone\{v_1,\ldots,v_{d-1},v_d\}$ and $\tau=\cone\{v_1,\ldots,v_{d-1},v'_d\}$.
Then $v_d+v'_d$ belongs to $\cone\{v_1,\ldots,v_{d-1}\}$.
\end{enumerate}
\end{definition}

\begin{definition}\label{define isomorphism of g-fans}
Let $\Sigma$ and $\Sigma'$ be fans in $\R^d$ and $\R^{d'}$ respectively. 
\begin{enumerate}[\rm(a)]
\item An \emph{isomorphism $\Sigma\simeq\Sigma'$ of fans} is an isomorphism $\Z^d\simeq\Z^{d'}$ of abelian groups such that the induced linear isomorphism $\R^d\to\R^{d'}$ gives a bijection $\Sigma\simeq\Sigma'$ between cones.
\item Let $(\Sigma,\sigma_+)$ and $(\Sigma',\sigma'_+)$ be sign-coherent fans in $\R^d$ and $\R^{d'}$ respectively. An \emph{isomorphism of sign-coherent fans} is an isomorphism $f:\Sigma\simeq\Sigma'$ of fans such that $\{f(\sigma_+),f(-\sigma_+)\}=\{\sigma'_+,-\sigma'_+\}$.
\end{enumerate}
\end{definition}

\begin{definition}\label{from fan to polytope}
Let $\Sigma$ be a nonsingular fan in $\R^d$. For each $\sigma\in\Sigma_d$, take a basis $v_1,\ldots,v_d$ of $\Z^d$ such that $\sigma=\cone\{v_1,\ldots,v_d\}$, and let $\sigma_{\le1}:=\conv\{0,v_1,\ldots,v_d\}\subset \R^d$.
Define a (not necessarily convex) polytope in $\R^d$ by
\[\P(\Sigma):=\bigcup_{\sigma\in\Sigma_d}\sigma_{\le1}.\]
We say that $\Sigma$ is \emph{convex} if $\P(\Sigma)$ is convex.
\end{definition}

\subsection{Sign-coherent fans of rank $2$}\label{Sign-coherent fans of rank $2$}
In this subsection, we introduce some terminologies of sign-coherent fans of rank $2$, and discuss some fundamental properties.

Let $\Sigma$ be a complete nonsingular fan of rank 2. 
We denote the rays of $\Sigma$ by
\begin{equation}\label{ray}
v_1,v_2,\ldots,v_{n-1},v_{n}=v_0
\end{equation}
which are indexed in a clockwise orientation. 
For each $1\le i\le n$, since $\Sigma$ is nonsingular, there exists an integer $a_i$ satisfying
\[a_i v_{i}=v_{i-1} +v_{i+1}\ \mbox{ for each }\ 1\le i\le n.\]
Following the terminology of frieze by \cite{CC1,CC2}, we call the sequence of integers
\begin{equation}\label{define s}
\df(\Sigma)=(a_1,\ldots,a_{n})
\end{equation}
the \emph{quiddity sequence} of $\Sigma$. In fact, $\Sigma$ is uniquely determined by its quiddity sequence.
A fan with quiddity sequence $(a_1,\ldots,a_{n})$ is denoted by
\[\Sigma(a_1,\ldots,a_n).\]

\begin{remark}
\cite[Section 2.5]{F} An integer sequence $(a_1,\ldots,a_n)$  is a quiddity sequence of nonsingular complete fan of rank 2 if and only if it satisfies
\begin{eqnarray*}
\sum_{i=1}^na_i=3n-12\ \mbox{ and }\ \begin{bmatrix}0&-1\\ 1&a_1\end{bmatrix}\begin{bmatrix}0&-1\\ 1&a_2\end{bmatrix}\cdots\begin{bmatrix}0&-1\\ 1&a_n\end{bmatrix}=\begin{bmatrix}1&0\\ 0&1\end{bmatrix}.
\end{eqnarray*} 
\end{remark}

\begin{definition}\label{define fan_sc}
We denote by $\itscfan$ the set of all (possibly infinite) fans $\Sigma$ satisfying that
\begin{enumerate}[$\bullet$]
\item $\Sigma$ is a sign-coherent fans (Definition \ref{define sign-coherent}) of rank 2 with positive and negative cones
\[\sigma_+:=\cone\{(1,0),(0,1)\}\ \mbox{ and }\ \sigma_-:=\cone\{(-1,0),(0,-1)\}\ \mbox{ respectively,}\]
\item each ray is a face of precisely two facets. 
\end{enumerate}
We denote the subset of complete fans by
\[\tscfan\subset\itscfan.\] 
\end{definition}

For $\Sigma\in\tscfan$, we denote the rays of $\Sigma$ in a clockwise orientation by
\begin{eqnarray*}
\Sigma_1=\{v_1:=(1,0),v_2,\ldots,v_{n-1},v_{n}=v_0:=(0,1)\}. 
\end{eqnarray*}
Then there exists $2\le i\le n-2$ such that $v_i=(0,-1)$ and $v_{i+1}=(-1,0)$.
\[\begin{xy}
0;<4pt,0pt>:<0pt,4pt>::
(0,-5)="0",
(-5,0)="1",
(0,0)*{\bullet},
(0,0)="2",
(5,0)="3",
(0,5)="4",
(0,-4)="a",
(4,0)="b",
(0,4)="c",
(-4,0)="d",
(0,6.5)*{{\scriptstyle v_n=v_0=(0,1)}},
(9.5,0)*{{\scriptstyle v_1=(1,0)}},
(0,-6.5)*{{\scriptstyle v_i=(0,-1)}},
(-11,0)*{{\scriptstyle v_{i+1}=(-1,0)}},
(1.5,1.5)*{{\scriptstyle +}},
(-1.5,-1.5)*{{\scriptstyle -}},
\ar@{-}"0";"1",
\ar@{-}"4";"3",
\ar@{-}"2";"0",
\ar@{-}"2";"1",
\ar@{-}"2";"3",
\ar@{-}"2";"4",
\ar@/^-2mm/@{.} "a";"b",
\ar@/^-2mm/@{.} "c";"d",
\end{xy}\]
In this case, it is more convenient to rewrite \eqref{define s} as 
\[\df(\Sigma)=(a_1,a_2,\ldots,a_i;a_n,a_{n-1},\ldots,a_{i+1}).\] 
Thus we mainly use the notation
\[\Sigma(a_1,\ldots,a_i;a_n,\ldots,a_{i+1})=\Sigma_{a_1,\ldots, a_i;a_n,\ldots,a_{i+1}}\]
instead of $\Sigma(a_1,\ldots,a_n)$.

We consider subsets
\[\begin{array}{ccc}\itscfanpm&\subset&\itscfan\\
\cup&&\cup\\
\tscfanpm&\subset&\tscfan
\end{array}\]
which consist of fans $\Sigma$ containing $\sigma_{-+}:=\cone\{(-1,0),(0,1)\}$, i.e.\ $\Sigma$ has the following form.
	\[
	\begin{xy}
		0;<4pt,0pt>:<0pt,4pt>::
		(0,-5)="0",
		(-5,0)="1",
		(0,0)*{\bullet},
		(0,0)="2",
		(5,0)="3",
		(0,5)="4",
		(-4.5,4)*{{\scriptstyle \sigma_{-+}}}, 
		(1.5,1.5)*{{\scriptstyle +}},
		(-1.5,-1.5)*{{\scriptstyle -}},
		( 0,-4) ="a",( 4,0) ="b", 
		\ar@{-}"0";"1",
		\ar@{-}"1";"4",
		\ar@{-}"4";"3",
		\ar@{-}"2";"0",
		\ar@{-}"2";"1",
		\ar@{-}"2";"3",
		\ar@{-}"2";"4",
		\ar@/^-2mm/@{.} "a";"b",
		\end{xy}
\]
Thus the rays and the facets of $\Sigma\in\tscfanpm$ are written as
\begin{eqnarray}\label{ray 2}
\Sigma_1&=&\{v_1=(1,0),v_2,\ldots,v_{n-2}=(0,-1),v_{n-1}=(-1,0),v_n=v_0=(0,1)\},\\ \label{facet 2}
\Sigma_2&=&\{\sigma_1,\ldots,\sigma_{n-3},\sigma_{n-2}=\sigma_-,\sigma_{n-1}=\sigma_{-+},\sigma_n=\sigma_+\}.
\end{eqnarray}
Similarly, we define $\itscfanmp$ and $\tscfanmp$ as the subsets of $\itscfan$ and $\tscfan$ respectively which consist of fans containing  $\sigma_{+-}:=\cone\{(1,0),(0,-1)\}$.
The following observations are clear.

\begin{lemma}\label{transpose fan}
The following assertions hold.
\begin{enumerate}[\rm(a)]
\item The correspondence $\Sigma\mapsto {}^t\Sigma:=\{{}^t\sigma\mid \sigma\in \Sigma\}$ gives bijections $\itscfanpm \to \itscfanmp$ and $\tscfanpm \to \tscfanmp$, where ${}^t(-):\R^2\to\R^2$ is an isomorphism $(x,y)\mapsto(y,x)$.
\item Let $\Sigma\in\tscfan$. 
Then $\Sigma\in\tscfanpm$ (respectively, $\Sigma\in\tscfanmp$) holds if and only if $\df(\Sigma)$ has the form
\[(b_1,\ldots,b_m;0,0)\ \mbox{ (respectively, $(0,0;b_1,\ldots,b_m)$)}.\]
In this case, $b_i\ge0$ holds for any $1\le i\le m$. 
\end{enumerate}
\end{lemma}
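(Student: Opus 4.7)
The plan is to treat the two parts separately, exploiting in each case the very explicit description of rays and quiddity entries available in rank $2$.

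For part (a), I will observe that the coordinate swap ${}^t(x,y)=(y,x)$ is an involutive linear automorphism of $\Z^2$ which interchanges $(1,0)\leftrightarrow(0,1)$ and $(-1,0)\leftrightarrow(0,-1)$. Consequently it preserves the pair $\{\sigma_+,-\sigma_+\}$ while exchanging $\sigma_{-+}\leftrightarrow\sigma_{+-}$. Since the sign-coherence condition of Definition \ref{define sign-coherent} is manifestly symmetric in the two coordinate basis vectors, and since the ``each ray is a face of precisely two facets'' condition is invariant under any linear isomorphism, the assignment $\Sigma\mapsto{}^t\Sigma$ sends $\itscfanpm$ into $\itscfanmp$; completeness is obviously preserved, giving the restricted bijection $\tscfanpm\to\tscfanmp$. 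Being its own inverse, the map is bijective, so no further work is needed here.

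For part (b), I will read off the last two quiddity entries directly from the defining relation $a_iv_i=v_{i-1}+v_{i+1}$. For $\Sigma\in\tscfanpm$, the two-dimensional facet $\sigma_{-+}=\cone\{(-1,0),(0,1)\}$ forces $(-1,0)$ and $(0,1)$ to be adjacent rays in the clockwise listing, so $v_{n-1}=(-1,0)$, $v_n=v_0=(0,1)$, and $i=n-2$. Plugging these values into the defining relations gives $a_{n-1}(-1,0)=(0,-1)+(0,1)=0$ and $a_n(0,1)=(-1,0)+(1,0)=0$, hence $a_{n-1}=a_n=0$, which in the reshuffled notation is exactly the claimed form $(b_1,\ldots,b_m;0,0)$ with $m=n-2$. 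The converse is the same calculation read backwards: from $a_{n-1}=a_n=0$ one obtains $v_{n-1}=-v_1=(-1,0)$ and $v_{n-2}=-v_n=(0,-1)$, so $\sigma_{-+}=\cone\{v_{n-1},v_n\}\in\Sigma_2$.

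The nonnegativity of $b_j$ is the only step that uses sign-coherence nontrivially. I plan to note that the clockwise ordering and sign-coherence together force $v_j\in\cone\{(1,0),(0,-1)\}$ for every $1\le j\le n-2$; writing $v_j=(p_j,-q_j)$ with $p_j,q_j\ge0$ and $(p_j,q_j)\ne(0,0)$, the relation $a_jv_j=v_{j-1}+v_{j+1}$ splits into $a_jp_j=p_{j-1}+p_{j+1}\ge0$ and $a_jq_j=q_{j-1}+q_{j+1}\ge0$. Since at least one coordinate of $v_j$ is strictly positive, $a_j\ge0$. The boundary cases $j=1$ and $j=n-2$ are handled by the same calculation, using $v_0=(0,1)$ and $v_{n-1}=(-1,0)$ respectively. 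I do not anticipate a real obstacle; the only thing to be careful about is bookkeeping between the two indexing conventions for the quiddity sequence (the purely cyclic $(a_1,\ldots,a_n)$ versus the ``split'' form with a semicolon at position $i$), which I will align once at the start.
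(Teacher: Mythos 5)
Your proof is correct. The paper states this lemma without proof (``The following observations are clear''), so there is no official argument to compare against; your write-up supplies the missing details and is a valid, elementary verification along the natural lines. Part (a) is exactly the expected observation that transposition is an involution of $\Z^2$ that fixes $\{\sigma_+,-\sigma_+\}$ and swaps $\sigma_{-+}$ with $\sigma_{+-}$, while all the defining conditions of $\itscfan$ are invariant under linear automorphisms. Part (b) is the right computation: with the paper's indexing $v_{n-2}=(0,-1)$, $v_{n-1}=(-1,0)$, $v_n=v_0=(0,1)$, one indeed gets $a_{n-1}=a_n=0$ from $a_iv_i=v_{i-1}+v_{i+1}$, and the split form has the second block equal to $(a_n,a_{n-1})$, giving $(b_1,\dots,b_m;0,0)$ with $m=n-2$; conversely the length constraint in the split form already forces $i=n-2$ and hence $\sigma_{-+}=\cone\{v_{n-1},v_n\}\in\Sigma_2$.

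One small wrinkle worth tightening in the nonnegativity argument: for the endpoint indices $j=1$ and $j=n-2$, the neighbouring ray $v_0=(0,1)$ (resp.\ $v_{n-1}=(-1,0)$) does \emph{not} lie in $\cone\{(1,0),(0,-1)\}$, so only one of the two coordinate inequalities $a_jp_j=p_{j-1}+p_{j+1}\ge0$ and $a_jq_j=q_{j-1}+q_{j+1}\ge0$ actually holds there; the other degenerates to a relation such as $q_2=1$ or $p_{n-3}=1$. Fortunately the coordinate that survives is precisely the one for which $v_j$ has a positive entry ($p_1=1>0$, resp.\ $q_{n-2}=1>0$), so the conclusion $a_1\ge0$, $a_{n-2}\ge0$ still goes through. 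You flag these boundary cases, but the phrase ``the same calculation'' slightly undersells the fact that only one of the two equations is usable; stating that explicitly would make the argument airtight. For $2\le j\le n-3$ the ray $v_j$ is primitive and lies strictly inside the fourth quadrant, so $p_j,q_j>0$ and either inequality gives $a_j\ge0$, as you say.
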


Let us consider the following elementary operation of fans of rank 2.

\begin{definition} 
	\label{definition:quiddity sequence} 
Let $\Sigma$ be a (possibly infinite) nonsingular fan of rank 2. For a cone $\sigma:=\cone\{u,v\}$ of $\Sigma$, we define a new nonsingular fan $D_\sigma(\Sigma)$ by
\begin{eqnarray*}
		D_\sigma(\Sigma)_1&=&\Sigma_1\sqcup\{\cone\{u+v\}\},\\
		D_\sigma(\Sigma)_2&=&(\Sigma_2\setminus\{\sigma\})\sqcup\{\cone\{u,u+v\},\cone\{v,u+v\}\}.
    \end{eqnarray*}
We call $D_\sigma(\Sigma)$ the \emph{subdivision} of $\Sigma$ at $\sigma$. 
\[{\Sigma=\begin{xy}
0;<4pt,0pt>:<0pt,4pt>::
(0,0)*{\bullet},
(0,0)="2",
(5,2)="3",
(5,-2)="4",
(8,1.5)="u",
(8,-1.5)="v",
(8,2.5)*{{\scriptstyle u}},
(8,-2.5)*{{\scriptstyle v}},
"3";"4"**\crv{~*{.}(0,10)&(-10,0)&(0,-10)},
(7,0)*{{\scriptstyle\sigma}},
\ar@{-}"u";"v",
\ar@{-}"2";"v",
\ar@{-}"2";"u",
\end{xy}}\ \ \ 
{D_\sigma(\Sigma)=\begin{xy}
0;<4pt,0pt>:<0pt,4pt>::
(0,0)*{\bullet},
(0,0)="2",
(5,2)="3",
(5,-2)="4",
(8,1.5)="u",
(8,-1.5)="v",
(16,0)="w",
(8,2.5)*{{\scriptstyle u}},
(8,-2.5)*{{\scriptstyle v}},
(18.5,0)*{{\scriptstyle u+v}},
"3";"4"**\crv{~*{.}(0,10)&(-10,0)&(0,-10)},
\ar@{-}"2";"w",
\ar@{-}"2";"v",
\ar@{-}"2";"u",
\ar@{-}"w";"v",
\ar@{-}"w";"u",
\end{xy}}
\]
\end{definition}

For a sequence $a=(a_1\ldots,a_n)$ and $1\le i\le n$, we define a new sequence by
\begin{equation}\label{define D_j for a}
D_i(a) = (a_1,\ldots,a_{i-1},a_{i}+1,1,a_{i+1}+1,a_{i+2},\ldots,a_n).
\end{equation}
For a complete nonsingular fan $\Sigma$ with rays \eqref{ray} and $\sigma_i:=\cone\{v_i,v_{i+1}\}$ for $1\leq i \leq n$, we have 
    \begin{equation}\label{s D_j=D_j s}
        \df\circ D_{\sigma_i}(\Sigma) = D_i\circ\df(\Sigma).
    \end{equation}

\begin{example}
Figure \ref{fig:tree diagram} gives fans in $\tscfanpm$ with at most 8 facets, where
\[\Sigma_{a_1,\ldots,a_n}:=\Sigma(a_1,\ldots,a_n;0,0)\]
and each edge shows a subdivision. Figure \ref{fig:tree diagram 2} gives examples of algebras whose $g$-fans are given in Figure \ref{fig:tree diagram}.
For example, $\Sigma_{111}$ is the $g$-vector fan of a cluster algebra of type $A_2$ \cite{FZ2,FZ4}. Similarly, $\Sigma_{1212}$ and $\Sigma_{2121}$ are the $g$-vector fans of cluster algebras of type $B_2$, and $\Sigma_{131313}$ and $\Sigma_{313131}$ are the $g$-vector fans of cluster algebras of type $G_2$.
\end{example}

Later we need the following observation (cf.\ \cite[Section 4.3]{F}).

\begin{proposition}\label{proposition:F=F'}
Each fan in $\tscfanpm$ can be obtained from $\Sigma(0,0;0,0)$  by a  sequence of subdivisions.
	\[
	\Sigma(0,0;0,0)=\begin{xy}
		0;<4pt,0pt>:<0pt,4pt>::
		(0,-5)="0",
		(-5,0)="1",
		(0,0)*{\bullet},
		(0,0)="2",
		(5,0)="3",
		(0,5)="4",
		(1.5,1.5)*{{\scriptstyle +}},
		(-1.5,-1.5)*{{\scriptstyle -}},
		\ar@{-}"0";"1",
		\ar@{-}"1";"4",
		\ar@{-}"4";"3",
		\ar@{-}"2";"0",
		\ar@{-}"2";"1",
		\ar@{-}"2";"3",
		\ar@{-}"2";"4",
		\ar@{-}"3";"0",
	\end{xy}
\]
\end{proposition}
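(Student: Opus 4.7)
The plan is to prove the proposition by induction on the number $n$ of rays of $\Sigma \in \tscfanpm$. For $n = 4$ the fan is $\Sigma(0,0;0,0)$ itself and there is nothing to do. For $n \geq 5$, I will find a ray $v_i$ strictly inside the open fourth quadrant, i.e.\ with $i \in \{2,\ldots,n-3\}$ in the indexing of \eqref{ray 2}, whose contraction yields a smaller fan $\Sigma' \in \tscfanpm$ satisfying $\Sigma = D_{\sigma'}(\Sigma')$ for $\sigma' = \cone\{v_{i-1},v_{i+1}\} \in \Sigma'$.

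Such a contraction is possible precisely when the quiddity entry $a_i$ equals $1$: in that case $v_i = v_{i-1} + v_{i+1}$, which forces $\det(v_{i-1},v_{i+1}) = \det(v_{i-1},v_i) = \pm 1$, so $\cone\{v_{i-1},v_{i+1}\}$ is again a nonsingular cone. Since the four boundary rays $(\pm 1,0),(0,\pm 1)$ are untouched, the resulting fan $\Sigma'$ still belongs to $\tscfanpm$ and has $n-1$ rays, so the induction hypothesis applies, and applying $D_{\sigma'}$ then expresses $\Sigma$ as an iterated subdivision of $\Sigma(0,0;0,0)$.

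The crux is therefore the following existence claim: for $n \geq 5$, at least one of $a_2,\ldots,a_{n-3}$ equals $1$. My plan for this is a monotonicity argument in the $\ell^1$-norm. Writing $v_k = (p_k,q_k)$ with $p_k \geq 0$ and $q_k \leq 0$, set $\alpha_k := p_k - q_k$. Because the closed fourth quadrant is closed under addition and $\alpha$ restricts to a linear functional on it, the relation $v_{k-1}+v_{k+1} = a_k v_k$ yields the scalar recurrence $\alpha_{k+1} = a_k \alpha_k - \alpha_{k-1}$. One checks that $\alpha_1 = \alpha_{n-2} = 1$ while $\alpha_k \geq 2$ for every interior $k \in \{2,\ldots,n-3\}$ (a primitive vector in the open fourth quadrant has both coordinates of modulus at least $1$), and consequently $a_k \geq 1$ there.

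Now suppose for contradiction that $a_k \geq 2$ for every $k \in \{2,\ldots,n-3\}$. Setting $f_k := \alpha_{k+1} - \alpha_k$, the recurrence rewrites as $f_k = (a_k - 1)\alpha_k - \alpha_{k-1} \geq \alpha_k - \alpha_{k-1} = f_{k-1}$, so $f_1 \leq f_2 \leq \cdots \leq f_{n-3}$. But $f_1 = \alpha_2 - 1 \geq 1$ while $f_{n-3} = 1 - \alpha_{n-3} \leq -1$, a contradiction, which produces the required interior index $i$ with $a_i = 1$. The main obstacle in the whole argument is precisely to guarantee $a_i = 1$ at an \emph{interior} index: Fulton's classical contraction argument merely produces some $a_i = 1$ in a complete nonsingular rank $2$ fan, but here one must additionally rule out $i \in \{1,n-2,n-1,n\}$ so as to stay within $\tscfanpm$, and the $\ell^1$-difference trick above is what enforces this restriction.
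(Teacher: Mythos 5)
Your argument is correct and follows essentially the same strategy as the paper: induction on the number of maximal cones, contracting at an interior ray $v_i$ with $a_i=1$, reducing everything to the existence claim that is the paper's Lemma~\ref{lemma:quiddity sequence contains 1}. Your proof of that key lemma via the quantity $\alpha_k = p_k - q_k$ and the monotonicity of its differences $f_k$ is a close variant of the paper's, which instead tracks only the first coordinate $x_k$ of $v_k=(x_k,y_k)$ and, under the hypothesis $a_i\ge 2$ for all interior $i$, derives the slightly more direct contradiction $1 = x_1 \le x_2 \le \cdots \le x_{n-2} = 0$.
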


To prove this, we need the following preparation.

\begin{lemma}[{cf.\ \cite[p.43]{F}}]
	\label{lemma:quiddity sequence contains 1}
	Let $\Sigma\in \tscfanpm$ and $\df(\Sigma)=(a_1,\ldots,a_{n-2};0,0)$. If $n\ge 5$, then there exists $2\le i\le n-3$  satisfying $a_i=1$.
\end{lemma}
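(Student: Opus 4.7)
The natural approach is to exploit the geometry of the rays. For $\Sigma\in\tscfanpm$, the rays
\[
v_1=(1,0),\ v_2,\ \ldots,\ v_{n-2}=(0,-1)
\]
all lie in the closed fourth quadrant, and for $2\le i\le n-3$ they lie in its open interior. Writing $v_i=(p_i,-q_i)$ with $p_i,q_i\in\Z_{\ge0}$ (and $p_i,q_i\ge1$ when $2\le i\le n-3$), I would introduce the height function $h(v_i):=p_i+q_i$. Then $h(v_1)=h(v_{n-2})=1$ while $h(v_i)\ge2$ for $2\le i\le n-3$. Applying $h$ to the defining relation $a_iv_i=v_{i-1}+v_{i+1}$ converts the problem into the one-dimensional recurrence
\[
a_i\,h(v_i)=h(v_{i-1})+h(v_{i+1}).
\]

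The core step is to let $i^*\in\{2,\ldots,n-3\}$ be the \emph{smallest} index maximizing $h(v_{i^*})$ among the interior rays. I then claim $h(v_{i^*-1})<h(v_{i^*})$ strictly: if $i^*=2$ the left neighbor has height $1<2\le h(v_{i^*})$, and if $i^*\ge 3$ the strict inequality follows from the leftmost-maximizer choice (any tie would contradict minimality of $i^*$). Combined with $h(v_{i^*+1})\le h(v_{i^*})$ (from maximality when $i^*+1\le n-3$, or from $h(v_{n-2})=1$ in the boundary case $i^*+1=n-2$), the recurrence forces $a_{i^*}\,h(v_{i^*})<2\,h(v_{i^*})$, hence $a_{i^*}<2$.

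To conclude, Lemma~\ref{transpose fan}(b) gives $a_{i^*}\ge0$, so it only remains to rule out $a_{i^*}=0$. But $a_{i^*}=0$ would force $v_{i^*-1}=-v_{i^*+1}$, which is impossible since both vectors are nonzero and lie in the closed fourth quadrant, whose intersection with its antipode is $\{0\}$. Hence $a_{i^*}=1$, as required. I do not foresee any real obstacle here; the only delicate point is that one needs a \emph{strict} inequality somewhere in order to upgrade $a_{i^*}\le 2$ to $a_{i^*}\le 1$, and the asymmetry introduced by choosing the leftmost maximizer (rather than an arbitrary one) is precisely what supplies this strict inequality.
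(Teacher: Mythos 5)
Your proof is correct, and it takes a noticeably different route from the paper's. Both arguments apply a linear functional to the defining recurrence $a_iv_i=v_{i-1}+v_{i+1}$ together with the boundary data $v_1=(1,0)$ and $v_{n-2}=(0,-1)$, but the mechanisms differ. The paper works with the first coordinate: it assumes $a_i\ge 2$ for every interior index, deduces by induction that the $x$-coordinates are nondecreasing, and obtains a contradiction from $1=x_1\le\cdots\le x_{n-2}=0$. You instead use the functional $(x,y)\mapsto x-y$, which restricts to $p+q$ on the fourth quadrant, and argue at the leftmost maximizer; this yields $a_{i^*}\le 1$ directly, with no induction and no global proof by contradiction. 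Your write-up also makes explicit the step of excluding $a_{i^*}=0$ (via the observation that a nonzero vector in the closed fourth quadrant cannot equal the negative of another such vector). The paper instead starts directly from the hypothesis $a_i\ge 2$ for all interior $i$, which in effect presupposes both $a_i\ge 0$ (Lemma~\ref{transpose fan}(b)) and $a_i\ne 0$; the latter is left implicit in the paper and your treatment is more self-contained on this point. One small technical remark worth keeping in mind: to pass from $a_{i^*}h(v_{i^*})<2h(v_{i^*})$ to $a_{i^*}<2$ you need $h(v_{i^*})>0$, which holds because $v_{i^*}$ is a lattice point in the open fourth quadrant so $h(v_{i^*})\ge 2$; you implicitly use this, and it is fine, but it is the hinge on which the maximizer argument turns.
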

\begin{proof}
	Let $v_i=(x_i,y_i) \in \mathbb{Z}^2$ for $1\leq i\leq n$. 
	Assume that $n\ge 5$ and $a_i\ge 2$ for any $2\le i\le n-3$.
	We claim that $x_{i+1}\ge x_i$ holds for each  $1\le i\le n-3$.
	 In fact, $n\ge 5$ implies $x_2\ge 1=x_1$. Then we have
\[
x_{i+1}=a_{i}x_i-x_{i-1}\ge 2x_i-x_{i-1}\ge x_i
\]
for each  $2\le i\le n-3$, and the claim follows inductively. Consequently $1=x_1\le x_2 \le \cdots \le  x_{n-2}=0$ holds, a contradiction.
\end{proof}

We are ready to prove Proposition \ref{proposition:F=F'}.

\begin{proof}[Proof of Proposition \ref{proposition:F=F'}]
Let $F\subset\tscfanpm$ be the set of fans obtained from $\Sigma(0,0;0,0)$ by a sequence of subdivisions. It suffices to show $\tscfanpm=F$.

	We will show that each $\Sigma\in \tscfanpm$ belongs to $F$ by using induction on $n=\#\Sigma_2$.

	Clearly $n\ge 4$ holds. If $n=4$, then $\Sigma=\Sigma(0,0;0,0)\in F$.
	
	Suppose that $\Sigma$ with $\#\Sigma_2=n\ge 5$ belongs to $\tscfanpm$. In terms of \eqref{ray 2} and \eqref{facet 2}, there exists $2\le i\le n-3$ satisfying $v_i=v_{i-1}+v_{i+1}$ by Lemma \ref{lemma:quiddity sequence contains 1}.
	Since $v_{i-1},v_{i+1}$ forms a $\Z$-basis of $\Z^2$, we obtain a new fan $\Sigma'\in\tscfanpm$ by
	\begin{eqnarray*}
	\Sigma'_1&:=&\Sigma_1\setminus\{v_i\},\\
	\Sigma'_2&:=&(\Sigma_2\setminus\{\sigma_{i-1},\sigma_i\})\cup \{\sigma\}\ \mbox{ for }\ \sigma:=\cone\{v_{i-1},v_{i+1}\}.
	\end{eqnarray*}
	Since $\#\Sigma'_2=n-1$, the induction hypothesis implies $\Sigma'\in F$. Thus $\Sigma=D_\sigma(\Sigma')\in F$ holds.
\end{proof}

\begin{remark}
\begin{enumerate}[\rm(a)]
\item
For each $n\ge 1$, we have a bijection
\[\{\Sigma\in\tscfanpm\ |\ \#\Sigma_2=n+3\}\simeq\{\mbox{the ways to parenthesize $n$ factors completely}\},\]
where parentheses show how cones in the fourth quadrant are obtained by iterated subdivisions.
For example, $\Sigma_{141222}$ in Figure \ref{fig:tree diagram} has 5 cones $\sigma_1,\ldots,\sigma_5$ in the fourth quadrant in terms of \eqref{facet 2}, and they are parenthesized as $\sigma_1(((\sigma_2\sigma_3)\sigma_4)\sigma_5)$. In particular, we have
$$\#\{\Sigma\in\tscfanpm\ |\ \#\Sigma_2=n+3\}=\frac{1}{n}\dbinom{2n-2}{n-1}.$$
\item
We also have a bijection
\[\{\Sigma\in\tscfanpm\ |\ \#\Sigma_2=n+3\}\simeq\{\mbox{Triangulations of a regular $(n+1)$-gon}\},\]
where $\Sigma_{a_1,\ldots,a_{n+1}}$ corresponds to a triangulation satisfying the following condition: 
Let $1,2,\ldots,n+1$ be the vertices of the regular $(n+1)$-gon in a clockwise direction, and $a_i$ ($1\leq i\leq n+1$) the number of triangles containing the vertex $i$ in the triangulation. 
For example, $\Sigma_{141222}$ corresponds to the following triangulation, where $1$ is the top vertex.

\[ 
\begin{tikzpicture}
    \coordinate(0) at (0:0); 
    \coordinate(30) at (30:0.8);
    \coordinate(90) at (90:0.8); 
    \coordinate(150) at (150:0.8);
    \coordinate(-30) at (-30:0.8);
    \coordinate(-90) at (-90:0.8); 
    \coordinate(-150) at (-150:0.8);    
    \node at(30) {$\bullet$};
    \node at(90) {$\bullet$};
    \node at(150) {$\bullet$};
    \node at(-30) {$\bullet$};
    \node at(-90) {$\bullet$};
    \node at(-150) {$\bullet$};

    \draw[] (30)--(90)--(150)--(-150)--(-90)--(-30)--cycle;
    \draw[] (30)--(150) (30)--(-150) (30)--(-90);
\end{tikzpicture}
\]
\item
It is well-known that the triangulations of a regular polygon give rise to Conway-Coxeter friezes \cite{CC1,CC2} (see also \cite{BFGST,BPT}). Thus we have a certain bijection between $\tscfanpm$ and Conway-Coxeter friezes which preserves the quiddity sequences.
\end{enumerate}
\end{remark}

\section{Basic results in silting theory}

\subsection{Preliminaries}\label{section 3.1}
Let $A$ be a finite dimensional algebra over a field $k$. Let $K_0(\proj A)$ be the Grothendieck group of the additive category $\proj A$, which is identified with the Grothendieck group of the triangulated category $\Kb(\proj A)$.
We recall basic results on silting theory from \cite{AiI,AIR,AHIKM}. First we recall the definition of 2-term silting complexes.

\begin{definition}\label{define silting}
Let $T=(T^i,d^i)\in\Kb(\proj A)$.
\begin{enumerate}[\rm(a)]
\item $T$ is called  \emph{presilting} if $\Hom_{\Kb(\proj A)}(T,T[\ell])=0$ for all positive integers $\ell$.\item $T$ is called \emph{silting} if it is presilting and $\Kb(\proj A)=\thick T$.
\item $T$ is called  \emph{2-term} if $T^i = 0$ for all $i\not= 0,-1$. In this case, the class $[T]=[T^0]-[T^{-1}]\in K_0(\proj A)$ of $T$ is called the \emph{$g$-vector} of $T$.
\item An element of $K_0(\proj A)$ is \emph{rigid} if it is a $g$-vector of some 2-term presilting complex.
\end{enumerate}
We denote by $\silt A$ (respectively, $\psilt A$, $\twosilt A$,  $\twopsilt A$) the set of isomorphism classes of basic silting (respectively, presilting, 2-term silting, 2-term presilting) complexes of $\Kb(\proj A)$.
Note that a 2-term presilting complex $T$ is silting if and only if $|T|=|A|$ holds.

For $T,U\in\silt A$, we write $T\ge U$ if $\Hom_{\Kb(\proj A)}(T,U[\ell])=0$ holds for all positive integers $\ell$. Then $(\silt A,\ge)$ is a partially ordered set \cite{AiI}.
\end{definition}

In this paper, the subposet $(\twosilt A,\ge)$ of $(\silt A,\ge)$ plays a central role. It is known that $\Hasse(\twosilt A)$ is $n$-regular for $n:=|A|$. More precisely, let $T=T_1\oplus\cdots\oplus T_n\in\twosilt A$ with indecomposable $T_i$. For each $1\le i\le n$, there exists precisely one $T'\in\twosilt A$ such that $T'=T'_i\oplus(\bigoplus_{j\neq i}T_j)$ for some $T'_i\neq T_i$. In this case, we call $T'$ \emph{mutation} of $T$ at $T_i$ and write
\[T'=\mu_{T_i}(T)=\mu_i(T).\]
In this case, either $T>T'$ or $T'<T$ holds. 
We denote $T'$ by $\mu^-_i(T)$ (respectively, $\mu^+_i(T)$) if $T>T'$ and call it \emph{left mutation} (respectively, \emph{right mutation}). 
The following result is fundamental in silting theory. 

\begin{proposition}\label{mutation=exchange}
Let $T,T'\in\twosilt A$. Take a decomposition $T=T_1\oplus\cdots\oplus T_n$ with indecomposable $T_i$. Then the following conditions are equivalent.
\begin{enumerate}[\rm(a)]
\item $T>T'$, and $T$ and $T'$ are mutation of each other.
\item There is an arrow $T\to T'$ in $\Hasse(\twosilt A)$.
\item $T'=T'_i\oplus(\bigoplus_{j\neq i}T_j)$ and there is a triangle
\[T_i\xrightarrow{f} U_i\to T'_i\to T_i[1]\]
such that $f$ is a minimal left $(\add \bigoplus_{j\neq i}T_j)$-approximation.
\item $T'=T'_i\oplus(\bigoplus_{j\neq i}T_j)$ and there is a triangle 
\[T_i\to U_i\xrightarrow{g} T'_i\to T_i[1]\]
such that $g$ is a minimal right $(\add \bigoplus_{j\neq i}T_j)$-approximation.
\end{enumerate}
The triangles in (c) and (d) are isomorphic, and called an \emph{exchange triangle}.
\end{proposition}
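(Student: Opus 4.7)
The plan is to establish the equivalences via the chain $(c)\Leftrightarrow(d)$, then $(c)\Rightarrow(a)\Rightarrow(b)\Rightarrow(c)$, invoking the foundational silting mutation theorems of \cite{AiI,AIR} at the key step.

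For $(c)\Leftrightarrow(d)$, set $U:=\bigoplus_{j\ne i}T_j$ and apply $\Hom(U,-)$ to the triangle $T_i\xrightarrow{f}U_i\xrightarrow{g}T'_i\to T_i[1]$. Since $T=T_i\oplus U$ is presilting, $\Hom(U,T_i[1])=0$, so $\Hom(U,U_i)\twoheadrightarrow\Hom(U,T'_i)$ is surjective; this translates "$f$ is a left $(\add U)$-approximation'' into "$g$ is a right $(\add U)$-approximation''. Minimality of $f$ (i.e.\ every endomorphism $\varphi$ of $U_i$ with $\varphi f=f$ is invertible) transfers to minimality of $g$ by a standard diagram chase on the exchange triangle, and vice versa.

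For $(c)\Rightarrow(a)$, which is the technical heart, I would verify that $T':=T'_i\oplus U$ is a 2-term silting complex with $T>T'$. That $T'$ is 2-term follows since $U_i\in\add U$ is 2-term and $T_i$ is 2-term, so the cone $T'_i$ is 2-term. That $\thick T'=\Kb(\proj A)$ follows from $T_i\in\thick(T'_i\oplus U)$ via the triangle. That $T'$ is presilting, i.e.\ $\Hom(T',T'[\ell])=0$ for $\ell>0$, is the core computation: applying $\Hom(-,T'_i[\ell])$ and $\Hom(T'_i,-)$ to the exchange triangle and using the presilting property of $T$ together with the approximation property of $f$ (which forces $\Hom(T'_i,U[\ell])=0$ in degree $\ell=1$) reduces everything to vanishing already known. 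Finally $T>T'$ follows from $\Hom(T_i,T'_i[1])=0$, obtained by applying $\Hom(T_i,-)$ to the triangle and using $\Hom(T_i,T_i[2])=0$ and surjectivity of $\Hom(T_i,U_i)\to\Hom(T_i,T'_i)$ (from the left approximation property).

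For $(a)\Rightarrow(b)$, suppose $T>T'$ with $T'$ a mutation of $T$ at $T_i$. If some $T''\in\twosilt A$ satisfied $T>T''>T'$, then comparing $g$-vectors (all of which coincide outside the $i$-th coordinate with those of $T=T'\oplus U$) forces $T''$ to share the summand $U$ with both $T$ and $T'$; but by the indecomposable silting mutation theorem \cite{AiI}, the complement of $U$ is determined up to two choices, giving $T''\in\{T,T'\}$, a contradiction. Hence there is a Hasse arrow $T\to T'$. For $(b)\Rightarrow(c)$, a Hasse arrow is a cover relation $T>T'$, so by mutation theory $T'=\mu^-_i(T)$ for some $i$, and the defining triangle of left mutation is exactly the one in (c).

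The main obstacle is the step $(c)\Rightarrow(a)$, specifically checking that $T'$ remains presilting: one must carefully trace the vanishing of $\Hom(T',T'[1])$ through the long exact sequences obtained from the exchange triangle, using both the presilting property of $T$ and the left-approximation property of $f$ in an essential way.
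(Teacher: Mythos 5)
The paper does not actually prove this proposition: it is stated as a recalled foundational result and left to the citations \cite{AiI,AIR} (the introductory sentence reads ``The following result is fundamental in silting theory'' and no proof is supplied). There is therefore no in-paper argument to compare against, and I evaluate your sketch on its own terms. The routing $(c)\Leftrightarrow(d)$ and then $(c)\Rightarrow(a)\Rightarrow(b)\Rightarrow(c)$ is reasonable, and the local arguments for $(c)\Leftrightarrow(d)$ and $(c)\Rightarrow(a)$ are essentially correct. Three small points on $(c)\Rightarrow(a)$: since $T'\in\twosilt A$ is a standing hypothesis of the proposition, you do not need to re-verify that $T'$ is silting; the surjectivity of $\Hom(T_i,U_i)\to\Hom(T_i,T'_i)$ comes directly from $\Hom(T_i,T_i[1])=0$, not from the approximation property of $f$; and you should record that $T\ne T'$ (via $T_i\notin\add(\bigoplus_{j\ne i}T_j)$ and non-splitness of the exchange triangle) to pass from $T\ge T'$ to $T>T'$.

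The step $(a)\Rightarrow(b)$ contains a genuine gap. You must show that no $T''\in\twosilt A$ satisfies $T>T''>T'$, and the argument ``comparing $g$-vectors ... forces $T''$ to share the summand $U$'' does not do this: being sandwiched between $T$ and $T'$ in the silting order does not directly constrain the $g$-vectors of $T''$, and the $g$-fan records adjacency of cones, not order intervals. The standard argument, from \cite{AiI}, is Hom-theoretic and does not go through $g$-vectors: writing $U:=\bigoplus_{j\ne i}T_j$, the inequality $T\ge T''$ gives $\Hom(U,T''[1])=0$, and $T''\ge T'$ gives $\Hom(T'',U[1])=0$, so $T''\oplus U$ is presilting; since a basic presilting complex in $\Kb(\proj A)$ has at most $n=|A|$ non-isomorphic indecomposable summands and $T''$ already has $n$, one gets $\add U\subseteq\add T''$, and then $T''\in\{T,T'\}$ because an almost complete silting complex admits exactly two completions. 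The same presilting-plus-counting reasoning is what is really needed in $(b)\Rightarrow(c)$ as well: invoking ``mutation theory'' there to pass from a Hasse arrow to a one-summand mutation is circular, since that identification is precisely (part of) what the proposition asserts.
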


To introduce the $g$-fan of a finite dimensional $k$-algebra $A$, 
we consider the real Grothendieck group of $A$:
\[K_0(\proj A)_\R:=K_0(\proj A)\otimes_{\mathbb{Z}}\R\simeq \R^{|A|}.\] 

\begin{definition}\label{g-fan}
For $T=T_1\oplus\cdots\oplus T_\ell\in\twopsilt A$ with indecomposable $T_i$, let
\begin{eqnarray*}
C(T):=\cone\{[T_1],\ldots,[T_\ell]\}\subset K_0(\proj A)_\R. 
\end{eqnarray*}
The \emph{$g$-fan} of $A$ is the set of cones:
\[\Sigma(A):=\{C(T)\mid T\in\twopsilt A\}.\] 
We say that $A$ is \emph{$g$-convex} if $\Sigma(A)$ is convex (see Definition \ref{from fan to polytope}).
\end{definition}

We say that $A$ is \emph{$g$-finite} (also \emph{$\tau$-tilting finite}) if $\#\twosilt A<\infty$. 
We give the following basic properties of $g$-fans.

\begin{proposition}\cite{AIR,As,DIJ,DF}\label{characterize g-finite}
Let $A$ be a finite dimensional algebra over a field $k$ and $n:=|A|$.
\begin{enumerate}[\rm(a)]
\item Any cone in $\Sigma(A)$ is a face of a cone of dimension $n$.
\item Any cone in $\Sigma(A)$ of dimension $n-1$ is a face of precisely two cones of dimension $n$. 
\item $(\Sigma(A),C(A))$ is a pairwise positive sign-coherent fan.
\item $A$ is $g$-finite (or equivalently, $\Sigma(A)$ is finite) if and only if $\Sigma(A)$ is complete.
\end{enumerate}
\end{proposition}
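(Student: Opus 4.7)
The plan is to verify each of the four assertions by invoking the corresponding cornerstone results in silting theory and then extracting the fan-theoretic conclusion. Throughout, the key bridge is the correspondence $T \mapsto C(T)$ between 2-term presilting complexes and cones in $\Sigma(A)$.

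For part (a), the statement is exactly the fan-theoretic translation of the completion theorem for presilting complexes: any 2-term presilting complex $T = T_1 \oplus \cdots \oplus T_\ell$ with $\ell < n$ admits indecomposables $T_{\ell+1}, \ldots, T_n$ with $T \oplus T_{\ell+1} \oplus \cdots \oplus T_n \in \twosilt A$. This is the content of \cite{AIR}, and it yields that $C(T)$ is a face of the full-dimensional cone $C(T \oplus T_{\ell+1} \oplus \cdots \oplus T_n)$. For part (b), an $(n-1)$-dimensional cone has the form $C(T')$ with $T'$ almost complete 2-term presilting, and the mutation theory recalled in Proposition \ref{mutation=exchange} gives precisely two indecomposables $T_i, T'_i$ (one on each side of the exchange triangle) which complete $T'$ to a silting complex. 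These supply exactly two $n$-dimensional cones of $\Sigma(A)$ containing $C(T')$ as a facet.

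For part (c), I would split the claim into three pieces. The fact that $\Sigma(A)$ is a nonsingular fan (cones glue along faces and each maximal cone is $\Z$-spanned by a basis of $K_0(\proj A)$) is again a consequence of silting mutation. Sign-coherence of $g$-vectors in the basis of indecomposable projectives is the theorem of \cite{DIJ} (originating in \cite{DF}): the summands of a 2-term presilting complex can be arranged so that for each projective $P_j$, all coordinates at $P_j$ share a common sign. Pairwise positivity follows from the exchange triangle of Proposition \ref{mutation=exchange}: if $\sigma = C(T)$ and $\tau = C(\mu_i^-(T))$ share the facet $C(T/T_i)$, then the triangle $T_i \to U_i \to T'_i \to T_i[1]$ with $U_i \in \add(T/T_i)$ yields $[T_i] + [T'_i] = [U_i] \in \cone\{[T_j] \mid j \neq i\}$, which is the required positivity condition.

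Part (d) is the equivalence of Demonet--Iyama--Jasso \cite{DIJ} (see also \cite{As}): $A$ is $\tau$-tilting finite if and only if $\Sigma(A)$ is complete. The easy direction is that finiteness of $\twosilt A$ combined with parts (a)--(c) forces $\Sigma(A)$ to cover $\R^n$, since a finite pairwise positive nonsingular sign-coherent fan in which every facet lies between two maximal cones is complete. The harder direction is that completeness of $\Sigma(A)$ implies $g$-finiteness, which is the main obstacle in the whole proposition; it ultimately rests on showing that if $\twosilt A$ were infinite, one could find $g$-vectors accumulating at a point uncovered by any cone, contradicting completeness. Since the present proposition is stated as a compilation, I would cite \cite{DIJ} for this step rather than reproving it.
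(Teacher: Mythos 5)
Your proposal correctly traces each assertion back to its source in the cited literature; since the paper states Proposition \ref{characterize g-finite} as a compilation from \cite{AIR,As,DIJ,DF} without giving its own proof, your account of where each part comes from is entirely consistent with the paper's treatment. The only small caution is in part (d): completeness of an arbitrary fan does not by itself imply finiteness (there are infinite complete fans), so the equivalence ``complete $\Leftrightarrow$ finite $\Leftrightarrow$ $g$-finite'' genuinely relies on the structure theory in \cite{DIJ,As} rather than on general fan combinatorics, as you correctly indicate by deferring to the references.
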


The following basic observation will be used frequently.

\begin{proposition}\label{+- condition}
Let $\Lambda$ be a finite dimensional algebra with orthogonal primitive idempotents $1=e_1+e_2$. Under the identification $P_1=(1,0)$ and $P_2=(0,1)$, the following assertions hold.
\begin{enumerate}[\rm(a)]
\item $\cone\{(-1,0),(0,1)\}\in\Sigma(\Lambda)$ if and only if $e_2\Lambda e_1=0$.
\item $\cone\{(1,0),(0,-1)\}\in\Sigma(\Lambda)$ if and only if $e_1\Lambda e_2=0$.
\end{enumerate}
\end{proposition}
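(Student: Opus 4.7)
The plan is to directly pin down, for each of the two cones, the unique candidate 2-term silting complex whose $g$-cone equals it, and then translate the silting condition into the stated Hom-vanishing. First I would invoke Proposition~\ref{characterize g-finite}(c) to conclude that $\Sigma(\Lambda)$ is sign-coherent and hence nonsingular, so each of its rays is generated by a primitive lattice vector and arises as the $g$-vector of an indecomposable 2-term presilting complex. Under the identification $[P_1]=(1,0)$, $[P_2]=(0,1)$, the rays $(-1,0)$ and $(0,-1)$ are realized by $P_1[1]$ and $P_2[1]$, respectively, and (by the standard fact that an indecomposable 2-term presilting complex is determined up to isomorphism by its $g$-vector) only by these. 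Thus $\cone\{(-1,0),(0,1)\}\in\Sigma(\Lambda)$ holds iff $T_a:=P_1[1]\oplus P_2$ is silting, and $\cone\{(1,0),(0,-1)\}\in\Sigma(\Lambda)$ holds iff $T_b:=P_1\oplus P_2[1]$ is silting.

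For part (a), since $|T_a|=2=|\Lambda|$, $T_a$ is silting iff it is presilting, so I would expand $\Hom_{\Kb(\proj\Lambda)}(T_a, T_a[\ell])$ for $\ell\ge 1$ into its four canonical summands. Three of these have the form $\Hom_\Lambda(P_i,P_j[m])$ with $m\ge 1$ and vanish automatically because $P_i,P_j$ are projective. The only potentially non-vanishing summand is
\[
\Hom_{\Kb(\proj\Lambda)}(P_1[1], P_2[1]) \;=\; \Hom_\Lambda(P_1,P_2) \;=\; e_2\Lambda e_1,
\]
using $\Hom_\Lambda(e_i\Lambda, e_j\Lambda)=e_j\Lambda e_i$. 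This vanishes precisely when $e_2\Lambda e_1=0$, which yields (a). Part (b) is entirely symmetric: the analogous computation for $T_b$ reduces presilting-ness to the vanishing of $\Hom_\Lambda(P_2,P_1)=e_1\Lambda e_2$, giving (b).

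The argument is essentially bookkeeping once the candidates $T_a$ and $T_b$ are identified; there is no serious obstacle. The only care needed is to track sign conventions for $g$-vectors of shifted projectives (so that $(-1,0)$ and $(0,-1)$ really are the $g$-vectors of $P_1[1]$ and $P_2[1]$), and to compute $\Hom_\Lambda(e_i\Lambda, e_j\Lambda)$ with the idempotents in the correct order.
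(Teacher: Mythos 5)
Your proof is correct and takes essentially the same route as the paper: identify $\cone\{(-1,0),(0,1)\}\in\Sigma(\Lambda)$ with $P_1[1]\oplus P_2\in\twosilt\Lambda$, then reduce the (pre)silting condition to $\Hom_{\Kb(\proj\Lambda)}(P_1,P_2)=e_2\Lambda e_1=0$. The paper's proof is a one-line chain of equivalences, whereas you spell out the (true and necessary) uniqueness of an indecomposable 2-term presilting complex with a given $g$-vector and the vanishing of the three trivial Hom-summands, but the underlying argument is identical.
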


Proposition \ref{+- condition} is explained by the following picture.
\[{e_2\Lambda e_1=0\Leftrightarrow\begin{xy}
0;<4pt,0pt>:<0pt,4pt>::
(0,-5)="0",
(-5,0)="1",
(0,0)*{\bullet},
(0,0)="2",
(5,0)="3",
(0,5)="4",
(0,-4)="a",
(4,0)="b",
(1.5,1.5)*{{\scriptstyle +}},
(-1.5,-1.5)*{{\scriptstyle -}},
(7,0)*{{\scriptstyle P_1}},
(0,6.7)*{{\scriptstyle P_2}}, 
\ar@{-}"0";"1",
\ar@{-}"1";"4",
\ar@{-}"4";"3",
\ar@{-}"2";"0",
\ar@{-}"2";"1",
\ar@{-}"2";"3",
\ar@{-}"2";"4",
\ar@/^-2mm/@{.} "a";"b",
\end{xy}}\ \ \ \ \  
{e_2\Lambda e_1=0\Leftrightarrow\begin{xy}
0;<4pt,0pt>:<0pt,4pt>::
(0,-5)="0",
(-5,0)="1",
(0,0)*{\bullet},
(0,0)="2",
(5,0)="3",
(0,5)="4",
(0,4)="a",
(-4,0)="b",
(1.5,1.5)*{{\scriptstyle +}},
(-1.5,-1.5)*{{\scriptstyle -}}, 
(7,0)*{{\scriptstyle P_1}},
(0,6.7)*{{\scriptstyle P_2}}, 
\ar@{-}"0";"1",
\ar@{-}"0";"3",
\ar@{-}"4";"3",
\ar@{-}"2";"0",
\ar@{-}"2";"1",
\ar@{-}"2";"3",
\ar@{-}"2";"4",
\ar@/^-2mm/@{.} "a";"b",
\end{xy}}\]

\begin{proof}
We only prove (a): $\Sigma(A)\in\tscfanpm$ if and only if $P_1[1]\oplus P_2\in\twosilt A$ if and only if $\Hom_{\Kb(\proj A)}(P_1,P_2)=0$ if and only if $e_2\Lambda e_1=0$.
\end{proof}

We end this subsection with recalling the sign decomposition technique studied in \cite{Ao,AHIKM}.  
We have to introduce the following notations. 

\begin{definition}\label{define A_epsilon}
Let $A$ be a basic finite dimensional algebra over a field $k$ with $|A|=n$, and $1=e_1+\cdots+e_n$ the orthogonal primitive idempotents. For $\epsilon\in\{\pm1\}^n$, 
we define 
\[K_0(\proj A)_{\epsilon,\R}:=\cone(\epsilon_i[e_iA]\mid i\in\{1,\ldots,n\})\]
and a subfan of $\Sigma(A)$  by
\[\Sigma_\epsilon(A):=\{\sigma\in\Sigma(A)\mid \sigma\subset K_0(\proj A)_{\epsilon,\R}\}.\]
Define idempotents of $A$ by
\[e^+_\epsilon:=\sum_{\epsilon_i=1}e_i\ \mbox{ and }\ e^-_\epsilon:=\sum_{\epsilon_i=-1}e_i.\]
We denote by $A_{\epsilon}$ the subalgebra of $A$ given by
\[A_\epsilon:=\left[\begin{array}{cc}e^+_\epsilon Ae^+_\epsilon&e^+_\epsilon Ae^-_\epsilon\\
0&e^-_\epsilon Ae^-_\epsilon\end{array}\right].\]
Define an ideal $I_\epsilon$ of $A_\epsilon$ by 
\[I_\epsilon:=\left[\begin{array}{cc}\rad (e^+_\epsilon Ae^+_\epsilon)\cap\Ann_{e^+_\epsilon Ae^+_\epsilon}(e^+_\epsilon Ae^-_\epsilon)&0\\
0&\rad(e^-_\epsilon Ae^-_\epsilon)\cap\Ann(e^+_\epsilon Ae^-_\epsilon)_{e^-_\epsilon Ae^-_\epsilon}\end{array}\right].\]
\end{definition}

The following result is often very useful to calculate $\Sigma_\epsilon(A)$.

\begin{proposition}\label{sign decomposition}\cite[Example 4.26]{AHIKM}
For each ideal $I$ of $A_\epsilon$ contained in $I_\epsilon$, the isomorphisms $-\otimes_{A_\epsilon}A:K_0(\proj A_\epsilon)_\R\simeq K_0(\proj A)_\R$ and $-\otimes_{A_\epsilon}(A_\epsilon/I):K_0(\proj A_\epsilon)_\R\simeq K_0(\proj A_\epsilon/I)_\R$ gives an isomorphism of fans
\[\Sigma_\epsilon(A)\simeq\Sigma_\epsilon(A_\epsilon/I).\]
\end{proposition}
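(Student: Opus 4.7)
The plan is to reduce the statement to the general sign decomposition technique developed in \cite[Section 4]{AHIKM}, of which the present proposition is the two-variable instance corresponding to the rank-2 setting relevant here. The proof will proceed in two independent isomorphism steps, corresponding to the two functors appearing in the statement.

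First, I would establish the isomorphism $\Sigma_\epsilon(A_\epsilon)\simeq \Sigma_\epsilon(A)$ induced by $-\otimes_{A_\epsilon}A$. The subalgebra $A_\epsilon$ differs from $A$ only by the off-diagonal block $e^-_\epsilon Ae^+_\epsilon$ being killed. The general principle from \cite{AHIKM} is that this block lies in the ``wrong corner'' for the sign $\epsilon$: a 2-term complex whose $g$-vector lies in $K_0(\proj A)_{\epsilon,\R}$ must be of the form $e^-_\epsilon A^{b} \to e^+_\epsilon A^{a}$ (up to shift), and morphisms between such complexes only involve $e^+_\epsilon Ae^+_\epsilon$, $e^+_\epsilon Ae^-_\epsilon$, and $e^-_\epsilon Ae^-_\epsilon$. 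Hence the presilting condition is governed entirely by $A_\epsilon$, and $-\otimes_{A_\epsilon}A$ gives a cone-preserving bijection between 2-term presilting complexes with $g$-vectors in the $\epsilon$-region.

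Second, I would verify that $-\otimes_{A_\epsilon}(A_\epsilon/I):\Sigma_\epsilon(A_\epsilon)\simeq\Sigma_\epsilon(A_\epsilon/I)$ for any ideal $I\subseteq I_\epsilon$. By construction, $I_\epsilon$ is the sum of two pieces: the part of $\rad(e^+_\epsilon Ae^+_\epsilon)$ that annihilates $e^+_\epsilon Ae^-_\epsilon$ on the left, and the part of $\rad(e^-_\epsilon Ae^-_\epsilon)$ that it annihilates on the right. These are precisely the conditions needed so that (i) passing to the quotient preserves all relevant Hom-spaces between the projectives appearing in 2-term $\epsilon$-complexes, and (ii) the mapping cone computation in the exchange triangles of Proposition \ref{mutation=exchange} is unchanged. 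Being in the radical ensures that no indecomposable projective collapses, and the annihilator conditions ensure that the bimodule structure of $e^+_\epsilon Ae^-_\epsilon$ is untouched, which is what controls morphisms between the two types of indecomposable summands of a 2-term $\epsilon$-complex.

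The main obstacle will be the careful bookkeeping in the second step: one needs to check that both directions of the bijection of presilting complexes work, i.e.\ that every 2-term presilting complex over $A_\epsilon/I$ lifts uniquely (up to isomorphism) to one over $A_\epsilon$ with the same $g$-vector, and that the presilting property is preserved in both directions. This is where the precise choice of $I_\epsilon$ becomes crucial: a slightly larger ideal (e.g.\ not requiring the annihilator condition) would fail to preserve the Hom-spaces between complexes supported on opposite idempotents, while a smaller one would miss the generality needed for applications. Once this matching is established, the isomorphism of fans follows by combining both steps, and the conclusion is exactly the statement of the proposition.
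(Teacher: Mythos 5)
The paper does not prove this proposition; it is cited verbatim from \cite[Example 4.26]{AHIKM}, so there is no in-paper argument to compare against. That said, your two-step factorization through $A_\epsilon$ is exactly the shape the cited argument takes, and the two key ideas are both present: (i) sign-coherence forces $\epsilon$-aligned complexes into the block-upper-triangular form that only sees $A_\epsilon$, and (ii) the annihilator conditions built into $I_\epsilon$ guarantee that the data governing the presilting criterion survives the quotient. So the approach is right.

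Two points are imprecise and should be tightened. First, the assertion that ``a 2-term complex whose $g$-vector lies in $K_0(\proj A)_{\epsilon,\R}$ must be of the form $e^-_\epsilon A^{b}\to e^+_\epsilon A^{a}$'' is false for arbitrary 2-term complexes (the identity $e_iA\to e_iA$ has $g$-vector $0$ but not this form). What is true, and what you actually need, is that an indecomposable 2-term \emph{presilting} complex in minimal form (no common indecomposable direct summand between the degree-$0$ and degree-$(-1)$ terms) with $g$-vector in the orthant $K_0(\proj A)_{\epsilon,\R}$ has $P^0\in\add(e^+_\epsilon A)$ and $P^{-1}\in\add(e^-_\epsilon A)$; this uses sign-coherence of $g$-vectors (Proposition \ref{characterize g-finite}(c)). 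Since the cones of $\Sigma_\epsilon(A)$ are generated by such $g$-vectors, this is enough. Second, ``passing to the quotient preserves all relevant Hom-spaces'' overstates it: the diagonal Hom-spaces $e^+_\epsilon Ae^+_\epsilon$ and $e^-_\epsilon Ae^-_\epsilon$ are genuinely quotiented. What is preserved is the bimodule $X=e^+_\epsilon Ae^-_\epsilon$ together with the \emph{images} of the diagonal rings in $\End(X)$, and by Proposition \ref{Ext 1}(b) the presilting criterion $M_{s,t}(X)=M_s(A)x+xM_t(B)$ depends only on those images. The containment $I\subset I_\epsilon$ (radical intersected with the annihilator) is what makes this exact statement go through while keeping the projectives distinct. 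Finally, the appeal to the exchange triangles of Proposition \ref{mutation=exchange} is unnecessary: once the presilting condition and the $g$-vectors are matched up, the fan isomorphism follows directly.
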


\subsection{Silting complexes in terms of matrices}\label{section: Matrices and Presilting complexes}
In this subsection, we give basic properties of 2-term presilting complexes. Throughout this subsection, we assume the following.

\begin{assumption}\label{Lambda=AXB}
For rings $A$ and $B$ and an $A^{\rm op}\otimes_k B$-module $X$ which is finitely generated on both sides, let
\[\Lambda:=\left[\begin{array}{cc}
	A&X\\ 0&B
\end{array}\right].\]
Equivalently, $\Lambda$ is a ring with orthogonal idempotents $1=e_1+e_2$ satisfying $e_2\Lambda e_1=0$. In fact, we can recover $\Lambda$ from $A:=e_1\Lambda e_1$, $B:=e_2\Lambda e_2$ and $X:=e_1\Lambda e_2$ by the equality above.
\end{assumption}

Consider projective $\Lambda$-modules
\[P_1:=[A\ X],\ P_2:=[0\ B]\in\proj\Lambda.\]
For $s,t\ge0$, we denote by $M_{s,t}(X)$ the set of $s\times t$ matrices with entries in $X$. Then we have an isomorphism
\[M_{s,t}(X)\simeq\Hom_\Lambda(P_2^{\oplus t},P_1^{\oplus s})\]
sending $x\in M_{s,t}(X)$ to the left multiplication $x(\cdot):P_2^{\oplus t}\to P_1^{\oplus s}$. 
Thus we have a 2-term complex
\[P_x:=(P_2^{\oplus t}\xrightarrow{x(\cdot)} P_1^{\oplus s})\in\per\Lambda.\]
The following observation is basic.

\begin{proposition}\label{Ext 1}
	Let $s,t,u,v\ge0$, $x\in M_{s,t}(X)$ and $y\in M_{u,v}(X)$.
	\begin{enumerate}[\rm(a)]
	\item Then we have an exact sequence
	\[M_{u,s}(A)\oplus M_{v,t}(B)\xrightarrow{[(\cdot)x\ y(\cdot)]}M_{u,t}(X)\to\Hom_{\per\Lambda}(P_x,P_y[1])\to0.\]
	\item In particular, $P_x$ is presilting if and only if $M_{s,t}(X)=M_s(A)x+xM_t(B)$ holds.
	\end{enumerate}
\end{proposition}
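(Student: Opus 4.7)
The statement is a straightforward computation of $\Hom$ in the homotopy category $\Kb(\proj\Lambda)$, using the fact that $P_x$ and $P_y$ are two-term complexes of projectives, so $\per\Lambda$-morphisms between them are represented by genuine chain maps modulo homotopy. The plan is to set up the complexes, read off the group of chain maps, read off the null-homotopies, and identify the resulting quotient.

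First I would fix the convention that $P_x$ is concentrated in degrees $-1,0$:
\[
P_x=(P_2^{\oplus t}\xrightarrow{\ x(\cdot)\ }P_1^{\oplus s}),\qquad P_y=(P_2^{\oplus v}\xrightarrow{\ y(\cdot)\ }P_1^{\oplus u}),
\]
so that $P_y[1]$ is concentrated in degrees $-2,-1$. A chain map $f:P_x\to P_y[1]$ is then determined by its single possibly nonzero component $f^{-1}:P_2^{\oplus t}\to P_1^{\oplus u}$, because the remaining components land in zero groups, and the chain-map condition holds vacuously (both squares close with zero). Using the standard identifications $\Hom_\Lambda(P_2,P_1)\cong e_1\Lambda e_2=X$ and $\End_\Lambda(P_1)\cong A$, $\End_\Lambda(P_2)\cong B$, the space of such $f^{-1}$ is canonically $M_{u,t}(X)$.

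Next, I would compute the null-homotopic chain maps. A homotopy $h:P_x\to P_y[1]$ of degree $-1$ consists of two components
\[
h^{-1}:P_2^{\oplus t}\to P_y[1]^{-2}=P_2^{\oplus v},\qquad h^0:P_1^{\oplus s}\to P_y[1]^{-1}=P_1^{\oplus u},
\]
which under the identifications above correspond to matrices $\beta\in M_{v,t}(B)$ and $\alpha\in M_{u,s}(A)$. The associated null-homotopic map in degree $-1$ is $d_{y[1]}h^{-1}+h^0 d_x$, which (up to the harmless sign coming from the shift) equals $y\beta+\alpha x\in M_{u,t}(X)$. This is precisely the image of the announced map $[(\cdot)x,\ y(\cdot)]:M_{u,s}(A)\oplus M_{v,t}(B)\to M_{u,t}(X)$, so taking the quotient gives the exact sequence in (a).

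For (b), I would apply (a) with $y=x$, $u=s$, $v=t$ to obtain
\[
\Hom_{\per\Lambda}(P_x,P_x[1])\cong M_{s,t}(X)/\bigl(M_s(A)x+xM_t(B)\bigr).
\]
Since $P_x$ is concentrated in degrees $-1,0$, the shifts $P_x[\ell]$ with $\ell\ge 2$ share no degrees with $P_x$, so $\Hom_{\per\Lambda}(P_x,P_x[\ell])=0$ automatically. Hence $P_x$ is presilting if and only if the displayed quotient vanishes, i.e.\ $M_{s,t}(X)=M_s(A)x+xM_t(B)$. I do not expect any serious obstacle: the only delicate points are the bookkeeping of indices $(s,t,u,v)$ and the sign from $P_y[1]$, both of which affect neither the image of the homotopy map nor the final cokernel.
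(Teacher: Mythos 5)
Your proposal is correct and takes essentially the same approach as the paper: the paper's proof of (a) cites exactly the exact sequence you derive by computing chain maps $P_x\to P_y[1]$ modulo homotopy, and (b) follows as you say once one notes that for two-term complexes the higher $\Hom$-groups vanish for degree reasons. You have simply unpacked the details that the paper leaves implicit.
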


\begin{proof}
	The assertion (a) follows from an exact sequence
	\[\Hom_\Lambda(P_1^{\oplus s},P_1^{\oplus u})\oplus\Hom_\Lambda(P_2^{\oplus t},P_2^{\oplus v})\xrightarrow{[(\cdot)x\ y(\cdot)]}\Hom_\Lambda(P_2^{\oplus t},P_1^{\oplus u})\to\Hom_{\per\Lambda}(P_x,P_y[1])\to0.\]
	The assertion (b) is immediate from (a).
\end{proof}

The following construction of silting complexes of $\Lambda$ will be used frequently, where $t(X)_B$ (respectively, $t_A( X)$) is the minimal number of generators of $X$ as a right $B$-module (respectively, left $A$-module).

\begin{proposition}\label{first mutation}
In Assumption \ref{Lambda=AXB}, assume that $A$ and $B$ are local $k$-algebras. 

\begin{enumerate}[\rm(a)]
\item $\Sigma(\Lambda)$ contains $\cone\{(0,-1),(1,-r)\}$ for $r:=t(X)_B=\dim(X/XJ_B)_{B/J_B}$.
\item $\Sigma(\Lambda)$ contains $\cone\{(1,0),(\ell,-1)\}$ for $\ell:=t_A (X)=\dim_{A/J_A}(X/J_AX)$.
\item Let $g_1,\ldots,g_r$ be a minimal set of generators of the $B$-module $X$. Then $\mu_1^+(\Lambda[1])=P_g\oplus P_2[1]\in\twosilt \Lambda$ holds for $g:=[g_1\ \cdots\ g_r]\in M_{1,r}(X)$.
\item Let $h_1,\ldots,h_\ell$ a minimal set of generators of the $A^{\op}$-module $X$. Then $\mu_2^-(\Lambda)=P_1\oplus P_h\in\twosilt \Lambda$ holds for $h:=\left[\begin{smallmatrix}h_1\\ \svdots\\ h_\ell\end{smallmatrix}\right]\in M_{\ell,1}(X)$. 
\end{enumerate}
\end{proposition}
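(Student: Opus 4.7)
The plan is to establish (c) and (d) directly by identifying the exchange triangles of Proposition \ref{mutation=exchange}, and then deduce (a) and (b) by simply reading off the $g$-vectors.

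For (c), I start from the identification $\Hom_\Lambda(P_2, P_1) = e_1\Lambda e_2 = X$, viewed as a right module over $\End_\Lambda(P_2) = B$. Since $B$ is local, Nakayama's lemma yields a minimal right $B$-generating set $g_1,\ldots,g_r$ of $X$ with $r = t(X)_B = \dim(X/XJ_B)_{B/J_B}$, and the associated map $g(\cdot): P_2^{\oplus r} \to P_1$ is a minimal right $\add(P_2)$-approximation of $P_1$; its shift $g(\cdot)[1]$ is then a minimal right $\add(P_2[1])$-approximation of $P_1[1]$. Inserting this into the exchange triangle for the right mutation $\mu_1^+$ of $\Lambda[1] = P_1[1] \oplus P_2[1]$ at the first summand gives
\[T'_1 \to P_2[1]^{\oplus r} \xrightarrow{g(\cdot)[1]} P_1[1] \to T'_1[1],\]
and a direct computation identifies $T'_1$, the cocone of $g(\cdot)[1]$, with the 2-term complex $P_g$. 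Thus $\mu_1^+(\Lambda[1]) = P_g \oplus P_2[1]$, and part (a) follows immediately since $[P_g] = [P_1] - r[P_2] = (1,-r)$ and $[P_2[1]] = (0,-1)$, placing $\cone\{(0,-1),(1,-r)\}$ in $\Sigma(\Lambda)$.

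The argument for (d) and (b) is dual. View $X = \Hom_\Lambda(P_2, P_1)$ as a left module over $\End_\Lambda(P_1) = A$, and pick a minimal left $A$-generating set $h_1, \ldots, h_\ell$ of $X$ of size $\ell = t_A(X) = \dim_{A/J_A}(X/J_AX)$. Then $h(\cdot): P_2 \to P_1^{\oplus \ell}$ is a minimal left $\add(P_1)$-approximation of $P_2$, and the exchange triangle for the left mutation $\mu_2^-$ of $\Lambda = P_1 \oplus P_2$ at the second summand is
\[P_2 \xrightarrow{h(\cdot)} P_1^{\oplus \ell} \to T'_2 \to P_2[1],\]
whose cone $T'_2$ is exactly $P_h$. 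Hence $\mu_2^-(\Lambda) = P_1 \oplus P_h$, and (b) follows via $[P_h] = (\ell,-1)$. The main obstacle I expect lies in bookkeeping of conventions: matching the direction of mutation ($\mu^+$ vs.\ $\mu^-$) with the correct side (right vs.\ left) of the approximation, verifying that the bimodule structures arising from $\End_\Lambda(P_2) = B$ and $\End_\Lambda(P_1) = A$ really place the generator counts on the asserted sides, and correctly identifying the mapping cone or cocone of a shifted approximation with $P_g$ or $P_h$.
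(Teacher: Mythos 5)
Your proof is correct and follows essentially the same route as the paper: identify $g(\cdot)[1]:P_2[1]^{\oplus r}\to P_1[1]$ as a minimal right $\add(P_2[1])$-approximation (equivalently, the minimality of the generating set of $X$ as a right $B$-module, justified via $B$ being local and Nakayama), recognize the cocone as $P_g$, and read off the $g$-vectors; the paper states this even more tersely and leaves (b)(d) to duality as you do.
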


By Propositions \ref{+- condition} and \ref{first mutation}, a part of $\Sigma(\Lambda)$ has the following form.
    \[\Sigma(\Lambda) =\begin{xy}
				0;<4pt,0pt>:<0pt,4pt>::
				(0,-5)="0",
				(-5,0)="1",
				(0,0)*{\bullet},
				(0,0)="2",
				(5,0)="3",
				(0,5)="4",
				(15,-5)="5",
				(5,-15)="6",
				(2,-6)="a",
				(6,-2)="b",
		(0,6.5)*{{\scriptstyle P_2}},
		(7,0.5)*{{\scriptstyle P_1}},
		(20,-6)*{{\scriptstyle P_h=\ell P_1-P_2}},
		(10.5,-16)*{{\scriptstyle P_g=P_1-rP_2}},
		(-2.5,-6)*{{\scriptstyle P_2[1]}},
		(-7.3,0)*{{\scriptstyle P_1[1]}},
				(1.5,1.5)*{{\scriptstyle +}},
				(-1.5,-1.5)*{{\scriptstyle -}},
				(12.5,-1.5)*{{\scriptstyle\mu_2^-(\Lambda)}},
				(-1,-12)*{{\scriptstyle\mu_1^+(\Lambda[1])}},
				(7,1)*{},
				(0,6.7)*{},
				\ar@{-}"0";"1",
				\ar@{-}"1";"4",
				\ar@{-}"3";"5",
				\ar@{-}"3";"4",
				\ar@{-}"2";"0",
				\ar@{-}"2";"1",
				\ar@{-}"2";"3",
				\ar@{-}"2";"4",
				\ar@{-}"2";"5",
				\ar@{-}"2";"6",
				\ar@{-}"0";"6",
				\ar@/^-2mm/@{.} "a";"b",
		\end{xy}\]
	
\begin{proof}
We only prove (a)(c) since (b)(d) are the duals. 
A minimal right $(\add P_2[1])$-approximation of $P_1[1]$ is given by
\[g(\cdot):P_2[1]^{\oplus r}\to P_1[1].\]
Thus the mutation of $\Lambda[1]$ at $P_1[1]$ is $P_g\oplus P_2$.
\end{proof}

Now we assume that $B$ is a local algebra. We fix a minimal set of generators $g_1,\ldots,g_r$ of the right $B$-module $X$ and set 
\[g:=[g_1\ \cdots\ g_r]\in M_{1,r}(X)\ \mbox{ and }\ 
\overline{g}:=[\overline{g_1}\ \cdots\ \overline{g_r}]\in M_{1,r}(X/XJ_B),	
\]
where $\overline{(\cdot)}$ is a canonical surjection $X \twoheadrightarrow X/X J_B$.
	Then we have an isomorphism
	\[
	\overline{g}(\cdot):M_{r,1}(B/J_B)\simeq X/XJ_B,
	\] 
	and we define a map $\pi:X\rightarrow M_{r,1}(B/J_B)$ by
	\[
	\pi:=(X\xrightarrow{\overline{(\cdot)}} X/XJ_B \xrightarrow{(\overline{g}(\cdot))^{-1}}M_{r,1}(B/J_B)).
	\]
For each $s,t\ge0$, an entry-wise application of $\pi$ gives a map
\[\pi:M_{s,t}(X)\to M_{s,t}(M_{r,1}(B/J_B))=M_{rs,t}(B/J_B).
\]
In other words, for the identity matrix $I_s \in M_s(k)$ and $\overline{g}I_s:=\left[\begin{smallmatrix}
\overline{g}   & &O \\
 &  \ddots	& \\
 O&   & \overline{g}\\
\end{smallmatrix}\right]
\in M_s(M_{1,r}(k))=M_{s,rs}(k)$, we have
\begin{equation}\label{define pi ab}
\overline{x}=(\overline{g}I_s)\pi(x)\ \mbox{ for each }\ x\in M_{s,t}(X).
\end{equation}
Define a morphism of $k$-algebras
\[\phi:M_s(A)\to M_{rs}(B/J_B)\ \mbox{ by }\ a(\overline{g}I_s)=(\overline{g}I_s)\phi(a).\]
Later we will use the following observation.

\begin{proposition}\label{proposition:full rank}
In Assumption \ref{Lambda=AXB}, assume that $B$ is a local algebra. Let $s,t\ge0$.
\begin{enumerate}[\rm(a)]	
\item 
$\pi:M_{s,t}(X)\to M_{rs,t}(B/J_B)$ is a morphism of $M_s(A)^{\op}\otimes_kM_t(B)$-modules, where we regard $M_{rs,t}(B/J_B)$ as an $M_s(A)^{\op}$-module via $\phi$.
\item Let $x\in M_{s,t}(X)$. If $P_x$ is presilting, then $\pi(x)\in M_{rs,t}(B/J_B)$ has full rank.
\end{enumerate}
\end{proposition}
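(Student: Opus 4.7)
The plan is to handle the two parts independently: part (a) reduces to a direct verification from the defining relations $\overline{x}=(\overline{g}I_s)\pi(x)$ and $a(\overline{g}I_s)=(\overline{g}I_s)\phi(a)$, while part (b) is a linear algebra argument over the division ring $D:=B/J_B$ (division because $B$ is local), combined with the presilting criterion from Proposition \ref{Ext 1}(b).

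For (a), I would separately verify right $M_t(B)$-linearity and left $M_s(A)$-linearity. For $b\in M_t(B)$ with image $\overline{b}\in M_t(D)$, one has
\[(\overline{g}I_s)\,\pi(xb)=\overline{xb}=\overline{x}\,\overline{b}=(\overline{g}I_s)\,\pi(x)\,\overline{b},\]
and since the blockwise isomorphism $\overline{g}(\cdot):M_{r,1}(D)\simeq X/XJ_B$ assembles to an injection $(\overline{g}I_s)(\cdot):M_{rs,t}(D)\hookrightarrow M_{s,t}(X/XJ_B)$, we conclude $\pi(xb)=\pi(x)\overline{b}$. Similarly, for $a\in M_s(A)$ the chain $\overline{ax}=a\overline{x}=a(\overline{g}I_s)\pi(x)=(\overline{g}I_s)\phi(a)\pi(x)$ gives $\pi(ax)=\phi(a)\pi(x)$. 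This shows $\pi$ is a morphism of $M_s(A)^{\op}\otimes_k M_t(B)$-modules, where the $M_s(A)$-action on $M_{rs,t}(D)$ is the one induced by $\phi$.

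For (b), first observe that $\pi:M_{s,t}(X)\to M_{rs,t}(D)$ is surjective, as it factors as the quotient $M_{s,t}(X)\twoheadrightarrow M_{s,t}(X/XJ_B)$ followed by an isomorphism. Applying $\pi$ to the presilting equality $M_{s,t}(X)=M_s(A)x+xM_t(B)$ furnished by Proposition \ref{Ext 1}(b), and using part (a), we obtain
\[M_{rs,t}(D)=\phi(M_s(A))\,y+y\,M_t(D),\qquad y:=\pi(x).\]
Now suppose, for contradiction, that $y$ does not have full rank, i.e.\ $\mathrm{rank}(y)<\min(rs,t)$. Then $y$ is not injective as a right $D$-linear map $D^t\to D^{rs}$, so there exists $v\in D^t\setminus\{0\}$ with $yv=0$. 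For any $M\in M_{rs,t}(D)$, write $M=\phi(a)y+yb$; then
\[Mv=\phi(a)(yv)+y(bv)=y(bv)\in\Image(y).\]
Since $v\neq0$, the map $M\mapsto Mv$ from $M_{rs,t}(D)$ to $D^{rs}$ is surjective (extend $v$ to a right $D$-basis of $D^t$ and send it to any prescribed vector). Hence $\Image(y)=D^{rs}$, i.e.\ $y$ is surjective, so $\mathrm{rank}(y)=rs$, contradicting $\mathrm{rank}(y)<\min(rs,t)\le rs$.

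The only subtlety is the linear-algebra step over the noncommutative division ring $D$; one must keep the left/right conventions straight (our $y$ is treated as a right-$D$-linear map between right $D$-modules, matching the right $B$-module structure on $X$), but the standard facts — column rank equals row rank, every prescribed linear map between finite free right $D$-modules is realized by a matrix — hold verbatim in this setting, so the argument is clean.
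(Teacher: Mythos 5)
Your proof is correct and follows essentially the same approach as the paper: part (a) by direct computation from the relations $\overline{x}=(\overline{g}I_s)\pi(x)$ and $a(\overline{g}I_s)=(\overline{g}I_s)\phi(a)$ using injectivity of $(\overline{g}I_s)(\cdot)$, and part (b) by applying the surjection $\pi$ to the presilting identity $M_{s,t}(X)=M_s(A)x+xM_t(B)$ from Proposition \ref{Ext 1}(b). The only difference is that where the paper states the final implication ``this clearly implies that $\pi(x)$ has full rank,'' you supply the explicit division-ring linear algebra argument (nontrivial kernel forces surjectivity, contradiction), which is a welcome expansion of a step the authors leave to the reader.
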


\begin{proof}
(a) For any $a\in M_s(A)$, $x\in M_{s,t}(X)$ and $b\in M_t(B)$, we need to show $\pi(axb)=\phi(a)\pi(x)b$. 
In fact, 
	 \[(\overline{g}I_s)\phi(a)\pi(x)b=a(\overline{g}I_s)\pi(x)b\stackrel{\eqref{define pi ab}}{=}a\overline{x}b=\overline{axb}\stackrel{\eqref{define pi ab}}{=}(\overline{g}I_s)\pi(axb)\]
  gives the desired equality since $\overline{g}I_s(\cdot)$ is injective.

(b) By Proposition \ref{Ext 1}(b), we have $M_{s,t}(X)=M_s(A)x+xM_t(B)$. Applying $\pi$, we have
\begin{eqnarray*}M_{rs,t}(B/J_B)=\pi(M_s(A)x+xM_t(B))&\stackrel{{\rm(a)}}{=}&\phi(M_s(A))\pi(x)+\pi(x)M_t(B)\\
		&\subset& M_{rs}(B/J_B)\pi(x)+\pi(x)M_t(B/J_B).
\end{eqnarray*}
		Thus the right-hand side is $M_{rs,t}(B/J_B)$. This clearly implies that $\pi(x)$ has full rank.
\end{proof}

For completeness, we also give the dual statement of Proposition \ref{proposition:full rank}. 
Now we assume that $A$ is a local algebra. We fix a minimal set of generators $h_1,\ldots,h_\ell$ of the left $A$-module $X$ and set 
\[
h:=\left[\begin{smallmatrix}h_1\\ \svdots\\ h_\ell\end{smallmatrix}\right]\in M_{\ell,1}(X)\ \mbox{ and }\ \overline{h}:=\left[\begin{smallmatrix}\overline{h_1}\\ \svdots\\ \overline{h_\ell}\end{smallmatrix}\right]\in M_{\ell,1}(X/J_AX),
\]
where by abuse of notations, $\overline{(\cdot)}$ is a canonical surjection $X \twoheadrightarrow X/J_A X$. 
Then we have an isomorphism $(\cdot)\overline{h}:M_{1,\ell}(A/J_A)\simeq X/J_AX$. By abuse of notations, let
\[\pi:=(X\xrightarrow{(\cdot)} X/J_AX\xrightarrow{((\cdot )\overline{h})^{-1}}M_{1,\ell}(A/J_A)).\]
For each $s,t\ge0$, an entry-wise application of $\pi$ gives a map\[\pi:M_{s,t}(X)\to M_{s,t}(M_{1,\ell}(A/J_A))=M_{s,\ell t}(A/J_A).\]
Define a morphism of $k$-algebras
\[\phi:M_t(B)\to M_{\ell t}(A/J_A)\ \mbox{ by }\ (\overline{h}I_t)b=\phi(b)(\overline{h}I_s).\]
We have the following dual of Proposition \ref{proposition:full rank}.

\begin{proposition}\label{proposition:full rank 2}
In Assumption \ref{Lambda=AXB}, assume that $A$ is a local algebra. Let $s,t\ge0$.
\begin{enumerate}[\rm(a)]	
\item $\pi:M_{s,t}(X)\to M_{s,\ell t}(A/J_A)$ is a morphism of $M_s(A)^{\op}\otimes_kM_t(B)$-modules, where we regard $M_{s,\ell t}(A/J_A)$ as an $M_t(B)$-module via $\phi$.
\item Let $x\in M_{s,t}(X)$. If $P_x$ is presilting, then $\pi(x)\in M_{s,\ell t}(A/J_A)$ has full rank.
\end{enumerate}
\end{proposition}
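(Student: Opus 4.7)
The plan is to mirror the argument of Proposition~\ref{proposition:full rank} with left and right interchanged throughout; all the structural ingredients (the isomorphism $(\cdot)\overline{h}\colon M_{1,\ell}(A/J_A)\simeq X/J_AX$, the defining relation $\overline{x}=\pi(x)(\overline{h}I_t)$ for $x\in M_{s,t}(X)$, and the intertwining identity $(\overline{h}I_t)b=\phi(b)(\overline{h}I_t)$ built into the definition of $\phi$) are set up to make this dual argument work directly.

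For part (a), I would check compatibility with each side of the bimodule structure separately. The left $M_s(A)$-equivariance follows from $\overline{ax}=a\overline{x}=a\pi(x)(\overline{h}I_t)$ together with the injectivity of $(\cdot)(\overline{h}I_t)\colon M_{s,\ell t}(A/J_A)\to M_{s,t}(X/J_AX)$ (a consequence of $(\cdot)\overline{h}$ being an isomorphism): the two expressions $\pi(ax)(\overline{h}I_t)$ and $a\pi(x)(\overline{h}I_t)$ must agree, and since the entries of $\pi(x)$ lie in $A/J_A$ the product $a\pi(x)$ factors through $M_s(A/J_A)$, yielding $\pi(ax)=a\pi(x)$. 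The right $M_t(B)$-equivariance follows similarly from $\overline{xb}=\pi(x)(\overline{h}I_t)b=\pi(x)\phi(b)(\overline{h}I_t)$, giving $\pi(xb)=\pi(x)\phi(b)$.

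For part (b), assuming $P_x$ is presilting, Proposition~\ref{Ext 1}(b) gives $M_{s,t}(X)=M_s(A)x+xM_t(B)$. Applying $\pi$ and invoking (a), we obtain
\[M_{s,\ell t}(A/J_A)=\pi(M_{s,t}(X))=M_s(A)\pi(x)+\pi(x)\phi(M_t(B))\subset M_s(A/J_A)\pi(x)+\pi(x)M_{\ell t}(A/J_A),\]
so equality holds. The conclusion then follows from the standard linear-algebra fact (already used implicitly in the proof of Proposition~\ref{proposition:full rank}(b)) that, for a matrix $y\in M_{m,n}(D)$ over a division ring $D$ of rank $r$, one has $\dim_D(M_m(D)y+yM_n(D))=mr+nr-r^2$, which equals $mn$ precisely when $r=\min(m,n)$, i.e.\ when $y$ has full rank. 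I expect no real obstacle here: the only bookkeeping to be careful about is keeping track of which side $\overline{h}$ sits on and how $\phi$ is defined, and these are fixed by the setup preceding the statement.
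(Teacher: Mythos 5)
Your proof is correct and carries out exactly the dualization of the argument for Proposition~\ref{proposition:full rank} that the paper implicitly relies on (the paper gives no separate proof of Proposition~\ref{proposition:full rank 2}, merely noting that it is the dual). The only differences are cosmetic: you verify left and right equivariance in two steps rather than proving $\pi(axb)=a\pi(x)\phi(b)$ in one equation as in the proof of Proposition~\ref{proposition:full rank}(a), and you spell out the full-rank linear-algebra fact that the paper leaves as ``this clearly implies.''
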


\subsection{Uniserial property of $g$-finite algebras}

As an application of results in the previous subsection, we prove the following result, which is not used in the rest of this paper.

\begin{theorem}\label{uniserial 0}
Let $\Lambda$ be a finite dimensional elementary $k$-algebra, and $1=e_1+\cdots+e_n$ the orthogonal primitive idempotents.
If $\Lambda$ is $g$-finite, then for each $1\le i\neq j\le n$, $e_i\Lambda e_j/e_i\Lambda e_jJ_\Lambda e_j$ is a uniserial $(e_i\Lambda e_i)^{\op}$-module and $e_i\Lambda e_j/e_iJ_\Lambda e_iJ_\Lambda e_j$ is a uniserial $e_j\Lambda e_j$-module.
\end{theorem}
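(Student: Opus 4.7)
The plan is to reduce to the case $n=2$ with $\Lambda$ upper triangular, and then to deduce uniseriality via a Kronecker-quotient argument.

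Since $g$-finiteness (equivalently, $\tau$-tilting finiteness) is preserved under the idempotent truncation $\Lambda\mapsto e\Lambda e$ for any idempotent $e\in\Lambda$ (a result of Demonet-Iyama-Jasso), I may replace $\Lambda$ by $(e_i+e_j)\Lambda(e_i+e_j)$ and assume $n=2$ with $e_1=e_i$, $e_2=e_j$. By considering $\Lambda^{\op}$, the second claim (uniseriality over $e_j\Lambda e_j$) is dual to the first (uniseriality over $(e_i\Lambda e_i)^{\op}$), so it suffices to prove the first. Write $A=e_1\Lambda e_1$, $B=e_2\Lambda e_2$, $X=e_1\Lambda e_2$; both $A$ and $B$ are local since $\Lambda$ is elementary and $e_1,e_2$ are primitive. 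Next, apply the sign decomposition of Proposition~\ref{sign decomposition} at $\epsilon=(1,-1)$ to obtain the upper triangular algebra $\Lambda_\epsilon=\left[\begin{smallmatrix}A & X\\ 0 & B\end{smallmatrix}\right]$ with $\Sigma_\epsilon(\Lambda)\simeq\Sigma_\epsilon(\Lambda_\epsilon)$. Since $e_2\Lambda_\epsilon e_1=0$, Proposition~\ref{+- condition} gives $\Sigma(\Lambda_\epsilon)\in\tscfanpm$, so combining this with the finiteness of $\Sigma_\epsilon(\Lambda_\epsilon)$ forces $\Sigma(\Lambda_\epsilon)$ to be complete; hence $\Lambda_\epsilon$ is $g$-finite, and I may assume $\Lambda$ itself is upper triangular.

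The core is then to show that $M:=X/XJ_B$ is a uniserial left $A$-module. For cyclicity, consider the two-sided ideal $I=J_A+J_B+J_AX+XJ_B$ of $\Lambda$ and set $\bar\Lambda=\Lambda/I$. Then $\bar\Lambda\cong K_d=\left[\begin{smallmatrix}k & k^d\\ 0 & k\end{smallmatrix}\right]$ with $d=\dim_k(M/J_AM)$. Since $g$-finiteness is preserved under algebra quotients (again by Demonet-Iyama-Jasso), $K_d$ is $g$-finite, which forces $d\le 1$ (for $d\ge 2$ the algebra $K_d$ is of infinite representation type and its $g$-fan is not complete). Hence $M$ is cyclic as left $A$-module.

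To upgrade cyclicity to uniseriality, I would iterate the Kronecker-quotient argument: for each $k\ge 1$, construct a suitable two-sided ideal $I_k$ of $\Lambda$ such that the quotient $\Lambda/I_k$ is still upper triangular and its associated module $\bar M_k$ has the $k$-th Loewy layer $J_A^k M/J_A^{k+1}M$ as its top; cyclicity applied to $\Lambda/I_k$ would then bound this layer by $1$. I expect this to be the main obstacle: the ideal $I_k$ must be carefully assembled from the $A$-$B$-bimodule filtration of $X$ so that the quotient genuinely exposes the desired Loewy layer as the top of $\bar M_k$ without collapsing earlier layers. Alternatively, the full-rank criteria of Propositions~\ref{proposition:full rank} and~\ref{proposition:full rank 2} offer a complementary route: they constrain, for any 2-term presilting complex $P_x$ with $x\in M_{s,t}(X)$, the matrices $\pi(x)$ to have full rank; by running this over suitable $(s,t)$ one can directly rule out non-uniserial $A$-module structures on $M$ via the resulting rigidity of $\phi\colon A\to M_r(k)$, thereby bypassing the construction of intermediate quotient algebras.
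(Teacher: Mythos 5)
Your reduction to the $n=2$ upper-triangular case is correct and matches the paper (idempotent truncation preserves $g$-finiteness; sign decomposition plus Proposition~\ref{+- condition} then shows the upper-triangular algebra $\Lambda_\epsilon$ has complete $g$-fan). Your argument for cyclicity of $M=X/XJ_B$ via the Kronecker quotient $K_d$ is also sound: quotients preserve $g$-finiteness, and $K_d$ is $\tau$-tilting infinite for $d\ge 2$.

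However, the step from cyclicity to uniseriality is a genuine gap, and the iterative quotient strategy you sketch cannot be repaired. The obstruction is structural: any algebra quotient $\Lambda\twoheadrightarrow\bar\Lambda$ induces a quotient $M\twoheadrightarrow\bar M$ of the associated bimodule, and then $\bar M/J\bar M$ is a quotient of $M/J_A M$, which you have already shown is $1$-dimensional. So \emph{every} quotient $\bar\Lambda$ of $\Lambda$ has cyclic associated module, and the Kronecker criterion gives you nothing new; you can never expose the deeper Loewy layer $J_A^kM/J_A^{k+1}M$ as a \emph{top} via a quotient. What you would actually need is to pass to the \emph{subalgebra} $\Lambda'=\left[\begin{smallmatrix}A&J_AX\\0&k\end{smallmatrix}\right]$ (after first reducing $B$ to $k$) and show that the relevant rigidity properties descend from $\Lambda$ to $\Lambda'$. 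This is not automatic — $\tau$-tilting finiteness is not preserved by subalgebras in general — and it is precisely the content of the paper's Lemma~\ref{(-1,b)}(b), which proves, by a matrix normalization using the full-rank criterion (your second, alternative route), that rigidity of $(1,-t)$ for $\Lambda$ implies rigidity of $(1,1-t)$ for $\Lambda'$. Once that lemma is in place, uniseriality follows by induction on $\dim_k X$. Your proposal gestures in the right direction (you name Propositions~\ref{proposition:full rank} and~\ref{proposition:full rank 2} as a possible bypass) but does not carry out the key descent step; and the alternative you actually develop in detail — the iterated quotient construction — is a dead end for the reason above.
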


Thanks to sign decomposition, we can deduce Theorem \ref{uniserial 0} from the following result.  

\begin{theorem}\label{uniserial}
Let $A$ and $B$ be local $k$-algebras with $k\simeq A/J_A\simeq B/J_B$. If $X$ is a $A^{\op}\otimes_kB$-module such that $\left[\begin{array}{cc}
A&X\\ 0&B
\end{array}\right]$ is $g$-finite, then $X/XJ_B$ is a uniserial $A^{\op}$-module and $X/J_AX$ is a uniserial $B$-module.
\end{theorem}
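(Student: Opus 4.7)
We prove the contrapositive of the first statement: if $N := X/XJ_B$ is not uniserial as a left $A$-module, then $\Lambda$ is not $g$-finite. The statement for $X/J_AX$ then follows by applying the first to the opposite algebra $\Lambda^{\op}$. The strategy exploits that any factor algebra of a $g$-finite algebra is again $g$-finite, since support $\tau$-tilting $(\Lambda/I)$-modules are precisely the support $\tau$-tilting $\Lambda$-modules annihilated by $I$ (cf.\ \cite{DIJ}). Thus it suffices to construct a quotient of $\Lambda$ that admits infinitely many 2-term presilting complexes.

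First, we quotient $\Lambda$ by the two-sided ideal $\bigl[\begin{smallmatrix} 0 & XJ_B \\ 0 & J_B \end{smallmatrix}\bigr]$ to reduce to the case $B=k$ and $X=N$, which preserves non-uniseriality of $N$ as left $A$-module. Now bifurcate by $\dim_k N/J_AN$. \emph{Case A} ($\dim_k N/J_AN \geq 2$): since $J_AN$ is an $A$-subbimodule of $N$, the ideal $\bigl[\begin{smallmatrix} J_A & J_AN \\ 0 & 0 \end{smallmatrix}\bigr]$ is two-sided, and the quotient is the generalized Kronecker algebra $\bigl[\begin{smallmatrix} k & k^r \\ 0 & k \end{smallmatrix}\bigr]$ with $r := \dim_k N/J_AN \geq 2$. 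This hereditary algebra of non-Dynkin type is well-known to be $g$-infinite (its preprojective tilting complexes give an unbounded sequence of $g$-vectors). Hence $\Lambda$ is $g$-infinite.

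\emph{Case B} ($\dim_k N/J_AN = 1$): here $N = An_0$ is cyclic, so $N \simeq A/J$ for some left ideal $J \subsetneq A$ whose poset of containing left ideals is not a chain. By quotienting $A$ by $\Ann_A(N)$ (making $A$ act faithfully on $N$) and then truncating $A$ by a suitable power of $J_A$, reduce to the setting $(\ast)$: $A$ is a local $k$-algebra with $J_A^2 = 0$, $\dim_k J_A =: m \geq 2$, and $N = A$ as left $A$-module. In setting $(\ast)$, exhibit an infinite family of pairwise non-isomorphic bricks of $\Lambda$: for each $1$-dimensional subspace $L \subset J_A$ (i.e., $L \in \mathbb{P}^{m-1}$), define $M_L := (A/L,\, k,\, f_L)$ where $f_L : A/L \to k$ is a $k$-linear map with $f_L|_{J_A/L} \neq 0$. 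A direct computation, using $J_A^2=0$ and $\End_A(A/L) \simeq A/L$ (since $A$ is commutative in this reduced case), gives $\End_\Lambda(M_L) = k$ when $m=2$, and distinct $L$ yield non-isomorphic $M_L$ since $\Ann_A(A/L) = L$. When $m \geq 3$, first quotient $A$ by a codimension-$2$ complement of a chosen $2$-dimensional subspace $W \subset J_A$ (automatically a two-sided ideal since $J_A^2=0$), reducing to $m=2$. By the brick characterisation of $\tau$-tilting finiteness \cite{DIJ}, the resulting $\mathbb{P}^1$-family of bricks contradicts $g$-finiteness.

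The main obstacle is the reduction within Case B when the non-uniserial branching of $N$ occurs only at a deeper radical level $i \geq 2$ (i.e., $\dim_k J_A^j N/J_A^{j+1}N = 1$ for $j<i$ but $\dim_k J_A^i N/J_A^{i+1} N \geq 2$). The naive quotient $A \twoheadrightarrow A/J_A^2$ destroys the branching in this case, so a more delicate argument is required. One approach is to quotient iteratively by $\Lambda$-ideals that compress the uniserial head $N/J_A^{i-1}N$ of $N$ while keeping the branching at $J_A^i N$ intact, so as to shift the branching down to the first radical level; alternatively, one may replace $\Lambda$ by a silting-mutation-equivalent algebra (via an iterated application of Proposition~\ref{first mutation}) for which the branching realigns to level $1$. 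Verifying that such a reduction always terminates in the form $(\ast)$ is the technical heart of the proof.
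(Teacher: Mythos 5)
Your Case A is fine, and the global reductions (quotients of a $g$-finite algebra are $g$-finite, passing to $\Lambda^{\op}$ for the second statement, reducing to $B=k$) are legitimate. But the proposal has a genuine, and in fact essential, gap exactly where you flag it: the reduction in Case B when the first branching of $N$ occurs at radical depth $i\ge 2$. Your first suggested repair cannot work: every ideal of $\Lambda=\left[\begin{smallmatrix}A&N\\ 0&k\end{smallmatrix}\right]$ has the form $\left[\begin{smallmatrix}I&Y\\ 0&*\end{smallmatrix}\right]$ with $Y\subseteq N$ a subbimodule, so any quotient of $\Lambda$ replaces $N$ by a quotient module $N/Y$; its first radical layer $(J_AN+Y)/(J_A^2N+Y)$ is a quotient of $J_AN/J_A^2N$, which is $1$-dimensional by hypothesis, so no quotient of $\Lambda$ can ever move the branching down to level $1$. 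The point is that the branching lives in the \emph{submodule} $J_A^iN$, and submodules do not correspond to quotient algebras of $\Lambda$, so the "factor algebras of $g$-finite algebras are $g$-finite" principle does not reach it. Your alternative via silting mutation is only a hope, not an argument. This deferred step is precisely where the paper's proof does its real work: Lemma \ref{(-1,b)}(b) shows by a direct matrix computation (using rigidity of all $(1,-t)$, which follows from completeness of the $g$-fan, together with the full-rank statement of Proposition \ref{proposition:full rank 2}) that if $(1,-t)$ is rigid for $\left[\begin{smallmatrix}A&X\\ 0&k\end{smallmatrix}\right]$ then $(1,1-t)$ is rigid for $\left[\begin{smallmatrix}A&J_AX\\ 0&k\end{smallmatrix}\right]$; combined with Lemma \ref{(-1,b)}(a) (cyclicity of $X$) this allows an induction on $\dim_kX$ that descends along submodules $X\supseteq J_AX\supseteq J_A^2X\supseteq\cdots$, which is exactly the descent your approach lacks.

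Two further points. First, even in the level-$1$ case your brick argument only produces a $\mathbb{P}^1(k)$-family $\{M_L\}$, which is finite when $k$ is a finite field, so it does not contradict brick-finiteness (the theorem is stated over an arbitrary field); you would need bricks of unbounded dimension, or an argument producing infinitely many $2$-term presilting complexes directly, as in your Case A. Second, minor but worth noting: in the reduction to the setting $(\ast)$ you must first kill $J_A^2N$ and then pass to $A/\Ann_A(N)$ (this order makes $J_A^2=0$ and hence $A$ commutative, after which faithfulness of the cyclic module forces $N\simeq A$); as written the order of operations does not quite deliver $N=A$.
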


\begin{proof}[Proof of Theorem \ref{uniserial}$\Rightarrow$Theorem \ref{uniserial 0}]
Since $\Lambda$ is $g$-finite, so is $\Gamma:=(e_i+e_j)\Lambda(e_i+e_j)$. By Proposition \ref{sign decomposition}, $\Gamma_{+-}=\left[\begin{array}{cc}
e_i\Lambda e_i&e_i\Lambda e_j\\ 0&e_j\Lambda e_j
\end{array}\right]$ is also $g$-finite. Thus the assertion follows from Theorem \ref{uniserial}.
\end{proof}

In the rest of this subsection, we prove Theorem \ref{uniserial}.

The following observation plays a key role in the proof, where we identify $K_0(\proj \Lambda)$ with $\Z^2$ via $[A\ X]\mapsto(1,0)$, $[0\ B]\mapsto(0,1)$.

\begin{lemma}\label{(-1,b)}
Let $\Lambda:=\left[\begin{array}{cc}
A&X\\ 0&k
\end{array}\right]$. Assume that $(1,-1)\in K_0(\proj\Lambda)$ is rigid.
\begin{enumerate}[\rm(a)]
\item There exists $h\in X$ such that $X=Ah$.
\item Let $\Lambda':=\left[\begin{array}{cc}
A&J_AX\\ 0&k
\end{array}\right]$ and $t\ge1$. If $(1,-t)\in K_0(\proj\Lambda)$ is rigid, then $(1,1-t)\in K_0(\proj\Lambda')$ is rigid.
\end{enumerate}
\end{lemma}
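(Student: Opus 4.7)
For part (a), I plan to invoke Proposition~\ref{Ext 1} directly. The $g$-vector $(1,-1)$ is realized, up to trivial $P\xrightarrow{1}P$ summands, only by a 2-term complex of the form $P_h=(P_2\xrightarrow{h(\cdot)} P_1)$ with $h\in X=M_{1,1}(X)$. Proposition~\ref{Ext 1}(b) then translates presiltingness of $P_h$ into $X=Ah+hM_1(k)=Ah$, which is precisely the claim.

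For part (b), the strategy is constructive. I will start from a presilting $P_x$ realizing $(1,-t)$, given by some $x=[x_1,\ldots,x_t]\in M_{1,t}(X)$, and normalize $x$ by an invertible $k$-column operation $b\in\GL_t(k)$ (this preserves presiltingness since $P_{xb}\cong P_x$). The crucial input from part~(a), applied to the standing rigidity of $(1,-1)$, is that $X$ is cyclic over $A$, hence $\ell:=\dim_k(X/J_AX)=1$. Reducing the presilting identity $X^t=Ax+xM_t(k)$ modulo $J_AX$ component-wise shows that $\overline{x}_1,\ldots,\overline{x}_t$ $k$-span the $1$-dimensional space $X/J_AX$, so we may choose $b$ to arrange $\overline{x}_1\neq 0$ and $\overline{x}_j=0$, equivalently $x_j\in J_AX$, for every $j\geq 2$. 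Set $x':=[x_2,\ldots,x_t]\in M_{1,t-1}(J_AX)$.

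It then remains to check that $P_{x'}$ is presilting in $\Lambda'$, i.e.\ $(J_AX)^{t-1}=Ax'+x'M_{t-1}(k)$, via Proposition~\ref{Ext 1}(b). The inclusion $\supseteq$ is immediate because $J_AX$ is a left $A$-submodule of $X$. For $\subseteq$, I will take an arbitrary $y'=[y_2,\ldots,y_t]\in(J_AX)^{t-1}$, extend it to $y:=[0,y_2,\ldots,y_t]\in X^t$, and use presiltingness of $x$ in $\Lambda$ to obtain $(a,b)\in A\times M_t(k)$ with $y=ax+xb$. The decisive step is to examine the $i$-th entry for $i\geq 2$ modulo $J_AX$: since $y_i$, $ax_i$, and each $x_jb_{ji}$ with $j\geq 2$ all lie in $J_AX$, one deduces $b_{1i}\overline{x}_1=0$ in $X/J_AX$, which forces $b_{1i}=0$ because $\overline{x}_1$ is nonzero in the $1$-dimensional $k$-space $X/J_AX$. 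The surviving equations then collapse to $y'=ax'+x'b^{(2)}$ with $b^{(2)}:=(b_{ji})_{2\leq j,i\leq t}\in M_{t-1}(k)$.

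The main subtlety to pinpoint is the dual use of $\ell=1$: it is used first to normalize $x$ so that $x_2,\ldots,x_t\in J_AX$, and again to pass from $b_{1i}\overline{x}_1\in J_AX$ to $b_{1i}=0$. Without part~(a) the normalization cannot be carried out, and the mod-$J_AX$ argument killing the first row of $b$ also fails. Everything else in the proof is a routine reading of the presilting equation for $x$.
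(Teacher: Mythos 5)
Your argument is correct and follows the same strategy as the paper's proof. For part (a) the reduction to Proposition~\ref{Ext 1}(b) is identical. For part (b) the paper normalizes more tightly — after observing via Proposition~\ref{proposition:full rank 2} that $\pi(x)$ has full rank, it uses reindexing together with a left $A^\times$ action to achieve $x_1=h$ exactly, and then a right $\GL_t(k)$ action to get $x_i\in J_Ah$ for $i\ge 2$ — whereas you only normalize $\overline{x}_1\neq 0$ and $\overline{x}_j=0$ for $j\ge 2$ by a single $\GL_t(k)$ column operation, re-deriving the nonvanishing of $\overline{x}$ directly from the presilting equation modulo $J_AX$ rather than citing the full-rank proposition. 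This is a genuine but mild streamlining: your version avoids introducing $h$ and the map $\pi$ into the normalization step, at the cost of rewriting the short argument that the residue of $x$ is nonzero. The decisive steps — extending $y'$ by a leading zero, reading the $i$-th entries for $i\ge 2$ modulo $J_AX$ to kill $b_{1i}$, and collapsing to $y'=ax'+x'b^{(2)}$ — are precisely those in the paper, and your pointer that the cyclicity from part (a) enters twice (once to normalize, once to conclude $b_{1i}=0$) is exactly the crux.
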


\begin{proof}
(a) By Proposition \ref{Ext 1}(b), there exists $h\in X$ satisfying $X=Ah+hk=Ah$.

(b) By Proposition \ref{Ext 1}(b), there exists $[x_1\ x_2\ \cdots\ x_t]\in M_{1,t}(X)$ such that
\begin{equation}\label{1,b equality}
M_{1,t}(X)=A[x_1\ \cdots\ x_t]+[x_1\ \cdots\ x_t]M_t(k).
\end{equation}
As in Section \ref{section: Matrices and Presilting complexes}, the element $h$ gives surjections
	\[
	\pi:=(X\xrightarrow{\overline{(\cdot)}} X/J_AX\xrightarrow{((\cdot)\overline{h})^{-1}}A/J_A=k)\ \mbox{ and }\ \pi:M_{1,t}(X)\to M_{1,t}(k).
	\]
By Proposition \ref{proposition:full rank 2}, $\pi(x)\in M_{1,t}(k)$ has full rank. By changing indices if necessary, we can assume $x_1\in A^\times h$. Multiplying an element in $A^\times$ from left, we can assume $x_1=h$.
Multiplying an element in $\GL_t(k)$ from right, we can assume $x_i\in J_Ah$ for each $2\le i\le t$. We claim 
\[M_{1,t-1}(J_AX)=A[x_2\ \cdots\ x_t]+[x_2\ \cdots\ x_t]M_{t-1}(k).\]
In fact, fix any $[y_2\ \cdots y_t]\in M_{1,t-1}(J_AX)$. By \eqref{1,b equality} there exist $a\in A$ and $b=[b_{ij}]_{1\le i,j\le t}\in M_t(k)$ such that
\begin{equation}\label{y=ax+xb}
[0\ y_2\ \cdots y_t]=a[h\ x_2\ \cdots x_t]+[h\ x_2\ \cdots x_t]b.
\end{equation}
Applying $\pi$, we obtain
\[[0\ 0\ \cdots 0]=\overline{a}[1\ 0\ \cdots 0]+[1\ 0\ \cdots 0]b\ \mbox{ in }\ M_{1,t}(k).\]
Thus we obtain $b_{12}=\cdots=b_{1t}=0$. Looking at the $i$-th entries for $2\le i\le t$ of \eqref{y=ax+xb}, we have
\[[y_2\ \cdots y_t]=a[x_2\ \cdots x_t]+[x_2\ \cdots x_t][b_{ij}]_{2\le i,j\le n}.\]
Thus the claim follows.
\end{proof}

We are ready to prove Theorem \ref{uniserial}.

\begin{proof}[Proof of Theorem \ref{uniserial}]
We prove that $X/XJ_B$ is a uniserial $A^{\op}$-module under a weaker assumption that $(1,-t)\in K_0(\proj\Lambda)$ is rigid for each $t\ge1$.
Since $\overline{\Lambda}:=\left[\begin{array}{cc}
A&X/XJ_B\\ 0&k
\end{array}\right]$ is a factor algebra of $\Lambda$, the element $(1,-t)\in K_0(\proj\overline{\Lambda})$ is rigid for each $t\ge1$. Replacing $\Lambda$ by $\overline{\Lambda}$, we can assume that
\[B=k\ \mbox{ and }\ \Lambda=\left[\begin{array}{cc}
A&X\\ 0&k
\end{array}\right].\]
We use induction on $\dim_kX$. By Lemma \ref{(-1,b)}(a), the $A^{\op}$-module $X$ has a unique maximal submodule $J_AX$. Let $\Lambda'=\left[\begin{array}{cc}
A&J_AX\\ 0&k
\end{array}\right]$. By Lemma \ref{(-1,b)}(b), $(1,-t)\in K_0(\proj\Lambda')$ is rigid for each $t\ge1$. By induction hypothesis, $J_AX$ is a uniserial $A^{\op}$-module. Therefore $X$ is also a uniserial $A^{\op}$-module.
\end{proof}

\section{Gluing, Rotation and Subdivision of fans}

\subsection{Gluing fans}
We start with introducing the following operation for fans of rank 2.

\begin{definition}
For $\Sigma \in \itscfanpm$ and $\Sigma' \in \itscfanmp$, we define $\Sigma\ast\Sigma'\in\itscfan$ by
\begin{eqnarray*}
(\Sigma\ast\Sigma')_1&:=&\Sigma_1\cup \Sigma'_1\\
(\Sigma\ast\Sigma')_2&:=&\left(\Sigma_2 \setminus\{\sigma_{-+}\} \right)\cup\left(\Sigma'_2 \setminus \{\sigma_{+-}\} \right).
\end{eqnarray*}
\begin{equation*}
{\Sigma=\begin{xy}
0;<4pt,0pt>:<0pt,4pt>::
(0,-5)="0",
(-5,0)="1",
(0,0)*{\bullet},
(0,0)="2",
(5,0)="3",
(0,5)="4",
(0,-4)="a",
(4,0)="b",
(1.5,1.5)*{{\scriptstyle +}},
(-1.5,-1.5)*{{\scriptstyle -}},
(4,-4)*{{\scriptstyle ?}},
(-4.5,4)*{{\scriptstyle \sigma_{-+}}},
\ar@{-}"0";"1",
\ar@{-}"1";"4",
\ar@{-}"4";"3",
\ar@{-}"2";"0",
\ar@{-}"2";"1",
\ar@{-}"2";"3",
\ar@{-}"2";"4",
\ar@/^-2mm/@{.} "a";"b",
\end{xy}}\ \ \ \ \  
{\Sigma'=\begin{xy}
0;<4pt,0pt>:<0pt,4pt>::
(0,-5)="0",
(-5,0)="1",
(0,0)*{\bullet},
(0,0)="2",
(5,0)="3",
(0,5)="4",
(0,4)="c",
(-4,0)="d",
(1.5,1.5)*{{\scriptstyle +}},
(-1.5,-1.5)*{{\scriptstyle -}},
(-4,4)*{{\scriptstyle !}},
(4.5,-4)*{{\scriptstyle \sigma_{+-}}}, 
\ar@{-}"0";"1",
\ar@{-}"0";"3",
\ar@{-}"4";"3",
\ar@{-}"2";"0",
\ar@{-}"2";"1",
\ar@{-}"2";"3",
\ar@{-}"2";"4",
\ar@/^-2mm/@{.} "c";"d",
\end{xy}}\ \ \ \ \ 
{\Sigma\ast\Sigma'=\begin{xy}
0;<4pt,0pt>:<0pt,4pt>::
(0,-5)="0",
(-5,0)="1",
(0,0)*{\bullet},
(0,0)="2",
(5,0)="3",
(0,5)="4",
(0,-4)="a",
(4,0)="b",
(0,4)="c",
(-4,0)="d",
(1.5,1.5)*{{\scriptstyle +}},
(-1.5,-1.5)*{{\scriptstyle -}},
(4,-4)*{{\scriptstyle ?}},
(-4,4)*{{\scriptstyle !}}, 
\ar@{-}"0";"1",
\ar@{-}"4";"3",
\ar@{-}"2";"0",
\ar@{-}"2";"1",
\ar@{-}"2";"3",
\ar@{-}"2";"4",
\ar@/^-2mm/@{.} "a";"b",
\ar@/^-2mm/@{.} "c";"d",
\end{xy}}
\end{equation*}
\end{definition}

Clearly we have 
\begin{eqnarray} \notag
        \itscfan &=& \itscfanpm\ast \itscfanmp:=\{\Sigma\ast\Sigma'\mid \Sigma\in \itscfanpm,\Sigma'\in \itscfanmp\},\\ \label{eq:gluing fans}
        \tscfan &=& \tscfanpm\ast \tscfanmp:=\{\Sigma\ast\Sigma'\mid \Sigma\in \tscfanpm,\Sigma'\in \tscfanmp\}.
\end{eqnarray}

Now we study an algebraic counterpart of the gluing fans. 
The following is a main result of this subsection, where we identify $K_0(\proj\Lambda)$ and $K_0(\proj\Lambda')$ with $\Z^2$ by $e_1\Lambda=(1,0)=e_1'\Lambda'$ and $e_2\Lambda=(0,1)=e_2'\Lambda'$.

\begin{theorem}[{Gluing Theorem}]\label{theorem:gluing1}
Let $\Lambda$ and $\Lambda'$ be elementary $k$-algebras of rank 2 with orthogonal primitive idempotents $1=e_1+e_2\in\Lambda$ and $1=e'_1+e'_2\in\Lambda'$. Assume $e_1\Lambda e_2=0$ and $e'_2\Lambda' e'_1=0$, or equivalently, $\Sigma(\Lambda)\in\itscfanpm$ and $\Sigma(\Lambda')\in\itscfanmp$ (Proposition \ref{+- condition}).
Then, there exists an elementary $k$-algebra $\Lambda\ast\Lambda'$ such that 
\begin{equation}
    \Sigma(\Lambda\ast\Lambda')= \Sigma(\Lambda)\ast\Sigma(\Lambda').
\end{equation}
\end{theorem}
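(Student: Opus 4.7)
The plan is to construct $\Lambda\ast\Lambda'$ explicitly and reduce the fan equality to Proposition \ref{sign decomposition} via sign decomposition.

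By the hypotheses $\Sigma(\Lambda)\in\itscfanpm$ and $\Sigma(\Lambda')\in\itscfanmp$ combined with Proposition \ref{+- condition}, we may write
\[
\Lambda=\begin{pmatrix}A & X\\ 0 & B\end{pmatrix},\qquad \Lambda'=\begin{pmatrix}A' & 0\\ Y & B'\end{pmatrix},
\]
where $A,B,A',B'$ are local elementary $k$-algebras with residue field $k$, $X=e_1\Lambda e_2$ is an $A$-$B$-bimodule, and $Y=e_2'\Lambda'e_1'$ is a $B'$-$A'$-bimodule. Form the fiber products
\[
\widetilde A:=A\times_k A'=\{(a,a')\in A\times A':\overline a=\overline{a'}\in k\},\qquad \widetilde B:=B\times_k B',
\]
which are local elementary $k$-algebras with residue field $k$ carrying canonical surjections $\widetilde A\twoheadrightarrow A,A'$ and $\widetilde B\twoheadrightarrow B,B'$. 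Define
\[
\Lambda\ast\Lambda':=\begin{pmatrix}\widetilde A & X\\ Y & \widetilde B\end{pmatrix}
\]
where $X$ (respectively $Y$) becomes an $\widetilde A$-$\widetilde B$-bimodule (respectively $\widetilde B$-$\widetilde A$-bimodule) by pulling back along the surjections to $A,B$ (respectively $B',A'$), and the multiplications $X\otimes_{\widetilde B}Y\to\widetilde A$ and $Y\otimes_{\widetilde A}X\to\widetilde B$ are declared to be zero. A direct $2\times 2$ matrix calculation confirms associativity (all triple products through the vanishing bridges collapse), and $\Lambda\ast\Lambda'$ is a finite dimensional elementary $k$-algebra of rank $2$.

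To establish $\Sigma(\Lambda\ast\Lambda')=\Sigma(\Lambda)\ast\Sigma(\Lambda')$, sign-decompose $\Sigma(\Lambda\ast\Lambda')$ into pieces $\Sigma_\epsilon(\Lambda\ast\Lambda')$ for $\epsilon\in\{\pm 1\}^2$. The pieces $\Sigma_{(+,+)}$ and $\Sigma_{(-,-)}$ always consist only of $\sigma_+$, $\sigma_-$ and their faces. For $\epsilon=(+,-)$ we have
\[
(\Lambda\ast\Lambda')_{(+,-)}=\begin{pmatrix}\widetilde A & X\\ 0 & \widetilde B\end{pmatrix},
\]
and there is a natural surjection onto $\Lambda$ with kernel
\[
I=\begin{pmatrix}\ker(\widetilde A\twoheadrightarrow A) & 0\\ 0 & \ker(\widetilde B\twoheadrightarrow B)\end{pmatrix}.
\]
Since $\widetilde A$ is local, $\ker(\widetilde A\twoheadrightarrow A)\subset\rad(\widetilde A)$, and since the $\widetilde A$-action on $X$ factors through $A$, this kernel annihilates $X$. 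The analogous statement holds on the $\widetilde B$-side, giving $I\subset I_{(+,-)}$. Proposition \ref{sign decomposition} then yields $\Sigma_{(+,-)}(\Lambda\ast\Lambda')=\Sigma_{(+,-)}(\Lambda)$. The symmetric argument gives $\Sigma_{(-,+)}(\Lambda\ast\Lambda')=\Sigma_{(-,+)}(\Lambda')$. Taking the union of the four sign pieces matches exactly the definition of $\Sigma(\Lambda)\ast\Sigma(\Lambda')$.

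The main obstacle is securing the inclusion $I\subset I_\epsilon$ for both signs, which is precisely what forces the choice $XY=YX=0$. Any nonzero cross multiplication would induce an action of $\ker(\widetilde A\twoheadrightarrow A)$ on $X$ routed through $Y$ and the residual part of $\widetilde B$, breaking its membership in $\Ann_{\widetilde A}(X)$ and spoiling the sign-decomposition identification. The fiber product $\widetilde A=A\times_k A'$ is dictated by universality: it is the natural local $k$-algebra admitting compatible surjections to both $A$ and $A'$, and its existence as a local algebra with residue field $k$ relies on the common residue field---hence on the elementarity hypothesis. Finally, the same sign-decomposition argument also shows $\Lambda\ast\Lambda'$ is $g$-finite whenever both $\Lambda$ and $\Lambda'$ are, yielding the complete version of the fan equality needed for the main application.
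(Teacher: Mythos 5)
Your proof is correct and follows essentially the same approach as the paper's: the same fiber-product construction of $\Lambda\ast\Lambda'$ with vanishing cross-multiplications, and the same reduction of $\Sigma_{(+,-)}$ and $\Sigma_{(-,+)}$ to $\Sigma(\Lambda)$ and $\Sigma(\Lambda')$ via Proposition \ref{sign decomposition}. Your ideal $\ker(\widetilde A\twoheadrightarrow A)\oplus\ker(\widetilde B\twoheadrightarrow B)$ coincides with the paper's $\rad C\oplus\rad D$ (in the paper's notation $C=e_1'\Lambda'e_1'$, $D=e_2'\Lambda'e_2'$), and your verification that it lands in $I_\epsilon$ is the same locality-plus-annihilation argument.
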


Theorem \ref{theorem:gluing1} is explained by the following picture.
\[{\Sigma(\Lambda)=\begin{xy}
0;<4pt,0pt>:<0pt,4pt>::
(0,-5)="0",
(-5,0)="1",
(0,0)*{\bullet},
(0,0)="2",
(5,0)="3",
(0,5)="4",
(0,-4)="a",
(4,0)="b",
(1.5,1.5)*{{\scriptstyle +}},
(-1.5,-1.5)*{{\scriptstyle -}},
(4,-4)*{{\scriptstyle ?}}, 
\ar@{-}"0";"1",
\ar@{-}"1";"4",
\ar@{-}"4";"3",
\ar@{-}"2";"0",
\ar@{-}"2";"1",
\ar@{-}"2";"3",
\ar@{-}"2";"4",
\ar@/^-2mm/@{.} "a";"b",
\end{xy}}\ \ \ \ \  
{\Sigma(\Lambda')=\begin{xy}
0;<4pt,0pt>:<0pt,4pt>::
(0,-5)="0",
(-5,0)="1",
(0,0)*{\bullet},
(0,0)="2",
(5,0)="3",
(0,5)="4",
(0,4)="c",
(-4,0)="d",
(1.5,1.5)*{{\scriptstyle +}},
(-1.5,-1.5)*{{\scriptstyle -}},
(-4,4)*{{\scriptstyle !}}, 
\ar@{-}"0";"1",
\ar@{-}"0";"3",
\ar@{-}"4";"3",
\ar@{-}"2";"0",
\ar@{-}"2";"1",
\ar@{-}"2";"3",
\ar@{-}"2";"4",
\ar@/^-2mm/@{.} "c";"d",
\end{xy}}\ \ \ \ \ 
{\Sigma(\Lambda\ast\Lambda')=\begin{xy}
0;<4pt,0pt>:<0pt,4pt>::
(0,-5)="0",
(-5,0)="1",
(0,0)*{\bullet},
(0,0)="2",
(5,0)="3",
(0,5)="4",
(0,-4)="a",
(4,0)="b",
(0,4)="c",
(-4,0)="d",
(1.5,1.5)*{{\scriptstyle +}},
(-1.5,-1.5)*{{\scriptstyle -}},
(4,-4)*{{\scriptstyle ?}},
(-4,4)*{{\scriptstyle !}}, 
\ar@{-}"0";"1",
\ar@{-}"4";"3",
\ar@{-}"2";"0",
\ar@{-}"2";"1",
\ar@{-}"2";"3",
\ar@{-}"2";"4",
\ar@/^-2mm/@{.} "a";"b",
\ar@/^-2mm/@{.} "c";"d",
\end{xy}}\]

The construction of $\Lambda\ast\Lambda'$ is as follows: 
We can write 
\[
\Lambda=\left[\begin{array}{cc}
A&X\\ 0&B
\end{array}\right]\ 
\mbox{ and }\ \Lambda'=\left[\begin{array}{cc}
C&0\\ Y&D
\end{array}\right],
\]
where $A,B,C,D$ are local $k$-algebras, $X$ is an $A^{\rm op}\otimes_k B$-module, and $Y$ is an $D^{\rm op}\otimes_k C$-module.
Since $\Lambda$ and $\Lambda'$ are elementary, we have  $k\simeq A/J_A\simeq B/J_B \simeq C/J_C\simeq D/J_D$. 
Let $A\times_kC$ be a fiber product of canonical surjections $\overline{(\cdot)}:A\to k$ and $\overline{(\cdot)}:C\to k$, that is,
\[A\times_kC:=\{(a,c)\in A\times C\mid \overline{a}=\overline{c}\}.\]
Let $B\times_kD$ be a fibre product of $\overline{(\cdot)}:B\to k$ and $\overline{(\cdot)}:D\to k$. Using the projections $A\times_kC\to A$ and $B\times_kD\to B$, we regard $X$ as an $(A\times_kC)^{\op}\otimes_k(B\times_kD)$-module, and using the projections $A\times_kC\to C$ and $B\times_kD\to D$, we regard $Y$ as an $(B\times_kD)^{\op}\otimes_k(A\times_kC)$-module.

We prove that the algebra
\[\Lambda\ast\Lambda':=\left[\begin{array}{cc}
A\times_kC&X\\ Y&B\times_kD
\end{array}\right]\]
satisfies $\Sigma(\Lambda\ast\Lambda')= \Sigma(\Lambda)\ast\Sigma(\Lambda')$, where the multiplication of the elements of  $X$ and those of $Y$ are defined to be zero.

\begin{proof}[Proof of Theorem \ref{theorem:gluing1}]
Let $\Gamma:=\Lambda\ast\Lambda'$. It suffices to prove
\[\Sigma_{+-}(\Gamma)= \Sigma_{+-}(\Lambda)\ \mbox{ and }\ \Sigma_{-+}(\Gamma)= \Sigma_{-+}(\Lambda').\]
For $\epsilon=(+,-)$, we have $\Gamma_\epsilon=\left[\begin{array}{cc}
A\times_kC&X\\ 0&B\times_kD
\end{array}\right]$. The ideal $I:=\left[\begin{array}{cc}
\rad C&0\\ 0&\rad D
\end{array}\right]$ of $\Gamma_\epsilon$ is contained in $I_\epsilon$, and we have an isomorphism $\Gamma_\epsilon/I\simeq \Lambda$ of $k$-algebras. Applying Proposition \ref{sign decomposition} to $\Gamma$, we get $\Sigma_{+-}(\Gamma)= \Sigma_{+-}(\Lambda)$. 
By the same argument, $\Sigma_{-+}(\Gamma)= \Sigma_{-+}(\Lambda')$ holds, as desired.
\end{proof}

\begin{example}
Let $\Lambda = k[1 \xrightarrow{a} 2]$ and $\Lambda'=k[1\xleftarrow{b}2]$. Then these $g$-fans are $\Sigma_{111;00}$ and $\Sigma_{00;111}$ respectively.
\[
\Sigma(\Lambda) =\Sigma_{111;00}=\begin{xy}
				0;<4pt,0pt>:<0pt,4pt>::
				(0,-5)="0",
				(-5,0)="1",
				(0,0)*{\bullet},
				(0,0)="2",
				(5,0)="3",
				(0,5)="4",
				(5,-5)="5",
				(1.5,1.5)*{{\scriptstyle +}},
				(-1.5,-1.5)*{{\scriptstyle -}},
				\ar@{-}"0";"1",
				\ar@{-}"1";"4",
				\ar@{-}"3";"5",
                \ar@{-}"5";"0",
				\ar@{-}"3";"4",
				\ar@{-}"2";"0",
				\ar@{-}"2";"1",
				\ar@{-}"2";"3",
				\ar@{-}"2";"4",
				\ar@{-}"2";"5",
		\end{xy}\ \ \ \ \
  \Sigma(\Lambda') =\Sigma_{00;111}=\begin{xy}
				0;<4pt,0pt>:<0pt,4pt>::
				(0,-5)="0",
				(-5,0)="1",  
				(0,0)*{\bullet},
				(0,0)="2",
				(5,0)="3",
				(0,5)="4",
				(-5,5)="5",
				(1.5,1.5)*{{\scriptstyle +}},
				(-1.5,-1.5)*{{\scriptstyle -}},
				\ar@{-}"0";"1",
				\ar@{-}"1";"5",
                \ar@{-}"5";"4",
				\ar@{-}"3";"0",
				\ar@{-}"3";"4",
				\ar@{-}"2";"0",
				\ar@{-}"2";"1",
				\ar@{-}"2";"3",
				\ar@{-}"2";"4",
				\ar@{-}"2";"5",
		\end{xy}  
\]
Applying Gluing Theorem\;\ref{theorem:gluing1}, the following algebra $\Gamma$ has $\Sigma_{111;111}=\Sigma_{111;00}\ast\Sigma_{00;111}$ as its $g$-fan.
\[\Gamma = \dfrac{k\left[\begin{xy}( 0,0) *+{1}="1", ( 8,0) *+{2}="2",\ar@<1.5pt> "1";"2"^a \ar@<1.5pt> "2";"1"^b  \end{xy}\right]}{\langle ab, ba\rangle}\ \ \ \ \ \Sigma(\Gamma)=\Sigma_{111;111} =
\begin{xy}
				0;<4pt,0pt>:<0pt,4pt>::
				(0,-5)="0",
				(-5,0)="1",
				(0,0)*{\bullet},
				(0,0)="2",
				(5,0)="3",
				(0,5)="4",
				(5,-5)="5",
                (-5,5)="6",
				(1.5,1.5)*{{\scriptstyle +}},
				(-1.5,-1.5)*{{\scriptstyle -}},
				\ar@{-}"0";"1",
				\ar@{-}"1";"6",
                \ar@{-}"6";"4",
				\ar@{-}"3";"5",
                \ar@{-}"5";"0",
				\ar@{-}"3";"4",
				\ar@{-}"2";"0",
				\ar@{-}"2";"1",
				\ar@{-}"2";"3",
				\ar@{-}"2";"4",
				\ar@{-}"2";"5",
                \ar@{-}"2";"6",
		\end{xy}
\]
In fact, we have $\Gamma\cong \Lambda\ast \Lambda'$. 
\end{example}

\subsection{Rotation and Mutation}
We start with introducing a piecewise linear transformation of sign coherent fan of rank 2. This is a generalization of mutation of $g$-vectors of cluster algebras of rank 2 \cite{FZ4,NZ}, and also a special case of so called \emph{combinatorial mutation} \cite{ACGK,FH}.

\begin{definition}\label{define rotation}
For $\Sigma\in\itscfanpm$ with $\sigma_+=\cone\{(0,1),(1,0)\}$, take $\sigma=\cone\{(1,0),(\ell,-1)\}\in\Sigma_2$. 
Define a new sign-coherent fan $\Sigma'$ by
\begin{eqnarray*}
\Sigma'_1&:=&\left(\Sigma_1\setminus\{(0,1)\}\right)\cup\{(-\ell,1)\}\\
\Sigma'_2&:=&\left(\Sigma_2
    \setminus \{\sigma_+,\sigma_{-+}\}\right)\cup\{-\sigma,\cone\{(-\ell,1),(1,0)\}\},
\end{eqnarray*}
    where the positive and negative cones of $\Sigma'$ are $\sigma$ and $-\sigma$ respectively. 
    \[
    {\Sigma =\begin{xy}
				0;<4pt,0pt>:<0pt,4pt>::
				(0,-5)="0",
				(-5,0)="1",
				(0,0)*{\bullet},
				(0,0)="2",
				(5,0)="3",
				(0,5)="4",
				(15,-5)="5",
				(0,-4)="a",
				(4,-1.33)="b",
		(0,6.5)*{{\scriptstyle (0,1)}},
		(8,0.5)*{{\scriptstyle (1,0)}},
		(15.5,-6.5)*{{\scriptstyle (\ell,-1)}},
		(0,-6.5)*{{\scriptstyle (0,-1)}},
		(-8.5,0)*{{\scriptstyle (-1,0)}},
				(1.5,1.5)*{{\scriptstyle +}},
				(-1.5,-1.5)*{{\scriptstyle -}},
				(11.5,-2)*{{\scriptstyle\sigma}},
				(-4,4)*{{\scriptstyle \sigma_{-+}}},
				(7,1)*{},
				(0,6.7)*{},
				\ar@{-}"0";"1",
				\ar@{-}"1";"4",
				\ar@{-}"3";"5",
				\ar@{-}"3";"4",
				\ar@{-}"2";"0",
				\ar@{-}"2";"1",
				\ar@{-}"2";"3",
				\ar@{-}"2";"4",
				\ar@{-}"2";"5",		
				\ar@/^-2mm/@{.} "a";"b",
		\end{xy}}\ \ \ \ \  
		{\rho(\Sigma)\simeq\Sigma' =\begin{xy}
				0;<4pt,0pt>:<0pt,4pt>::
				(0,-5)="0",
				(-5,0)="1",
				(0,0)*{\bullet},
				(0,0)="2",
				(5,0)="3",
				(0,5)="4",
				(15,-5)="5",
				(-15,5)="6",
				(0,-4)="a",
				(4,-1.33)="b",
		(-15,6.5)*{{\scriptstyle (-\ell,1)}},
		(8,0.5)*{{\scriptstyle (1,0)}},
		(15.5,-6.5)*{{\scriptstyle (\ell,-1)}},
		(0,-6.5)*{{\scriptstyle (0,-1)}},
		(-8.5,-0.5)*{{\scriptstyle (-1,0)}},
				(5,-0.85)*{{\scriptstyle +}},
				(-5,0.85)*{{\scriptstyle -}},
				(11.5,-2)*{{\scriptstyle\sigma}},
				(-11.5,2)*{{\scriptstyle-\sigma}},
				(7,1)*{},
				(0,6.7)*{},
				\ar@{-}"0";"1",
				\ar@{-}"3";"5",
				\ar@{-}"2";"0",
				\ar@{-}"2";"1",
				\ar@{-}"2";"3",
				\ar@{-}"2";"5",
				\ar@{-}"6";"1",
				\ar@{-}"6";"2",
				\ar@{-}"6";"3",
				\ar@/^-2mm/@{.} "a";"b",
			\end{xy}}
			\]    
			We define the \emph{rotation} $\rho(\Sigma)\in\itscfanpm$ of $\Sigma$ as the image of $\Sigma'$ by a linear transformation of $\mathbb{R}^2$ mapping $(1,0)\mapsto (0,1)$ and $(\ell,-1)\mapsto (1,0)$.
\end{definition}

We give basic properties of rotation, where the name ``rotation'' is explained by (a) below.

\begin{proposition}\label{sr sequence}
Let $\Sigma\in\tscfanpm$ with facets \eqref{facet 2} and $\df(\Sigma)=(a_1,\ldots,a_{n-2};0,0)$.
           \begin{enumerate}[\rm(a)]
    \item We have
    \begin{equation*}
        \df(\rho(\Sigma))=(a_2,\ldots,a_{n-2},a_1;0,0).
    \end{equation*}
    In particular, $\rho^{n-2}(\Sigma)=\Sigma$ holds, and therefore $\rho$ is an invertible operation.
    \item For each $1\le i\le n-3$, we have
    \[D_{\sigma_i}(\Sigma) =  \rho^{n-3-i} \circ D_{\sigma_{n-3}}\circ \rho^{i+1}(\Sigma).\]
    \end{enumerate}
\end{proposition}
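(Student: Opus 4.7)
The plan is to verify both statements at the level of quiddity sequences, since a fan in $\tscfanpm$ is determined by its quiddity sequence together with its distinguished positive cone.

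For part (a), I will compute the rays of $\rho(\Sigma)$ directly under the linear map $L$ of Definition \ref{define rotation}. With $\ell = a_1$, this is the map $L(x,y) = (-y, x + \ell y)$, whose matrix has determinant $1$. The two key computations are $L((-\ell,1)) = (-1,0)$ and $L((-1,0)) = (0,-1)$. Combined with the images $L(v_{i+1})$ of the remaining rays, these identify the rays of $\rho(\Sigma)$ in the clockwise order of \eqref{ray 2} as $u_i = L(v_{i+1})$ for $1 \le i \le n-2$, $u_{n-1} = (-1,0)$ and $u_n = (0,1)$. Applying $L$ to the recurrence $a_{i+1}v_{i+1} = v_i + v_{i+2}$ yields $a_{i+1}u_i = u_{i-1} + u_{i+1}$ for $1 \le i \le n-3$; a direct check at $i = n-2$ using $u_{n-3} = L((0,-1)) = (1,-a_1)$ and $u_{n-1} = (-1,0)$ gives the wrapped-around entry $a_1$. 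The final two entries are forced to be zero by $\rho(\Sigma) \in \tscfanpm$. Iterating the cyclic shift $n-2$ times recovers $\df(\Sigma)$, so $\rho^{n-2}(\Sigma) = \Sigma$ and $\rho$ is invertible.

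For part (b), I use part (a) together with \eqref{s D_j=D_j s} to reduce to a combinatorial computation on sequences. Beginning with $\df(\Sigma) = (a_1,\ldots,a_{n-2};0,0)$, the fan $\rho^{i+1}(\Sigma)$ has quiddity sequence $(a_{i+2}, \ldots, a_{n-2}, a_1, \ldots, a_{i+1}; 0, 0)$, in which positions $n-3$ and $n-2$ hold $a_i$ and $a_{i+1}$ respectively. Applying $D_{n-3}$ then replaces these two entries with $a_i+1, 1, a_{i+1}+1$ at positions $n-3, n-2, n-1$ of a new length-$(n-1)$ sequence. The final step $\rho^{n-3-i}$ cyclically shifts this sequence by $n-3-i$, carrying the inserted block to positions $i, i+1, i+2$ and delivering exactly the sequence $D_i(\df(\Sigma))$ of \eqref{define D_j for a}.

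The main obstacle will be the careful index bookkeeping in part (b), since the two rotations act on sequences of different lengths (respectively $n-2$ before and $n-1$ after subdivision), and one must track how the three subdivision positions $n-3, n-2, n-1$ land under a cyclic shift by $n-3-i$. Boundary cases ($i=1$ and $i=n-3$) should be checked separately; note that $i = n-3$ is in fact tautological since $\rho^{n-2} = \mathrm{id}$ by part (a). In part (a), the subtlest point is confirming that $L$ sends the extra ray $(-\ell, 1)$ and the unchanged ray $v_{n-1} = (-1, 0)$ to $(-1, 0)$ and $(0, -1)$ respectively, which is precisely the property that restores the standard $\tscfanpm$-frame for $\rho(\Sigma)$.
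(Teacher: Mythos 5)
Your proposal is correct and follows essentially the same route as the paper's proof: part (a) is verified by computing the rays of $\rho(\Sigma)$ from those of $\Sigma$ and applying the defining recurrence $a_i v_i = v_{i-1}+v_{i+1}$, with the only special case being the wrap-around entry; part (b) then reduces to the same sequence bookkeeping using (a) and \eqref{s D_j=D_j s}, shifting left by $i+1$, applying $D_{n-3}$, and shifting back by $n-3-i$. The paper phrases (a) slightly differently by computing the recurrence among the rays $w_i$ of the intermediate fan $\Sigma'$ before applying the linear map, whereas you apply the map first; since the map is a linear isomorphism this is an immaterial difference, and your explicit identification $L((0,-1))=(1,-a_1)$ matches the paper's computation $w_{n-3}+w_{n-1}=a_1 w_{n-2}$.
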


\begin{proof}
(a) Recall $\Sigma_1=\{v_1,\ldots,v_n\}$ and $a_iv_i=v_{i-1}+v_{i+1}$ for $1\le i\le n$. Moreover
\[\rho(\Sigma)_1=\{w_1,\ldots,w_n\}\ \mbox{ where }\ w_i:=v_{i+1}\ (i\neq n-1),\ w_{n-1}:=-v_{2}.\]
Hence we have
\begin{eqnarray*}
w_{i-1}+w_{i+1}&=&v_i+v_{i+2}=a_{i+1}v_{i+1}=a_{i+1}w_i\ \mbox{ for }\ i\neq n-2,n,\\ 
w_{n-1}+w_1&=&-v_2+v_2=0\cdot w_n,\\
w_{n-3}+w_{n-1}&=&v_{n-2}-v_2=-(v_n+v_2)=-a_1v_1=a_1v_{n-1}=a_1w_{n-2}.
\end{eqnarray*}
Thus $\df(\rho(\Sigma))=(a_2,\ldots,a_{n-2},a_1;0,0)$ as desired. 

(b) By (a), we have $\df\circ\rho^{i+1}(\Sigma)=(a_{i+2},\ldots,a_{n-2},a_1,\ldots,a_{i+1};0,0)$. Thus
\[\df\circ D_{\sigma_{n-3}}\circ\rho^{i+1}(\Sigma)\stackrel{\eqref{s D_j=D_j s}}{=}D_{n-3}\circ\df\circ\rho^{i+1}(\Sigma)=(a_{i+2},\ldots,a_{n-2},a_1,\ldots,a_{i-1},a_{i}+1,1,a_{i+1}+1;0,0).\]
By (a) again, we have 
\begin{eqnarray*}
\df\circ\rho^{n-3-i}\circ D_{\sigma_{n-3}}\circ\rho^{i+1}(\Sigma)&=&(a_1,\ldots,a_{i-1}, a_{i}+1,1,a_{i+1}+1,a_{i+2},\ldots,a_{n-2};0,0)\\
&=&D_i\circ\df(\Sigma)\stackrel{\eqref{s D_j=D_j s}}{=}\df\circ D_{\sigma_i}(\Sigma).
\end{eqnarray*}
Since a fan is uniquely determined by its quiddity sequence, the assertion follows.
\end{proof}

The following main result of this section shows that mutation of algebras is an algebraic counterpart of rotation of fans, where we identify $K_0(\proj\Lambda)$ with $\Z^2$ by $e_1\Lambda=(1,0)$ and $e_2\Lambda=(0,1)$.

\begin{theorem}[{Rotation Theorem}]\label{theorem:rotation}
Let $\Lambda$ be a finite dimensional $k$-algebra of rank 2 
with orthogonal primitive idempotents $1=e_1+e_2$. Assume $e_1\Lambda e_2=0$, or equivalently, $\Sigma(\Lambda)\in\itscfanpm$ (Proposition \ref{+- condition}).
 Then, there exists a finite dimensional $k$-algebra $\rho(\Lambda)$ such that
\[\Sigma(\rho(\Lambda))=\rho(\Sigma(\Lambda)).\]
Furthermore, if $\Lambda$ is elementary, then 
$\rho(\Lambda)$ is elementary. 
\end{theorem}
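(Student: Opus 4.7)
The plan is to realize $\rho(\Lambda)$ as the endomorphism algebra of a silting mutation of $\Lambda$, and then transport the $g$-fan through the induced derived equivalence. Write $\Lambda = \left[\begin{smallmatrix}A & X \\ 0 & B\end{smallmatrix}\right]$ with $A = e_1\Lambda e_1$, $B = e_2\Lambda e_2$, $X = e_1\Lambda e_2$; since $\Lambda$ has rank $2$, $A$ and $B$ are automatically local. By Proposition \ref{first mutation}(d), I would take a minimal set $h_1,\dots,h_\ell$ of generators of ${}_AX$, form the $2$-term complex $P_h := (P_2^{\oplus\ell} \xrightarrow{h(\cdot)} P_1)$ with $[P_h]=(\ell,-1)$, and obtain $\mu_2^-(\Lambda) = P_1 \oplus P_h \in \twosilt\Lambda$. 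Setting $T_1 := P_h$, $T_2 := P_1$, I define
\[\rho(\Lambda) := \End_{\Kb(\proj\Lambda)}(T_1 \oplus T_2),\]
with primitive idempotents $e_i'$ corresponding to $T_i$. Each $T_i$ is indecomposable (since $P_1 \oplus P_h$ is basic silting), so $\rho(\Lambda)$ is a basic finite-dimensional $k$-algebra; if $\Lambda$ is elementary, then each local $\End(T_i)$ has residue field $k$, so $\rho(\Lambda)$ is elementary.

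Next, by the standard silting-derived-equivalence theorem, $T := T_1 \oplus T_2$ induces a triangle equivalence $F := \RHom_\Lambda(T,-) : \Db(\mod\Lambda) \xrightarrow{\simeq} \Db(\mod\rho(\Lambda))$ sending $T_i \mapsto e_i'\rho(\Lambda)$. It restricts to a poset isomorphism of silting complexes, under which $\twosilt\rho(\Lambda)$ corresponds to the interval $[T[1], T] := \{S \in \silt\Lambda \mid T \geq S \geq T[1]\}$. On Grothendieck groups, $F$ yields a $\Z$-linear isomorphism whose $\R$-extension $\varphi : K_0(\proj\rho(\Lambda))_\R \to K_0(\proj\Lambda)_\R$ is given by $(a,b) \mapsto (a\ell+b, -a)$; this is precisely the inverse of the linear map used in Definition \ref{define rotation}. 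In particular, $\varphi(\sigma_+^{\rho(\Lambda)}) = \sigma$ and $\varphi(-\sigma_+^{\rho(\Lambda)}) = -\sigma$.

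To finish, I would verify the equality $\varphi(\Sigma(\rho(\Lambda))) = \Sigma'$, where $\Sigma'$ is the intermediate fan of Definition \ref{define rotation}. For the inclusion $\subseteq$, each cone on the left is the $C$-cone of a silting complex in $[T[1], T]$, and such cones lie inside $\sigma$, inside $-\sigma$, or in the lower half-plane region shared with $\Sigma(\Lambda)$. For $\supseteq$, every cone of $\Sigma(\Lambda)$ distinct from $\sigma_+$ and $\sigma_{-+}$ is the $C$-cone of a $2$-term presilting complex $U$ of $\Lambda$ whose rays avoid the open upper half-plane; sign-coherence then forces $T \geq U \geq T[1]$. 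Together with the exchange triangle of Proposition \ref{mutation=exchange} describing the right mutation $\mu_1^+(T)$, which produces the new ray $(-\ell,1)$ and hence the cone $\cone\{(-\ell,1),(1,0)\}$, this exhausts the facets of $\Sigma'$ and yields $\Sigma(\rho(\Lambda)) = \rho(\Sigma(\Lambda))$.

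The main obstacle will be the last step: identifying $\cone\{(-\ell,1),(1,0)\}$ explicitly as the $C$-cone of $\mu_1^+(T)$, and verifying that the silting interval $[T[1], T]$ contains precisely those $2$-term presilting complexes of $\Lambda$ whose $C$-cones avoid the open upper half-plane. This will require an explicit exchange-triangle computation and a careful use of sign-coherence to exclude any presilting complex whose cone strays into the open upper half-plane beyond $\sigma \cup (-\sigma) \cup \cone\{(-\ell,1),(1,0)\}$.
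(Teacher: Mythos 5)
The proposal contains a genuine gap: you set $\rho(\Lambda) := E := \End_{\Kb(\proj\Lambda)}(T)$, but this is not the right algebra. The paper instead takes $\rho(\Lambda) := E_{-+}$, the ``sign-decomposed'' upper-triangular subalgebra of $E$ obtained via Definition~\ref{define A_epsilon}, and this step is essential. Example~\ref{example:13122} in the paper shows explicitly that $E$ and $\Gamma = E_{-+}$ can differ as algebras, and more importantly that $\Sigma(E) = \Sigma_{13122;111}$ has eight facets while $\rho(\Sigma(\Lambda)) = \Sigma_{13122;00}$ has only seven. Concretely, your claim that the pull-back $\varphi^{-1}\bigl(\cone\{(-\ell,1),(1,0)\}\bigr) = \cone\{(-1,0),(0,1)\}$ is a cone of $\Sigma(E)$ amounts, by Proposition~\ref{+- condition}, to the assertion that $\Hom(T_1,T_2) = \Hom(P_h,P_1) = 0$. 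This is false in general --- in the cited example it fails --- and that is precisely what forces the passage from $E$ to $E_{-+}$: passing to $E_{-+}$ kills $e_1Ee_2$ so that Proposition~\ref{+- condition} plants the new ``big'' cone, while Proposition~\ref{sign decomposition} guarantees the other quadrants are unchanged. Your proposed fallback of identifying that cone with $C(\mu_1^+(T))$ also does not work: with your conventions $\mu_1^+(T)$ replaces $T_1$ by a summand whose cone shares the ray $[T_2]=(1,0)$ and is $\sigma_+ = C(\Lambda)$, not $\cone\{(-\ell,1),(1,0)\}$.

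A secondary problem is the invocation of a ``standard silting-derived-equivalence theorem.'' The mutation $T = \mu_2^-(\Lambda)$ is a $2$-term silting complex but need not be tilting, so Rickard's theorem does not apply and there is no a priori derived equivalence $\Db(\mod\Lambda) \simeq \Db(\mod E)$. What the paper actually uses is Proposition~\ref{lower upper bij}(a) (citing \cite[Theorem 4.26]{AHIKM}), which provides only a triangle functor $F : \Kb(\proj\Lambda) \to \Kb(\proj E)$ inducing an isomorphism of the half-fans $\Sigma_{2,-}(\Lambda) \simeq \Sigma_{2,+}(E)$. This is exactly the amount of information needed, and also exactly why the ``other'' half of $\Sigma(E)$ is uncontrolled and must be tamed by sign decomposition. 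Your elementary-ness argument is essentially the one in the paper, using Proposition~\ref{lower upper bij}(b), and that part is fine.
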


Theorem \ref{theorem:rotation} is explained by the following picture.
    \[
    {\Sigma(\Lambda) =\begin{xy}
				0;<4pt,0pt>:<0pt,4pt>::
				(0,-5)="0",
				(-5,0)="1",
				(0,0)*{\bullet},
				(0,0)="2",
				(5,0)="3",
				(0,5)="4",
				(15,-5)="5",
				(0,-4)="a",
				(4,-1.33)="b",
				(1.5,1.5)*{{\scriptstyle +}},
				(-1.5,-1.5)*{{\scriptstyle -}},
				(7,1)*{},
				(0,6.7)*{},
				\ar@{-}"0";"1",
				\ar@{-}"1";"4",
				\ar@{-}"3";"5",
				\ar@{-}"3";"4",
				\ar@{-}"2";"0",
				\ar@{-}"2";"1",
				\ar@{-}"2";"3",
				\ar@{-}"2";"4",
				\ar@{-}"2";"5",
				\ar@/^-2mm/@{.} "a";"b",
		\end{xy}}\ \ \ \ \  
		{\Sigma(\rho(\Lambda))\simeq\begin{xy}
				0;<4pt,0pt>:<0pt,4pt>::
				(0,-5)="0",
				(-5,0)="1",
				(0,0)*{\bullet},
				(0,0)="2",
				(5,0)="3",
				(0,5)="4",
				(15,-5)="5",
				(-15,5)="6",
				(0,-4)="a",
				(4,-1.33)="b",
				(5,-0.85)*{{\scriptstyle +}},
				(-5,0.85)*{{\scriptstyle -}},
				(7,1)*{},
				(0,6.7)*{},
				\ar@{-}"0";"1",
				\ar@{-}"3";"5",
				\ar@{-}"2";"0",
				\ar@{-}"2";"1",
				\ar@{-}"2";"3",
				\ar@{-}"2";"5",
				\ar@{-}"6";"1",
				\ar@{-}"6";"2",
				\ar@{-}"6";"3",
				\ar@/^-2mm/@{.} "a";"b",
			\end{xy}}
			\]
The algebra $\rho(\Lambda)$ is consturcted by
\[E:=\End_{\Kb(\proj \Lambda)}(\mu_2^-(\Lambda))\ \mbox{ and }\ \rho(\Lambda):=\left[\begin{array}{cc}
e_1Ee_1&0\\ e_2Ee_1&e_2Ee_2
\end{array}\right],\]
where $e_1\in E$ is an idempotent corresponding to the direct summand $e_1\Lambda$ of $\mu_2^-(\Lambda)$ and $e_2:=1-e_1\in E$.
To prove Theorem \ref{theorem:rotation}, we need the following preparation.

Let $A$ be a basic finite dimensional algebra over a field $k$ with $|A|=n$, and $1=e_1+\cdots+e_n$ the orthogonal primitive idempotents.
For $1\le i\le n$ and $\delta\in\{\pm1\}$, consider a half space
\[\R^n_{i,\delta}:=\{x_1e_1+\cdots+x_de_n\in\R^n\mid \delta x_i\ge0\}\]
and define a subfan of $\Sigma$ by
\[\Sigma_{i,\delta}:=\{\sigma\in\Sigma\mid\sigma\subset\R^n_{i,\delta}\}.\]
On the other hand, for elements $T\ge T'$ in $\silt A$, we consider the interval
\[[T',T]:=\{U\in\silt A\mid T\ge U\ge T'\}.\]
The following result provides a correspondence of a part of two $g$-fans.

\begin{proposition}\label{lower upper bij}
For $1\le i\le n$, let $B:=\End_A(\mu_i^-(A))$, where $\mu^-_i(A)=T_i\oplus(\bigoplus_{j\neq i}P_j^A)$. 
\begin{enumerate}[\rm(a)]
\item\cite[Threom 4.26]{AHIKM} There exists a triangle functor $F:\Kb(\proj A)\to\Kb(\proj B)$ which satisfies $F(T_i)\simeq P_i^B$ and $F(P_j^A)\simeq P_j^B$ for each $j\neq i$ and gives an isomorphism $K_0(\proj A)\simeq K_0(\proj B)$ 
and an isomorphism of fans
\[\Sigma_{i,-}(A)\simeq\Sigma_{i,+}(B).\]
\item There are isomorphisms $(1-e_i)A(1-e_i)\simeq(1-e_i)B(1-e_i)$ and $A/(1-e_i)\simeq B/(1-e_i)$ of $k$-algebras.
\end{enumerate}
\end{proposition}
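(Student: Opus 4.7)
Part (a) is cited from \cite[Theorem 4.26]{AHIKM}, so my plan focuses on (b). Setting $Q:=\bigoplus_{j\ne i}P_j^A$, I will use the triangle equivalence $F$ from (a), which satisfies $F(T_i)\simeq P_i^B$ and $F(P_j^A)\simeq P_j^B$ for $j\ne i$, together with the exchange triangle
\[
P_i^A\xrightarrow{f} U_i\xrightarrow{g} T_i\xrightarrow{h} P_i^A[1]\qquad(U_i\in\add Q)
\]
coming from the left mutation $\mu_i^-(A)=T_i\oplus Q$ (Proposition~\ref{mutation=exchange}).

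For the first isomorphism, I will write $A=\End_A(P_i^A\oplus Q)$ and $B=\End_A(T_i\oplus Q)$ in matrix form with respect to these decompositions. The $(1-e_i)$-corner then reads $(1-e_i)A(1-e_i)=\End_A(Q)$ and $(1-e_i)B(1-e_i)=\End_B(\bigoplus_{j\ne i}P_j^B)$, and these are identified via the fully faithfulness of $F$ on $\add Q$.

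For the second isomorphism, the same matrix description gives $A/A(1-e_i)A\simeq\End_A(P_i^A)/\mathcal{I}(P_i^A)$ and $B/B(1-e_i)B\simeq\End_A(T_i)/\mathcal{I}(T_i)$, where $\mathcal{I}(X)$ denotes the two-sided ideal of $\End_A(X)$ consisting of endomorphisms factoring through an object of $\add Q$ (on the $B$-side, transport via $F$ using $F(T_i)\simeq P_i^B$). My plan is to identify both quotients with the same group $\Hom_{\Kb(\proj A)}(T_i,P_i^A[1])$. Applying $\Hom(-,P_i^A)$ to the exchange triangle, the vanishings $\Ext^1(P_i^A,P_i^A)=\Ext^1(U_i,P_i^A)=0$ (projectivity of $P_i^A$ and $U_i$) yield an isomorphism
\[
\End_A(P_i^A)/\mathcal{I}(P_i^A)\xrightarrow{\sim}\Hom_{\Kb(\proj A)}(T_i,P_i^A[1]),\qquad \alpha\mapsto\alpha[1]\circ h.
\]
Dually, applying $\Hom(T_i,-)$ and using $\Ext^1(T_i,T_i)=\Ext^1(T_i,U_i)=0$ (which follow from the presilting property of $\mu_i^-(A)=T_i\oplus Q$) will give
\[
\End_A(T_i)/\mathcal{I}(T_i)\xrightarrow{\sim}\Hom_{\Kb(\proj A)}(T_i,P_i^A[1]),\qquad\gamma\mapsto h\circ\gamma.
\]
Composing these two isomorphisms provides the desired bijection on the nose.

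The main obstacle will be upgrading this bijection of abelian groups to a $k$-algebra isomorphism. I plan to realize the same map via the intrinsically multiplicative assignment $\alpha\mapsto\overline{\gamma}$, where $\gamma\in\End_A(T_i)$ is any morphism completing $\alpha$ to a morphism of the exchange triangle to itself; the defining relation $h\circ\gamma=\alpha[1]\circ h$ is precisely the matching of the two boundary maps above. Well-definedness (independence of the choice of $\gamma$, which differs by a map factoring through $U_i\in\add Q$) and multiplicativity will then follow by standard triangulated-category manipulations, completing the proof.
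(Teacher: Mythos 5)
Your proposal is correct and follows essentially the same route as the paper: both identify $A/(1-e_i)$ and $B/(1-e_i)$ with $\End_{\Kb(\proj A)}(P_i^A)$ and $\End_{\Kb(\proj A)}(T_i)$ modulo morphisms factoring through $\add\bigl(\bigoplus_{j\neq i}P_j^A\bigr)$, and define the isomorphism $a\mapsto b$ via the exchange triangle through the relation $h\circ b=a[1]\circ h$ obtained by completing $a$ to a morphism of triangles. Your intermediate identification of both quotients with $\Hom_{\Kb(\proj A)}(T_i,P_i^A[1])$ (using the approximation properties of $f$ and $g$ and the vanishing of the relevant $\Hom(-,-[1])$ groups) just makes explicit the bijectivity check the paper leaves as routine, while the first isomorphism is, as you note, immediate since both corners equal $\End_{\Kb(\proj A)}\bigl(\bigoplus_{j\neq i}P_j^A\bigr)$.
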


\begin{proof}
(b) Although this is known to experts, we give a proof for convenience of the reader. The first isomorphism is clear. To prove the second one, notice that $A/(1-e_i)=\End_{\Kb(\proj A)}(P_i^A)/[A/P_i^A]$ and $B/(1-e_i)=\End_{\Kb(\proj A)}(T_i)/[T/T_i]$ hold, where $[X]$ denotes the ideal consisting of morphisms factoring through $\add X$.
Let $P_i \xrightarrow{f} Q \xrightarrow{g} T_i\xrightarrow{h} P_i[1]$ be an exchange triangle. Let $a\in e_iAe_i=\End_{\Kb(\proj A)}(P_i)$. 
Since $f$ is a minimal left $(\add A/P_i)$-approximation of $P_i$,
	we obtain the following commutative diagram.
	\[
	\xymatrix@R=1.5em{
		P_i  \ar[r]^f \ar[d]^a & Q \ar[r]^g \ar[d] & T_i \ar[r]^h\ar[d]^b&P_i[1]\ar[d]^{a[1]}\\
		P_i  \ar[r]^f & Q \ar[r]^g & T_i\ar[r]^h & P_i[1]}
	\]
	It is routine to check that the desired isomorphism $A/(1-e_i)A=\End_{\Kb(\proj A)}(P_i^A)/[A/P_i^A]\simeq B/(1-e_i)=\End_{\Kb(\proj A)}(T_i)/[T/T_i]$ is given by $a\mapsto b$.	
\end{proof}

We are ready to prove Theorem \ref{theorem:rotation}.

\begin{proof}[Proof of Theorem \ref{theorem:rotation}]
Let $T=P_1^\Lambda\oplus T_2:=\mu_2^-(\Lambda)$. Then $E=\End_{\Kb(\proj \Lambda)}(T)$.
	By Proposition \ref{lower upper bij}(a), we have a triangle functor $F:\Kb(\proj \Lambda)\to \Kb(\proj E)$ which satisfies
	\begin{equation*}
	F(P_1^\Lambda)=P_1^E\ \mbox{ and }\ F(T_2)=P_2^E
	\end{equation*}
	and induces an isomorphism $F:K_0(\proj \Lambda)\simeq K_0(\proj E)$ and an isomorphism of fans
	\begin{eqnarray*}
	&F\colon  
	\Sigma_{2,-}(\Lambda)
	\simeq \Sigma_{2,+}(E).&
    \end{eqnarray*}
    \begin{equation*}
    \begin{array}{ccc}
    {\begin{xy}
				0;<4pt,0pt>:<0pt,4pt>::
				(0,-5)="0",
				(-5,0)="1",
				(0,0)*{\bullet},
				(0,0)="2",
				(5,0)="3",
				(0,5)="4",
				(15,-5)="5",
				(0,-4)="a",
				(4,-1.33)="b",
		(0,9)*{\Sigma(\Lambda)},
		(0,6.5)*{{\scriptstyle P_2^\Lambda}},
		(7,0.8)*{{\scriptstyle P_1^\Lambda}},
		(15.5,-6)*{{\scriptstyle T_2}},
				(12,-1.5)*{{\scriptstyle T}},
				(1.5,1.5)*{{\scriptstyle +}},
				(-1.5,-1.5)*{{\scriptstyle -}},
				(7,1)*{},
				(0,6.7)*{},
				\ar@{-}"0";"1",
				\ar@{-}"1";"4",
				\ar@{-}"3";"5",
				\ar@{-}"3";"4",
				\ar@{-}"2";"0",
				\ar@{-}"2";"1",
				\ar@{-}"2";"3",
				\ar@{-}"2";"4",
				\ar@{-}"2";"5",
				\ar@/^-2mm/@{.} "a";"b",
		\end{xy}}
		&{\begin{xy}
				0;<4pt,0pt>:<0pt,4pt>::
				(0,-5)="0",
				(-5,0)="1",
				(0,0)*{\bullet},
				(0,0)="2",
				(5,0)="3",
				(0,5)="4",
				(15,-5)="5",
				(-15,5)="6",
				(0,-4)="a",
				(4,-1.33)="b",
				(4,0)="c",
				(-4,1.33)="d",
		(0,9)*{\Sigma(E)},
		(-15,6.5)*{{\scriptstyle P_2^E[1]}},
		(7,0.8)*{{\scriptstyle P_1^E}},
		(15.5,-6)*{{\scriptstyle P_2^E}},
		(-8,-1)*{{\scriptstyle P_1^E[1]}},
				(5,-0.85)*{{\scriptstyle +}},
				(-5,0.85)*{{\scriptstyle -}},
				(12,-1.5)*{{\scriptstyle E}},
				(-12,1.5)*{{\scriptstyle E[1]}},
				(7,1)*{},
				(0,6.7)*{},
				\ar@{-}"0";"1",
				\ar@{-}"3";"5",
				\ar@{-}"2";"0",
				\ar@{-}"2";"1",
				\ar@{-}"2";"3",
				\ar@{-}"2";"5",
				\ar@{-}"6";"1",
				\ar@{-}"6";"2",
				\ar@/^-2mm/@{.} "a";"b",
				\ar@/^-4mm/@{.} "c";"d",
			\end{xy}}
		&{\begin{xy}
				0;<4pt,0pt>:<0pt,4pt>::
				(0,-5)="0",
				(-5,0)="1",
				(0,0)*{\bullet},
				(0,0)="2",
				(5,0)="3",
				(0,5)="4",
				(15,-5)="5",
				(-15,5)="6",
				(0,-4)="a",
				(4,-1.33)="b",
		(0,9)*{\Sigma(\Gamma)},
		(-15,6.5)*{{\scriptstyle P_2^{\Gamma}[1]}},
		(7,0.8)*{{\scriptstyle P_1^{\Gamma}}},
		(15.5,-6)*{{\scriptstyle P_2^{\Gamma}}},
		(-8,-1)*{{\scriptstyle P_1^{\Gamma}[1]}},
				(5,-0.85)*{{\scriptstyle +}},
				(-5,0.85)*{{\scriptstyle -}},
				(12,-1.5)*{{\scriptstyle \Gamma}},
				(-12,1.5)*{{\scriptstyle \Gamma[1]}},
				(7,1)*{},
				(0,6.7)*{},
				\ar@{-}"0";"1",
				\ar@{-}"3";"5",
				\ar@{-}"2";"0",
				\ar@{-}"2";"1",
				\ar@{-}"2";"3",
				\ar@{-}"2";"5",
				\ar@{-}"6";"1",
				\ar@{-}"6";"2",
				\ar@{-}"6";"3",
				\ar@/^-2mm/@{.} "a";"b",
			\end{xy}}
			\end{array}
			\end{equation*}
The algebra $\Gamma=\rho(\Lambda)$ is $E_{-+}$ in terms of Definition \ref{define A_epsilon}. Thus Theorem \ref{sign decomposition} implies
		$$\Sigma_{-+}(\Gamma)=\Sigma_{-+}(E).$$
Under the isomorphism $K_0(\proj\Gamma)\simeq\Z^2$ given by $P_1^\Gamma\mapsto(0,1)$ and $P_2^\Gamma\mapsto(1,0)$, we obtain $\Sigma(\Gamma)=\rho(\Sigma(\Lambda))$. Therefore $\rho(\Lambda):=\Gamma$ satisfies the desired condition. 

It remains to prove the last assertion. By Proposition \ref{lower upper bij}(a), we have isomorphisms $e_1Ee_1\simeq e_1\Lambda e_1$ and $\Lambda/(e_1)\simeq E/(e_1)$ of $k$-algebras. Thus, if $\Lambda$ is elementary, then so are $E$ and $\Gamma$.
\end{proof}

We give two examples of Theorem \ref{theorem:rotation}. The first one satisfies $E=\Gamma$.

\begin{example}
			\label{example:2121}
			Let $\Lambda$ be the following algebra. Then $\Sigma(\Lambda)$ is the following fan by Example \ref{example:1212} below.
        \[\Lambda= \dfrac{k\left[\begin{xy}( 0,0) *+{1}="1",
				( 8,0) *+{2}="2",\ar "1";"2"^a  \ar @(lu,ld)"1";"1"_{b}  
			\end{xy}\right]}{\langle b^2\rangle}\ \ \ \ \ 		\Sigma(\Lambda)=\Sigma_{1212}=\begin{tikzpicture}[baseline=-6mm]
				\coordinate(0) at(0:0);
				\coordinate(x) at(0:0.66);
				\coordinate(y) at(90:0.66);
				\coordinate(shift1212) at (0);
                \node at(0.2,0.2) {$\scriptstyle +$};
                \node at(-0.2,-0.2) {$\scriptstyle -$};
				\coordinate(v0) at($0*(x)+0*(y)+(shift1212)$); 
				\coordinate(v1) at($1*(x)+0*(y)+(shift1212)$); 
				\coordinate(v2) at($1*(x)+-1*(y)+(shift1212)$); 
				\coordinate(v3) at($1*(x)+-2*(y)+(shift1212)$); 
				\coordinate(v4) at($0*(x)+-1*(y)+(shift1212)$); 
				\draw[fill=black] (v0) circle [radius = 0.55mm]; 
				\draw (v0)--(v1); 
				\draw (v0)--(v2); 
				\draw (v0)--(v3); 
				\draw (v0)--(v4); 
				\draw (v1)--(v2)--(v3)--(v4); 
				\coordinate(w1) at($1*(x)+0*(y)+(shift1212)$); 
				\coordinate(w2) at($0*(x)+-1*(y)+(shift1212)$); 
				\coordinate(w3) at($-1*(x)+0*(y)+(shift1212)$); 
				\coordinate(w4) at($0*(x)+1*(y)+(shift1212)$); 
				\draw (v0)--(w3); 
				\draw (v0)--(w4);
				\draw (w3)--(w4); 
				\draw (w1)--(w4);
				\draw (w2)--(w3); 
			\end{tikzpicture}\]
   
		We set $\mu_2(\Lambda)=T=T_1\oplus T_2:=[e_2 \Lambda\xrightarrow{a\cdot} e_1 \Lambda]\oplus e_1 \Lambda$
		and $E:=\End_{\Kb(\proj \Lambda)}(T)$. Then, we have 
		\[
		\Gamma=E= \dfrac{k\left[\begin{xy}( 0,0) *+{1}="1",
				( 8,0) *+{2}="2",\ar "1";"2"^a  \ar @(ru,rd)"2";"2"^{b}  
			\end{xy}\right]}{\langle b^2\rangle}
		\ \text{and }\ \Sigma(\Gamma)=\rho(\Sigma(\Lambda))=\Sigma_{2121}=\begin{tikzpicture}[baseline=-4mm]
				\coordinate(0) at(0:0);
				\coordinate(x) at(0:0.66);
				\coordinate(y) at(90:0.66);
                \node at(0.2,0.2) {$\scriptstyle +$};
                \node at(-0.2,-0.2) {$\scriptstyle -$};
				\coordinate(shift2121) at (0);
				\coordinate(v0) at($0*(x)+0*(y)+(shift2121)$); 
				\coordinate(v1) at($1*(x)+0*(y)+(shift2121)$); 
				\coordinate(v2) at($2*(x)+-1*(y)+(shift2121)$); 
				\coordinate(v3) at($1*(x)+-1*(y)+(shift2121)$); 
				\coordinate(v4) at($0*(x)+-1*(y)+(shift2121)$); 
				\draw[fill=black] (v0) circle [radius = 0.55mm]; 
				\draw (v0)--(v1); 
				\draw (v0)--(v2); 
				\draw (v0)--(v3); 
				\draw (v0)--(v4); 
				\draw (v1)--(v2)--(v3)--(v4); 
				\coordinate(w1) at($1*(x)+0*(y)+(shift2121)$); 
				\coordinate(w2) at($0*(x)+-1*(y)+(shift2121)$); 
				\coordinate(w3) at($-1*(x)+0*(y)+(shift2121)$); 
				\coordinate(w4) at($0*(x)+1*(y)+(shift2121)$); 
				\draw (v0)--(w3); 
				\draw (v0)--(w4);
				\draw (w3)--(w4); 
				\draw (w1)--(w4);
				\draw (w2)--(w3); 
			\end{tikzpicture}
		\]
		\end{example}

The second example satisfies $E\neq \Gamma$. 
\begin{example}\label{example:13122}
Let $\Lambda$ be the following algebra. Then $\Sigma(\Lambda)$ is the following fan by Example \ref{example:21312} below.
\[\Lambda=\dfrac{
	k\left[\begin{xy}( 0,0) *+{1}="1",
		( 8,0) *+{2}="2",\ar "1";"2"^a  \ar @(lu,ld)"1";"1"_{b}  \ar @(ru,rd)"2";"2"^{c}
	\end{xy}\right]}{\langle b^2, c^2, b ac\rangle}\ \ \ \ \ 
\Sigma(\Lambda)=\Sigma_{21312}=
\begin{tikzpicture}[baseline=-6mm]
		\coordinate(0) at(0:0);
		\coordinate(x) at(0:0.66);
		\coordinate(y) at(90:0.66);
        \node at(0.2,0.2) {$\scriptstyle +$};
        \node at(-0.2,-0.2) {$\scriptstyle -$};
		\coordinate(shift21312) at (0:0);
		\coordinate(v0) at($0*(x)+0*(y)+(shift21312)$); 
		\coordinate(v1) at($1*(x)+0*(y)+(shift21312)$); 
		\coordinate(v2) at($2*(x)+-1*(y)+(shift21312)$); 
		\coordinate(v3) at($1*(x)+-1*(y)+(shift21312)$); 
		\coordinate(v4) at($1*(x)+-2*(y)+(shift21312)$); 
		\coordinate(v5) at($0*(x)+-1*(y)+(shift21312)$); 
		\draw[fill=black] (v0) circle [radius = 0.55mm]; 
		\draw (v0)--(v1); 
		\draw (v0)--(v2); 
		\draw (v0)--(v3); 
		\draw (v0)--(v4); 
		\draw (v0)--(v5); 
		\draw (v1)--(v2)--(v3)--(v4)--(v5); 
		\coordinate(w1) at($1*(x)+0*(y)+(shift21312)$); 
		\coordinate(w2) at($0*(x)+-1*(y)+(shift21312)$); 
		\coordinate(w3) at($-1*(x)+0*(y)+(shift21312)$); 
		\coordinate(w4) at($0*(x)+1*(y)+(shift21312)$); 
		\draw (v0)--(w3); 
		\draw (v0)--(w4);
		\draw (w3)--(w4); 
		\draw (w1)--(w4);
		\draw (w2)--(w3); 
	\end{tikzpicture}	
	\]
 
We set $\mu_2(\Lambda)=T=T_1\oplus T_2:=[e_2 \Lambda\xrightarrow{\left(\begin{smallmatrix} a\cdot \\ ac\cdot \end{smallmatrix}\right)} e_1 \Lambda^{\oplus 2}]\oplus e_1 \Lambda$ and $E:=\End_{\Kb(\proj \Lambda)}(T)$, where we switch the indices $1$ and $2$ unlike the proof of Theorem \ref{theorem:rotation}. Then, we have 
\[
E=\dfrac{k\left[\begin{xy}( 0,0) *+{1}="1",
		( 8,0) *+{2}="2",\ar@<2pt> "1";"2"^a \ar@<2pt> "2";"1"^{a'}   \ar @(lu,ld)"1";"1"_{b} 
	\end{xy}\right]}{\langle b^2, a'b, a'aa'\rangle}\ \mbox{ and }\ 
\Sigma(E)=\Sigma_{13122;111}=
\begin{tikzpicture}[baseline=-8mm]
	\coordinate(0) at(0:0);
	\coordinate(x) at(0:0.66);
	\coordinate(y) at(90:0.66);
    \node at(0.2,0.2) {$\scriptstyle +$};
    \node at(-0.2,-0.2) {$\scriptstyle -$};
	\coordinate(shift13122) at (0);
	\coordinate(v0) at($0*(x)+0*(y)+(shift13122)$); 
	\coordinate(v1) at($1*(x)+0*(y)+(shift13122)$); 
	\coordinate(v2) at($1*(x)+-1*(y)+(shift13122)$); 
	\coordinate(v3) at($2*(x)+-3*(y)+(shift13122)$); 
	\coordinate(v4) at($1*(x)+-2*(y)+(shift13122)$); 
	\coordinate(v5) at($0*(x)+-1*(y)+(shift13122)$); 
	\draw[fill=black] (v0) circle [radius = 0.55mm]; 
	\draw (v0)--(v1); 
	\draw (v0)--(v2); 
	\draw (v0)--(v3); 
	\draw (v0)--(v4); 
	\draw (v0)--(v5); 
	\draw (v1)--(v2)--(v3)--(v4)--(v5); 
	\coordinate(w1) at($1*(x)+0*(y)+(shift13122)$); 
	\coordinate(w2) at($0*(x)+-1*(y)+(shift13122)$); 
	\coordinate(w3) at($-1*(x)+0*(y)+(shift13122)$);
	\coordinate(w4) at($-1*(x)+1*(y)+(shift13122)$); 
	\coordinate(w5) at($0*(x)+1*(y)+(shift13122)$); 
	\draw (v0)--(w3); 
	\draw (v0)--(w4);
	\draw (v0)--(w5);
	\draw (w3)--(w4);
	\draw (w4)--(w5); 
	\draw (w1)--(w5);
	\draw (w2)--(w3);
\end{tikzpicture}
\] 
where new arrows $a$, $a'$ and $b$ are morphisms in $\Kb(\proj \Lambda)$ given by commutative diagrams 
\[
\begin{xy}
	(0,0)*+{0}="1", (20,0)*+{e_1 \Lambda}="2",
	(0,-10)*+{e_2 \Lambda}="3", (20,-10)*+{e_1 \Lambda^{\oplus 2}}="4",
	\ar "1";"2"
	\ar "1";"3"
	\ar "3";"4"^{\left(\begin{smallmatrix} a\cdot \\ ac\cdot  \end{smallmatrix}\right)}
	\ar "2";"4"^{\left(\begin{smallmatrix}0 \\ 1 \end{smallmatrix}\right)}
\end{xy}\hspace{10pt}
\begin{xy}
	(0,0)*+{e_2 \Lambda}="1", (20,0)*+{e_1 \Lambda^{\oplus 2}}="2",
	(0,-10)*+{0}="3", (20,-10)*+{e_1 \Lambda}="4",
	\ar "1";"2"^{\left(\begin{smallmatrix}a\cdot\\ ac\cdot \end{smallmatrix}\right)}
	\ar "1";"3"
	\ar "3";"4"
	\ar "2";"4"^{\left(\begin{smallmatrix}0 & b\cdot \end{smallmatrix}\right)}
\end{xy}
\hspace{10pt}
\begin{xy}
	(0,0)*+{e_2 \Lambda}="1", (20,0)*+{e_1 \Lambda^{\oplus 2}}="2",
	(0,-10)*+{e_2 \Lambda}="3", (20,-10)*+{e_1 \Lambda^{\oplus 2}}="4",
	\ar "1";"2"^{\left(\begin{smallmatrix} a\cdot \\ ac\cdot \end{smallmatrix}\right)}
	\ar "1";"3"_{c\cdot}
	\ar "3";"4"^{\left(\begin{smallmatrix} a\cdot \\ ac\cdot  \end{smallmatrix}\right)}
	\ar "2";"4"^{\left(\begin{smallmatrix}0 & 1 \\ 0 & 0 \end{smallmatrix}\right)}
\end{xy}
\]
respectively. Let $\Gamma:=E_{+-}=\left[\begin{array}{cc} e_1 E e_1 & e_1 E e_2 \\ 0 & e_2 E e_2 \end{array}\right]=\left[\begin{array}{cc} \langle e_1,b,aa',baa'\rangle_k & \langle a,ba,aa'a,baa'a\rangle_k \\ 0 & \langle e_2,a'a\rangle_k \end{array}\right]$. 
Then, we have
\[
\Gamma=\dfrac{k\left[\begin{xy}( 0,0) *+{1}="1",
 		( 8,0) *+{2}="2",\ar "1";"2"^a  \ar @(ru,rd)"2";"2"^{c}  \ar @(dl,dr)"1";"1"_(.2){b} \ar @ (ul,ur)"1";"1"^(.2){b'}
 	\end{xy}\right]}{\langle b^2, b'^2, c^2, b'b,b'a-ac\rangle}\ \mbox{ and }\ \Sigma(\Gamma)=\rho(\Sigma(\Lambda))=\Sigma_{13122}=\begin{tikzpicture}[baseline=-8mm]
	\coordinate(0) at(0:0);
	\coordinate(x) at(0:0.66);
	\coordinate(y) at(90:0.66);
    \node at(0.2,0.2) {$\scriptstyle +$};
    \node at(-0.2,-0.2) {$\scriptstyle -$};
	\coordinate(shift13122) at (0);
	\coordinate(v0) at($0*(x)+0*(y)+(shift13122)$); 
	\coordinate(v1) at($1*(x)+0*(y)+(shift13122)$); 
	\coordinate(v2) at($1*(x)+-1*(y)+(shift13122)$); 
	\coordinate(v3) at($2*(x)+-3*(y)+(shift13122)$); 
	\coordinate(v4) at($1*(x)+-2*(y)+(shift13122)$); 
	\coordinate(v5) at($0*(x)+-1*(y)+(shift13122)$); 
	\draw[fill=black] (v0) circle [radius = 0.55mm]; 
	\draw (v0)--(v1); 
	\draw (v0)--(v2); 
	\draw (v0)--(v3); 
	\draw (v0)--(v4); 
	\draw (v0)--(v5); 
	\draw (v1)--(v2)--(v3)--(v4)--(v5); 
	\coordinate(w1) at($1*(x)+0*(y)+(shift13122)$); 
	\coordinate(w2) at($0*(x)+-1*(y)+(shift13122)$); 
	\coordinate(w3) at($-1*(x)+0*(y)+(shift13122)$); 
	\coordinate(w4) at($0*(x)+1*(y)+(shift13122)$); 
	\draw (v0)--(w3); 
	\draw (v0)--(w4);
	\draw (w3)--(w4); 
	\draw (w1)--(w4);
	\draw (w2)--(w3); 
\end{tikzpicture}
\]
where $b':=aa'$ and $c:=a'a$.
\end{example}

\subsection{Subdivision and Extension}
\label{subsect:subdivisions of g-fans}

In this section, we realize subdivisions of $g$-fans of rank 2 by extensions of algebras. 
The following is a main result of this section, where we identify $K_0(\proj\Lambda)$ with $\Z^2$ by $e_1\Lambda=(1,0)$ and $e_2\Lambda=(0,1)$.

\begin{theorem}[Subdivision Theorem]\label{theorem:subdivision}
	Let $\Lambda$ be a finite dimensional elementary $k$-algebra 
	with orthogonal primitive idempotents $1=e_1+e_2$. Assume $e_1\Lambda e_2=0$, or equivalently, $\Sigma(\Lambda)\in\itscfanpm$ (Proposition \ref{+- condition}). 
	Then, for $T\in\{\mu_1^{+}(\Lambda[1]),\mu_2^{-}(\Lambda)\}$, there exists a finite dimensional elementary $k$-algebra $D_T(\Lambda)$ such that
	\[\Sigma(D_T(\Lambda)) = D_{C(T)}(\Sigma(\Lambda)).\]
\end{theorem}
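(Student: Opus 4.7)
The plan is to reduce to $T = \mu_2^-(\Lambda)$ (the other case follows from Proposition~\ref{sr sequence}(b) combined with Rotation Theorem~\ref{theorem:rotation}, since $\sigma_1 = C(\mu_2^-(\Lambda))$ and $\sigma_{n-3} = C(\mu_1^+(\Lambda[1]))$ are related by $D_{\sigma_1} = \rho^{n-4} \circ D_{\sigma_{n-3}} \circ \rho^2$) and to construct $D_T(\Lambda)$ explicitly as a square-zero extension of $\Lambda$. Write $\Lambda = \left[\begin{smallmatrix} A & X \\ 0 & B\end{smallmatrix}\right]$ with $\ell := t_A(X)$ and fix minimal left $A$-generators $h_1, \ldots, h_\ell \in X$, so that $C(T) = \cone\{(1,0),(\ell,-1)\}$ by Proposition~\ref{first mutation}.

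The construction: adjoin a formal square-zero ``loop'' $c$ at vertex $2$, defining $B^+ := B \oplus Bc$ (trivial extension of $B$ by itself as a left $B$-module, with $c^2 = 0$ and $cJ_B = 0$) and $X^+ := X \oplus Ah_\ell c$, where the right action of $c$ on $X$ satisfies $h_i c = 0$ for $i < \ell$ while $h_\ell c$ generates a cyclic $A$-module summand. Set $D_T(\Lambda) := \left[\begin{smallmatrix} A & X^+ \\ 0 & B^+\end{smallmatrix}\right]$. By construction, $D_T(\Lambda)$ is finite-dimensional and elementary, $e_1 D_T(\Lambda) e_2 = 0$, $t_A(X^+) = \ell+1$ (new generator $h_\ell c$), and $t(X^+)_{B^+} = t(X)_B$ (since $X^+ J_{B^+} = XJ_B \oplus Ah_\ell c$, yielding $X^+/X^+J_{B^+} \cong X/XJ_B$).

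Verification of $\Sigma(D_T(\Lambda)) = D_{C(T)}(\Sigma(\Lambda))$ proceeds via sign decomposition (Proposition~\ref{sign decomposition}). For each $\epsilon \ne (+,-)$, the new elements $c$ and $h_\ell c$ lie inside the ideal $I_\epsilon$ of Definition~\ref{define A_epsilon}, so $(D_T(\Lambda))_\epsilon/I_\epsilon \cong \Lambda_\epsilon/I_\epsilon$ and the corresponding sign-coherent pieces of the $g$-fans coincide. For $\epsilon = (+,-)$, Proposition~\ref{first mutation}(c) gives $\mu_2^-(D_T(\Lambda)) = P_1 \oplus P_{h^+}$ with $[P_{h^+}] = (\ell+1,-1)$, supplying the new ray; the subsequent mutation $\mu_1^+(P_1 \oplus P_{h^+})$ then produces the cone $\cone\{(\ell+1,-1),(\ell,-1)\}$. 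The main obstacle is to show that every other 2-term presilting complex of $\Lambda$ in $\Sigma_{+-}(\Lambda) \setminus C(T)$ lifts to a 2-term presilting complex of $D_T(\Lambda)$ with the same $g$-vector (via Proposition~\ref{Ext 1}(b) and the matrix equation $M_{s,t}(X^+) = M_s(A)x + xM_t(B^+)$ reducing to its analogue over $(A,B)$), while simultaneously excluding spurious new rays in $\Sigma_{+-}(D_T(\Lambda))$; this last point uses the full-rank criterion of Proposition~\ref{proposition:full rank}(b), which constrains the shape of any presilting matrix with entries in $X^+$ via the surjection $\pi$ onto $B^+/J_{B^+} = k$, forcing the admissible $g$-vectors to be exactly those appearing in $D_{C(T)}(\Sigma(\Lambda))$.
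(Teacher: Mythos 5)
Your overall architecture (an explicit algebra extension, the two new cones via Proposition \ref{first mutation}, preservation of the remaining cones, sign decomposition for the quadrants other than $(+,-)$) parallels the paper's, but your construction is not the paper's and, as stated, it fails. For $T=\mu_2^-(\Lambda)$ the paper's (mirrored) construction enlarges the local algebra at vertex $2$ by the trivial extension with the \emph{dual module} $D(X/J_AX)$ and adds only a one-dimensional piece to the bimodule; you instead enlarge $B$ by a copy of itself, $B^+=B\oplus Bc$, and add a cyclic $A$-module $Ah_\ell c$ to $X$, without specifying the annihilator of $h_\ell c$ (so the algebra is not pinned down, and it depends on the choice and ordering of the minimal generators $h_1,\dots,h_\ell$). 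Test it on $\Lambda=k[1\xrightarrow{a}2,\ b\text{ a loop at }1]/(b^2)$, whose $g$-fan is $\Sigma_{1212}$: here $A=k[b]/(b^2)$, $B=k$, $X=Aa\cong A$, $\ell=1$, $C(T)=\cone\{(1,0),(1,-1)\}$ and $D_{C(T)}(\Sigma(\Lambda))=\Sigma_{21312}$, which contains the ray $(1,-1)$. Since $\Ann_A(h_1)=0$, the natural reading of ``$Ah_\ell c$ is a cyclic $A$-module summand'' gives $Ah_1c\cong A$, i.e.\ your algebra is $k[a,b,c]/(b^2,c^2)$ with \emph{no} relation $bac$; then $e_1D_T(\Lambda)e_2=\langle a,ba,ac,bac\rangle$ is $4$-dimensional while $\dim_k(Ax+xB^+)\le 2+2-1=3$ for every single element $x$, so by Proposition \ref{Ext 1}(b) there is no presilting complex with $g$-vector $(1,-1)$ and the ray $(1,-1)$ is missing: $\Sigma(D_T(\Lambda))\neq\Sigma_{21312}$. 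The correct algebra here requires the extra relation $bac=0$, i.e.\ the added piece must be the dual-module-sized one; the paper's trivial extension by $D\overline{X}$ builds this in automatically, and the fact that your output flips between a right and a wrong algebra depending on an unspecified annihilator shows the construction is not well defined as written.

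Even with the construction repaired, the step you label the ``main obstacle'' is exactly where the paper's proof lives, and your sketch does not close it. To lift a presilting $P_x$, $x\in M_{s,t}(X)$ with $s\le \ell t$, one must produce all of $M_{s,t}(N)$ (the new components) inside $M_s(A)x+xM_t(B^+)$; with $B^+=B\oplus Bc$ the only new right multiplications available are $x\mapsto h_\ell^*(xb)$ for $b\in B$, whereas the full-rank criterion you invoke (Proposition \ref{proposition:full rank 2}) controls all $\ell$ coordinate functionals of $\pi(x)$ simultaneously and does not imply surjectivity of the single-functional family. The paper's construction is engineered precisely so that the whole dual space acts, and the dual-basis/evaluation-map computation of Proposition \ref{indecomposable rigid g-vector}(c), together with the quotient argument $\Gamma/I\cong\Lambda$ excluding spurious rays, is the argument your phrase ``reducing to its analogue over $(A,B)$'' would have to replace. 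Finally, your reduction of the $\mu_1^+$ case to the $\mu_2^-$ case via Proposition \ref{sr sequence}(b) and Theorem \ref{theorem:rotation} only makes sense when $\Sigma(\Lambda)$ is complete (it uses $n=\#\Sigma_2$ and positive powers of $\rho$ standing in for $\rho^{-1}$), but Theorem \ref{theorem:subdivision} does not assume $g$-finiteness; the paper instead proves the $\mu_1^+$ case directly and obtains the other case by passing to the opposite algebra.
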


Theorem \ref{theorem:subdivision} is explained by the following picture, where $T=\mu_1^{+}(\Lambda[1])$ and $T'=\mu_2^{-}(\Lambda)$.
\[{\begin{xy}
				0;<4pt,0pt>:<0pt,4pt>::
				(0,-5)="0",
				(-5,0)="1",
				(0,0)*{\bullet},
				(0,0)="2",
				(5,0)="3",
				(0,5)="4",
				(15,-5)="5",
				(5,-15)="6",
				(2,-6)="a",
				(6,-2)="b",
		(0,6.5)*{{\scriptstyle P_2}},
		(7,0.5)*{{\scriptstyle P_1}}, 
		(-2.5,-6)*{{\scriptstyle P_2[1]}},
		(-7.3,0)*{{\scriptstyle P_1[1]}},
		(0,9)*{\Sigma(\Lambda)},
				(1.5,1.5)*{{\scriptstyle +}},
				(-1.5,-1.5)*{{\scriptstyle -}},
				(12.5,-1.5)*{{\scriptstyle\mu_2^-(\Lambda)}},
				(-1,-12)*{{\scriptstyle\mu_1^+(\Lambda[1])}},
				(7,1)*{},
				(0,6.7)*{},
				\ar@{-}"0";"1",
				\ar@{-}"1";"4",
				\ar@{-}"3";"5",
				\ar@{-}"3";"4",
				\ar@{-}"2";"0",
				\ar@{-}"2";"1",
				\ar@{-}"2";"3",
				\ar@{-}"2";"4",
				\ar@{-}"2";"5",
				\ar@{-}"2";"6",
				\ar@{-}"0";"6",
				\ar@/^-2mm/@{.} "a";"b",
		\end{xy}}\ \ \ \ \ 
{\begin{xy}
				0;<4pt,0pt>:<0pt,4pt>::
				(0,-5)="0",
				(-5,0)="1",
				(0,0)*{\bullet},
				(0,0)="2",
				(5,0)="3",
				(0,5)="4",
				(15,-5)="5",
				(5,-15)="6",
		(5,-20)="7",
				(2,-6)="a",
				(6,-2)="b",
				(1.5,1.5)*{{\scriptstyle +}},
				(-1.5,-1.5)*{{\scriptstyle -}},
				(0,9)*{\Sigma(D_T(\Lambda))},
				(7,1)*{},
				(0,6.7)*{},
				\ar@{-}"0";"1",
				\ar@{-}"1";"4",
				\ar@{-}"3";"5",
				\ar@{-}"3";"4",
				\ar@{-}"2";"0",
				\ar@{-}"2";"1",
				\ar@{-}"2";"3",
				\ar@{-}"2";"4",
				\ar@{-}"2";"5",
				\ar@{-}"2";"6",
				\ar@{-}"2";"7",
				\ar@{-}"0";"7",
				\ar@{-}"6";"7",
				\ar@/^-2mm/@{.} "a";"b",
		\end{xy}}\ \ \ \ \ 
{\begin{xy}
				0;<4pt,0pt>:<0pt,4pt>::
				(0,-5)="0",
				(-5,0)="1",
				(0,0)*{\bullet},
				(0,0)="2",
				(5,0)="3",
				(0,5)="4",
				(15,-5)="5",
				(5,-15)="6",
		(20,-5)="7",
				(2,-6)="a",
				(6,-2)="b",
				(1.5,1.5)*{{\scriptstyle +}},
				(-1.5,-1.5)*{{\scriptstyle -}},
				(0,9)*{\Sigma(D_{T'}(\Lambda))},
				(7,1)*{},
				(0,6.7)*{},
				\ar@{-}"0";"1",
				\ar@{-}"1";"4",
				\ar@{-}"7";"5",
				\ar@{-}"3";"4",
				\ar@{-}"2";"0",
				\ar@{-}"2";"1",
				\ar@{-}"2";"3",
				\ar@{-}"2";"4",
				\ar@{-}"2";"5",
				\ar@{-}"2";"6",
				\ar@{-}"2";"7",
				\ar@{-}"3";"7",
				\ar@{-}"6";"0",
				\ar@/^-2mm/@{.} "a";"b",
		\end{xy}}
\]
In the rest, we consider the case $T=\mu_1^+(\Lambda[1])$ since the case $T=\mu_2^-(\Lambda)$ is the dual.
The construction of $D_T(\Lambda)$ for $T=\mu_1^+(\Lambda[1])$ is as follows: 

\begin{construction}
By Proposition \ref{+- condition}, we can write 
\[
\Lambda=\left[\begin{array}{cc}
	A&X\\ 0&B
\end{array}\right].
\]
where $A,B$ are local $k$-algebras and $X$ is an $A^{\rm op}\otimes_k B$-module. Since $\Lambda$ is elementary, we have $k\simeq A/J_A\simeq B/J_B$. 
Let
\[\overline{X}:=X/XJ_B.\]
Then the $k$-dual $D\overline{X}$ is an $A$-module, and we regard it as an $A^{\op}$-module by using the action of $k$ through the natural surjection $A\to k$. Let
\[C:=A\oplus D\overline{X}\]
be a trivial extension algebra of $A$ by $D\overline{X}$. Let
\[\overline{(\cdot)}:A\to k,\ \overline{(\cdot)}:B\to k\ \mbox{ and }\ \overline{(\cdot)}:X\to\overline{X}\]
be canonical surjections. We regard
\[Y:=\left[\begin{array}{c}k\\ X\end{array}\right]\]
as a $C^{\op}\otimes_kB$-module by
\[(a,f)\cdot\left[\begin{smallmatrix}\alpha\\ x\end{smallmatrix}\right]\cdot b:=\left[\begin{smallmatrix}\overline{a}\alpha\overline{b}+f(\overline{x})\overline{b}\\ axb\end{smallmatrix}\right]\ \mbox{ for $(a,f)\in C=A\oplus D\overline{X}$, $\left[\begin{smallmatrix}\alpha\\ x\end{smallmatrix}\right]\in Y=\left[\begin{array}{c}k\\ X\end{array}\right]$ and $b\in B$.}\] 
Then we set
\[
D_T(\Lambda):=\left[\begin{array}{cc}
	C&Y\\ 0&B
\end{array}\right].
\]
\end{construction}

In the rest of this subsection, we prove Theorem \ref{theorem:subdivision}. 
For simplicity, we set
\[\Gamma:=D_T(\Lambda)\ \mbox{ and }\ Q_1:=[C\ Y],\ Q_2:=[0\ B]\in\proj\Gamma.\]
For $y\in M_{s,t}(Y)\simeq\Hom_\Gamma(Q_2^{\oplus t},Q_1^{\oplus s})$, we define
\[Q_y:=[Q_2^{\oplus t}\xrightarrow{y(\cdot)} Q_1^{\oplus s}]\in \Kb(\proj \Gamma).\]

We fix a minimal set of generators $g_1,\ldots,g_r$ of the $B$-module $X$. Then $(\overline{g_1},\dots, \overline{g_r})$ forms a $k$-basis of $\overline{X}=X/XJ_B$. Set 
	\[
		g:=[g_1\ \cdots\ g_r]\in M_{1,r}(X)\ \mbox{ and }\ 
\overline{g}:=[\overline{g_1}\ \cdots\ \overline{g_r}]\in M_{1,r}(X/XJ_B).	\]
We need the following easy observation. 

\begin{lemma}\label{2 cones}
$\Sigma(\Gamma)$ contains $\cone\{(0,1),(1,-r-1)\}$ and $\cone\{(1,-r-1),(1,-r)\}$. More explicitly, let
\[\left[\begin{smallmatrix}0\\ g\end{smallmatrix}\right]\in M_{1,r}(Y)\ \mbox{ and }\ \left[\begin{smallmatrix}0&1\\ g&0\end{smallmatrix}\right]\in M_{1,r+1}(Y).\]
Then $Q_{\left[\begin{smallmatrix}0&1\\ g&0\end{smallmatrix}\right]}\oplus Q_2[1]$ and $Q_{\left[\begin{smallmatrix}0\\ g\end{smallmatrix}\right]}\oplus Q_{\left[\begin{smallmatrix}0&1\\ g&0\end{smallmatrix}\right]}$ belong to $\twosilt\Gamma$.
\end{lemma}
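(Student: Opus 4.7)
The plan is to verify both complexes are 2-term silting by combining the matrix characterization of presiltingness (Proposition \ref{Ext 1}(b)) with a direct indecomposability argument. Reading the $g$-vectors off the complex shapes, $Q_{\left[\begin{smallmatrix}0\\ g\end{smallmatrix}\right]}$, $Q_{\left[\begin{smallmatrix}0&1\\ g&0\end{smallmatrix}\right]}$ and $Q_2[1]$ have $g$-vectors $(1,-r)$, $(1,-(r+1))$ and $(0,-1)$ respectively, so the cones they pairwise generate are $\cone\{(0,-1),(1,-r-1)\}$ and $\cone\{(1,-r-1),(1,-r)\}$ (reading $(0,1)$ in the statement as $(0,-1)$).

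Set $y_0:=\left[\begin{smallmatrix}0\\ g\end{smallmatrix}\right]$ and $y_1:=\left[\begin{smallmatrix}0&1\\ g&0\end{smallmatrix}\right]$. For $Q_{y_1}\oplus Q_2[1]$, identify this sum with $Q_y$ for $y:=[y_1,0]\in M_{1,r+2}(Y)$, and verify
\[M_{1,r+2}(Y)=M_1(C)y+yM_{r+2}(B).\]
Right multiplication by $b\in M_{r+2}(B)$ alone suffices: the $j$-th column of $yb$ equals $\left[\begin{smallmatrix}\overline{b_{r+1,j}}\\ \sum_{i\le r}g_ib_{ij}\end{smallmatrix}\right]$, so the surjection $B\twoheadrightarrow k$ realizes any top entry in $k$ while the generators $g_1,\dots,g_r$ of $X_B$ realize any bottom entry in $X$. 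For $Q_{y_0}\oplus Q_{y_1}$, identify it with $Q_{y_0\oplus y_1}$ for the block-diagonal matrix in $M_{2,2r+1}(Y)$ and verify the analogous equation column by column; the $y_1$-columns are handled as above, while the $y_0$-columns lack the entry $\left[\begin{smallmatrix}1\\ 0\end{smallmatrix}\right]$, so right $B$-multiplication alone cannot produce a nonzero top entry. Here the trivial-extension summand $D\overline{X}\subset C$ plays its role: since $(0,f)\cdot\left[\begin{smallmatrix}0\\ g_i\end{smallmatrix}\right]=\left[\begin{smallmatrix}f(\overline{g_i})\\ 0\end{smallmatrix}\right]$ and $(\overline{g_1},\dots,\overline{g_r})$ is a $k$-basis of $\overline{X}$, a functional $f\in D\overline{X}$ can be chosen freely to produce any prescribed top row in $k^r$.

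Since $|\Gamma|=2$, silting follows from $|T|=2$; as the $g$-vectors of $Q_{y_0}$ and $Q_{y_1}$ are distinct, it suffices to prove each is indecomposable. By sign-coherence of $\Sigma(\Gamma)$ (Proposition \ref{characterize g-finite}(c)) and the primitivity of $(1,-r)$ and $(1,-(r+1))$, any nontrivial decomposition must extract a copy of $Q_2[1]$. Such a summand would force, after a change of basis by some invertible $u$ over $B$, the matrix $y_iu$ to have a zero column; but a Nakayama argument using that $g_1,\dots,g_r$ is a minimal generating set of $X_B$ shows every $B$-linear dependency among the columns of $y_i$ has all coefficients in $J_B$, so a zero column of $y_iu$ would force a column of $u$ to lie entirely in $J_B$, contradicting invertibility of $u$. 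The main technical obstacle is the presiltingness check for $Q_{y_0}\oplus Q_{y_1}$: the interaction between the new summand $D\overline{X}$ of $C$ and the $B$-module structure of $Y$ must be coordinated across both row blocks, and this is exactly the point at which the construction $\Gamma=D_T(\Lambda)$ shows its force.
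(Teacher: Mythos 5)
Your argument is correct and reaches the same conclusion, but it substitutes direct computation for two shortcuts the paper takes. For $Q_{y_1}\oplus Q_2[1]$, the paper simply observes that the $r+1$ columns of $y_1$ form a minimal generating set of $Y_B$, so the complex equals $\mu_1^+(\Gamma[1])$ by Proposition \ref{first mutation} and is thereby silting with $Q_{y_1}$ indecomposable all at once; you instead verify $[y_1\ 0]M_{r+2}(B)=M_{1,r+2}(Y)$ by hand, which only gives presiltingness and forces you to treat indecomposability separately. The paper then gets indecomposability of $Q_{y_0}$ almost for free by a summand count (a decomposition of $Q_{y_0}$ would force $|T|>2=|\Gamma|$, impossible for a basic presilting complex), whereas you run a Nakayama argument on minimal generation to rule out a zero column of $y_iu$ for $u$ invertible over $B$; both are valid, yours is more self-contained, the paper's is shorter. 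The essential content — the three matrix identities from Proposition \ref{Ext 1} and the decisive role of $D\overline{X}\subset C$ in supplying the $\left[\begin{smallmatrix}k\\0\end{smallmatrix}\right]$-component that right $B$-multiplication of $y_0$ cannot reach — is identical in both proofs, and you correctly flagged the typo $(0,1)\leadsto(0,-1)$ in the statement. One expository caution: "the $y_1$-columns are handled as above" is imprecise, because in the block matrix the columns $r+1,\ldots,2r+1$ carry both the $(1,2)$-block $c_{12}y_1+y_0 M_{r,r+1}(B)$ and the $(2,2)$-block $c_{22}y_1+y_1M_{r+1}(B)$; only the latter is right $B$-multiplication of $y_1$, while the former requires $D\overline{X}$ acting on $y_1$ to hit the top entries of the first $r$ of those columns. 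The conclusion holds, but the mechanism is not the one that "as above" suggests, and you should also note that the $C$-action cannot be chosen column by column (the same scalar $c_{12}$ acts on all columns of a block), so the phrase "column by column" deserves qualification.
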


\begin{proof}
	A minimal set of generators of the $B$-module $Y$ is given by the $r+1$ columns of $\left[\begin{smallmatrix}0&1\\ g&0\end{smallmatrix}\right]$. Thus 
	$Q_{\left[\begin{smallmatrix}0&1\\ g&0\end{smallmatrix}\right]}\oplus Q_2[1]\in\twosilt\Gamma$ holds by Proposition \ref{first mutation}.
	
	In the rest, we prove that $T:=Q_{\left[\begin{smallmatrix}0\\ g\end{smallmatrix}\right]}\oplus Q_{\left[\begin{smallmatrix}0&1\\ g&0\end{smallmatrix}\right]}$ is basic silting. By the first statement, $Q_{\left[\begin{smallmatrix}0&1\\ g&0\end{smallmatrix}\right]}$ is indecomposable. If $Q_{\left[\begin{smallmatrix}0\\ g\end{smallmatrix}\right]}$ is not indecomposable, then $|T|$ is bigger than two, a contradiction. Thus $T$ is basic.
	
		We will show that $T$ is presilting by using Proposition \ref{Ext 1}. By our choice of $g$, we have
	\[gM_{r,1}(B)=X\ \mbox{ and }\ (D\overline{X})g=M_{1,r}(k).\]
	Thus we have $\left[\begin{smallmatrix}0\\ g\end{smallmatrix}\right]M_r(B)=M_{1,r}(\left[\begin{smallmatrix}0\\ X\end{smallmatrix}\right])$ and $(D\overline{X})\left[\begin{smallmatrix}0\\ g\end{smallmatrix}\right]=M_{1,r}(\left[\begin{smallmatrix}k\\ 0\end{smallmatrix}\right])$, and hence
	\[C\left[\begin{smallmatrix}0\\ g\end{smallmatrix}\right]+\left[\begin{smallmatrix}0\\ g\end{smallmatrix}\right]M_r(B)\supset (D\overline{X})\left[\begin{smallmatrix}0\\ g\end{smallmatrix}\right]+\left[\begin{smallmatrix}0\\ g\end{smallmatrix}\right]M_r(B)=M_{1,r}(\left[\begin{smallmatrix}0\\ X\end{smallmatrix}\right])+M_{1,r}(\left[\begin{smallmatrix}k\\ 0\end{smallmatrix}\right])=M_{1,r}(Y).\]
	This clearly implies
	\[C\left[\begin{smallmatrix}0\\ g\end{smallmatrix}\right]+\left[\begin{smallmatrix}0&1\\ g&0\end{smallmatrix}\right]M_{r+1,r}(B)=M_{1,r}(Y),\]
	and a similar argument implies
	\[C\left[\begin{smallmatrix}0&1\\ g&0\end{smallmatrix}\right]+\left[\begin{smallmatrix}0\\ g\end{smallmatrix}\right]M_{r,r+1}(B)=M_{1,r+1}(Y).\]
	Thus Proposition \ref{Ext 1} implies that $T$ is presilting, as desired.
\end{proof}

As in Section \ref{section: Matrices and Presilting complexes}, the element $\overline{g}$ gives a surjection
	\[
	\pi:=(X\xrightarrow{\overline{(\cdot)}} \overline{X}\xrightarrow{(\overline{g}(\cdot))^{-1}}M_{r,1}(\overline{B})=M_{r,1}(k)),
	\]
	which extends to the map $\pi:M_{s,t}(X)\to M_{rs,t}(k)$ for each $s,t\ge 0$.

The following observation is crucial.

\begin{proposition}\label{indecomposable rigid g-vector}
	Let $s,t\ge0$. For $x\in M_{s,t}(X)$, consider $\left[\begin{smallmatrix}0\\ x\end{smallmatrix}\right]\in M_{s,t}(Y)$.
	\begin{enumerate}[\rm(a)]
		\item $P_x$ is indecomposable in $\Kb(\proj\Lambda)$ if and only if $Q_{\left[\begin{smallmatrix}0\\ x\end{smallmatrix}\right]}$ is indecomposable in $\Kb(\proj\Gamma)$.
		\item If $Q_{\left[\begin{smallmatrix}0\\ x\end{smallmatrix}\right]}$ is a presilting complex of $\Gamma$, then $P_x$ is a presilting complex of $\Lambda$.
		\item The converse of {\rm(b)} holds if $t\le rs$. 
		\item The restriction of $\Sigma(\Gamma)$ to $\{(x,y)\in\R^2\mid 0\le -y\le rx\}$ coincides with that of $\Sigma(\Lambda)$.
	\end{enumerate}
\end{proposition}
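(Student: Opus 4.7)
The plan is to analyze the canonical surjection $\Gamma\twoheadrightarrow\Lambda$ with square-zero kernel $I$, whose induced base-change functor sends $Q_{\left[\begin{smallmatrix}0\\x\end{smallmatrix}\right]}$ to $P_x$. For (a), I would use $\Hom_\Gamma(Q_1,Q_2)=0$ (so no homotopies appear) to identify $\End(Q_{\left[\begin{smallmatrix}0\\x\end{smallmatrix}\right]})$ with pairs $((a,f),\beta)\in M_s(C)\times M_t(B)$ satisfying $ax=x\beta$ and $f(\overline{x})=0$, and then show that the subspace $K:=\{((0,f),0)\mid f(\overline{x})=0\}$ is a two-sided nilpotent ideal with quotient $\End(Q_{\left[\begin{smallmatrix}0\\x\end{smallmatrix}\right]})/K\simeq\End(P_x)$; since $K$ then lies in the Jacobson radical, locality will transfer both ways, proving (a). The key identities are $(\overline{a}\,f')(\overline{x})=\overline{a}\cdot f'(\overline{x})$ and $(f'a)(\overline{x})=f'(\overline{x})\,\overline{\beta}$ (the latter uses $ax=x\beta$, hence $a\overline{x}=\overline{x}\,\overline{\beta}$), together with $K^2=0$ coming from the trivial-extension multiplication on $C$.

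For (b) and (c), I would apply the presilting criterion of Proposition \ref{Ext 1}(b): the equality $M_s(C)\left[\begin{smallmatrix}0\\x\end{smallmatrix}\right]+\left[\begin{smallmatrix}0\\x\end{smallmatrix}\right]M_t(B)=M_{s,t}(Y)$ decomposes independently along $M_{s,t}(Y)=M_{s,t}(k)\oplus M_{s,t}(X)$. The bottom coordinate reads $M_s(A)x+xM_t(B)=M_{s,t}(X)$, which is exactly the presilting criterion for $P_x$; this yields (b). The top coordinate asks that $f\mapsto f(\overline{x})$ surject from $M_s(D\overline{X})$ onto $M_{s,t}(k)$. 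Using the dual basis of $\overline{g}$ to identify $M_s(D\overline{X})\simeq M_{s,rs}(k)$, this map becomes $N\mapsto N\pi(x)$, which is surjective iff $\pi(x)$ has rank $t$. By Proposition \ref{proposition:full rank}(b), $P_x$ being presilting forces $\pi(x)$ to have full rank $\min(rs,t)$, and the hypothesis $t\le rs$ upgrades this to rank $t$, giving (c).

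For (d), I would show that every indecomposable 2-term presilting $R$ of $\Gamma$ with $g$-vector in the region can be represented as $Q_y$ for $y=\left[\begin{smallmatrix}\alpha\\x\end{smallmatrix}\right]\in M_{s,t}(Y)$ with $t\le rs$; applying (b) makes $P_x$ presilting, and then the same surjectivity as in the proof of (c) furnishes $g_2\in M_s(D\overline{X})$ with $g_2(\overline{x})=-\alpha$. The automorphism $(I_s,g_2)\in\GL_s(C)$ of $Q_1^{\oplus s}$ converts $y$ into $\left[\begin{smallmatrix}0\\x\end{smallmatrix}\right]$, so $R\simeq Q_{\left[\begin{smallmatrix}0\\x\end{smallmatrix}\right]}$. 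Combined with (a)-(c), this will give a $g$-vector-preserving bijection between the indecomposable 2-term presilting complexes of $\Lambda$ and of $\Gamma$ whose $g$-vectors lie in the region. Applying the same criterion to block-diagonal $x=\operatorname{diag}(x_1,x_2)$ (where $t_1+t_2\le r(s_1+s_2)$ follows from $t_i\le rs_i$) will transport compatibility of direct-sum pairs, so the maximal cones in the region will match and (d) will follow. The main technical obstacle is the matrix-level bookkeeping in (a) verifying that $K$ is a two-sided ideal; everything else reduces cleanly to Propositions \ref{Ext 1} and \ref{proposition:full rank}.
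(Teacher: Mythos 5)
Your proposal is correct, and parts (b), (c), (d) follow essentially the same route as the paper. For part (a), however, you take a genuinely different approach: you compute $\End_{\Kb(\proj\Gamma)}(Q_{\left[\begin{smallmatrix}0\\x\end{smallmatrix}\right]})$ explicitly (legitimately, since $\Hom_\Gamma(Q_1,Q_2)=e_2\Gamma e_1=0$ kills the homotopies), extract the square-zero two-sided ideal $K=\{((0,f),0)\mid f(\overline{x})=0\}$, and identify the quotient with $\End_{\Kb(\proj\Lambda)}(P_x)$, so that locality transfers. Your verification that $K$ is a two-sided ideal is where the two bimodule actions on $D\overline{X}$ enter: the left action through $k$ gives $(\overline{a}f)(\overline{x})=\overline{a}\,f(\overline{x})$, and the genuine right $A$-action together with $ax=x\beta$ gives $(fa)(\overline{x})=f(a\overline{x})=f(\overline{x})\overline{\beta}$. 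The paper instead observes in one line that the additive functors $-\otimes_\Gamma\Lambda$ and $-\otimes_\Lambda\Gamma$ (the first using $\Lambda\simeq\Gamma/I$, the second using that $\Lambda$ is a ring direct summand of the trivial extension $\Gamma$) carry $Q_{\left[\begin{smallmatrix}0\\x\end{smallmatrix}\right]}$ and $P_x$ to each other, so decomposability transfers both ways; this is slicker but relies implicitly on a Nakayama-type fact that these base changes do not kill nonzero minimal two-term complexes. Your endomorphism-ring argument is more computational but entirely self-contained and gives the extra structural information that $\End(Q)$ is a square-zero extension of $\End(P_x)$.

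For (b) and (c), your decomposition of the presilting criterion of Proposition \ref{Ext 1}(b) along $M_{s,t}(Y)=M_{s,t}(k)\oplus M_{s,t}(X)$ into two independent surjectivity conditions is the same computation the paper performs (the paper shows $V\supset M_{s,t}\left(\left[\begin{smallmatrix}0\\X\end{smallmatrix}\right]\right)$ and then $V\supset M_{s,t}\left(\left[\begin{smallmatrix}k\\0\end{smallmatrix}\right]\right)$ separately), and the full-rank argument via Proposition \ref{proposition:full rank}(b) matches the paper's proof of (c) word for word. For (d) the paper simply writes ``Immediate from (c)''; your elaboration --- reducing an arbitrary indecomposable presilting $Q_{\left[\begin{smallmatrix}\alpha\\x\end{smallmatrix}\right]}$ with $g$-vector in the region to normal form $Q_{\left[\begin{smallmatrix}0\\x\end{smallmatrix}\right]}$ via the unit $(I_s,g_2)\in M_s(C)^\times$ once $P_x$ is known to be presilting, and then passing to block-diagonal $x$ for the cone structure --- is exactly the missing detail that justifies the paper's one-liner. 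One small imprecision to note: when you write ``applying (b) makes $P_x$ presilting,'' you are implicitly using the version of (b) for arbitrary $y=\left[\begin{smallmatrix}\alpha\\x\end{smallmatrix}\right]$ rather than just $y=\left[\begin{smallmatrix}0\\x\end{smallmatrix}\right]$; this is harmless because the paper's proof of (b) (base change along $\Gamma\twoheadrightarrow\Lambda$) is insensitive to $\alpha$, but it is worth stating explicitly before the reduction to $\left[\begin{smallmatrix}0\\x\end{smallmatrix}\right]$ is carried out, to avoid any appearance of circularity.
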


\begin{proof}
	Notice that $\Gamma$ is the trivial extension $\Lambda\oplus I$ of $\Lambda$ by the following ideal $I$ of $\Gamma$:
	\[I:=\left[\begin{array}{cc}
		D\overline{X}&k\\ 0&0
	\end{array}\right].\] 
	
	(a) Since $P_x\simeq Q_{\left[\begin{smallmatrix}0\\ x\end{smallmatrix}\right]}\otimes_\Gamma\Lambda$ and $Q_{\left[\begin{smallmatrix}0\\ x\end{smallmatrix}\right]}\simeq P_x\otimes_\Lambda\Gamma$, the assertion follows immediately.
	
	(b) Since $\Lambda=\Gamma/I$ and $Q_{\left[\begin{smallmatrix}0\\ x\end{smallmatrix}\right]}\otimes_\Gamma\Lambda\simeq P_x$, the assertion follows.
	
	(c) Assume that $P_x$ is a presilting complex of $\Lambda$. Then by Proposition \ref{Ext 1}(b), we have
	\begin{equation}\label{x presilting}
		M_{s,t}(X)=M_s(A)x+xM_t(B).
	\end{equation}
	Again by Proposition \ref{Ext 1}(b), it suffices to show the equality
	\[V:=M_s(C)\left[\begin{smallmatrix}0\\ x\end{smallmatrix}\right]+\left[\begin{smallmatrix}0\\ x\end{smallmatrix}\right]M_t(B)=M_{s,t}(\left[\begin{smallmatrix}k\\ X\end{smallmatrix}\right]).\]
	Since
	$V\supset M_s(A)\left[\begin{smallmatrix}0\\ x\end{smallmatrix}\right]+\left[\begin{smallmatrix}0\\ x\end{smallmatrix}\right]M_t(B)\stackrel{\eqref{x presilting}}{=}M_{s,t}(\left[\begin{smallmatrix}0\\ X\end{smallmatrix}\right])$ holds, it suffices to show
	\begin{equation}\label{V contains M_ab}
	V\supset M_{s,t}(\left[\begin{smallmatrix}k\\ 0\end{smallmatrix}\right]).
	\end{equation}
	By our assumption $t\le rs$ and Proposition \ref{proposition:full rank}(b), $\pi(x)$ has rank $t$ and the map
	\begin{equation}\label{pi_g(x) map}
		(\cdot)\pi(x):M_{s,rs}(k)\to M_{s,t}(k)
	\end{equation}
	is surjective. We denote by $g_1^*,\ldots,g_r^*$ the basis of $D\overline{X}$ which is dual to $g_1,\ldots,g_r$. Then the map $(\cdot)\left[\begin{smallmatrix}g_1^*\\ \svdots\\ g_r^*\end{smallmatrix}\right]:M_{1,r}(k)\simeq D\overline{X}$ is a bijection, and we denote its inverse by
	\[\pi':D\overline{X}\simeq M_{1,r}(k).\]
	It gives a bijection $\pi':M_{s}(D\overline{X})\simeq M_{s,rs}(k)$. We have a commutative diagram
	\[\xymatrix{
		M_{s}(D\overline{X})\times M_{s,t}(X)\ar[rr]^{\pi'\times\pi}\ar[drr]_{\rm eval.}&&M_{s,rs}(k)\times M_{rs,t}(k)\ar[d]^{\rm mult.}\\
		&&M_{s,t}(k)}\]
	where ${\rm eval.}$ is given by the evaluation map $D\overline{X}\times X\to D\overline{X}\times\overline{X}\to k$. Thus the commutativity of the diagram above and the surjectivity of \eqref{pi_g(x) map} shows that the map
	\[(\cdot)x:M_{s}(D\overline{X})\to M_{s,t}(k)\]
	is also surjective. Therefore the desired claim \eqref{V contains M_ab} follows from
	\[V\supset M_s(C)\left[\begin{smallmatrix}0\\ x\end{smallmatrix}\right]\supset M_s(D\overline{X})\left[\begin{smallmatrix}0\\ x\end{smallmatrix}\right]=M_{s,t}(\left[\begin{smallmatrix}k\\ 0\end{smallmatrix}\right]).\qedhere\]
(d) Immediate from (c).
\end{proof}

We are ready to prove Theorem \ref{theorem:subdivision}.

\begin{proof}[Proof of Theorem \ref{theorem:subdivision}]
The assertion follows from Lemma \ref{2 cones} and Proposition \ref{indecomposable rigid g-vector}(d).
\end{proof}

We give two examples of Subdivision Theorem \ref{theorem:subdivision}.

\begin{example}
	\label{example:1212}
Let $\Lambda$ be the following algebra. Then $\Sigma(\Lambda)$ is the following fan.
\[\Lambda=k[1\to 2]\ \ \ \Sigma(\Lambda)=\Sigma_{111}=\begin{tikzpicture}[baseline=0mm]
				\coordinate(0) at(0:0);
				\coordinate(x) at(0:0.66);
				\coordinate(y) at(90:0.66);
                \node at(0.2,0.2) {$\scriptstyle +$};
                \node at(-0.2,-0.2) {$\scriptstyle -$};
				\coordinate(shift111) at (0);
				\coordinate(v0) at($0*(x)+0*(y)+(shift111)$); 
				\coordinate(v1) at($1*(x)+0*(y)+(shift111)$); 
				\coordinate(v2) at($1*(x)+-1*(y)+(shift111)$); 
				\coordinate(v3) at($0*(x)+-1*(y)+(shift111)$); 
				\draw[fill=black] (v0) circle [radius = 0.55mm]; 
				\draw (v0)--(v1); 
				\draw (v0)--(v2); 
				\draw (v0)--(v3); 
				\draw (v1)--(v2)--(v3); 
				\coordinate(w1) at($1*(x)+0*(y)+(shift111)$); 
				\coordinate(w2) at($0*(x)+-1*(y)+(shift111)$); 
				\coordinate(w3) at($-1*(x)+0*(y)+(shift111)$); 
				\coordinate(w4) at($0*(x)+1*(y)+(shift111)$); 
				\draw (v0)--(w3); 
				\draw (v0)--(w4);
				\draw (w3)--(w4); 
				\draw (w1)--(w4);
				\draw (w2)--(w3); 
			\end{tikzpicture}\]
		Applying Theorem\;\ref{theorem:subdivision} to $\Lambda$, we get
		\[
		\Gamma:=\begin{bmatrix}
			k\oplus Dk & \left[\begin{smallmatrix}k\\k\end{smallmatrix}\right]\\
			0 & k
		\end{bmatrix}=\dfrac{
			k\left[\begin{xy}( 0,0) *+{1}="1",
				( 8,0) *+{2}="2",\ar "1";"2"  \ar @(lu,ld)"1";"1"_{b}  
			\end{xy}\right]}{\langle b^2\rangle}\ \mbox{ and }\ \Sigma(\Gamma)=D_3(\Sigma(\Lambda))=\Sigma_{1212}=\begin{tikzpicture}[baseline=-6mm]
				\coordinate(0) at(0:0);
				\coordinate(x) at(0:0.66);
				\coordinate(y) at(90:0.66);
                \node at(0.2,0.2) {$\scriptstyle +$};
                \node at(-0.2,-0.2) {$\scriptstyle -$};
				\coordinate(shift1212) at (0);
				\coordinate(v0) at($0*(x)+0*(y)+(shift1212)$); 
				\coordinate(v1) at($1*(x)+0*(y)+(shift1212)$); 
				\coordinate(v2) at($1*(x)+-1*(y)+(shift1212)$); 
				\coordinate(v3) at($1*(x)+-2*(y)+(shift1212)$); 
				\coordinate(v4) at($0*(x)+-1*(y)+(shift1212)$); 
				\draw[fill=black] (v0) circle [radius = 0.55mm]; 
				\draw (v0)--(v1); 
				\draw (v0)--(v2); 
				\draw (v0)--(v3); 
				\draw (v0)--(v4); 
				\draw (v1)--(v2)--(v3)--(v4); 
				\coordinate(w1) at($1*(x)+0*(y)+(shift1212)$); 
				\coordinate(w2) at($0*(x)+-1*(y)+(shift1212)$); 
				\coordinate(w3) at($-1*(x)+0*(y)+(shift1212)$); 
				\coordinate(w4) at($0*(x)+1*(y)+(shift1212)$); 
				\draw (v0)--(w3); 
				\draw (v0)--(w4);
				\draw (w3)--(w4); 
				\draw (w1)--(w4);
				\draw (w2)--(w3); 
			\end{tikzpicture}\]
\end{example}
	
\begin{example}\label{example:21312}
Let $\Lambda$ be the following algebra. Then $\Sigma(\Lambda)$ is the following fan by Example \ref{example:2121}.
	\[\Lambda=\dfrac{k\left[\begin{xy}( 0,0) *+{1}="1",
			( 8,0) *+{2}="2",\ar "1";"2"^a  \ar @(ru,rd)"2";"2"^{b}  
		\end{xy}\right]}{\langle b^2\rangle}\ \ \ \Sigma(\Lambda)=\Sigma_{2121}=\begin{tikzpicture}[baseline=0mm]
			\coordinate(0) at(0:0);
			\coordinate(x) at(0:0.66);
			\coordinate(y) at(90:0.66);
            \node at(0.2,0.2) {$\scriptstyle +$};
            \node at(-0.2,-0.2) {$\scriptstyle -$};
			\coordinate(shift2121) at (0);
			\coordinate(v0) at($0*(x)+0*(y)+(shift2121)$); 
			\coordinate(v1) at($1*(x)+0*(y)+(shift2121)$); 
			\coordinate(v2) at($2*(x)+-1*(y)+(shift2121)$); 
			\coordinate(v3) at($1*(x)+-1*(y)+(shift2121)$); 
			\coordinate(v4) at($0*(x)+-1*(y)+(shift2121)$); 
			\draw[fill=black] (v0) circle [radius = 0.55mm]; 
			\draw (v0)--(v1); 
			\draw (v0)--(v2); 
			\draw (v0)--(v3); 
			\draw (v0)--(v4); 
			\draw (v1)--(v2)--(v3)--(v4); 
			\coordinate(w1) at($1*(x)+0*(y)+(shift2121)$); 
			\coordinate(w2) at($0*(x)+-1*(y)+(shift2121)$); 
			\coordinate(w3) at($-1*(x)+0*(y)+(shift2121)$); 
			\coordinate(w4) at($0*(x)+1*(y)+(shift2121)$); 
			\draw (v0)--(w3); 
			\draw (v0)--(w4);
			\draw (w3)--(w4); 
			\draw (w1)--(w4);
			\draw (w2)--(w3); 
		\end{tikzpicture}\]
		Applying Theorem\;\ref{theorem:subdivision} to $\Lambda$, we get
		\[\Gamma:=\begin{bmatrix}k\oplus D(ka) & \left[\begin{smallmatrix}k\\ \langle a,ab\rangle_k\end{smallmatrix}\right]\\
		0 & \langle e_2,b\rangle_k
	\end{bmatrix}	=\dfrac{
		k\left[\begin{xy}( 0,0) *+{1}="1",
			( 8,0) *+{2}="2",\ar "1";"2"^a  \ar @(lu,ld)"1";"1"_{c}  \ar @(ru,rd)"2";"2"^{b}
		\end{xy}\right]}{\langle b^2, c^2, cab\rangle}\ \mbox{ and }\ \Sigma(\Gamma)=D_4(\Sigma(\Lambda))=\Sigma_{21312}=
\begin{tikzpicture}[baseline=-4mm]
			\coordinate(0) at(0:0);
			\coordinate(x) at(0:0.66);
			\coordinate(y) at(90:0.66);
            \node at(0.2,0.2) {$\scriptstyle +$};
            \node at(-0.2,-0.2) {$\scriptstyle -$};
			\coordinate(shift21312) at (0);
			\coordinate(v0) at($0*(x)+0*(y)+(shift21312)$); 
			\coordinate(v1) at($1*(x)+0*(y)+(shift21312)$); 
			\coordinate(v2) at($2*(x)+-1*(y)+(shift21312)$); 
			\coordinate(v3) at($1*(x)+-1*(y)+(shift21312)$); 
			\coordinate(v4) at($1*(x)+-2*(y)+(shift21312)$); 
			\coordinate(v5) at($0*(x)+-1*(y)+(shift21312)$); 
			\draw[fill=black] (v0) circle [radius = 0.55mm]; 
			\draw (v0)--(v1); 
			\draw (v0)--(v2); 
			\draw (v0)--(v3); 
			\draw (v0)--(v4); 
			\draw (v0)--(v5); 
			\draw (v1)--(v2)--(v3)--(v4)--(v5); 
			\coordinate(w1) at($1*(x)+0*(y)+(shift21312)$); 
			\coordinate(w2) at($0*(x)+-1*(y)+(shift21312)$); 
			\coordinate(w3) at($-1*(x)+0*(y)+(shift21312)$); 
			\coordinate(w4) at($0*(x)+1*(y)+(shift21312)$); 
			\draw (v0)--(w3); 
			\draw (v0)--(w4);
			\draw (w3)--(w4); 
			\draw (w1)--(w4);
			\draw (w2)--(w3); 
		\end{tikzpicture}\]
\end{example}

\subsection{Proof of Theorem \ref{theorem:sc fan = g fan intro}}

Let $k$ be a field. For a finite dimensional $k$-algebras $\Lambda$ of rank $2$, we regard the $g$-fan $\Sigma(\Lambda)$ as a fan in $\R^2$ by isomorphism $K_0(\proj\Lambda)\simeq\R^2$ given by $P_1\mapsto (1,0)$ and $P_2\mapsto (0,1)$. We denote by
\[\gfan(2)\]
the subset of $\tscfan$ consisting of $g$-fans of finite dimensional $k$-algebras of rank $2$. Let $\gelfan(2)$ be the subset of $\gfan(2)$ consisting of $g$-fans of finite dimensional elementary $k$-algebras of rank $2$.

The following is a main result of this paper. 

\begin{theorem}\label{main theorem}
For any field $k$, we have 
\begin{equation}
    \gelfan(2) = \gfan(2) = \tscfan. 
\end{equation}
That is, any sign-coherent fan in $\mathbb{R}^2$ can be realized as a $g$-fan $\Sigma(\Lambda)$ of some finite dimensional elementary $k$-algebra $\Lambda$.  
\end{theorem}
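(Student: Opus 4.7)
The inclusions $\gelfan(2)\subset\gfan(2)\subset\tscfan$ are immediate (the second from Proposition \ref{characterize g-finite}(c)(d)), so what requires work is $\tscfan\subset\gelfan(2)$. The plan is to assemble the three structural theorems of Section 4 --- Gluing, Rotation, and Subdivision --- into an inductive construction, using Proposition \ref{proposition:F=F'} as the combinatorial backbone.

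First I would reduce to the ``half-fan'' case. By the decomposition $\tscfan=\tscfanpm\ast\tscfanmp$ of \eqref{eq:gluing fans} and Gluing Theorem \ref{theorem:gluing1}, it suffices to realize every fan in $\tscfanpm$ and every fan in $\tscfanmp$ by an elementary $k$-algebra. These two tasks are equivalent via the transpose bijection of Lemma \ref{transpose fan}(a), whose algebraic counterpart is passage to the opposite algebra (swapping $P_1\leftrightarrow P_2$ in the identification $K_0(\proj\Lambda)_\R\simeq\R^2$); since $\Lambda^{\op}$ is elementary whenever $\Lambda$ is, I can focus on $\tscfanpm\subset\gelfan(2)$.

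Next I would induct using Proposition \ref{proposition:F=F'}: every $\Sigma\in\tscfanpm$ is obtained from $\Sigma(0,0;0,0)$ by a finite sequence of subdivisions at cones in the fourth quadrant. The base case is covered by the elementary algebra $k\times k$, whose $g$-fan is $\Sigma(0,0;0,0)$. For the inductive step, given an elementary algebra $\Lambda$ realizing some $\Sigma\in\tscfanpm$ with $n$ facets and a fourth-quadrant cone $\sigma_i$ ($1\le i\le n-3$), I would produce an elementary algebra realizing $D_{\sigma_i}(\Sigma)$ by using the rewriting
\[D_{\sigma_i}(\Sigma)=\rho^{n-3-i}\circ D_{\sigma_{n-3}}\circ\rho^{i+1}(\Sigma)\]
of Proposition \ref{sr sequence}(b) as a recipe: apply Rotation Theorem \ref{theorem:rotation} $i+1$ times to obtain an elementary algebra $\Lambda'$ with $\Sigma(\Lambda')=\rho^{i+1}(\Sigma)$; identify $C(\mu_1^+(\Lambda'[1]))$ with the cone $\sigma_{n-3}$ of $\rho^{i+1}(\Sigma)$ adjacent to $(0,-1)$ via Proposition \ref{first mutation}; apply Subdivision Theorem \ref{theorem:subdivision} at $T=\mu_1^+(\Lambda'[1])$ to realize $D_{\sigma_{n-3}}(\rho^{i+1}(\Sigma))$ by an elementary algebra; and finally apply Rotation Theorem another $n-3-i$ times to land at an elementary algebra whose $g$-fan is $D_{\sigma_i}(\Sigma)$.

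The main obstacle will be checking that these operations can be chained legally throughout the induction. Both Rotation and Subdivision require $e_1\Lambda e_2=0$, equivalently $\Sigma(\Lambda)\in\tscfanpm$ by Proposition \ref{+- condition}, and this condition is preserved at every step: by rotation since $\df(\rho(\Sigma))$ still ends in $0,0$ by Proposition \ref{sr sequence}(a), and by subdivision since the operation only alters the fourth quadrant and leaves $\sigma_+,\sigma_{-+},\sigma_-$ untouched. Elementariness is likewise preserved throughout --- the Rotation Theorem asserts this explicitly, while the constructions of $D_T(\Lambda)$ and $\Lambda\ast\Lambda'$ are manifestly elementary whenever the inputs are. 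Once these compatibilities are in place, the induction closes and yields $\tscfanpm\subset\gelfan(2)$, hence the theorem.
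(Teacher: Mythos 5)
Your proof is correct and follows essentially the same route as the paper: reduce to $\tscfanpm$ via Gluing (and transpose/opposite), then induct from $\Sigma(0,0;0,0)=\Sigma(k\times k)$ using Proposition \ref{proposition:F=F'}, with Rotation and Subdivision Theorems implementing each subdivision step via the identity of Proposition \ref{sr sequence}(b). Your explicit verification that $e_1\Lambda e_2=0$ and elementariness persist through the chain is a welcome bit of care that the paper leaves implicit, but the underlying argument is the same.
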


\begin{proof}
It suffices to show 
$\tscfan=\gelfan(2)$. 
Let 
\begin{equation*}
    \gelfan^{+-}(2):= \gelfan(2)\cap \tscfanpm\ \mbox{ and }\ \gelfan^{-+}(2):= \gelfan(2)\cap \tscfanmp.
\end{equation*} 
By Gluing Theorem \ref{theorem:gluing1}, we have
\begin{equation*}
\gelfan(2)=\gelfan^{+-}(2) \ast \gelfan^{-+}(2).
\end{equation*}
By Rotation Theorem \ref{theorem:rotation}, $\gelfan^{+-}(2)$ is closed under rotations. 
By Theorem \ref{theorem:subdivision} and Proposition \ref{sr sequence}(b), $\gelfan^{+-}(2)$ is closed under subdivisions.
Since $\Sigma(0,0;0,0)=\Sigma(k\times k)\in \gelfan^{+-}(2)$, Proposition \ref{proposition:F=F'} implies
\begin{equation*}
    \gelfan^{+-}(2) = \tscfanpm.  
\end{equation*}
Similarly, we have 
$\gelfan^{-+}(2) =\tscfanmp$. 
Consequently, we have 
\[\tscfan \stackrel{\eqref{eq:gluing fans}}{=} \tscfanpm\ast \tscfanmp =
\gelfan^{+-}(2) \ast \gelfan^{-+}(2) =
\gelfan(2).\qedhere\] 
\end{proof}

For given $\Sigma\in\tscfan$, our proof of Theorem \ref{main theorem} gives a concrete algorithm to construct a finite dimensional $k$-algebra $\Lambda$ satisfying $\Sigma(\Lambda)=\Sigma$. We demonstrate it in the following example.

\begin{example}
	\label{example:rotation-subdivision-gluing} 
	 We construct a finite dimensional $k$-algebra $\Gamma$ satisfying $\Sigma(\Gamma)=\Sigma_{13122;1212}$ by the following three steps.

\begin{enumerate}[\rm(I)]
\item We obtain a finite dimensional $k$-algebra
\[\Lambda=\dfrac{
			k\left[\begin{xy}( 0,0) *+{1}="1",
				( 8,0) *+{2}="2",\ar "1";"2"^{a_3} \ar @(ru,rd)"2";"2"^{a_4}  \ar @(ul,ur)"1";"1"^(.2){a_2} \ar @ (dl,dr)"1";"1"_(.2){a_1}
			\end{xy}\right]}{\langle a_1^2, a_2^2, a_4^2, a_2a_1, a_2a_3-a_3a_4\rangle}\]
satisfying $\Sigma(\Lambda)=\Sigma_{13122;00}$ by using Rotation Theorem \ref{theorem:rotation} and Subdivision Theorem \ref{theorem:subdivision} as follows.
\[		\xymatrix@C3em{\begin{tikzpicture}[baseline=0mm] 
\coordinate(0) at(0:0);
\coordinate(x) at(0:0.5);
\coordinate(y) at(90:0.5);
\coordinate(shift00) at(0);
\coordinate(v0) at($0*(x)+0*(y)+(shift00)$); 
\coordinate(v1) at($1*(x)+0*(y)+(shift00)$); 
\coordinate(v2) at($0*(x)+-1*(y)+(shift00)$); 
\draw[fill=black] (v0) circle [radius = 0.55mm];
\draw (v0)--(v1); 
\draw (v0)--(v2); 
\draw (v1)--(v2); 
\coordinate(L) at($0*(x)+1.5*(y)+(shift00)$); 
\node at(L) {\small$\Sigma_{00}$};
\coordinate(w3) at($-1*(x)+0*(y)+(shift00)$); 
\coordinate(w4) at($0*(x)+1*(y)+(shift00)$); 
\draw (v0)--(w3); 
\draw (v0)--(w4);
\draw (w3)--(w4); 
\draw (v1)--(w4);
\draw (v2)--(w3); 
\end{tikzpicture}
\ar[r]^-{D_2}_-{} & 
\begin{tikzpicture}[baseline=0mm]
\coordinate(0) at(0:0);
\coordinate(x) at(0:0.5);
\coordinate(y) at(90:0.5);
\coordinate(shift111) at (0);
\coordinate(v0) at($0*(x)+0*(y)+(shift111)$); 
\coordinate(v1) at($1*(x)+0*(y)+(shift111)$); 
\coordinate(v2) at($1*(x)+-1*(y)+(shift111)$); 
\coordinate(v3) at($0*(x)+-1*(y)+(shift111)$); 
\draw[fill=black] (v0) circle [radius = 0.55mm]; 
\draw (v0)--(v1); 
\draw (v0)--(v2); 
\draw (v0)--(v3); 
\draw (v1)--(v2)--(v3); 
\coordinate(L) at($0*(x)+1.5*(y)+(shift111)$); 
\node at(L) {\small$\Sigma_{111}$};
\coordinate(w1) at($1*(x)+0*(y)+(shift111)$); 
\coordinate(w2) at($0*(x)+-1*(y)+(shift111)$); 
\coordinate(w3) at($-1*(x)+0*(y)+(shift111)$); 
\coordinate(w4) at($0*(x)+1*(y)+(shift111)$); 
\draw (v0)--(w3); 
\draw (v0)--(w4);
\draw (w3)--(w4); 
\draw (w1)--(w4);
\draw (w2)--(w3); 
\end{tikzpicture}\ar[r]^-{D_3}_-{{\rm Ex.}\,\ref{example:1212}}&\begin{tikzpicture}[baseline=0mm]
		\coordinate(0) at(0:0);
		\coordinate(x) at(0:0.5);
		\coordinate(y) at(90:0.5);
		\coordinate(shift1212) at (0);
		\coordinate(v0) at($0*(x)+0*(y)+(shift1212)$); 
		\coordinate(v1) at($1*(x)+0*(y)+(shift1212)$); 
		\coordinate(v2) at($1*(x)+-1*(y)+(shift1212)$); 
		\coordinate(v3) at($1*(x)+-2*(y)+(shift1212)$); 
		\coordinate(v4) at($0*(x)+-1*(y)+(shift1212)$); 
		\draw[fill=black] (v0) circle [radius = 0.55mm]; 
		\draw (v0)--(v1); 
		\draw (v0)--(v2); 
		\draw (v0)--(v3); 
		\draw (v0)--(v4); 
		\draw (v1)--(v2)--(v3)--(v4); 
		\coordinate(L) at($0*(x)+1.5*(y)+(shift1212)$); 
		\node at(L) {\small$\Sigma_{1212}$};
		\coordinate(w1) at($1*(x)+0*(y)+(shift1212)$); 
		\coordinate(w2) at($0*(x)+-1*(y)+(shift1212)$); 
		\coordinate(w3) at($-1*(x)+0*(y)+(shift1212)$); 
		\coordinate(w4) at($0*(x)+1*(y)+(shift1212)$); 
		\draw (v0)--(w3); 
		\draw (v0)--(w4);
		\draw (w3)--(w4); 
		\draw (w1)--(w4);
		\draw (w2)--(w3); 
	\end{tikzpicture}
\ar[r]^-{\rho}_-{{\rm Ex.}\,\ref{example:2121}}&
\begin{tikzpicture}[baseline=0mm]
	\coordinate(0) at(0:0);
	\coordinate(x) at(0:0.5);
	\coordinate(y) at(90:0.5);
	\coordinate(shift2121) at (0);
	\coordinate(v0) at($0*(x)+0*(y)+(shift2121)$); 
	\coordinate(v1) at($1*(x)+0*(y)+(shift2121)$); 
	\coordinate(v2) at($2*(x)+-1*(y)+(shift2121)$); 
	\coordinate(v3) at($1*(x)+-1*(y)+(shift2121)$); 
	\coordinate(v4) at($0*(x)+-1*(y)+(shift2121)$); 
	\draw[fill=black] (v0) circle [radius = 0.55mm]; 
	\draw (v0)--(v1); 
	\draw (v0)--(v2); 
	\draw (v0)--(v3); 
	\draw (v0)--(v4); 
	\draw (v1)--(v2)--(v3)--(v4); 
	\coordinate(L) at($0*(x)+1.5*(y)+(shift2121)$); 
	\node at(L) {\small$\Sigma_{2121}$};
	\coordinate(w1) at($1*(x)+0*(y)+(shift2121)$); 
	\coordinate(w2) at($0*(x)+-1*(y)+(shift2121)$); 
	\coordinate(w3) at($-1*(x)+0*(y)+(shift2121)$); 
	\coordinate(w4) at($0*(x)+1*(y)+(shift2121)$); 
	\draw (v0)--(w3); 
	\draw (v0)--(w4);
	\draw (w3)--(w4); 
	\draw (w1)--(w4);
	\draw (w2)--(w3); 
\end{tikzpicture}
		\ar[r]^-{D_4}_-{{\rm Ex.}\,\ref{example:21312}}&
		\begin{tikzpicture}[baseline=0mm]
			\coordinate(0) at(0:0);
			\coordinate(x) at(0:0.5);
			\coordinate(y) at(90:0.5);
			\coordinate(shift21312) at (0);
			\coordinate(v0) at($0*(x)+0*(y)+(shift21312)$); 
			\coordinate(v1) at($1*(x)+0*(y)+(shift21312)$); 
			\coordinate(v2) at($2*(x)+-1*(y)+(shift21312)$); 
			\coordinate(v3) at($1*(x)+-1*(y)+(shift21312)$); 
			\coordinate(v4) at($1*(x)+-2*(y)+(shift21312)$); 
			\coordinate(v5) at($0*(x)+-1*(y)+(shift21312)$); 
			\draw[fill=black] (v0) circle [radius = 0.55mm]; 
			\draw (v0)--(v1); 
			\draw (v0)--(v2); 
			\draw (v0)--(v3); 
			\draw (v0)--(v4); 
			\draw (v0)--(v5); 
			\draw (v1)--(v2)--(v3)--(v4)--(v5); 
			\coordinate(L) at($0*(x)+1.5*(y)+(shift21312)$); 
			\node at(L) {\small$\Sigma_{21312}$};
			\coordinate(w1) at($1*(x)+0*(y)+(shift21312)$); 
			\coordinate(w2) at($0*(x)+-1*(y)+(shift21312)$); 
			\coordinate(w3) at($-1*(x)+0*(y)+(shift21312)$); 
			\coordinate(w4) at($0*(x)+1*(y)+(shift21312)$); 
			\draw (v0)--(w3); 
			\draw (v0)--(w4);
			\draw (w3)--(w4); 
			\draw (w1)--(w4);
			\draw (w2)--(w3); 
		\end{tikzpicture}
	\ar[r]^-{\rho}_-{{\rm Ex.}\,\ref{example:13122}}&
	\begin{tikzpicture}[baseline=0mm]
		\coordinate(0) at(0:0);
		\coordinate(x) at(0:0.5);
		\coordinate(y) at(90:0.5);
		\coordinate(shift13122) at (0);
		\coordinate(v0) at($0*(x)+0*(y)+(shift13122)$); 
		\coordinate(v1) at($1*(x)+0*(y)+(shift13122)$); 
		\coordinate(v2) at($1*(x)+-1*(y)+(shift13122)$); 
		\coordinate(v3) at($2*(x)+-3*(y)+(shift13122)$); 
		\coordinate(v4) at($1*(x)+-2*(y)+(shift13122)$); 
		\coordinate(v5) at($0*(x)+-1*(y)+(shift13122)$); 
		\draw[fill=black] (v0) circle [radius = 0.55mm]; 
		\draw (v0)--(v1); 
		\draw (v0)--(v2); 
		\draw (v0)--(v3); 
		\draw (v0)--(v4); 
		\draw (v0)--(v5); 
		\draw (v1)--(v2)--(v3)--(v4)--(v5); 
		\coordinate(L) at($0*(x)+1.5*(y)+(shift13122)$); 
		\node at(L) {\small$\Sigma_{13122}$};
		\coordinate(w1) at($1*(x)+0*(y)+(shift13122)$); 
		\coordinate(w2) at($0*(x)+-1*(y)+(shift13122)$); 
		\coordinate(w3) at($-1*(x)+0*(y)+(shift13122)$); 
		\coordinate(w4) at($0*(x)+1*(y)+(shift13122)$); 
		\draw (v0)--(w3); 
		\draw (v0)--(w4);
		\draw (w3)--(w4); 
		\draw (w1)--(w4);
		\draw (w2)--(w3); 
	\end{tikzpicture}
		}
	\]
\item Similarly, we obtain a finite dimensional $k$-algebra
\[\Lambda':=\dfrac{
	k\left[\begin{xy}( 0,0) *+{1}="1",
		( 8,0) *+{2}="2",\ar "2";"1"_{b_1}  \ar @(ru,rd)"2";"2"^{b_2}  
	\end{xy}\right]}{\langle b_2^2\rangle}\]
satisfying $\Sigma(\Lambda')=\Sigma(0,0;1,2,1,2)$. 
\item We obtain a finite dimensional $k$-algebra
\[\Gamma=\dfrac{
	k\left[\begin{xy}( 0,0) *+{1}="1",
		( 10,0) *+{2}="2",\ar@<1.5pt> "1";"2"^{a_3}  \ar @(dr,dl)"2";"2"^(.2){a_4}  \ar @(ul,ur)"1";"1"^(.2){a_2} \ar @ (dl,dr)"1";"1"_(.2){a_1} \ar@<1.5pt> "2";"1"^{b_1}  \ar @(ur,ul)"2";"2"_(.2){b_2}
	\end{xy}\right]}{\langle a_1^2, a_2^2, a_4^2, a_2a_1,a_2a_3-a_3a_4, b_2^2\rangle+\langle a_ib_j,b_ja_i\mid i\in \{1,2,3,4\},j\in\{1,2\}\rangle}\]
satisfying $\Sigma(\Gamma)=\Sigma(1,3,1,2,2;1,2,1,2)$ by applying Gluing Theorem \ref{theorem:gluing1} to $\Lambda$ and $\Lambda'$, see Example \ref{example:gluing}.
	 	\[
			 	\Sigma(\Lambda)=\begin{tikzpicture}[baseline=0mm]
	 		\coordinate(0) at(0:0);
	 		\coordinate(x) at(0:0.66);
	 		\coordinate(y) at(90:0.66);
            \node at(0.2,0.2) {$\scriptstyle +$};
            \node at(-0.2,-0.2) {$\scriptstyle -$};
	 		\coordinate(shift13122) at (0);
	 		\coordinate(v0) at($0*(x)+0*(y)+(shift13122)$); 
	 		\coordinate(v1) at($1*(x)+0*(y)+(shift13122)$); 
	 		\coordinate(v2) at($1*(x)+-1*(y)+(shift13122)$); 
	 		\coordinate(v3) at($2*(x)+-3*(y)+(shift13122)$); 
	 		\coordinate(v4) at($1*(x)+-2*(y)+(shift13122)$); 
	 		\coordinate(v5) at($0*(x)+-1*(y)+(shift13122)$); 
	 		\draw[fill=black] (v0) circle [radius = 0.55mm]; 
	 		\draw (v0)--(v1); 
	 		\draw (v0)--(v2); 
	 		\draw (v0)--(v3); 
	 		\draw (v0)--(v4); 
	 		\draw (v0)--(v5); 
	 		\draw (v1)--(v2)--(v3)--(v4)--(v5); 
	 		\coordinate(L) at($0*(x)+1.5*(y)+(shift13122)$); 
	 		\coordinate(w1) at($1*(x)+0*(y)+(shift13122)$); 
	 		\coordinate(w2) at($0*(x)+-1*(y)+(shift13122)$); 
	 		\coordinate(w3) at($-1*(x)+0*(y)+(shift13122)$); 
	 		\coordinate(w4) at($0*(x)+1*(y)+(shift13122)$); 
	 		\draw (v0)--(w3); 
	 		\draw (v0)--(w4);
	 		\draw (w3)--(w4); 
	 		\draw (w1)--(w4);
	 		\draw (w2)--(w3); 
	 	\end{tikzpicture}\quad 
			\Sigma(\Lambda')=\begin{tikzpicture}[baseline=0mm]
	 		\coordinate(0) at(0:0);
	 		\coordinate(x) at(0:0.66);
	 		\coordinate(y) at(90:0.66);
            \node at(0.2,0.2) {$\scriptstyle +$};
            \node at(-0.2,-0.2) {$\scriptstyle -$};
	 		\coordinate(shift1212) at (0);
	 		\coordinate(v0) at($0*(x)+0*(y)+(shift1212)$); 
	 		\coordinate(v1) at($0*(x)+1*(y)+(shift1212)$); 
	 		\coordinate(v2) at($-1*(x)+1*(y)+(shift1212)$); 
	 		\coordinate(v3) at($-2*(x)+1*(y)+(shift1212)$); 
	 		\coordinate(v4) at($-1*(x)+0*(y)+(shift1212)$); 
	 		\draw[fill=black] (v0) circle [radius = 0.55mm]; 
	 		\draw (v0)--(v1); 
	 		\draw (v0)--(v2); 
	 		\draw (v0)--(v3); 
	 		\draw (v0)--(v4); 
	 		\draw (v1)--(v2)--(v3)--(v4); 
	 		\coordinate(L) at($0*(x)+1.5*(y)+(shift1212)$); 
	 		\coordinate(w1) at($0*(x)+1*(y)+(shift1212)$); 
	 		\coordinate(w2) at($-1*(x)+0*(y)+(shift1212)$); 
	 		\coordinate(w3) at($0*(x)+-1*(y)+(shift1212)$); 
	 		\coordinate(w4) at($1*(x)+0*(y)+(shift1212)$); 
	 		\draw (v0)--(w3); 
	 		\draw (v0)--(w4);
	 		\draw (w3)--(w4); 
	 		\draw (w1)--(w4);
	 		\draw (w2)--(w3); 
	 	\end{tikzpicture}
	 	\quad \quad \Sigma(\Gamma)=\Sigma(\Lambda)\ast \Sigma(\Lambda')=\begin{tikzpicture}[baseline=0mm]
	 		\coordinate(0) at(0:0);
	 		\coordinate(x) at(0:0.66);
	 		\coordinate(y) at(90:0.66);
            \node at(0.2,0.2) {$\scriptstyle +$};
            \node at(-0.2,-0.2) {$\scriptstyle -$};
	 		\coordinate(shift13122) at (0);
	 		\coordinate(v0) at($0*(x)+0*(y)+(shift13122)$); 
	 		\coordinate(v1) at($1*(x)+0*(y)+(shift13122)$); 
	 		\coordinate(v2) at($1*(x)+-1*(y)+(shift13122)$); 
	 		\coordinate(v3) at($2*(x)+-3*(y)+(shift13122)$); 
	 		\coordinate(v4) at($1*(x)+-2*(y)+(shift13122)$); 
	 		\coordinate(v5) at($0*(x)+-1*(y)+(shift13122)$); 
	 		\draw[fill=black] (v0) circle [radius = 0.55mm]; 
	 		\draw (v0)--(v1); 
	 		\draw (v0)--(v2); 
	 		\draw (v0)--(v3); 
	 		\draw (v0)--(v4); 
	 		\draw (v0)--(v5); 
	 		\draw (v1)--(v2)--(v3)--(v4)--(v5); 
	 		\coordinate(L) at($0*(x)+1.5*(y)+(shift13122)$); 
	 		\coordinate(w1) at($1*(x)+0*(y)+(shift13122)$);
	 		\coordinate(w2) at($0*(x)+-1*(y)+(shift13122)$); 
	 		\coordinate(w3) at($-1*(x)+0*(y)+(shift13122)$);
	 		\coordinate(w4) at($-2*(x)+1*(y)+(shift13122)$);
	 		\coordinate(w5) at($-1*(x)+1*(y)+(shift13122)$); 
	 		\coordinate(w6) at($0*(x)+1*(y)+(shift13122)$); 
	 		\draw (v0)--(w3);
	 		\draw (v0)--(w6); 
	 		\draw (v0)--(w4);
	 		\draw (v0)--(w5);
	 		\draw (w2)--(w3); 
	 		\draw (w3)--(w4); 
	 		\draw (w4)--(w5);
	 		\draw (w5)--(w6);
	 		\draw (w6)--(v1); 
	 	\end{tikzpicture}
	 	\]
\end{enumerate}
\end{example}

\begin{example}
	\label{example:gluing} 
Let $\Lambda$ and $\Lambda'$ be the following algebras.
\[
\Lambda:=\dfrac{
	k\left[\begin{xy}( 0,0) *+{1}="1",
		( 8,0) *+{2}="2",\ar "1";"2"^{a_3}  \ar @(ru,rd)"2";"2"^{a_4}  \ar @(ul,ur)"1";"1"^(0.2){a_2} \ar @ (dl,dr)"1";"1"_(.2){a_1}
	\end{xy}\right]}{\langle a_1^2, a_2^2, a_4^2, a_2a_1,a_2a_3-a_3a_4\rangle},\hspace{10pt}
\Lambda':=\dfrac{
	k\left[\begin{xy}( 0,0) *+{1}="1",
		( 8,0) *+{2}="2",\ar "2";"1"_{b_1}  \ar @(ru,rd)"2";"2"^{b_2}  
	\end{xy}\right]}{\langle b_2^2\rangle}
\]
By Examples \ref{example:13122} and \ref{example:1212} below, we have 
\[
\Sigma(\Lambda)=\Sigma_{13122;00}=\begin{tikzpicture}[baseline=0mm]
	\coordinate(0) at(0:0);
	\coordinate(x) at(0:0.66);
	\coordinate(y) at(90:0.66);
    \node at(0.2,0.2) {$\scriptstyle +$};
    \node at(-0.2,-0.2) {$\scriptstyle -$};
	\coordinate(shift13122) at (0);
	\coordinate(v0) at($0*(x)+0*(y)+(shift13122)$); 
	\coordinate(v1) at($1*(x)+0*(y)+(shift13122)$); 
	\coordinate(v2) at($1*(x)+-1*(y)+(shift13122)$); 
	\coordinate(v3) at($2*(x)+-3*(y)+(shift13122)$); 
	\coordinate(v4) at($1*(x)+-2*(y)+(shift13122)$); 
	\coordinate(v5) at($0*(x)+-1*(y)+(shift13122)$); 
	\draw[fill=black] (v0) circle [radius = 0.55mm]; 
	\draw (v0)--(v1); 
	\draw (v0)--(v2); 
	\draw (v0)--(v3); 
	\draw (v0)--(v4); 
	\draw (v0)--(v5); 
	\draw (v1)--(v2)--(v3)--(v4)--(v5); 
	\coordinate(w1) at($1*(x)+0*(y)+(shift13122)$); 
	\coordinate(w2) at($0*(x)+-1*(y)+(shift13122)$); 
	\coordinate(w3) at($-1*(x)+0*(y)+(shift13122)$); 
	\coordinate(w4) at($0*(x)+1*(y)+(shift13122)$); 
	\draw (v0)--(w3); 
	\draw (v0)--(w4);
	\draw (w3)--(w4); 
	\draw (w1)--(w4);
	\draw (w2)--(w3); 
\end{tikzpicture}
\
\Sigma(\Lambda')=\Sigma_{00;1212}=\begin{tikzpicture}[baseline=0mm]
	\coordinate(0) at(0:0);
	\coordinate(x) at(0:0.66);
	\coordinate(y) at(90:0.66);
    \node at(0.2,0.2) {$\scriptstyle +$};
    \node at(-0.2,-0.2) {$\scriptstyle -$};
	\coordinate(shift1212) at (0);
	\coordinate(v0) at($0*(x)+0*(y)+(shift1212)$); 
	\coordinate(v1) at($0*(x)+1*(y)+(shift1212)$); 
	\coordinate(v2) at($-1*(x)+1*(y)+(shift1212)$); 
	\coordinate(v3) at($-2*(x)+1*(y)+(shift1212)$); 
	\coordinate(v4) at($-1*(x)+0*(y)+(shift1212)$); 
	\draw[fill=black] (v0) circle [radius = 0.55mm]; 
	\draw (v0)--(v1); 
	\draw (v0)--(v2); 
	\draw (v0)--(v3); 
	\draw (v0)--(v4); 
	\draw (v1)--(v2)--(v3)--(v4); 
	\coordinate(w1) at($0*(x)+1*(y)+(shift1212)$); 
	\coordinate(w2) at($-1*(x)+0*(y)+(shift1212)$); 
	\coordinate(w3) at($0*(x)+-1*(y)+(shift1212)$); 
	\coordinate(w4) at($1*(x)+0*(y)+(shift1212)$); 
	\draw (v0)--(w3); 
	\draw (v0)--(w4);
	\draw (w3)--(w4); 
	\draw (w1)--(w4);
	\draw (w2)--(w3); 
\end{tikzpicture}
\]
On the other hand, we have
\[
\Lambda\ast\Lambda'= \dfrac{
k\left[\begin{xy}( 0,0) *+{1}="1",
	( 10,0) *+{2}="2",\ar@<1.5pt> "1";"2"^{a_3}  \ar @(dr,dl)"2";"2"^(.2){a_4}  \ar @(ul,ur)"1";"1"^(.2){a_2} \ar @ (dl,dr)"1";"1"_(.2){a_1} \ar@<1.5pt> "2";"1"^{b_1}  \ar @(ur,ul)"2";"2"_(.2){b_2}
\end{xy}\right]}{\langle a_1^2, a_2^2, a_4^2, a_2a_1,a_2a_3-a_3a_4, b_2^2\rangle+\langle a_ib_j,b_ja_i\mid i\in \{1,2,3,4\},j\in\{1,2\}\rangle}
\]
By Gluing Theorem \ref{theorem:gluing1}, we have
\[
\Sigma(\Lambda\ast\Lambda')=\Sigma(\Lambda)\ast\Sigma(\Lambda')=\Sigma_{13122;1212}=\begin{tikzpicture}[baseline=0mm]
	\coordinate(0) at(0:0);
	\coordinate(x) at(0:0.66);
	\coordinate(y) at(90:0.66);
    \node at(0.2,0.2) {$\scriptstyle +$};
    \node at(-0.2,-0.2) {$\scriptstyle -$};
	\coordinate(shift13122) at (0);
	\coordinate(v0) at($0*(x)+0*(y)+(shift13122)$); 
	\coordinate(v1) at($1*(x)+0*(y)+(shift13122)$); 
	\coordinate(v2) at($1*(x)+-1*(y)+(shift13122)$); 
	\coordinate(v3) at($2*(x)+-3*(y)+(shift13122)$); 
	\coordinate(v4) at($1*(x)+-2*(y)+(shift13122)$); 
	\coordinate(v5) at($0*(x)+-1*(y)+(shift13122)$); 
	\draw[fill=black] (v0) circle [radius = 0.55mm]; 
	\draw (v0)--(v1); 
	\draw (v0)--(v2); 
	\draw (v0)--(v3); 
	\draw (v0)--(v4); 
	\draw (v0)--(v5); 
	\draw (v1)--(v2)--(v3)--(v4)--(v5); 
	\coordinate(w1) at($1*(x)+0*(y)+(shift13122)$);
	\coordinate(w2) at($0*(x)+-1*(y)+(shift13122)$); 
	\coordinate(w3) at($-1*(x)+0*(y)+(shift13122)$);
	\coordinate(w4) at($-2*(x)+1*(y)+(shift13122)$);
	\coordinate(w5) at($-1*(x)+1*(y)+(shift13122)$); 
	\coordinate(w6) at($0*(x)+1*(y)+(shift13122)$); 
	\draw (v0)--(w3);
	\draw (v0)--(w6); 
	\draw (v0)--(w4);
	\draw (v0)--(w5);
	\draw (w2)--(w3); 
	\draw (w3)--(w4); 
	\draw (w4)--(w5);
	\draw (w5)--(w6);
	\draw (w6)--(v1); 
\end{tikzpicture}
\]
\end{example}

\section{$g$-Convex algebras of rank 2}
In this section, we will characterize algebras of rank 2 which have convex $g$-polygons. 

\subsection{Characterizations of $g$-convex algebras of rank 2}
Let $e,e'$ be pairwise orthogonal primitive idempotents in $A$ and $x\in e A e'$. Then we use the following notations.
\begin{enumerate}[$\bullet$]
	\item $x\in eAe'$ is a \emph{left generator} (respectively, \emph{right generator}) of $eAe'$ if $eAx=eAe'$ (respectively, $xAe'=eAe'$). 
	\item Define subalgebras $L_x\subset eAe$ and $R_x\subset e'Ae'$ as follows (see Lemma \ref{lemma:l and r are subalgebras}).
		\[
		L_x:=\{a\in e A e\mid ax\in x A e'\}\ \mbox{ and }\ R_x:=\{a\in e' A e'\mid xa\in e A x\}.
		\]
\end{enumerate}

Recall that, for a ring $\Lambda$ and a right (respectively, left) $\Lambda$-module $M$, we denote by $t(M)_\Lambda$ (respectively, $t_\Lambda(M)$) the minimal number of generators of $M$.

\begin{theorem}
	\label{theorem:charactarization_g-convex_ranktwo}
	Let $A$ be a basic finite dimensional algebra, $\{e_1, e_2\}$ a complete set of primitive orthogonal idempotents in $A$, and $P_i=e_i A$ $(i=1,2)$.
	\begin{enumerate}[\rm(a)]
		\item $A$ is $g$-convex if and only if 
			\[\Sigma(A)=\Sigma_{a;b}\ \mbox{ for some }\ a,b\in \{(0,0),(1,1,1),(1,2,1,2),(2,1,2,1)\}.\]
		\item Let $(l,r):=\left(t_{e_1Ae_1}(e_1Ae_2),t(e_1Ae_2)_{e_2Ae_2}\right)$. Then we have the following statements.
		\begin{enumerate}[$\bullet$]
			\item $\Sigma(A)=\Sigma_{00}\ast\Sigma$ for some $\Sigma\in\itscfanmp$ if and only if $(l,r)=(0,0)$.
			\item $\Sigma(A)=\Sigma_{111}\ast\Sigma$ for some $\Sigma\in\itscfanmp$ if and only if $(l,r)=(1,1)$.
			\item $\Sigma(A)=\Sigma_{1212}\ast\Sigma$ for some $\Sigma\in\itscfanmp$ if and only if $(l,r)=(1,2)$ and $t_{L_x}(e_1Ae_1)=2$ hold for some left generator $x$ of $e_1 A e_2$ and $L_x:=\{a\in e_1 A e_1\mid ax\in x A e_2\}$. 
				\item $\Sigma(A)=\Sigma_{2121}\ast\Sigma$ for some $\Sigma\in\itscfanmp$ if and only if $(l,r)=(2,1)$ and $t(e_2Ae_2)_{R_x}= 2$ hold for some right generator $x$ of $e_1 A e_2$ and $R_x:=\{b\in e_2 A e_2\mid xb\in e_1 A x\}$.
		\end{enumerate}
		\[
		{\begin{xy}
				0;<4pt,0pt>:<0pt,4pt>::
				( 0,9) *+{\Sigma_{00}\ast\Sigma},
				(  0,0) *{\bullet}="(0,0)",
				( 0,5) ="(0,10)",
				( 0,6) *+{{\scriptstyle P_2}},
				( 5,0) ="(10,0)",
				( 6,0) *+{{\scriptstyle P_1}},
				( 0,-5) ="(0,-10)",
				(  -5,0) ="(-10,0)",
				(0,3)*{}="(0,5)",
				(-3,0)*{}="(-5,0)",
				\ar@{-} "(10,0)";"(-10,0)"
				\ar@{-} "(0,10)";"(0,-10)"
				\ar@{-} "(10,0)";"(0,10)"
				\ar@{-} "(-10,0)";"(0,-10)"
				\ar@{-} "(10,0)";"(0,-10)"
				\ar@/^-2mm/@{.} "(0,5)";"(-5,0)"
		\end{xy}}
		\ \ \ {\begin{xy}
				0;<2pt,0pt>:<0pt,2pt>::
				( 0,18) *+{\Sigma_{111}\ast\Sigma},
				(  0,0) *{\bullet},
				(  0,0) ="(0,0)",
				( 0,10) ="(0,10)",
				( 0,12) *+{{\scriptstyle P_2}},
				( 10,0) ="(10,0)",
				( 12.5,0) *+{{\scriptstyle P_1}},
				( 0,-10)="(0,-10)",
				(  -10,0)="(-10,0)",
				(10,-10)="(10,-10)",
				(0,6)*{}="(0,5)",
				(-6,0)*{}="(-5,0)",
				\ar@{-} "(10,0)";"(-10,0)"
				\ar@{-} "(0,10)";"(0,-10)"
				\ar@{-} "(0,0)";"(10,-10)"
				\ar@{-} "(10,0)";"(0,10)"
				\ar@{-} "(-10,0)";"(0,-10)"
				\ar@{-} "(10,0)";"(10,-10)"
				\ar@{-} "(10,-10)";"(0,-10)"
				\ar@/^-2mm/@{.} "(0,5)";"(-5,0)"
		\end{xy}}
		\ \ \ {\begin{xy}
				0;<2pt,0pt>:<0pt,2pt>::
				( 0,18) *+{\Sigma_{1212}\ast\Sigma},
				(  0,0) *{\bullet},
				(  0,0) ="(0,0)",
				( 0,10) ="(0,10)",
				( 0,12) *+{{\scriptstyle P_2}},
				( 10,0) ="(10,0)",
				( 12.5,0) *+{{\scriptstyle P_1}},
				( 0,-10)="(0,-10)",
				(  -10,0)="(-10,0)",
				(10,-10)="(10,-10)",
				(10,-20)="(10,-20)",
				(0,6)*{}="(0,5)",
				(-6,0)*{}="(-5,0)",
				\ar@{-} "(10,0)";"(-10,0)"
				\ar@{-} "(0,10)";"(0,-10)"
				\ar@{-} "(0,0)";"(10,-10)"
				\ar@{-} "(0,0)";"(10,-20)"
				\ar@{-} "(10,0)";"(0,10)"
				\ar@{-} "(-10,0)";"(0,-10)"
				\ar@{-} "(10,0)";"(10,-10)"
				\ar@{-} "(10,-10)";"(10,-20)"
				\ar@{-} "(10,-20)";"(0,-10)"
				\ar@/^-2mm/@{.} "(0,5)";"(-5,0)"
			\end{xy}
		}\ \ \ {\begin{xy}
				0;<2pt,0pt>:<0pt,2pt>::
				( 0,18) *+{\Sigma_{2121}\ast\Sigma},
				(  0,0) *{\bullet},
				(  0,0) ="(0,0)",
				( 0,10) ="(0,10)",
				( 0,12) *+{{\scriptstyle P_2}},
				( 10,0) ="(10,0)",
				( 13,0) *+{{\scriptstyle P_1}},
				( 0,-10)="(0,-10)",
				(  -10,0)="(-10,0)",
				(20,-10)="(20,-10)",
				(10,-10)="(10,-10)",
				(0,5)*{}="(0,5)",
				(-5,0)*{}="(-5,0)",
				\ar@{-} "(10,0)";"(-10,0)"
				\ar@{-} "(0,10)";"(0,-10)"
				\ar@{-} "(0,0)";"(20,-10)"
				\ar@{-} "(0,0)";"(10,-10)"
				\ar@{-} "(10,0)";"(0,10)"
				\ar@{-} "(-10,0)";"(0,-10)"
				\ar@{-} "(0,-10)";"(20,-10)"
				\ar@{-} "(20,-10)";"(10,0)"
				\ar@/^-2mm/@{.} "(0,5)";"(-5,0)"
		\end{xy}}
		\]
		\item Let $(l,r):=\left(t_{e_2Ae_2}(e_2Ae_1),t(e_2Ae_1)_{e_1Ae_1}\right)$. Then we have the following statements.
		\begin{enumerate}[$\bullet$]
			\item $\Sigma(A)=\Sigma\ast{}^t\Sigma_{00}$ for some $\Sigma\in\itscfanpm$ if and only if $(l,r)=(0,0)$.
			\item $\Sigma(A)=\Sigma\ast{}^t\Sigma_{111}$ for some $\Sigma\in\itscfanpm$ if and only if $(l,r)=(1,1)$.
			\item $\Sigma(A)=\Sigma\ast{}^t\Sigma_{1212}$ for some $\Sigma\in\itscfanpm$ if and only if $(l,r)=(1,2)$ and $t_{L_x}(e_2Ae_2)=2$ hold for some left generator $x$ of $e_2 A e_1$ and $L_x:=\{a\in e_2 A e_2\mid ax\in x A e_1\}$. 
				\item $\Sigma(A)=\Sigma\ast{}^t\Sigma_{2121}$ for some $\Sigma\in\itscfanpm$ if and only if $(l,r)=(2,1)$ and $t(e_1Ae_1)_{R_x}= 2$ hold for some right generator $x$ of $e_2 A e_1$ and $R_x:=\{b\in e_1 A e_1\mid xb\in e_2 A x\}$.
		\end{enumerate}
		\[
		{\begin{xy}
				0;<0pt,4pt>:<4pt,0pt>::
				( 11.5,0) *+{\Sigma\ast{}^t\Sigma_{00}},
				(  0,0) *{\bullet}="(0,0)",
				( 0,5) ="(0,10)",
				( 0,6) *+{{\scriptstyle P_1}},
				( 5,0) ="(10,0)",
				( 6,0) *+{{\scriptstyle P_2}},
				( 0,-5) ="(0,-10)",
				(  -5,0) ="(-10,0)",
				(0,3)*{}="(0,5)",
				(-3,0)*{}="(-5,0)",
				\ar@{-} "(10,0)";"(-10,0)"
				\ar@{-} "(0,10)";"(0,-10)"
				\ar@{-} "(10,0)";"(0,10)"
				\ar@{-} "(-10,0)";"(0,-10)"
				\ar@{-} "(10,0)";"(0,-10)"
				\ar@/^2mm/@{.} "(0,5)";"(-5,0)"
		\end{xy}}
		\ \ \ {\begin{xy}
				0;<0pt,2pt>:<2pt,0pt>::
				( 23,0) *+{\Sigma\ast{}^t\Sigma_{111}},
				(  0,0) *{\bullet},
				(  0,0) ="(0,0)",
				( 0,10) ="(0,10)",
				( 0,12) *+{{\scriptstyle P_1}},
				( 10,0) ="(10,0)",
				( 12.5,0) *+{{\scriptstyle P_2}},
				( 0,-10)="(0,-10)",
				(  -10,0)="(-10,0)",
				(10,-10)="(10,-10)",
				(0,6)*{}="(0,5)",
				(-6,0)*{}="(-5,0)",
				\ar@{-} "(10,0)";"(-10,0)"
				\ar@{-} "(0,10)";"(0,-10)"
				\ar@{-} "(0,0)";"(10,-10)"
				\ar@{-} "(10,0)";"(0,10)"
				\ar@{-} "(-10,0)";"(0,-10)"
				\ar@{-} "(10,0)";"(10,-10)"
				\ar@{-} "(10,-10)";"(0,-10)"
				\ar@/^2mm/@{.} "(0,5)";"(-5,0)"
		\end{xy}}
		\ \ \ {\begin{xy}
				0;<0pt,2pt>:<2pt,0pt>::
				( 23,0) *+{\Sigma\ast{}^t\Sigma_{1212}},
				(  0,0) *{\bullet},
				(  0,0) ="(0,0)",
				( 0,10) ="(0,10)",
				( 0,12) *+{{\scriptstyle P_1}},
				( 10,0) ="(10,0)",
				( 12.5,0) *+{{\scriptstyle P_2}},
				( 0,-10)="(0,-10)",
				(  -10,0)="(-10,0)",
				(10,-10)="(10,-10)",
				(10,-20)="(10,-20)",
				(0,6)*{}="(0,5)",
				(-6,0)*{}="(-5,0)",
				\ar@{-} "(10,0)";"(-10,0)"
				\ar@{-} "(0,10)";"(0,-10)"
				\ar@{-} "(0,0)";"(10,-10)"
				\ar@{-} "(0,0)";"(10,-20)"
				\ar@{-} "(10,0)";"(0,10)"
				\ar@{-} "(-10,0)";"(0,-10)"
				\ar@{-} "(10,0)";"(10,-10)"
				\ar@{-} "(10,-10)";"(10,-20)"
				\ar@{-} "(10,-20)";"(0,-10)"
				\ar@/^2mm/@{.} "(0,5)";"(-5,0)"
			\end{xy}
		}\ \ \ {\begin{xy}
				0;<0pt,2pt>:<2pt,0pt>::
				( 23,0) *+{\Sigma\ast{}^t\Sigma_{2121}},
				(  0,0) *{\bullet},
				(  0,0) ="(0,0)",
				( 0,10) ="(0,10)",
				( 0,12) *+{{\scriptstyle P_1}},
				( 10,0) ="(10,0)",
				( 13,0) *+{{\scriptstyle P_2}},
				( 0,-10)="(0,-10)",
				(  -10,0)="(-10,0)",
				(20,-10)="(20,-10)",
				(10,-10)="(10,-10)",
				(0,5)*{}="(0,5)",
				(-5,0)*{}="(-5,0)",
				\ar@{-} "(10,0)";"(-10,0)"
				\ar@{-} "(0,10)";"(0,-10)"
				\ar@{-} "(0,0)";"(20,-10)"
				\ar@{-} "(0,0)";"(10,-10)"
				\ar@{-} "(10,0)";"(0,10)"
				\ar@{-} "(-10,0)";"(0,-10)"
				\ar@{-} "(0,-10)";"(20,-10)"
				\ar@{-} "(20,-10)";"(10,0)"
				\ar@/^2mm/@{.} "(0,5)";"(-5,0)"
		\end{xy}}
		\]
	\end{enumerate}
\end{theorem}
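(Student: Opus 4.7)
The plan is to deduce part (a) from parts (b) and (c), and then focus on (b); part (c) is dual under the transposition $\Sigma \mapsto {}^t\Sigma$ of Lemma \ref{transpose fan}. Any $\Sigma \in \tscfan$ decomposes uniquely as $\Sigma = \Sigma^+ \ast \Sigma^-$ with $\Sigma^+ \in \tscfanpm$ and $\Sigma^- \in \tscfanmp$, so combining (b) and (c) characterizes exactly the sixteen fans $\Sigma_{a;b}$. For part (a), one verifies directly that each $\Sigma_{a;b}$ is convex (a finite angle check at the primitive ray generators in the polygon $\P(\Sigma_{a;b})$), and conversely that any convex $\Sigma \in \tscfan$ must have its fourth- and second-quadrant quiddity sequences among the four listed, which is the rank-$2$ case analysis underlying \cite[Theorem 6.3]{AHIKM}.

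For part (b), write $X := e_1 A e_2$ and $B := e_2 A e_2$, so that $A = \left[\begin{smallmatrix} e_1 A e_1 & X \\ 0 & B \end{smallmatrix}\right]$ (with $e_2 A e_1 = 0$ forced by $\sigma_{-+} \in \Sigma_a$ via Proposition \ref{+- condition}). The case $\Sigma_{00}$ is Proposition \ref{+- condition}: $\sigma_{+-} \in \Sigma(A)$ iff $X = 0$ iff $(l, r) = (0, 0)$. For $\Sigma_{111}$, the unique intermediate fourth-quadrant ray $(1, -1)$ must be adjacent to both $(1, 0)$ and $(0, -1)$; by Proposition \ref{first mutation}, these adjacent rays are $(l, -1)$ and $(1, -r)$ respectively, so their coincidence at $(1, -1)$ is equivalent to $(l, r) = (1, 1)$.

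For $\Sigma_{1212}$, the fourth quadrant rays are $(1, -1)$ and $(1, -2)$, yielding $(l, r) = (1, 2)$ by the same reasoning. The extra condition concerns the intermediate silting complex on the mutation chain from $\mu_2^-(A)$ to $\mu_1^+(A[1])$. With $x \in X$ a left generator and $T := P_x = [P_2 \xrightarrow{x\cdot} P_1]$, one has $\mu_2^-(A) = P_1 \oplus T$, and the intermediate silting $T \oplus T'$ arises from an exchange triangle $P_1 \to T^{\oplus s} \to T' \to P_1[1]$ with $[T'] = (s - 1, -s)$. Using $\Hom_{\Kb(\proj A)}(P_1, P_2) = e_2 A e_1 = 0$, one computes $\Hom_{\Kb(\proj A)}(P_1, T) = e_1 A e_1$ and $\End_{\Kb(\proj A)}(T) = \{(a, b) \in e_1 A e_1 \times B \mid ax = xb\}$ (no homotopies exist), and the left action of $\End_{\Kb(\proj A)}(T)$ on $\Hom_{\Kb(\proj A)}(P_1, T)$ factors through the projection $(a, b) \mapsto a$ whose image is $L_x$. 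Hence $s = t_{L_x}(e_1 A e_1)$, and $[T'] = (1, -2)$ is equivalent to $t_{L_x}(e_1 A e_1) = 2$. The subsequent mutation $T \oplus T' \to T' \oplus P_2[1] = \mu_1^+(A[1])$ and the absence of further fourth-quadrant rays follow from the uniqueness of indecomposable rigid summands with a given $g$-vector, combined with the forced $g$-vector identity $[P_2[1]] = [T'] - [T] = (0, -1)$. The case $\Sigma_{2121}$ is dual: working with $\mu_1^+(A[1]) = P_g \oplus P_2[1]$ for $g = [x]$ a right generator, the analogous computation identifies the relevant invariant as $t(e_2 A e_2)_{R_x}$ via the projection $\End_{\Kb(\proj A)}(P_g) \to R_x$, $(a, b) \mapsto b$.

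The main obstacle is the intermediate-cone analysis in cases $\Sigma_{1212}$ and $\Sigma_{2121}$: one must compute $\End$ and $\Hom$ in $\Kb(\proj A)$ carefully, using the crucial vanishing $e_2 A e_1 = 0$ to eliminate all homotopies, and identify the abstract module-theoretic invariant $t_{\End_{\Kb(\proj A)}(T)}(\Hom_{\Kb(\proj A)}(P_1, T))$ with the concrete algebraic invariant $t_{L_x}(e_1 A e_1)$ via the explicit subalgebra $L_x \subset e_1 A e_1$. A related subtlety is that the stated conditions must not only produce the predicted intermediate ray but also preclude additional subdivisions of the fourth quadrant; this is handled by noting that the Hasse interval $[\mu_1^+(A[1]), \mu_2^-(A)]$ of $\twosilt A$ is a chain in rank $2$ whose length is rigidly pinned down by the mutation-counts $s$ and (dually) $t$, together with the uniqueness of indecomposable rigid summands at each prescribed $g$-vector.
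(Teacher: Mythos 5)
Your argument is correct but proceeds by a genuinely different route from the paper's. The paper proves the key equivalence $\cone\{(1,-1),(1,-2)\}\in\Sigma(A)\Leftrightarrow t_{L_x}(e_1Ae_1)=2$ (Proposition \ref{proposition:(1,0)+(1,-1),(1,-1)+(1,-2)}) entirely within its matrix formalism: it constructs the specific matrix $y=[x\ ux]$, invokes Proposition \ref{Ext 1} to reduce presilting to identities of the form $M_s(A)x+xM_t(B)=M_{s,t}(X)$, and verifies the full-rank lemmas (Propositions \ref{proposition:full rank} and \ref{proposition:full rank 2}) by hand, with condition (c) of that proposition serving as a bridge. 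You instead work categorically: you compute $\Hom_{\Kb(\proj A)}(P_1,T)=e_1Ae_1$ and $\End_{\Kb(\proj A)}(T)=\{(a,b)\mid ax=xb\}$ in the homotopy category (correctly noting that $e_2Ae_1=0$ kills all homotopies), identify the $\End(T)$-module action on $\Hom(P_1,T)$ as factoring through the surjection $\End(T)\twoheadrightarrow L_x$, and read off the number of copies of $T$ in the minimal left $\add T$-approximation of $P_1$ as $s=t_{\End T}(\Hom(P_1,T))=t_{L_x}(e_1Ae_1)$, so that the exchange triangle $P_1\to T^{\oplus s}\to T'\to P_1[1]$ yields $[T']=(s-1,-s)$. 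Your approach appeals to the general relationship between minimal approximations and generating sets of a $\Hom$-module over an $\End$-ring, which is cleaner in spirit but pushes a few routine verifications (indecomposability of $T'$, the covering argument for the fourth quadrant precluding further subdivision) into asides; the paper's version grinds through matrix algebra but stays self-contained within the framework built in Section 3. The dual computation for $\Sigma_{2121}$ via $\End(P_g)$ and the right module $\Hom(P_g,P_2[1])\cong e_2Ae_2$ over $R_x$ is similarly sound.

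One small point worth making explicit: for the \emph{if} direction of each bullet in (b), you need the sign-decomposition reduction of Proposition \ref{sign decomposition} to justify working with $A_{+-}$ (which does satisfy $e_2A_{+-}e_1=0$), since the hypothesized conditions on $(l,r)$ and $t_{L_x}(e_1Ae_1)$ alone say nothing about $e_2Ae_1$. Your parenthetical remark that ``$e_2Ae_1=0$ is forced by $\sigma_{-+}\in\Sigma(A)$'' only covers the \emph{only if} direction. The paper's proof of Theorem \ref{theorem:charactarization_g-convex_ranktwo}(b) elides the same step when it invokes Proposition \ref{first mutation} (whose hypothesis is $e_2\Lambda e_1=0$) directly, so this is a shared implicit convention rather than a defect specific to your argument — but it is worth spelling out in a full write-up.
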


\begin{remark}
	For a left (respectively, right) generator $x$ of $e_1Ae_2$, $L_x$ (respectively, $R_x$) is unique up to conjugacy.
	In particular, $t_{L_x}(e_1Ae_1)$ (respectively, $t(e_2Ae_2)_{R_x}$) does not depend on the choice of a left (respectively, right) generator $x$.
\end{remark}

\begin{example}
\begin{enumerate}[\rm (a)]
		\item 
For each $a,b\in \{(0,0),(1,1,1),(1,2,1,2),(2,1,2,1)\}$, an algebra $A_{a,b}$ satisfying 
      $\Sigma(A_{a;b})=\Sigma_{a;b}$ can be defined as given in Table\;\ref{table:g-convex algebras of rank2}.
        Algebras $A_{00;00}, A_{11;00}, A_{1212;00}$ and $A_{2121;00}$ in the first row are taken from Figure \ref{fig:tree diagram 2}, and 
        we put $A_{00;111}:=A_{111;00}^{\rm op}$, $A_{00;1212}:=A_{2121;00}^{\rm op}$ and 
        $A_{00;2121}:=A_{1212;00}^{\rm op}$ in the first column.
        Further, other algebras are constructed using Gluing Thorem \ref{theorem:gluing1}, i.e.,
        \[
            A_{a;b}:=A_{a;00}\ast A_{00;b}.
        \]       

\begin{table}[h]
    \centering
\scalebox{1}{
\right]=R_x+u R_x$
			\end{itemize}
			Further, we have $e_2 A e_1=0$. Therefore, Theorem\;\ref{theorem:charactarization_g-convex_ranktwo} implies that
			$\Sigma(A)$ has the following form.
			\[
			\begin{xy}
				0;<2pt,0pt>:<0pt,2pt>::
				(  0,0) ="(0,0)",
				(  0,0) *{\bullet},
				( 0,10) ="(0,10)",
				( 0,12) *+{{\scriptstyle P_2}},
				( 10,0) ="(10,0)",
				( 13,0) *+{{\scriptstyle P_1}},
				( 0,-10)="(0,-10)",
				(  -10,0)="(-10,0)",
				(20,-10)="(20,-10)",
				(10,-10)="(10,-10)",
				(0,5)*{}="(0,5)",
				(-5,0)*{}="(-5,0)",
				\ar@{-} "(10,0)";"(-10,0)"
				\ar@{-} "(0,10)";"(0,-10)"
				\ar@{-} "(0,0)";"(20,-10)"
				\ar@{-} "(0,0)";"(10,-10)"
				\ar@{-} "(10,0)";"(0,10)"
				\ar@{-} "(-10,0)";"(0,-10)"
				\ar@{-} "(0,-10)";"(20,-10)"
				\ar@{-} "(20,-10)";"(10,0)"
				\ar@{-} "(0,10)";"(-10,0)"
			\end{xy}
			\]
\end{enumerate}
\end{example}

\subsection{Proof of Theorem \ref{theorem:charactarization_g-convex_ranktwo}}

In this subsection, we prove Theorem \ref{theorem:charactarization_g-convex_ranktwo}.
The following observation shows Theorem \ref{theorem:charactarization_g-convex_ranktwo}(a) and gives another proof of \cite[Theorem 6.3]{AHIKM}.
	\begin{proposition}\label{proposition:sharp of convex g-fan}
		Let $A$ be as in Theorem \ref{theorem:charactarization_g-convex_ranktwo}. Then
		$A$ is $g$-convex if and only if $\Sigma(A)=\Sigma_{a;b}$ for some $a,b\in \{(0,0),(1,1,1),(1,2,1,2),(2,1,2,1)\}$.
	\end{proposition}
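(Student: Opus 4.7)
The plan is to translate convexity of $\P(\Sigma(A))$ into the bound $a_i\le 2$ on every entry of the quiddity sequence, enumerate the admissible sequences via the subdivision tree of Proposition~\ref{proposition:F=F'}, and then glue the two halves.

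First, for $\Sigma\in\tscfan$ with rays $v_1,\dots,v_n$ and quiddity $(a_1,\dots,a_n)$, I will prove that $\P(\Sigma)$ is convex if and only if $a_i\le 2$ for every $i$. The verification is purely local at each vertex $v_i$: since $a_iv_i=v_{i-1}+v_{i+1}$, the midpoint of the chord $\overline{v_{i-1}v_{i+1}}$ equals $(a_i/2)v_i$, so $v_i$ lies on the far side of that chord from the origin (or on it) precisely when $a_i\le 2$. Because $\P(\Sigma)$ is star-shaped at $0$ with boundary cycle $v_1,\dots,v_n$, global convexity amounts to local convexity at every vertex, which gives the quiddity bound.

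Second, I will enumerate the fans in $\tscfanpm$ whose quiddity sequence satisfies the bound. By Proposition~\ref{proposition:F=F'} each such fan arises from $\Sigma_{00}=\Sigma(0,0;0,0)$ by iterated fourth-quadrant subdivisions, and \eqref{define D_j for a} shows that the subdivision $D_{\sigma_i}$ preserves the bound exactly when $\max(a_i,a_{i+1})\le 1$. Walking this admissible tree: from $\Sigma_{00}$ we reach only $\Sigma_{111}$; from $\Sigma_{111}$ only $\Sigma_{2121}$ and $\Sigma_{1212}$; and from either $\Sigma_{2121}$ or $\Sigma_{1212}$ every further admissible fourth-quadrant subdivision forces some entry to $3$. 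Hence the convex fans in $\tscfanpm$ are exactly $\Sigma_a$ with $a\in\{(0,0),(1,1,1),(1,2,1,2),(2,1,2,1)\}$, and by Lemma~\ref{transpose fan} the convex fans in $\tscfanmp$ are the corresponding transposes ${}^t\Sigma_b$.

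Third, by \eqref{eq:gluing fans} every $\Sigma\in\tscfan$ splits as $\Sigma_a\ast\Sigma_b$ with $\Sigma_a\in\tscfanpm$ and $\Sigma_b\in\tscfanmp$. Since the cones $\sigma_+$ and $-\sigma_+$ separate the two halves, both neighbours of every ray of $\Sigma_a\ast\Sigma_b$ lie in a single factor, so every quiddity of the gluing is inherited unchanged from that factor. Therefore the gluing satisfies the bound if and only if both factors do, producing exactly the $16$ fans $\Sigma_a\ast{}^t\Sigma_b$ in the statement. To conclude at the level of the algebra $A$, I still need to exclude a non-complete $\Sigma(A)\in\itscfan$ from being convex: any infinite fan in $\itscfan$ must have its rays accumulating at some interior direction $v^*$ of a single sign quadrant, whence two points of $\P(\Sigma(A))$ lying far out on either side of $v^*$ have their midpoint in the uncovered wedge. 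The main obstacle will be this non-complete exclusion, together with a careful bookkeeping of quiddity inheritance at the four boundary rays $P_1,P_2,-P_1,-P_2$; both steps reduce to elementary plane geometry but warrant care.
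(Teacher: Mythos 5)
Your proposal is correct and follows the paper's route: the midpoint computation at each vertex gives the quiddity bound $a_i\le 2$, the subdivision tree of Proposition~\ref{proposition:F=F'} enumerates exactly $\Sigma_{00},\Sigma_{111},\Sigma_{1212},\Sigma_{2121}$ in $\tscfanpm$, and gluing with the transposed half produces the 16 fans. The one step you spell out that the paper's (terse) proof passes over silently is that a non-complete $\Sigma(A)$ cannot be convex — the paper implicitly assumes completeness by writing $\Sigma(A)=\Sigma_{a;b}$ — and your exclusion is right in spirit, though you should phrase it via the point where the chord $\overline{p_1p_2}$ actually crosses the missing ray (which lies far from the origin since the nearby primitive ray generators have unbounded norm) rather than via the literal midpoint, which need not land on that ray.
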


	\begin{proof}
		The ``if'' part is clear. Conversely, assume that $A$ is $g$-convex and $\Sigma(A)=\Sigma_{a;b}$ with $a=(a_1,\dots, a_n)$ and $b=(b_1,\dots,b_m)$. Then
		$a_i\le 2$ and $b_j\le 2$ hold for each $i,j$. Using Proposition \ref{proposition:F=F'}, it is easy to check that $a,b\in \{(0,0),(1,1,1),(1,2,1,2),(2,1,2,1)\}$ holds (see Figure \ref{fig:tree diagram}).
	\end{proof}

Next we show the following. 

\begin{lemma} 
	\label{lemma:l and r are subalgebras}
	Let $x\in e_1 A e_2$. Then $L_x$ is a subalgebra of $e_1 A e_1$, and $R_x$ is a subalgebra of $e_2 A e_2$.
\end{lemma}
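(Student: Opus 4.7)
The statement is a routine verification that two explicitly defined subsets of one-sided corners $e_1 A e_1$ and $e_2 A e_2$ are closed under the ring operations. Since $L_x$ and $R_x$ are defined as dual conditions, I will treat only $L_x$ in detail; the argument for $R_x$ is obtained by the obvious left/right reversal.

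The plan is to check the three subring axioms: $L_x$ contains the unit $e_1$ of $e_1 A e_1$, is closed under addition, and is closed under multiplication. The first is immediate since $e_1 x = x = x e_2 \in x A e_2$, and additive closure is trivial because $x A e_2$ is an abelian subgroup of $A$. The only step that needs comment is multiplicative closure: given $a_1, a_2 \in L_x$, write $a_2 x = x c$ for some $c \in A e_2$; since $x = e_1 x \in e_1 A e_2$ forces $c \in e_2 A e_2$ (by multiplying by $e_2$ on the left and noting $x e_2 = x$), we then have
\[
(a_1 a_2) x \;=\; a_1 (a_2 x) \;=\; a_1 (x c) \;=\; (a_1 x) c \;\in\; (x A e_2) \cdot e_2 A e_2 \;\subseteq\; x A e_2,
\]
so $a_1 a_2 \in L_x$, as required.

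For $R_x$, the dual argument works: $e_2 \in R_x$ since $x e_2 = x = e_1 x$; additive closure is immediate; and for $b_1, b_2 \in R_x$, writing $x b_1 = a_1 x$ with $a_1 \in e_1 A e_1$ (using $e_1 x = x$ to force $a_1$ into the correct corner) yields $x(b_1 b_2) = (x b_1) b_2 = a_1 (x b_2) \in e_1 A x$. There is no genuine obstacle here; the only subtle point, if any, is recognizing that the ``witnessing'' elements $c$ (respectively $a_1$) can always be chosen inside the appropriate corner algebra, which is ensured by multiplying the defining identities by the idempotents $e_1, e_2$ on the suitable side.
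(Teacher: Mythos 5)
Your verification is correct and is in essence the same argument the paper invokes: the paper merely notes the claim is a special case of the general fact that for rings $A,B$, an $(A,B)$-bimodule $M$, and $x\in M$, the set $\{b\in B\mid xb\in Ax\}$ is a subring of $B$, leaving the axiom-check implicit. You carry out that check directly in the corner-algebra setting; the extra step of forcing the witnessing element $c$ into $e_2Ae_2$ is harmless but not strictly needed, since $(xAe_2)(Ae_2)\subseteq xAe_2$ already gives multiplicative closure.
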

\begin{proof}
This is a special case of the following easy fact: Let $A,B$ be rings, $M$ an $(A,B)$-module, and $x\in M$. Then $\{b\in B\mid xb\in Ax\}$ is a subring of $B$. 
\end{proof}

Now we give a key observation.
As in Section \ref{section: Matrices and Presilting complexes}, for $s,t\ge0$, $x\in M_{s,t}(e_1Ae_2)$, we define
\[
P_x:=(e_2A^{\oplus t}\xrightarrow{x(\cdot)} e_1A^{\oplus s})\in\Kb(\proj A).
\]

\begin{proposition}
	\label{proposition:(1,0)+(1,-1),(1,-1)+(1,-2)}
	Assume $t_{e_1Ae_1}(e_1Ae_2)=1$. For a left generator $x\in e_1Ae_2$, 
the following conditions are equivalent.
       \begin{enumerate}[\rm(a)] 
       \item $\Sigma(A)$ contains $\cone\{(1,-1),(1,-2)\}$.
       \item $t_{L_x}( e_1 A e_1)=2$.
       \item $e_1 A y+xM_{1,2}(e_2Ae_2)=M_{1,2}(e_1 A e_2)$ holds for some $u\in e_1Ae_1\setminus L_x$ and $y:=[x\ ux]$.
       \end{enumerate}
\end{proposition}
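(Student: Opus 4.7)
The plan is to recognise (c) as the Ext-vanishing that makes $P_x \oplus P_y$ a basic $2$-term silting complex realising the cone in (a), and then to derive (b) $\Leftrightarrow$ (c) via Nakayama's lemma applied to the subalgebra $L_x$, which turns out to be local.

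Setting $B := e_2 A e_2$, $X := e_1 A e_2$, and $y := [x\ ux]$ for $u \in e_1Ae_1 \setminus L_x$, I would first note via Proposition \ref{Ext 1} that $\Hom(P_x, P_x[1]) = 0$ and $\Hom(P_x, P_y[1]) = 0$ are automatic because $e_1Ae_1 \cdot x = X$ (since $x$ is a left generator), while the inclusion $xM_{1,2}(B) \subseteq yM_2(B)$ makes $\Hom(P_y, P_y[1]) = 0$ follow from $\Hom(P_y, P_x[1]) = 0$; the latter unfolds to condition (c). Thus (c) is equivalent to $P_x \oplus P_y$ being presilting.

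To get (c) $\Rightarrow$ (a), I must show $P_y$ is indecomposable. The only nontrivial decomposition allowed by $g$-vector bookkeeping in rank $2$ is $P_y \cong P_{x'} \oplus e_2 A [1]$, producing $\beta \in (e_1Ae_1)^\times$ and $\alpha \in GL_2(B)$ with the second column of $\beta^{-1}y\alpha$ vanishing, i.e.\ $x\alpha_{12} + ux\alpha_{22} = 0$. If $\alpha_{22}$ is a unit this forces $u \in L_x$, contradicting the hypothesis; otherwise $\alpha_{22} \in J_B$ and invertibility of $\bar\alpha$ makes $\alpha_{12}$ a unit, so $x = -uxb$ with $b := \alpha_{22}\alpha_{12}^{-1} \in J_B$, and iteration yields $x = (-u)^n xb^n = 0$ for $n\gg 0$ by nilpotency of $J_B$, contradicting $x \neq 0$. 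Thus $P_y$ is indecomposable and $P_x \oplus P_y \in \twosilt A$ realises $\cone\{(1,-1),(1,-2)\}$. The converse (a) $\Rightarrow$ (c) is symmetric: starting from an indecomposable $P_y$ with $[P_y] = (1,-2)$ and $P_x \oplus P_y \in \twosilt A$, I would write $y = [a_1 x\ a_2 x]$, invoke the full-rank criterion of Proposition \ref{proposition:full rank 2} to make some $a_i$ a unit, reduce to $y = [x\ ux]$ by reordering and left multiplication, and observe $u \notin L_x$ from indecomposability, so that $\Hom(P_y, P_x[1]) = 0$ gives (c).

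Finally, for (b) $\Leftrightarrow$ (c), setting $c_1 = 0$ in the matrix equation of (c) shows it is equivalent to the existence of $u \in e_1Ae_1 \setminus L_x$ with $e_1Ae_1 = L_x + L_x u$, yielding (c) $\Rightarrow$ (b) immediately. For the converse I would verify that $L_x$ is a local subring of $e_1Ae_1$: if $a \in L_x$ is a unit of $e_1Ae_1$ with $ax = xb$, reducing modulo $J_AX$ (using $t_{e_1Ae_1}(X) = 1$) gives $\bar a = \bar b$ in the residue field, so $b$ is a unit in $B$ and $a^{-1}x = xb^{-1} \in xB$, whence $a^{-1} \in L_x$; hence the non-units of $L_x$ form the ideal $J_{L_x} = L_x \cap J_{e_1Ae_1}$, and a short check gives $J_{L_x}e_1Ae_1 \cap L_x = J_{L_x}$. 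Nakayama then identifies $t_{L_x}(e_1Ae_1)$ with $\dim(e_1Ae_1/J_{L_x}e_1Ae_1)$, and any $u$ lifting a vector linearly independent from $\bar 1$ in this two-dimensional quotient satisfies $u \notin L_x$ and $e_1Ae_1 = L_x + L_x u$ by Nakayama. The main technical obstacle is ruling out the exotic subcase $\alpha_{22} \in J_B$ in the indecomposability step, where an abstract rank consideration fails and one must exploit finite-dimensionality of $A$ (nilpotency of $J_B$) via iteration.
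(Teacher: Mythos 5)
Your proof is correct, and its overall architecture matches the paper's: you recognise condition (c) as the Ext-vanishing (iv) of Proposition \ref{Ext 1}, observe that (i) and (iii) are automatic because $x$ is a left generator while (iv) subsumes (ii) via $xM_{1,2}(B)\subseteq yM_2(B)$, and then close the loop through the counting of generators over $L_x$. The one place where you genuinely diverge is the indecomposability of $P_y$ in the implication (c)$\Rightarrow$(a). The paper dispatches this in two lines by citing uniqueness of rigid objects with a prescribed $g$-vector (Demonet--Iyama--Jasso, Theorem 6.5(a) of \cite{DIJ}): if $P_y$ decomposed, its $g$-vector forces $P_y\simeq P_2[1]\oplus P_z$ with $[P_z]=[P_x]$, hence $P_z\simeq P_x$, whence $x$ is simultaneously a left and right generator and $L_x=e_1Ae_1$, contradicting $u\notin L_x$. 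Your replacement is an explicit change-of-basis argument: a splitting produces $\alpha\in\GL_2(e_2Ae_2)$ killing the second column of $\beta y\alpha$, and the dichotomy on $\bar\alpha_{22}$ either forces $u\in L_x$ directly or yields $x=-uxb$ with $b\in J_{e_2Ae_2}$, after which nilpotency of the radical kills $x$. This is more elementary (no appeal to the DIJ uniqueness theorem) at the cost of being longer and requiring the extra observation that $x\neq 0$. On the remaining implications, you insert a small but genuine improvement over what the paper writes: you prove that $L_x$ is a local ring with $J_{L_x}=L_x\cap J_{e_1Ae_1}$ before invoking Nakayama, whereas the paper's phrase ``$t_{L_x}(e_1Ae_1)=2$ and $L_x\not\subset\rad_{L_x}e_1Ae_1$'' silently presupposes exactly this. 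Finally, your (a)$\Rightarrow$(c) is stated a bit tersely (``observe $u\notin L_x$ from indecomposability''); to make it airtight one should spell out that $u\in L_x$, say $ux=xb$, gives $y=x[1\ b]$ and hence $P_y\simeq P_{[x\ 0]}\simeq P_x\oplus P_2[1]$, contradicting indecomposability -- this is the same fact your (c)$\Rightarrow$(a) case 1 proves, but used in the reverse direction.
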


\begin{proof}
Notice that $P_x$ is indecomposable presilting by Proposition \ref{first mutation}.
 
(a)$\Rightarrow$(b) If $t_{L_x}( e_1 A e_1)= 1$, then $e_1 A e_1= L_x$ holds. Thus $e_1Ae_2=e_1Ax\subset xAe_2$ holds, 
and thus $x$ is a right generator. By Proposition \ref{first mutation}, $P_x\oplus P_2[1]\in \twosilt A$ holds, a contradiction to $\cone\{(1,-1),(1,-2)\}\in\Sigma(A)$.
Thus it suffices to prove $t_{L_x}( e_1 A e_1)\le2$.

Since $\cone\{(1,-1),(1,-2)\}\in\Sigma(A)$, there exists $y=[x_1\ x_2]\in M_{1,2}(e_1Ae_2)$ such that $P_x\oplus P_y$ is silting. By Proposition \ref{Ext 1}, we have 
\begin{eqnarray}\label{yy}
M_{1,2}(e_1 A e_2)=e_1Ay+yM_{2,2}(e_2Ae_2),\\ \label{yx}
M_{1,2}(e_1 A e_2)=e_1Ay+xM_{1,2}(e_2Ae_2).
\end{eqnarray}
Looking at the first entry of \eqref{yy}, at least one of $x_1$ and $x_2$ does not belong to $\rad_{e_1Ae_1}e_1Ae_2$. 
Without loss of generality, assume $x_1\notin \rad_{e_1Ae_1}e_1Ae_2$. Then there exists $a\in(e_1Ae_1)^\times$ such that $x=ax_1$. Since $P_y\simeq P_{ay}$, we can assume $x_1=x$ by replacing $y$ by $ay$.
Since $x$ is a left generator, there exists $u\in e_1Ae_1$ such that $x_2=ux$. Consequently, we can assume
\[y=[x\ ux].\]
For each $a\in e_1Ae_1$, \eqref{yx} implies that there exist $a'\in e_1 A e_1$ and $b,b'\in e_2 A e_2$ such that 
	\[
 	[0\ ax]= a'[x\ ux]+x[b\ b'].
 	\]
 	Then $a'$ and $a-a'u$ are in $L_x$, and hence $a=a'u+(a-a'u)\in L_xu+L_x$. Thus $e_1 A e_1 = L_x+L_x u$ and $t_{L_x}( e_1 A e_1)\le2$ hold, as desired.  
	 	
(b)$\Rightarrow$(c) Since $t_{L_x}( e_1 A e_1)=2$ and $L_x\not\subset\rad_{L_x}e_1Ae_1$, there exists $u\in e_1Ae_1\setminus L_x$ such that
\[L_xu+L_x=e_1Ae_1.\]
Multiplying $x$ from the right, we have $L_xux+L_xx=e_1Ax=e_1Ae_2$. Since $L_xx\subset xAe_2$, we have
\begin{equation}\label{uxx}
L_xux+xAe_2=e_1Ae_2.
\end{equation}
To prove (c), take any $[z\ w]\in M_{1,2}(e_1Ae_2)$. Since $x$ is a left generator, there exists $a\in e_1Ae_1$ such that $z=ax$. By \eqref{uxx}, there exist $l\in L_x$ and $b\in e_2Ae_2$ such that $w-aux=lux+xb$. 
By definition of $L_x$, there exists $c\in e_2Ae_2$ such that $lx=xc$. 
 Then we have 
 \[
 [z\ w]=(a+l)[x\ ux]+x[-c\ b]\in e_1 A y+xM_{1,2}(e_2Ae_2).
 \]

(c)$\Rightarrow$(a) By Proposition \ref{Ext 1}, the following assertions hold.
\begin{itemize}
\item $P_x$ is presilting if and only if (i) $e_1Ax+xAe_2=e_1Ae_2$.
\item $P_y$ is presilting if and only if (ii) $e_1Ay+yM_{2,2}(e_2Ae_2)=M_{1,2}(e_1 A e_2)$.
\item $\Hom_{\Kb(\proj A)}(P_x, P_y[1])=0$ if and only if (iii) $e_1 A x+yM_{2,1}(e_2Ae_2)=e_1 A e_2$.
\item $\Hom_{\Kb(\proj A)}(P_y, P_x[1])=0$ if and only if (iv) $e_1 A y+xM_{1,2}(e_2Ae_2)=M_{1,2}(e_1 A e_2)$.
\end{itemize}
It is clear that (iv) implies (ii), and (i) implies (iii). By looking at the first entry of the row vector, (iv) implies (i). 

Our assumption (c) implies that (iv) holds, and hence (i)-(iii) also hold. Thus $P_x\oplus P_y$ is presilting. 
It remains to show that $P_y$ is indecomposable. 
Suppose that $P_y$ is decomposable. By considering the $g$-vectors, we have that $P_y\simeq e_2A[1]\oplus P_z$ for some $z\in e_1Ae_2$. Since $[P_z]=[P_x]$, we have $P_z\simeq P_x$ by \cite[Theorem 6.5(a)]{DIJ}. This shows that $e_2A[1]\oplus P_x$ is silting.  
By Proposition \ref{first mutation}, we have $xAe_2=e_1Ae_2$ and $L_x=e_1Ae_1$. This contradicts $u\not\in L_x$.
\end{proof}

We are ready to prove Theorem \ref{theorem:charactarization_g-convex_ranktwo}(b).

\begin{proof}[Proof of \rm{Theorem \ref{theorem:charactarization_g-convex_ranktwo}(b)}]
The first and second statements follow from Proposition \ref{+- condition} and Proposition \ref{first mutation}.

We prove the third statement. By Proposition \ref{first mutation}, $\cone\{(1,0),(1,-1)\}\in \Sigma(A)$ if and only if $t_{e_1Ae_1}(e_1Ae_2)=1$, and  
$\cone\{(0,-1),(1,-2)\}\in \Sigma(A)$ if and only if $t(e_1Ae_2)_{e_2Ae_2}=2$. 
Thus the assertion follows from Proposition \ref{proposition:(1,0)+(1,-1),(1,-1)+(1,-2)}.

The fourth statement is the dual of the third statement.
\end{proof}

\section{Gluing fans II and Application}

\subsection{Gluing fans II}

In this subsection, we study another type of gluing fans of rank 2.
We start with introducing the following operation for fans of rank two.

\begin{definition}
For $\Sigma,\Sigma' \in \itscfanpm$ satisfying
\begin{equation}\label{sigma sigma'}
\sigma=\cone\{(0,-1),(1,-1)\}\in\Sigma\ \ \mbox{ and }\ \sigma'=\cone\{(1,-1),(1,0)\}\in\Sigma',
\end{equation}
we define $\Sigma\circ\Sigma'\in\itscfanpm$ by
\begin{eqnarray*}
(\Sigma\circ \Sigma')_1&:=&\Sigma_1\cup \Sigma'_1\\
(\Sigma\circ \Sigma')_2&:=&\left(\Sigma_2 \setminus\{\sigma\} \right)\cup\left(\Sigma'_2 \setminus \{\sigma'\} \right).
\end{eqnarray*}
\begin{equation*}
{\Sigma=\begin{xy}
0;<4pt,0pt>:<0pt,4pt>::
(0,-5)="0",
(-5,0)="1",
(0,0)*{\bullet},
(0,0)="2",
(5,0)="3",
(0,5)="4",
(5,-5)="5",
(2.9,-2.9)="a",
(4,0)="b",
(1.5,1.5)*{{\scriptstyle +}},
(-1.5,-1.5)*{{\scriptstyle -}},
(5,-2)*{{\scriptstyle ?}},
(1.5,-3.5)*{{\scriptstyle \sigma}},
\ar@{-}"0";"1",
\ar@{-}"1";"4",
\ar@{-}"4";"3",
\ar@{-}"0";"5",
\ar@{-}"2";"0",
\ar@{-}"2";"1",
\ar@{-}"2";"3",
\ar@{-}"2";"4",
\ar@{-}"2";"5",
\ar@/^-0.8mm/@{.} "a";"b",
\end{xy}}\ \ \ \ \  
{\Sigma'=\begin{xy}
0;<4pt,0pt>:<0pt,4pt>::
(0,-5)="0",
(-5,0)="1",
(0,0)*{\bullet},
(0,0)="2",
(5,0)="3",
(0,5)="4",
(5,-5)="5",
(2.9,-2.9)="d",
(0,-4)="c",
(1.5,1.5)*{{\scriptstyle +}},
(-1.5,-1.5)*{{\scriptstyle -}},
(2,-5)*{{\scriptstyle !}},
(3.5,-1.5)*{{\scriptstyle \sigma'}},
\ar@{-}"0";"1",
\ar@{-}"1";"4",
\ar@{-}"4";"3",
\ar@{-}"3";"5",
\ar@{-}"2";"0",
\ar@{-}"2";"1",
\ar@{-}"2";"3",
\ar@{-}"2";"4",
\ar@{-}"2";"5",
\ar@/^-0.8mm/@{.} "c";"d",
\end{xy}}\ \ \ \ \ 
{\Sigma\circ\Sigma'=\begin{xy}
0;<4pt,0pt>:<0pt,4pt>::
(0,-5)="0",
(-5,0)="1",
(0,0)*{\bullet},
(0,0)="2",
(5,0)="3",
(0,5)="4",
(5,-5)="5",
(2.9,-2.9)="a",
(4,0)="b",
(2.9,-2.9)="d",
(0,-4)="c",
(1.5,1.5)*{{\scriptstyle +}},
(-1.5,-1.5)*{{\scriptstyle -}},
(2,-5)*{{\scriptstyle !}},
(5,-2)*{{\scriptstyle ?}},
\ar@{-}"0";"1",
\ar@{-}"1";"4",
\ar@{-}"4";"3",
\ar@{-}"2";"0",
\ar@{-}"2";"1",
\ar@{-}"2";"3",
\ar@{-}"2";"4",
\ar@{-}"2";"5",
\ar@/^-0.8mm/@{.} "a";"b",
\ar@/^-0.8mm/@{.} "c";"d",
\end{xy}}
\end{equation*}
\end{definition}

\begin{remark}
When $\Sigma$ and $\Sigma'$ are complete, the assumption \eqref{sigma sigma'} is equivalent to that
\[\Sigma=\Sigma(a_1,\ldots,a_{n-1},1;0,0)\ \mbox{ and }\ \Sigma'=\Sigma(1,b_2,\ldots, b_m;0,0).\]
In this case, the above gluing is described as follows:
\begin{equation*} 
\Sigma\circ\Sigma'=\Sigma(a_1,\ldots,a_{n-2}, a_{n-1}+b_{2}-1, b_3,\ldots,b_{m};0,0).
\end{equation*}
\end{remark}

Now we study an algebraic counterpart of the gluing fans of this type. 

\begin{theorem}\label{theorem:gluing2}
Let $\Lambda$ and $\Lambda'$ be elementary $k$-algebras of rank 2 
with orthogonal primitive idempotents $1=e_1+e_2\in\Lambda$ and $1=e'_1+e'_2\in\Lambda'$ satisfying $e_1\Lambda e_2=0$, $e'_1\Lambda' e'_2=0$,
\[\sigma=\cone\{(0,-1),(1,-1)\}\in\Sigma(\Lambda)\ \ \mbox{ and }\ \sigma'=\cone\{(1,-1),(1,0)\}\in\Sigma(\Lambda').\]
Then, there exists an elementary $k$-algebra $\Lambda\circ\Lambda'$ such that 
\[\Sigma_2(\Lambda\circ\Lambda')=\Sigma_2(\Lambda)\circ\Sigma_2(\Lambda').\] 
\end{theorem}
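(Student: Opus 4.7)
My plan is to construct $\Lambda\circ\Lambda'$ explicitly, following the pattern of Gluing Theorem~\ref{theorem:gluing1}. By the hypotheses together with Proposition~\ref{+- condition}, both $\Sigma(\Lambda),\Sigma(\Lambda')$ lie in $\itscfanpm$, so we may write $\Lambda=\left[\begin{smallmatrix}A&X\\0&B\end{smallmatrix}\right]$ and $\Lambda'=\left[\begin{smallmatrix}A'&X'\\0&B'\end{smallmatrix}\right]$ with local $k$-algebras $A,B,A',B'$. Comparing the cone conditions $\cone\{(0,-1),(1,-1)\}\in\Sigma(\Lambda)$ and $\cone\{(1,-1),(1,0)\}\in\Sigma(\Lambda')$ with the description in Proposition~\ref{first mutation} forces $t(X)_B=1$ and $t_{A'}(X')=1$, so there exist a right generator $x\in X$ with $X=xB$ and a left generator $x'\in X'$ with $X'=A'x'$.

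I would then define $A'':=A\times_{k}A'$ and $B'':=B\times_{k}B'$ as the fiber products over the canonical surjections to $k\simeq A/J_A\simeq A'/J_{A'}\simeq B/J_B\simeq B'/J_{B'}$, and construct an $(A'',B'')$-bimodule $X''$ by gluing the cyclic bimodules $X$ and $X'$ along the identification $x\sim x'$, subject to the additional compatibility relation $J_{A'}\cdot X''\cdot J_B=0$ so that the $\Lambda$-relations and $\Lambda'$-relations on the common generator do not interact. Setting $\Lambda\circ\Lambda':=\left[\begin{smallmatrix}A''&X''\\0&B''\end{smallmatrix}\right]$ then produces an elementary $k$-algebra equipped with canonical surjections $\Lambda\circ\Lambda'\twoheadrightarrow\Lambda$ and $\Lambda\circ\Lambda'\twoheadrightarrow\Lambda'$ coming from the first and second projections of the fiber products.

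To establish $\Sigma_2(\Lambda\circ\Lambda')=\Sigma_2(\Lambda)\circ\Sigma_2(\Lambda')$, I would show that the $2$-term presilting complexes of $\Lambda\circ\Lambda'$ whose $g$-vectors $(u,v)$ satisfy $u+v\geq 0$ (i.e.\ lie on or ``above'' the ray $(1,-1)$) correspond bijectively, via the surjection $\Lambda\circ\Lambda'\twoheadrightarrow\Lambda$, to those of $\Lambda$ in the same half-plane, and symmetrically that complexes with $u+v\leq 0$ correspond to those of $\Lambda'$. These bijections would be verified using the matrix criterion of Proposition~\ref{Ext 1}(b) combined with the full-rank arguments of Propositions~\ref{proposition:full rank} and~\ref{proposition:full rank 2}, in parallel with the proof of Subdivision Theorem~\ref{theorem:subdivision}.

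The principal obstacle is exactly this last step. Unlike in Gluing Theorem~\ref{theorem:gluing1}, where $\Sigma(\Lambda)$ and $\Sigma(\Lambda')$ live in the disjoint sign cones $(+,-)$ and $(-,+)$ of $\Sigma(\Lambda\ast\Lambda')$ so the clean sign decomposition of Proposition~\ref{sign decomposition} applies, here both fans contribute to the same sign cone $(+,-)$ of $\Lambda\circ\Lambda'$. A direct matrix-theoretic argument is therefore needed: given a presilting complex $P_y$ over $\Lambda\circ\Lambda'$ with $y\in M_{s,t}(X'')$, the compatibility relation $J_{A'}\cdot X''\cdot J_B=0$ should force each entry of $y$ to lie predominantly in the ``$\Lambda$-part'' or the ``$\Lambda'$-part'' of $X''$, and this dichotomy—determined by the generators $x$ and $x'$ via the projections—translates into whether the $g$-vector of $P_y$ lies above or below the ray $(1,-1)$ in the fourth quadrant.
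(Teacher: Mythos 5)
Your construction matches the paper's: $\Lambda\circ\Lambda'$ is built from the fiber products $A\times_kA'$ and $B\times_kB'$ together with a bimodule obtained by ``gluing'' $X$ and $X'$ along their one-dimensional tops, and this gluing is precisely the fiber product $X\times_kX'$ of the surjections $\pi\colon X\twoheadrightarrow X/XJ_B\simeq k$ and $\pi'\colon X'\twoheadrightarrow X'/J_{A'}X'\simeq k$ (normalized by your chosen generators $x$, $x'$), which automatically satisfies $J_{A'}\cdot(X\times_kX')\cdot J_B=0$ rather than this being an imposed relation. Your identification of the threshold ray $(1,-1)$, i.e.\ the split at $s=t$, and the tools (Proposition~\ref{Ext 1}(b) plus the full-rank arguments of Propositions~\ref{proposition:full rank}, \ref{proposition:full rank 2}), as well as the observation that sign decomposition is unavailable here, all agree with the paper.

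The one place where your outline drifts from the paper's actual mechanism is the final sentence. The paper does \emph{not} argue that individual entries of a presilting matrix over $\Lambda\circ\Lambda'$ concentrate in the ``$\Lambda$-part'' or the ``$\Lambda'$-part''; a generic element of $X\times_kX'$ has both coordinates nonzero, and no per-entry dichotomy holds. Instead the split at $s=t$ is established by two complementary statements: first, the surjections $\Lambda\circ\Lambda'\twoheadrightarrow\Lambda$ and $\Lambda\circ\Lambda'\twoheadrightarrow\Lambda'$ send presilting complexes to presilting complexes for every $(s,t)$ (Proposition~\ref{proposition:from Q to P}); and second, when $s\ge t$, a presilting $x\in M_{s,t}(X)$ over $\Lambda$ lifts to the presilting $(x,\overline{x})\in M_{s,t}(X\times_kX')$ over $\Lambda\circ\Lambda'$, where $\overline{x}:=\pi(x)\,x'$ is the canonical image of $x$ through $k$ into $X'$, with the symmetric statement for $s\le t$ (Proposition~\ref{proposition:from P to Q}). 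The full-rank criterion enters only in this lifting step: since $\pi(x)\in M_{s,t}(k)$ has full rank $t$ and $s\ge t$, multiplication by $\pi(x)$ is surjective, which is what lets $M_s(J_{A'})\cdot(x,\overline{x})$ produce every element of $M_{s,t}(\{0\}\times J_{A'}X')$. Replacing your ``predominance'' heuristic with this project-and-lift argument gives exactly the paper's proof.
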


Theorem \ref{theorem:gluing2} is explained by the following picture.
\[{\Sigma(\Lambda)=\begin{xy}
0;<4pt,0pt>:<0pt,4pt>::
(0,-5)="0",
(-5,0)="1",
(0,0)*{\bullet},
(0,0)="2",
(5,0)="3",
(0,5)="4",
(5,-5)="5",
(2.9,-2.9)="a",
(4,0)="b",
(1.5,1.5)*{{\scriptstyle +}},
(-1.5,-1.5)*{{\scriptstyle -}},
(5,-2)*{{\scriptstyle ?}},
(7,0)*{{\scriptstyle P_1}},
(0,6.7)*{{\scriptstyle P_2}},
(1.5,-3.5)*{{\scriptstyle \sigma}},
\ar@{-}"0";"1",
\ar@{-}"1";"4",
\ar@{-}"4";"3",
\ar@{-}"0";"5",
\ar@{-}"2";"0",
\ar@{-}"2";"1",
\ar@{-}"2";"3",
\ar@{-}"2";"4",
\ar@{-}"2";"5",
\ar@/^-0.8mm/@{.} "a";"b",
\end{xy}}\ \ \ \ \ {\Sigma(\Lambda')=\begin{xy}
0;<4pt,0pt>:<0pt,4pt>::
(0,-5)="0",
(-5,0)="1",
(0,0)*{\bullet},
(0,0)="2",
(5,0)="3",
(0,5)="4",
(5,-5)="5",
(2.9,-2.9)="d",
(0,-4)="c",
(1.5,1.5)*{{\scriptstyle +}},
(-1.5,-1.5)*{{\scriptstyle -}},
(2,-5)*{{\scriptstyle !}},
(7,0)*{{\scriptstyle P'_1}},
(0,6.7)*{{\scriptstyle P'_2}},
(3.5,-1.5)*{{\scriptstyle \sigma'}},
\ar@{-}"0";"1",
\ar@{-}"1";"4",
\ar@{-}"4";"3",
\ar@{-}"3";"5",
\ar@{-}"2";"0",
\ar@{-}"2";"1",
\ar@{-}"2";"3",
\ar@{-}"2";"4",
\ar@{-}"2";"5",
\ar@/^-0.8mm/@{.} "c";"d",
\end{xy}}\ \ \ \ \ {\Sigma(\Lambda\circ\Lambda')=\begin{xy}
0;<4pt,0pt>:<0pt,4pt>::
(0,-5)="0",
(-5,0)="1",
(0,0)*{\bullet},
(0,0)="2",
(5,0)="3",
(0,5)="4",
(5,-5)="5",
(2.9,-2.9)="a",
(4,0)="b",
(2.9,-2.9)="d",
(0,-4)="c",
(1.5,1.5)*{{\scriptstyle +}},
(-1.5,-1.5)*{{\scriptstyle -}},
(2,-5)*{{\scriptstyle !}},
(5,-2)*{{\scriptstyle ?}},
(7,0)*{{\scriptstyle Q_1}},
(0,6.7)*{{\scriptstyle Q_2}},
\ar@{-}"0";"1",
\ar@{-}"1";"4",
\ar@{-}"4";"3",
\ar@{-}"2";"0",
\ar@{-}"2";"1",
\ar@{-}"2";"3",
\ar@{-}"2";"4",
\ar@{-}"2";"5",
\ar@/^-0.8mm/@{.} "a";"b",
\ar@/^-0.8mm/@{.} "c";"d",
\end{xy}}\]

The construction of $\Lambda\circ\Lambda'$ is as follows: By our assumption, we can write
\begin{eqnarray*}
\Lambda=\left[\begin{array}{cc}
A&X\\ 0&B
\end{array}\right]\ \mbox{ and }\ P_1:=[A\ X],\ P_2:=[0\ B]\in\proj\Lambda,\\ 
\Lambda'=\left[\begin{array}{cc}
C&Y\\ 0&D
\end{array}\right]\ \mbox{ and }\ P'_1:=[C\ Y],\ P'_2:=[0\ D]\in\proj\Lambda',
\end{eqnarray*}
where
\begin{enumerate}[$\bullet$]
\item $A$, $B$, $C$, $D$ are local $k$-algebras such that $k\simeq A/J_A\simeq B/J_B\simeq C/J_C\simeq D/J_D$.
\item $X$ is an $A^{\op}\otimes_kB$-module and $Y$ is an $C^{\op}\otimes_kD$-module.
\item There exist $g\in X$ and $h\in Y$ such that $X=gB\neq0$ and $Y=Ch\neq0$ by Proposition \ref{first mutation}. \end{enumerate}
Let $A\times_kC$ (respectively, $B\times_kD$) be a fibre product of canonical surjections $A\to k$ and $C\to k$ (respectively, $B\to k$ and $D\to k$). 
As in Section \ref{section: Matrices and Presilting complexes}, we consider maps
\begin{eqnarray}\label{define pi}
\pi:X\to X/XJ_B\xrightarrow{(g(\cdot))^{-1}}B/J_B=k\ \mbox{ and }\ \pi':Y\to Y/J_CY\xrightarrow{((\cdot)h)^{-1}}C/J_C=k.
\end{eqnarray}
Let $X\times_kY$ be a fibre product of $\pi:X\to k$ and $\pi':Y\to k$. Then $X\times_kY$ is a $(A\times_kC)^{\op}\otimes_k(B\times_kD)$-module, and let
\[\Lambda\circ\Lambda':=\left[\begin{array}{cc}
A\times_kC&X\times_kY\\ 0&B\times_kD
\end{array}\right].\]

The rest of this section is devoted to proving Theorem \ref{theorem:gluing2}. For simplicity, let
\[\Gamma:=\Lambda\circ\Lambda'\ \mbox{ and }\ Q_1:=[A\times_kC\ X\times_kY],\ Q_2:=[0\ B\times_kD]\in\proj\Gamma.\]
Consider ideals of $\Gamma$ by
\[I=\left[\begin{array}{cc}
J_C&J_CY\\ 0&J_D
\end{array}\right]\ \mbox{ and }\ I'=\left[\begin{array}{cc}
J_A&XJ_B\\ 0&J_B
\end{array}\right].\]
Then there exist isomorphisms of $k$-algebras
\begin{equation}\label{factors of Gamma}
\Gamma/I\simeq\Lambda\ \mbox{ and }\ \Gamma/I'\simeq\Lambda'.
\end{equation}

As in Section \ref{section: Matrices and Presilting complexes}, for $s,t\ge0$, $x\in M_{s,t}(X)$, $y\in M_{s,t}(Y)$ and $(x',y')\in M_{s,t}(X\times_kY)$, we define
\begin{eqnarray*}
P_x&:=&(P_2^{\oplus t}\xrightarrow{x(\cdot)} P_1^{\oplus s})\in\per\Lambda,\\
P'_y&:=&({P'_2}^{\oplus t}\xrightarrow{y(\cdot)} {P'_1}^{\oplus s})\in\per\Lambda'\\
Q_{(x,y)}&:=&(Q_2^{\oplus t}\xrightarrow{(x',y')(\cdot)} Q_1^{\oplus s})\in\per\Gamma.
\end{eqnarray*}

\begin{proposition}\label{proposition:from Q to P}
Let $s,t\ge0$ and $(x,y)\in M_{s,t}(X\times_kY)$. If $Q_{(x,y)}$ is a presilting complex of $\Gamma$, then $P_x$ is a presilting complex of $\Lambda$ and $P'_y$ is a presilting complex of $\Lambda'$.
\end{proposition}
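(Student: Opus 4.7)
My plan is to translate the presilting hypothesis on $Q_{(x,y)}$ via Proposition \ref{Ext 1}(b) into a matrix identity over $X\times_kY$, and then push this identity forward along the two natural projections $X\times_kY\to X$ and $X\times_kY\to Y$.

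First I would check that the first projection $p_1\colon X\times_kY\to X$, $(x,y)\mapsto x$, is a surjective homomorphism of $(A\times_kC)^{\op}\otimes_k(B\times_kD)$-modules, where $A\times_kC$ acts on $X$ through the projection $A\times_kC\to A$ and $B\times_kD$ acts on $X$ through $B\times_kD\to B$. Surjectivity is immediate: $\pi'$ in \eqref{define pi} is surjective (since $\pi'(h)=1$), so for every $x\in X$ one can choose $y\in Y$ with $\pi'(y)=\pi(x)$. The bimodule compatibility is a direct verification from the very definition of the multiplication on $X\times_kY$. Entrywise this yields a surjective bimodule homomorphism
\[
p_1\colon M_{s,t}(X\times_kY)\twoheadrightarrow M_{s,t}(X),
\]
which intertwines the surjections $M_s(A\times_kC)\twoheadrightarrow M_s(A)$ and $M_t(B\times_kD)\twoheadrightarrow M_t(B)$ in the obvious sense.

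Next, by Proposition \ref{Ext 1}(b) applied to $\Gamma$, the presilting hypothesis on $Q_{(x,y)}$ is equivalent to
\[
M_{s,t}(X\times_kY)=M_s(A\times_kC)(x,y)+(x,y)M_t(B\times_kD).
\]
Applying $p_1$ to both sides and using $p_1(x,y)=x$ together with the surjectivity noted above, I get
\[
M_{s,t}(X)=M_s(A)\,x+x\,M_t(B),
\]
which by Proposition \ref{Ext 1}(b) for $\Lambda$ means that $P_x$ is presilting. The symmetric argument with the second projection $p_2\colon X\times_kY\to Y$ (whose surjectivity uses instead that $\pi$ is surjective because $\pi(g)=1$) gives that $P'_y$ is presilting in $\Kb(\proj\Lambda')$.

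There is essentially no serious obstacle here: the only point requiring care is verifying that the two projections are bimodule maps with respect to the actions induced from $A\times_kC$ and $B\times_kD$, which is just a matter of unwinding the fibre product structure. One could alternatively phrase the proof by noting that $Q_{(x,y)}\otimes_\Gamma \Lambda\simeq P_x$ and $Q_{(x,y)}\otimes_\Gamma\Lambda'\simeq P'_y$ via the isomorphisms in \eqref{factors of Gamma}, but since preservation of presilting under base change is not automatic, it is cleaner to apply the matrix criterion of Proposition \ref{Ext 1}(b) directly as above.
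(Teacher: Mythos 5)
Your proof is correct, but it takes a different route from the paper's. The paper's proof is a one-liner: it notes that $\Lambda\simeq\Gamma/I$ and $P_x\simeq Q_{(x,y)}\otimes_\Gamma\Lambda$, and then invokes the standard fact that $2$-term presilting descends along any surjective algebra homomorphism $\Gamma\twoheadrightarrow\Lambda$ (this is because, for $2$-term complexes $Q,Q'\in\Kb(\proj\Gamma)$, the natural map $\Hom_{\Kb(\proj\Gamma)}(Q,Q'[1])\to\Hom_{\Kb(\proj\Lambda)}(Q\otimes_\Gamma\Lambda,Q'\otimes_\Gamma\Lambda[1])$ is surjective — both sides are cokernels computed from Hom-spaces between projectives, on which $-\otimes_\Gamma\Lambda$ is surjective, so one concludes by a diagram chase). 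You discard this route as ``not automatic,'' but in fact it is automatic for $2$-term complexes, and the paper relies on exactly this principle here (and already earlier, in the proof of Proposition~\ref{indecomposable rigid g-vector}(b)). Your alternative — pushing the matrix identity from Proposition~\ref{Ext 1}(b) along the surjective bimodule projections $p_1\colon X\times_kY\to X$ and $p_2\colon X\times_kY\to Y$, which intertwine the algebra surjections $A\times_kC\to A$, etc. — is an elementary and self-contained reformulation of the same descent, and all the surjectivity claims you use (of $p_1,p_2$ and of $A\times_kC\to A$, $B\times_kD\to B$) are correct. What each buys: the paper's phrasing generalizes beyond the specific triangular shape of $\Gamma$, while yours stays entirely inside the matrix calculus already set up in Section~\ref{section: Matrices and Presilting complexes} and avoids appealing to any unstated general lemma.
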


\begin{proof}
By \eqref{factors of Gamma} and $Q_{(x,y)}\otimes_\Gamma\Lambda=P_x$, the complex $P_x$  is presilting. The complex $P'_y$ is presilting similarly.
\end{proof}

Define maps $\overline{(\cdot)}:A\to C$ and $\overline{(\cdot)}:B\to D$ as the compositions of canonical maps
\[\overline{(\cdot)}:A\xrightarrow{\overline{(\cdot)}} k\subset C\ \mbox{ and }\ \overline{(\cdot)}:B\xrightarrow{\overline{(\cdot)}} k\subset D.\]
Using $\pi$ and $\pi'$ in \eqref{define pi}, define maps $\overline{(\cdot)}:X\to Y$ and $\overline{(\cdot)}:Y\to X$ by
\begin{equation}\label{define bar}
\overline{(\cdot)}:X\xrightarrow{\pi}k\xrightarrow{(\cdot)h}kh\subset Y\ \mbox{ and }\ \overline{(\cdot)}:Y\xrightarrow{\pi'}k\xrightarrow{(\cdot)g}kg\subset X.
\end{equation}
Then the first projection $X\times_kY\to X$, $(x,y)\mapsto x$ has a section given by
\[X\to X\times_kY,\ x\mapsto (x,\overline{x}),\]
and the second projection $X\times_kY\to Y$, $(x,y)\mapsto y$ has a section given by
\[Y\to X\times_kY,\ y\mapsto (\overline{y},y).\]
The following is a crucial result.

\begin{proposition}\label{proposition:from P to Q}
The following assertions hold.
\begin{enumerate}[\rm(a)]
\item Let $s\ge t$. For $x\in M_{s,t}(X)$, consider $(x,\overline{x})\in M_{s,t}(X\otimes_kY)$.
Then $P_x$ is a presilting complex of $\Lambda$ if and only if $Q_{(x,\overline{x})}$ is a presilting complex of $\Gamma$.
\item Let $s\le t$. For $y\in M_{s,t}(Y)$, consider $(\overline{y},y)\in M_{c,d}(X\otimes_kY)$.
Then $P'_y$ is a presilting complex of $\Lambda'$ if and only if $Q_{(\overline{y},y)}$ is a presilting complex of $\Gamma$.
\end{enumerate}
\end{proposition}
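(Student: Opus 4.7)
The plan is to prove only the ``only if'' directions of (a) and (b), since the converse implications are given by Proposition~\ref{proposition:from Q to P}. The two parts are entirely dual, so I would focus on (a) and outline the changes needed for (b) at the end.

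For (a), the first step is to reduce $Q_{(x,\bar x)}$ being presilting to the matrix identity
\[
M_{s,t}(X\times_k Y)=M_s(A\times_k C)(x,\bar x)+(x,\bar x)M_t(B\times_k D)
\]
via Proposition~\ref{Ext 1}(b). Fix $(u,v)\in M_{s,t}(X\times_k Y)$, so $\pi(u)=\pi'(v)$ entrywise. Since $P_x$ is presilting, Proposition~\ref{Ext 1}(b) gives $a\in M_s(A)$ and $b\in M_t(B)$ with $u=ax+xb$. Using the canonical $k$-algebra splittings $k\hookrightarrow C$ and $k\hookrightarrow D$ (available because $C/J_C=k=D/J_D$), lift $\bar a$ and $\bar b$ to $a^*\in M_s(C)$ and $b^*\in M_t(D)$; this gives candidates $(a,a^*)\in M_s(A\times_k C)$ and $(b,b^*)\in M_t(B\times_k D)$. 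Applying $\pi$ to the identity $u=ax+xb$ yields $\pi(u)=\bar a\,\pi(x)+\pi(x)\bar b$, and combining this with the compatibility $\pi'(v)=\pi(u)$ shows that the residual
\[
w:=v-a^*\bar x-\bar x b^*\in M_{s,t}(Y)
\]
actually lies in $M_{s,t}(J_CY)$. Because $Y=Ch$ is cyclic, $J_CY=J_Ch$, and I can write $w=w'h$ for some $w'\in M_{s,t}(J_C)$.

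The final step is to absorb $w$ by perturbing $a^*$. Here I would invoke Proposition~\ref{proposition:full rank}(b): since $P_x$ is presilting, $\pi(x)\in M_{s,t}(k)$ has full rank, and because $s\ge t$ this produces a left inverse $\sigma\in M_{t,s}(k)$ with $\sigma\pi(x)=I_t$. Setting $c:=w'\sigma\in M_s(J_C)$ gives $c\pi(x)=w'$ in $M_{s,t}(C)$, hence $c\bar x=w$. Since $\bar c=0$, the pair $(a,a^*+c)$ still lies in $M_s(A\times_k C)$, and
\[
(a,a^*+c)(x,\bar x)+(x,\bar x)(b,b^*)=(u,v),
\]
as required.

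The hard part will be Step~2, where the correction $w$ must be realized purely on the $\alpha_C$-side: this simultaneously requires the cyclicity $Y=Ch$ (to write $w=w'h$) and the full-rank property of $\pi(x)$ (so that $c\mapsto c\pi(x)$ surjects onto $M_{s,t}(J_C)$); the inequality $s\ge t$ is what guarantees the needed left inverse. Part (b) follows the same template with the roles of $(X,A,B,g,\pi)$ and $(Y,C,D,h,\pi')$ exchanged: starting from $P'_y$ presilting one writes $v=cy+yd$, lifts $\bar c,\bar d$ back to $A$ and $B$, reduces to a residual in $M_{s,t}(XJ_B)=M_{s,t}(gJ_B)$, and uses Proposition~\ref{proposition:full rank 2}(b) together with $s\le t$ to obtain a right inverse of $\pi'(y)$ and absorb the residual into a perturbation of $\beta_B$.
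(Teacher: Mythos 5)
Your argument is correct and matches the paper's proof in all the essential ingredients: you reduce presiltingness to the $\Ext^1$ vanishing criterion of Proposition~\ref{Ext 1}, use the presilting hypothesis on $P_x$ to approximate the $X$-coordinate, and use the full-rank property of $\pi(x)$ from Proposition~\ref{proposition:full rank}(b) together with $s\geq t$ to absorb the residual on the $Y$-side via a left inverse of $\pi(x)$. The only (cosmetic) difference is that the paper first decomposes $X\times_kY=\{(0,y)\mid y\in J_CY\}+\{(z,\overline z)\mid z\in X\}$ and hits each summand separately, whereas you take an arbitrary $(u,v)$, build a first approximation from the $\Lambda$-side, and then correct the error; these amount to the same computation.
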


\begin{proof}
It suffices to prove (a) since (b) is dual to (a).

The ``if'' part is clear from Proposition \ref{proposition:from Q to P}.

We prove the ``only if'' part. By Proposition \ref{Ext 1}, it suffices to show
\[M_{s,t}(X\times_kY)=M_s(A\times_kC)(x,\overline{x})+(x,\overline{x})M_t(B\times_kD).\]
Since
\[X\times_kY=\{(0,y)\mid y\in J_CY\}+\{(z,\overline{z})\mid z\in X\},\]
it suffices to show the following assertions.
\begin{enumerate}[\rm(i)]
\item For each $y\in M_{s,t}(J_CY)$, we have $(0,y)\in M_s(A\times_kC)(x,\overline{x})$.
\item For each $z\in M_{s,t}(X)$, we have $(z,\overline{z})\in M_s(A\times_kC)(x,\overline{x})+(x,\overline{x})M_t(B\times_kD)$.
\end{enumerate}
We prove (i). Since $P_x$ is presilting,  $\pi(x)\in M_{s,t}(k)$ has full rank by Proposition \ref{proposition:full rank}.
Since $s\ge t$, the map $(\cdot)\pi(x):M_s(k)\to M_{s,t}(k)$ is surjective. Applying $J_C\otimes_k-$, the map $(\cdot)\pi(x):M_s(J_C)\to M_{s,t}(J_C)$ is also surjective, and so is the composition
\[(\cdot)\overline{x}\stackrel{\eqref{define bar}}{=}(\cdot)\pi(x)h:M_s(J_C)\xrightarrow{(\cdot)\pi(x)} M_{s,t}(J_C)\xrightarrow{(\cdot)h} M_{s,t}(J_CY).\]
Therefore there exists $c\in M_s(J_C)$ such that $y=c\overline{x}$. Then $(0,c)\in M_s(A\times_kC)$ satisfies
\[(0,c)(x,\overline{x})=(0,y).\]
We prove (ii). Since $P_x$ is presilting, we have $M_{s,t}(X)=M_s(A)x+xM_t(B)$ by Proposition \ref{Ext 1}. 
Thus there exist $a\in M_s(A)$ and $b\in M_t(B)$ such that $z=ax+xb$. Then
\[(a,\overline{a})(x,\overline{x})+(x,\overline{x})(b,\overline{b})=(ax+xb,\overline{ax+xb})=(z,\overline{z}).\]
Thus the assertion follows.
\end{proof}

Now we are ready to prove Theorem \ref{theorem:gluing2}.

\begin{proof}[Proof of Theorem \ref{theorem:gluing2}] 
By Proposition \ref{+- condition}, each of $\Sigma(\Lambda)$, $\Sigma(\Lambda')$ and $\Sigma(\Gamma)$ contains $\cone\{(-1,0),\ (0,1)\}$. By Propositions \ref{proposition:from Q to P} and \ref{proposition:from P to Q}, the following assertions hold.
\begin{enumerate}[\rm(i)]
\item Let $s\ge t$. Then there exists $x\in M_{s,t}(X)$ such that $P_x$ is presilting if and only if there exists $(x,y)\in M_{s,t}(X\times_kY)$ such that $Q_{(s,t)}$ is presilting.
\item Let $s\le t$. Then there exists $y\in M_{s,t}(Y)$ such that $P'_y$ is presilting if and only if there exists $(x,y)\in M_{s,t}(X\times_kY)$ such that $Q_{(s,t)}$ is presilting.
\end{enumerate}
Therefore the claim follows. 
\end{proof}

\begin{example}
	Let $\Lambda$ and $\Lambda'$ be the following algebras.
	\[
	\Lambda:=\dfrac{
		k\left[\begin{xy}( 0,0) *+{1}="1",
			( 8,0) *+{2}="2",\ar "1";"2"^a  \ar @(ru,rd)"2";"2"^{b}   
		\end{xy}\right]}{\langle b^2\rangle}\ \hspace{10pt}
	\Lambda':=\dfrac{
		k\left[\begin{xy}( 0,0) *+{1}="1",
			( 8,0) *+{2}="2",\ar "1";"2"^a  \ar @(ru,rd)"2";"2"^{d}  \ar @(ul,ur)"1";"1"^(.2){c_1} \ar @ (dl,dr)"1";"1"_(.2){c_2}
		\end{xy}\right]}{\langle c_1^2, c_2^2, d^2, c_1c_2,c_1a-ad\rangle}
	\]
	By Examples \ref{example:2121} and \ref{example:13122}, we have
	\[
	\Sigma(\Lambda)=\Sigma_{2121}=\begin{tikzpicture}[baseline=0mm]
		\coordinate(0) at(0:0);
		\coordinate(x) at(0:0.66);
		\coordinate(y) at(90:0.66);
        \node at(0.2,0.2) {$\scriptstyle +$};
        \node at(-0.2,-0.2) {$\scriptstyle -$};
		\coordinate(shift2121) at (0);
		\coordinate(v0) at($0*(x)+0*(y)+(shift2121)$); 
		\coordinate(v1) at($1*(x)+0*(y)+(shift2121)$); 
		\coordinate(v2) at($2*(x)+-1*(y)+(shift2121)$); 
		\coordinate(v3) at($1*(x)+-1*(y)+(shift2121)$); 
		\coordinate(v4) at($0*(x)+-1*(y)+(shift2121)$); 
		\draw[fill=black] (v0) circle [radius = 0.55mm]; 
		\draw (v0)--(v1); 
		\draw (v0)--(v2); 
		\draw (v0)--(v3); 
		\draw (v0)--(v4); 
		\draw (v1)--(v2)--(v3)--(v4); 
		\coordinate(w1) at($1*(x)+0*(y)+(shift2121)$); 
		\coordinate(w2) at($0*(x)+-1*(y)+(shift2121)$); 
		\coordinate(w3) at($-1*(x)+0*(y)+(shift2121)$); 
		\coordinate(w4) at($0*(x)+1*(y)+(shift2121)$); 
		\draw (v0)--(w3); 
		\draw (v0)--(w4);
		\draw (w3)--(w4); 
		\draw (w1)--(w4);
		\draw (w2)--(w3); 
	\end{tikzpicture}
	\quad \quad 
	\Sigma(\Lambda')=\Sigma_{13122}=\begin{tikzpicture}[baseline=0mm]
		\coordinate(0) at(0:0);
		\coordinate(x) at(0:0.66);
		\coordinate(y) at(90:0.66);
        \node at(0.2,0.2) {$\scriptstyle +$};
        \node at(-0.2,-0.2) {$\scriptstyle -$};
		\coordinate(shift13122) at (0);
		\coordinate(v0) at($0*(x)+0*(y)+(shift13122)$); 
		\coordinate(v1) at($1*(x)+0*(y)+(shift13122)$); 
		\coordinate(v2) at($1*(x)+-1*(y)+(shift13122)$); 
		\coordinate(v3) at($2*(x)+-3*(y)+(shift13122)$); 
		\coordinate(v4) at($1*(x)+-2*(y)+(shift13122)$); 
		\coordinate(v5) at($0*(x)+-1*(y)+(shift13122)$); 
		\draw[fill=black] (v0) circle [radius = 0.55mm]; 
		\draw (v0)--(v1); 
		\draw (v0)--(v2); 
		\draw (v0)--(v3); 
		\draw (v0)--(v4); 
		\draw (v0)--(v5); 
		\draw (v1)--(v2)--(v3)--(v4)--(v5); 
		\coordinate(w1) at($1*(x)+0*(y)+(shift13122)$); 
		\coordinate(w2) at($0*(x)+-1*(y)+(shift13122)$); 
		\coordinate(w3) at($-1*(x)+0*(y)+(shift13122)$); 
		\coordinate(w4) at($0*(x)+1*(y)+(shift13122)$); 
		\draw (v0)--(w3); 
		\draw (v0)--(w4);
		\draw (w3)--(w4); 
		\draw (w1)--(w4);
		\draw (w2)--(w3); 
	\end{tikzpicture}
	\]
	Let $\Gamma:=\left[\begin{array}{cc}
	e_1 \Lambda e_1 \times_k e_1 \Lambda'e_1& e_1\Lambda e_2\times_k e_1 \Lambda'e_2\\ 0&e_2\Lambda e_2\times_ke_2 \Lambda'e_2
\end{array}\right]$. Then we have
\[
\Gamma=\dfrac{
	k\left[\begin{xy}( 0,0) *+{1}="1",
		( 8,0) *+{2}="2",\ar "1";"2"^a  \ar @(ur,ul)"2";"2"_(.2){b} \ar @(dr,dl)"2";"2"^(.2){d}  \ar @(ul,ur)"1";"1"^(.2){c_1} \ar @ (dl,dr)"1";"1"_(.2){c_2}
	\end{xy}\right]}{\langle b^2, c_1^2, c_2^2, d^2, c_1c_2,c_1a-ad, c_2ab, bd,db\rangle}
\ \mbox{ and }\ \Sigma(\Gamma)=\Sigma_{214122}=\begin{tikzpicture}[baseline=-12mm]
	\coordinate(0) at(0:0);
	\coordinate(x) at(0:0.66);
	\coordinate(y) at(90:0.66);
    \node at(0.2,0.2) {$\scriptstyle +$};
    \node at(-0.2,-0.2) {$\scriptstyle -$};
	\coordinate(shift13122) at (0);
	\coordinate(v0) at($0*(x)+0*(y)+(shift13122)$); 
	\coordinate(v1) at($1*(x)+0*(y)+(shift13122)$); 
	\coordinate(v1') at($2*(x)+-1*(y)+(shift13122)$); 
	\coordinate(v2) at($1*(x)+-1*(y)+(shift13122)$); 
	\coordinate(v3) at($2*(x)+-3*(y)+(shift13122)$); 
	\coordinate(v4) at($1*(x)+-2*(y)+(shift13122)$); 
	\coordinate(v5) at($0*(x)+-1*(y)+(shift13122)$); 
	\draw[fill=black] (v0) circle [radius = 0.55mm]; 
	\draw (v0)--(v1);
	\draw (v0)--(v1'); 
	\draw (v0)--(v2); 
	\draw (v0)--(v3); 
	\draw (v0)--(v4); 
	\draw (v0)--(v5); 
	\draw (v1)--(v1')--(v2)--(v3)--(v4)--(v5); 
	\coordinate(w1) at($1*(x)+0*(y)+(shift13122)$); 
	\coordinate(w2) at($0*(x)+-1*(y)+(shift13122)$); 
	\coordinate(w3) at($-1*(x)+0*(y)+(shift13122)$); 
	\coordinate(w4) at($0*(x)+1*(y)+(shift13122)$); 
	\draw (v0)--(w3); 
	\draw (v0)--(w4);
	\draw (w3)--(w4); 
	\draw (w1)--(w4);
	\draw (w2)--(w3); 
\end{tikzpicture}
\]
\end{example}

\subsection{Application to non-complete $g$-fans}

Now we introduce a quiddity sequence of non-complete fans. 
We start with giving a general definition.
For a totally ordered set $I$, let 
\[\widetilde{I}:=(\{0\}\times\Z_{\ge0})\sqcup(I\times\Z)\sqcup(\{\infty\}\times\Z_{\le0})\subset(\{0\}\sqcup I\sqcup\{\infty\})\times\Z,\]
which is regarded as a totally ordered set by the lexicographic order, and $0$ is the minimum element and $\infty$ is the maximum element of $\{0\}\sqcup I\sqcup\{\infty\}$.

Assume that $\Sigma\in\itscfanpm$ is non-complete. Then there exists a totally ordered set $I$ such that
 we denote the rays of $\Sigma$ by
\[\Sigma_1=\{v_{i,n}\mid (i,n)\in\widetilde{I}\}\]
which are indexed in a clockwise orientation by the totally ordered set $\widetilde{I}$ such that
\[v_{0,0}=(0,1),\ v_{0,1}=(1,0),\ v_{\infty,-1}=(0,-1),\ v_{\infty,0}=(-1,0).\] 
We define the \emph{quiddity sequence} $\df(\Sigma)=(a_{i,n})_{(i,n)\in
\widetilde{I}\setminus\{(0,0),(\infty,0)\}}$ of $\Sigma$ by
\[v_{i,n-1}+v_{i,n+1}=a_{i,n}v_{i,n}.\]

Using the subsequence $\df_i:=(a_{i,n})_{n}$ of $\df(\Sigma)$ for each $i\in\{0\}\sqcup I\sqcup\{\infty\}$, we often write $\df(\Sigma)$ as
$$\df(\Sigma)=(\df_i)_{i\in\{0\}\sqcup I\sqcup\{\infty\}}.$$ 
For a finite dimensional algebra $\Lambda$ such that $\Sigma(\Lambda)\in\itscfanpm$ is non-complete, we define $\df(\Sigma(\Lambda))$ as above.

Now let $\Sigma,\Sigma'\in\itscfanpm$ be non-complete fans  satisfying \eqref{sigma sigma'}.
Let $I$ and $I'$ be totally ordered sets such that
\[\Sigma_1=\{v_{i,n}\mid (i,n)\in\widetilde{I}\}\ \mbox{ and }\ \Sigma'_1=\{v'_{i,n}\mid (i,n)\in\widetilde{I'}\},\]
and we denote by $(a_{i,n})_{(i,n)\in\widetilde{I}\setminus\{(0,0),(\infty,0)\}}$ and $(b_{i,n})_{(i,n)\in
\widetilde{I'}\setminus\{(0,0),(\infty,0)\}}$ the quiddity sequences of $\Sigma$ and $\Sigma'$ respectively.
Then we have $a_{\infty,-1}=1=b_{0,1}$ and $v_{\infty,-2}=(1,-1)=v'_{0,2}$.
We regard $I\sqcup\{x\}\sqcup I'$ be a totally ordered set such that $i<x<i'$ for all $i\in I$ and $i'\in I'$. Then
\[(\Sigma\circ\Sigma')_1=\{w_{i,n}\mid (i,n)\in\widetilde{I\sqcup\{x\}\sqcup I'}\},\] where 
\[w_{i,n}=\left\{\begin{array}{ll}
v_{i,n}&\mbox{if $i\in\{0\}\sqcup I$},\\
v_{\infty,n-2}&\mbox{if $i=x$,\ $n\le -1$},\\
(1,-1)&\mbox{if $i=x$,\ $n=0$},\\
v_{0,n+2}'&\mbox{if $i=x$,\ $n\ge 1$},\\
v_{i,n}'&\mbox{if $i\in I'\sqcup\{\infty\}$}.
\end{array}\right.\]
The quiddity sequence $(c_{i,n})_{(i,n)\in\widetilde{I\sqcup\{x\}\sqcup I'}\setminus\{(0,0),(\infty,0)\}}$ of $\Sigma\circ\Sigma'$ is given by
\[c_{i,n}=\left\{\begin{array}{ll}
a_{i,n}&\mbox{if $i\in\{0\}\sqcup I$},\\
a_{\infty,n-2}&\mbox{if $i=x$,\ $n\le -1$},\\
a_{\infty,-2}+b_{0,2}-1&\mbox{if $i=x$,\ $n=0$},\\
b_{0,n+2}&\mbox{if $i=x$,\ $n\ge 1$},\\
b_{i,n}&\mbox{if $i\in I'\sqcup\{\infty\}$}.
\end{array}\right.\]

Now, for each positive integer $N$,  we give explicitly a finite dimensional algebra $\Lambda_N$ such that $\Hasse(\twosilt\Lambda_N)$ has precisely $N$ connected components. 
Let 
\[\Lambda:=\dfrac{
		k\left[\begin{xy}( 0,0) *+{1}="1",
			( 8,0) *+{2}="2",\ar "1";"2"  \ar @(ru,rd)"2";"2"^{a}   
		\end{xy}\right]}{\langle a^4\rangle}
\]
Then we have the following result.

\begin{theorem}\label{many}
Let $N$ be a positive integer. Define a finite dimensional $k$-algebra $\Lambda_N$ by
\[\Lambda_1:=\Lambda\ \mbox{ and }\ \Lambda_{N+1}:=\Lambda\circ\rho(\Lambda_N).\] 
\begin{enumerate}[\rm(a)]
\item The complement of $\bigcup_{\sigma\in\Sigma(\Lambda_N)}\sigma$ in $K_0(\proj\Lambda_N)_\R$ is a disjoint union $\bigsqcup_{i=0}^{N-1}\R_{>0}u_i$ of half lines for $u_i:=(2,-1)\left[\begin{smallmatrix}0&1\\ -1&4\end{smallmatrix}\right]^i$.
\item 
We have
$\df(\Sigma(\Lambda_N))=(\df_0,\df_1,\ldots,\df_{N-1},\df_\infty)$, where
$$\df_i=\left\{\begin{array}{ll}
(141414\cdots)&\mbox{if $i=0$},\\
(\cdots14141714141\cdots)&\mbox{if $1\leq i\leq N-1$},\\
(\cdots14141\underbrace{44\cdots 4}_{N-1} )&\mbox{if $i=\infty$}.
\end{array}\right.$$ 
\item $\Hasse(\twosilt\Lambda_N)$ has precisely $N$ connected components.
\end{enumerate}
\end{theorem}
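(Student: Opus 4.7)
The plan is a single structural induction on $N$ to establish (a) and (b), from which (c) will follow formally.

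For the base case $N = 1$, I would analyse $\Sigma(\Lambda_1) = \Sigma(\Lambda)$ directly. Writing $\Lambda = \left[\begin{smallmatrix}A & X \\ 0 & B\end{smallmatrix}\right]$ with $A = k$, $B = k[a]/(a^4)$, and $X = e_1\Lambda e_2$, we have $t_A(X) = 4$ and $t(X)_B = 1$; Proposition \ref{first mutation} then supplies the cones $\cone\{(1,0),(4,-1)\}$ and $\cone\{(0,-1),(1,-1)\}$ of $\Sigma(\Lambda)$. Starting from these, iterating silting mutation on each side will produce two sequences of indecomposable 2-term presilting complexes whose $g$-vectors accumulate at $u_0 = (2,-1)$ from above and below; tracking these vectors confirms the claimed quiddity pattern. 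The decisive negative input is that $(2,-1)$ itself is not a $g$-vector: by Proposition \ref{Ext 1} this reduces to showing $M_2(A)x + xB \neq M_{2,1}(X)$ for every $x \in M_{2,1}(X)$. After using Propositions \ref{proposition:full rank} and \ref{proposition:full rank 2} to normalise to $x = (1,q)^{\top}$ with $q \in J_B$, a direct dimension count in $B = k[a]/(a^4)$ yields $\dim_k(M_2(k)x + xB) \leq 7 < 8 = \dim_k M_{2,1}(X)$.

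For the inductive step, assuming (a) and (b) hold for $N$, I would apply the Rotation Theorem \ref{theorem:rotation} to $\Lambda_N$ at the cone $\cone\{(1,0),(4,-1)\} \in \Sigma(\Lambda_N)$. The induced linear isomorphism of $\R^2$ is the matrix $M = \left[\begin{smallmatrix}0 & -1 \\ 1 & 4\end{smallmatrix}\right]$, and a direct check gives $M u_i^{\top} = u_{i+1}^{\top}$, whence $\Sigma(\rho(\Lambda_N)) = \rho(\Sigma(\Lambda_N))$ has complement $\bigsqcup_{i=1}^{N} \R_{>0} u_i$. The inductive quiddity gives $a_{0,2} = 1$ for $\Lambda_N$, so the second facet of $\Sigma(\Lambda_N)$ in the fourth quadrant is $\cone\{(4,-1),(3,-1)\}$, and $M$ sends it to $\cone\{(1,0),(1,-1)\}$; together with $\cone\{(0,-1),(1,-1)\} \in \Sigma(\Lambda)$ this verifies the hypotheses of the Gluing Theorem \ref{theorem:gluing2}. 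The resulting fan $\Sigma(\Lambda_{N+1}) = \Sigma(\Lambda) \circ \rho(\Sigma(\Lambda_N))$ has complement $\bigsqcup_{i=0}^{N} \R_{>0} u_i$, and its quiddity is read off from the gluing recipe.

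For part (c), Proposition \ref{mutation=exchange} identifies edges of $\Hasse(\twosilt \Lambda_N)$ with pairs of maximal cones sharing a facet, so the connected components of the Hasse quiver correspond bijectively to the connected components of $\bigcup_{\sigma \in \Sigma(\Lambda_N)_2} \sigma = \R^2 \setminus \bigl(\{0\} \cup \bigsqcup_{i=0}^{N-1} \R_{>0} u_i\bigr)$. Since $M$ has irrational eigenvalues $2 \pm \sqrt{3}$, its only rational eigenvector is zero, so no positive power $M^i$ fixes the rational ray through $u_0 = (2,-1)$; the $N$ half-lines $\R_{>0} u_i$ are therefore pairwise distinct and separate $\R^2 \setminus \{0\}$ into exactly $N$ connected components. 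The hardest step in the whole argument will be the base case, specifically the negative dimension count ruling out $(2,-1)$ as a $g$-vector; the positive construction of the accumulating rays is a comparatively routine application of Proposition \ref{first mutation} followed by iterated silting mutation.
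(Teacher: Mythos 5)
Your proposal follows the same architecture as the paper's proof: a base case analysis of $\Sigma(\Lambda)$ for $N=1$, an inductive step combining the Rotation Theorem~\ref{theorem:rotation} with Gluing Theorem~\ref{theorem:gluing2}, and a topological identification of connected components for part~(c). The inductive step is essentially identical to the paper's (the matrix $M=\left[\begin{smallmatrix}0&-1\\1&4\end{smallmatrix}\right]$ you use is the transpose of the paper's $T=\left[\begin{smallmatrix}0&1\\-1&4\end{smallmatrix}\right]$, which is just the row/column convention), and your verification that $T$ sends $\cone\{(4,-1),(3,-1)\}$ to $\cone\{(1,0),(1,-1)\}$ so that the hypotheses of Theorem~\ref{theorem:gluing2} hold is correct. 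Your part~(c) argument via Propositions~\ref{mutation=exchange} and~\ref{characterize g-finite} and the irrationality of the eigenvalues $2\pm\sqrt{3}$ is also sound and is in fact more explicit than the paper, which simply exhibits the $N=2,3$ pictures and appeals to iteration.

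The genuine divergence, and the one place with a gap, is the base case. The paper does not prove it: it invokes Lemma~\ref{wang}, citing Wang~\cite{W}. You instead attempt a direct proof. Your negative input is fine: Proposition~\ref{first mutation} gives the facets $\cone\{(1,0),(4,-1)\}$ and $\cone\{(0,-1),(1,-1)\}$, and the normalisation $x=[\alpha,\alpha q]^\top$ with $q\in J_B$ plus the bound $\dim_k(M_2(k)x+xB)\le 2\dim_k(k\alpha+k\alpha q)+4-1\le 7<8$ correctly shows via Proposition~\ref{Ext 1}(b) that $(2,-1)$ is not rigid. But you then say the positive quiddity determination is ``a comparatively routine application of Proposition~\ref{first mutation} followed by iterated silting mutation,'' and this underestimates the remaining work. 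Knowing that $\cone\{(1,0),(4,-1)\}\in\Sigma(\Lambda)$ and that $(2,-1)$ is not rigid does \emph{not} force the next quiddity entry to be $1$: by pairwise positivity the next ray is $v_3=a_2(4,-1)-(1,0)=(4a_2-1,-a_2)$ for some $a_2\ge 1$, and for \emph{every} $a_2\ge 1$ this $v_3$ lies strictly between $(4,-1)$ and $(2,-1)$, so the nonrigidity of $(2,-1)$ alone rules out nothing. Pinning down $a_2=1$ (and all subsequent entries, and the precise complement on the other side as well) requires either explicitly computing the mutations $\mu_1\mu_2^-(\Lambda),\dots$, or iterating the Rotation Theorem on the concrete algebra and reapplying Proposition~\ref{first mutation} at each stage to read off the next quiddity value. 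This is comparable in effort to the dimension count, not a corollary of it. Without it, parts~(a) and~(b) for $N=1$ are not established, and the induction has no base. Everything downstream is correct given the base case, but you should either carry out this computation or cite Wang's result as the paper does.
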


The case $N=1$ follows from a result of Wang \cite{W}.

\begin{lemma}\cite[Proof of Proposition 3.10]{W}\label{wang}
\begin{enumerate}[\rm(a)]
\item The complement of $\bigcup_{\sigma\in\Sigma(\Lambda)}\sigma$ in $K_0(\proj\Lambda_1)_\R$ is a half line $\R_{>0}(2,-1)$.
\item We have 
$\df(\Sigma(\Lambda))=(\df_0,\df_\infty)$, where 
$\df_0=(41414\cdots)$ and $\df_\infty=(\cdots14141)$. 
\[\begin{tikzpicture}[baseline=0mm]
\coordinate(0) at(0:0);
\coordinate(x) at(0:0.53);
\coordinate(y) at(90:0.53);
\node at(0.165,0.165) {$\scriptstyle +$};
\node at(-0.165,-0.165) {$\scriptstyle -$};
\coordinate(shift) at($0*(x)+0*(y)$); 
\node(lambda0) at(shift) {}; 
\coordinate(L) at($-2.5*(x)+0*(y)+(shift)$); 
\node at(L) {$\Sigma(\Lambda)$};  

\coordinate(v0) at($0*(x)+0*(y)+(shift)$); 
\draw[fill=black] (v0) circle [radius = 0.55mm]; 
\coordinate(v1) at($1*(x)+0*(y)+(shift)$); 
\coordinate(v2) at($4*(x)+-1*(y)+(shift)$); 
\coordinate(v3) at($3*(x)+-1*(y)+(shift)$); 
\coordinate(v4) at($8*(x)+-3*(y)+(shift)$); 
\coordinate(v5) at($5*(x)+-2*(y)+(shift)$); 
\coordinate(v6) at($12*(x)+-5*(y)+(shift)$); 
    \draw (v0)--(v1); 
    \draw (v0)--(v2); 
    \draw (v0)--(v3); 
    \draw (v0)--(v4);
    \draw (v0)--(v5); 
    \draw (v0)--(v6);
    \draw (v1)--(v2)--(v3)--(v4)--(v5)--(v6);
\coordinate(u0) at($0*(x)+0*(y)+(shift)$); 
\coordinate(u1) at($0*(x)+-1*(y)+(shift)$); 
\coordinate(u2) at($1*(x)+-1*(y)+(shift)$); 
\coordinate(u3) at($4*(x)+-3*(y)+(shift)$); 
\coordinate(u4) at($3*(x)+-2*(y)+(shift)$); 
\coordinate(u5) at($8*(x)+-5*(y)+(shift)$); 
\coordinate(u6) at($5*(x)+-3*(y)+(shift)$); 
\coordinate(u7) at($12*(x)+-7*(y)+(shift)$); 
    \draw (u0)--(u1); 
    \draw (u0)--(u2); 
    \draw (u0)--(u3); 
    \draw (u0)--(u4);
    \draw (u0)--(u5); 
    \draw (u0)--(u6);
    \draw (u0)--(u7); 
    \draw (u1)--(u2)--(u3)--(u4)--(u5)--(u6)--(u7); 

\coordinate(w1) at($1*(x)+0*(y)+(shift)$); 
\coordinate(w2) at($0*(x)+-1*(y)+(shift)$); 
\coordinate(w3) at($-1*(x)+0*(y)+(shift)$); 
\coordinate(w4) at($0*(x)+1*(y)+(shift)$); 
    \draw (v0)--(w3); 
    \draw (v0)--(w4);
    \draw (w3)--(w4); 
    \draw (w1)--(w4);
    \draw (w2)--(w3); 
\coordinate(t0) at($2*(x)+-1*(y)+(shift)$); 
\draw[dotted] (v0)--($6*(t0)$);
\end{tikzpicture}\]
\item $\Hasse(\twosilt\Lambda)$ is connected.
\end{enumerate}
\end{lemma}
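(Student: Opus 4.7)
The plan is to realize $\Lambda$ in the matrix form of Assumption \ref{Lambda=AXB} as $\Lambda=\left[\begin{smallmatrix}k&B\\0&B\end{smallmatrix}\right]$ with $B=k[a]/(a^4)$, so $A=k$ and $X=eB\simeq B$ as a right $B$-module generated by the arrow $e$. Since $e_2\Lambda e_1=0$, Proposition \ref{+- condition} places $\Sigma(\Lambda)$ in $\itscfanpm$. Computing $r=t(X)_B=1$ and $\ell=t_A(X)=\dim_kB=4$, Proposition \ref{first mutation} puts the cones $\cone\{(0,-1),(1,-1)\}$ and $\cone\{(1,0),(4,-1)\}$ in $\Sigma(\Lambda)$, seeding the two quiddity sequences at $v_{0,1}=(1,0),\,v_{0,2}=(4,-1)$ and $v_{\infty,-1}=(0,-1),\,v_{\infty,-2}=(1,-1)$.

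To generate the remaining rays, the plan is to iterate mutation into the fourth quadrant from both sides. Setting $T_0'=[P_2\xrightarrow{(e,ea,ea^2,ea^3)^T}P_1^{\oplus4}]$, a direct matrix computation gives $\End(T_0')\cong B$ and $\Hom(P_1,T_0')\cong B$ is cyclic as a right $\End(T_0')$-module (generated by the map picking the last coordinate); the exchange triangle of Proposition \ref{mutation=exchange} then produces the new indecomposable summand $T_1'=[P_2\xrightarrow{(e,ea,ea^2)^T}P_1^{\oplus3}]$ with $g$-vector $v_{0,3}=(3,-1)$. Iterating, each successive summand has the form $T_n'=[P_2\xrightarrow{h_n}P_1^{\oplus s_n}]$, and a parallel analysis of the morphism spaces $\Hom(T_{n-1}',T_n')$ over local rings $\End(T_n')$ shows the approximation length alternates as $a_{0,n}\in\{1,4\}$, producing exactly the recursion $v_{0,n+1}=a_{0,n}v_{0,n}-v_{0,n-1}$ with quiddity $(4,1,4,1,\ldots)$. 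A symmetric construction starting from $\mu_1^+(\Lambda[1])$ gives the opposite family $v_{\infty,-n}$ with the reverse quiddity.

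The hard part will be ruling out any ray of $\Sigma(\Lambda)$ on the half-line $\R_{>0}(2,-1)$, i.e., any rigid $g$-vector of the form $(2t,-t)$ with $t\ge1$. Using the matrix presentation of Section \ref{section: Matrices and Presilting complexes}, for $x\in M_{2t,t}(B)$ the complex $P_x$ is presilting iff the $k$-linear map
\[
M_{2t}(k)\oplus M_t(B)\longrightarrow M_{2t,t}(B),\qquad(g,m)\longmapsto gx+xm
\]
is surjective (Proposition \ref{Ext 1}(b)). Counting $k$-dimensions, the source has dimension $(2t)^2+4t^2=8t^2$, equal to that of the target $M_{2t,t}(B)$, so surjectivity would force the map to be an isomorphism; but the pair $(I_{2t},-I_t)$ always lies in its kernel (since $I_{2t}\cdot x-x\cdot I_t=0$), forcing non-injectivity and therefore non-surjectivity. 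Hence $P_x$ is not presilting for any $x$, so no rigid $g$-vector lies on $\R_{>0}(2,-1)$.

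Combining the two constructions, the cones $\cone\{v_{0,n},v_{0,n+1}\}_{n\ge0}$ together with the symmetric cones on the other side exhaust the fourth quadrant except for $\R_{>0}(2,-1)$ (both sequences converge to this direction, as it is the dominant eigenvector of the period-two monodromy matrix of the recursion, with eigenvalue $2+\sqrt3>1$). Together with the non-rigidity above, this yields (a) and (b). For (c), each cone of $\Sigma(\Lambda)$ shares a ray with its two adjacent cones, so the Hasse quiver $\Hasse(\twosilt\Lambda)$ is a bi-infinite path threaded through the three initial cones $\sigma_+,\sigma_{-+},\sigma_-$, and is therefore connected.
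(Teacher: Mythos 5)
The paper itself gives no proof of this lemma; it is imported from Wang \cite[Proof of Proposition 3.10]{W}, so your argument is necessarily independent, and its central ingredient is correct and well chosen. Excluding the ray $\R_{>0}(2,-1)$ does reduce to showing no class $(2t,-t)$, $t\ge1$, is rigid (a rational point of the ray lying in any cone $C(T)$ would give an integral nonnegative combination of the $[T_i]$, hence a rigid $(2t,-t)$), and your dimension count is valid: with $A=k$, $B=k[a]/(a^4)$, $X\cong B$, the $k$-linear map $M_{2t}(A)\oplus M_t(B)\to M_{2t,t}(X)$, $(g,m)\mapsto gx+xm$, has source and target both of dimension $8t^2$ and kills $(I_{2t},-I_t)$, so it is never surjective and Proposition \ref{Ext 1}(b) applies. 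Two details should be made explicit. First, you tacitly assume every $2$-term presilting complex with $g$-vector $(2t,-t)$ has the form $P_x$ with $x\in M_{2t,t}(X)$; this holds, but needs the standard fact that a minimal $2$-term presilting complex has no indecomposable summand common to degrees $0$ and $-1$, together with the observation that a $P_1$ in degree $-1$ splits off here because $\End(P_1)=k$ and $\Hom(P_1,P_2)=e_2\Lambda e_1=0$. Second, your parenthetical justification of the limiting direction is wrong in detail: the period-two monodromy of the recursion with quiddity $4,1,4,1,\ldots$ is unipotent (trace $2$, determinant $1$), not hyperbolic with eigenvalue $2+\sqrt3$; the rays grow linearly, $v_{0,2m+1}=(2m+1,-m)$ and $v_{0,2m}=(4m,-(2m-1))$ (and mirrored on the $\infty$-side), and it is these explicit formulas that give convergence of directions to $(2,-1)$. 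The conclusion you draw is nevertheless correct.

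The genuine gap is the inductive step that actually produces the fan. Your covering argument needs every ray $(2m+1,-m)$ and $(4m,-(2m-1))$ (and the mirror family) to be a ray of $\Sigma(\Lambda)$, with consecutive pairs forming silting complexes, i.e.\ precisely the alternating quiddity $4,1,4,1,\ldots$ on both sides. You verify only the first exchange (from $(4,-1)$ to $(3,-1)$, which is correct: $\End(T_0')\cong B$ and $\Hom(P_1,T_0')$ is generated by one element over it) and then assert that ``a parallel analysis'' gives approximation lengths alternating between $1$ and $4$ forever. That assertion is the whole computational content of Wang's lemma: at each stage one must control $\End(T_n')$ and the relevant $\Hom$-module, or equivalently exhibit explicit $x\in M_{s,t}(X)$ with the claimed $g$-vectors and check via Proposition \ref{Ext 1} that each consecutive pair is silting, and prove this for all $n$ by induction. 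Without that induction, neither (a) nor (b) is established, because nothing rules out the uncovered region being larger than the single ray. Once it is supplied, your part (c) is fine: two $2$-dimensional cones sharing a ray correspond to silting complexes that are mutations of each other, hence joined by an arrow of $\Hasse(\twosilt\Lambda)$ by Proposition \ref{mutation=exchange}, and the adjacency graph of the $2$-dimensional cones is a bi-infinite path, so the Hasse quiver is connected.
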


We are ready to prove Theorem \ref{many}.

\begin{proof}[Proof of Theorem \ref{many}]
By  Theorem \ref{theorem:rotation} and Lemma \ref{wang}, 
the complement of $\Sigma(\rho(\Lambda))$ is $\R_{>0}(1,-2)$ and we have 
$\df(\Sigma(\rho(\Lambda)))=(\df_0,\df_\infty)$, where 
$\df_0=(14141\cdots)$ and $\df_\infty=(\cdots41414)$. 
\[
\begin{tikzpicture}[baseline=0mm]
\coordinate(0) at(0:0);
\coordinate(x) at(0:0.53);
\coordinate(y) at(90:0.53);
\node at(0.165,0.165) {$\scriptstyle +$};
\node at(-0.165,-0.165) {$\scriptstyle -$};

\coordinate(rshift) at($0*(x)+0*(y)$); 
\node(rlambda0) at(rshift) {}; 
\coordinate(L) at($-2.5*(x)+0*(y)+(rshift)$); 
\node at(L) {$\Sigma(\rho(\Lambda))$};  
\coordinate(v0) at($0*(x)+0*(y)+(rshift)$); 
\draw[fill=black] (v0) circle [radius = 0.55mm]; 
\coordinate(vv1) at($1*(x)+0*(y)+(rshift)$); 
\coordinate(vv2) at($1*(x)+-1*(y)+(rshift)$); 
\coordinate(vv3) at($3*(x)+-4*(y)+(rshift)$); 
\coordinate(vv4) at($2*(x)+-3*(y)+(rshift)$); 
\coordinate(vv5) at($5*(x)+-8*(y)+(rshift)$); 
\coordinate(vv6) at($3*(x)+-5*(y)+(rshift)$);
\coordinate(vv7) at($7*(x)+-12*(y)+(rshift)$);
\coordinate(vv8) at($4*(x)+-7*(y)+(rshift)$);
    \draw (v0)--(vv1); 
    \draw (v0)--(vv2); 
    \draw (v0)--(vv3); 
    \draw (v0)--(vv4);
    \draw (v0)--(vv5); 
    \draw (v0)--(vv6);
    \draw (v0)--(vv7);
    \draw (v0)--(vv8);
    \draw (vv1)--(vv2)--(vv3)--(vv4)--(vv5)--(vv6)--(vv7)--(vv8);
\coordinate(u0) at($0*(x)+0*(y)+(rshift)$); 
\coordinate(uu1) at($0*(x)+-1*(y)+(rshift)$); 
\coordinate(uu2) at($1*(x)+-4*(y)+(rshift)$); 
\coordinate(uu3) at($1*(x)+-3*(y)+(rshift)$); 
\coordinate(uu4) at($3*(x)+-8*(y)+(rshift)$); 
\coordinate(uu5) at($2*(x)+-5*(y)+(rshift)$); 
\coordinate(uu6) at($5*(x)+-12*(y)+(rshift)$); 
    \draw (u0)--(uu1); 
    \draw (u0)--(uu2); 
    \draw (u0)--(uu3); 
    \draw (u0)--(uu4);
    \draw (u0)--(uu5); 
    \draw (u0)--(uu6);
    \draw (uu1)--(uu2)--(uu3)--(uu4)--(uu5)--(uu6); 
\coordinate(w1) at($1*(x)+0*(y)+(rshift)$); 
\coordinate(w2) at($0*(x)+-1*(y)+(rshift)$); 
\coordinate(w3) at($-1*(x)+0*(y)+(rshift)$); 
\coordinate(w4) at($0*(x)+1*(y)+(rshift)$); 
    \draw (v0)--(w3); 
    \draw (v0)--(w4);
    \draw (w3)--(w4); 
    \draw (w1)--(w4);
    \draw (w2)--(w3); 
\coordinate(rt) at($1*(x)+-2*(y)+(rshift)$); 
\draw[dotted] (v0)--($6*(rt)$);
\end{tikzpicture}
\]

Thus we can apply Theorem \ref{theorem:gluing2} to $\Lambda$ and $\rho(\Lambda)$, and we get the algebra 
$\Lambda_2:=\Lambda \circ\rho(\Lambda).$
Then $\df(\Sigma(\Lambda_2))=(\df_0,\df_1,\df_\infty)$, where 
$$\df_i=\left\{\begin{array}{ll}
(41414\cdots)&\mbox{if $i=0$},\\
(\cdots14141714141\cdots)&\mbox{if $i=1$},\\
(\cdots41414 )&\mbox{if $i=\infty$},
\end{array}\right.$$
and the complement of $\Sigma(\Lambda_2)$ is 
$\R_{>0}(2,-1)\sqcup\R_{>0}(1,-2)$. In particular, 
$\Sigma(\Lambda_2)$ has two connected components.

\[
\begin{tikzpicture}[baseline=0mm]
\coordinate(0) at(0:0);
\coordinate(x) at(0:0.53);
\coordinate(y) at(90:0.53);
\node at(0.165,0.165) {$\scriptstyle +$};
\node at(-0.165,-0.165) {$\scriptstyle -$};
\coordinate(shift2) at($0*(x)+0*(y)$); 
\node(lambda2) at(shift2) {}; 
\coordinate(L) at($-2.5*(x)+0*(y)+(shift2)$); 
\node at(L) {$\Sigma(\Lambda_2)$};  
\coordinate(v0) at($0*(x)+0*(y)+(shift2)$); 
\draw[fill=black] (v0) circle [radius = 0.55mm]; 
\coordinate(v1) at($1*(x)+0*(y)+(shift2)$); 
\coordinate(v2) at($4*(x)+-1*(y)+(shift2)$); 
\coordinate(v3) at($3*(x)+-1*(y)+(shift2)$); 
\coordinate(v4) at($8*(x)+-3*(y)+(shift2)$); 
\coordinate(v5) at($5*(x)+-2*(y)+(shift2)$); 
\coordinate(v6) at($12*(x)+-5*(y)+(shift2)$); 
    \draw (v0)--(v1); 
    \draw (v0)--(v2); 
    \draw (v0)--(v3); 
    \draw (v0)--(v4);
    \draw (v0)--(v5); 
    \draw (v0)--(v6);
    \draw (v1)--(v2)--(v3)--(v4)--(v5)--(v6);
\coordinate(u0) at($0*(x)+0*(y)+(shift2)$); 
\coordinate(u1) at($0*(x)+-1*(y)+(shift2)$); 
\coordinate(u2) at($1*(x)+-1*(y)+(shift2)$); 
\coordinate(u3) at($4*(x)+-3*(y)+(shift2)$); 
\coordinate(u4) at($3*(x)+-2*(y)+(shift2)$); 
\coordinate(u5) at($8*(x)+-5*(y)+(shift2)$); 
\coordinate(u6) at($5*(x)+-3*(y)+(shift2)$); 
\coordinate(u7) at($12*(x)+-7*(y)+(shift2)$); 
    \draw (u0)--(u1); 
    \draw (u0)--(u2); 
    \draw (u0)--(u3); 
    \draw (u0)--(u4);
    \draw (u0)--(u5); 
    \draw (u0)--(u6);
    \draw (u0)--(u7); 
    \draw (u2)--(u3)--(u4)--(u5)--(u6)--(u7); 

\coordinate(vv1) at($1*(x)+0*(y)+(shift2)$); 
\coordinate(vv2) at($1*(x)+-1*(y)+(shift2)$); 
\coordinate(vv3) at($3*(x)+-4*(y)+(shift2)$); 
\coordinate(vv4) at($2*(x)+-3*(y)+(shift2)$); 
\coordinate(vv5) at($5*(x)+-8*(y)+(shift2)$); 
\coordinate(vv6) at($3*(x)+-5*(y)+(shift2)$);
\coordinate(vv7) at($7*(x)+-12*(y)+(shift2)$);
\coordinate(vv8) at($4*(x)+-7*(y)+(shift2)$);
    \draw (v0)--(vv1); 
    \draw (v0)--(vv2); 
    \draw (v0)--(vv3); 
    \draw (v0)--(vv4);
    \draw (v0)--(vv5); 
    \draw (v0)--(vv6);
    \draw (v0)--(vv7);
    \draw (v0)--(vv8);
    \draw (vv2)--(vv3)--(vv4)--(vv5)--(vv6)--(vv7)--(vv8);
\coordinate(u0) at($0*(x)+0*(y)+(shift2)$); 
\coordinate(uu1) at($0*(x)+-1*(y)+(shift2)$); 
\coordinate(uu2) at($1*(x)+-4*(y)+(shift2)$); 
\coordinate(uu3) at($1*(x)+-3*(y)+(shift2)$); 
\coordinate(uu4) at($3*(x)+-8*(y)+(shift2)$); 
\coordinate(uu5) at($2*(x)+-5*(y)+(shift2)$); 
\coordinate(uu6) at($5*(x)+-12*(y)+(shift2)$); 
    \draw (u0)--(uu1); 
    \draw (u0)--(uu2); 
    \draw (u0)--(uu3); 
    \draw (u0)--(uu4);
    \draw (u0)--(uu5); 
    \draw (u0)--(uu6);
    \draw (uu1)--(uu2)--(uu3)--(uu4)--(uu5)--(uu6); 
\coordinate(w1) at($1*(x)+0*(y)+(shift2)$); 
\coordinate(w2) at($0*(x)+-1*(y)+(shift2)$); 
\coordinate(w3) at($-1*(x)+0*(y)+(shift2)$); 
\coordinate(w4) at($0*(x)+1*(y)+(shift2)$); 
    \draw (v0)--(w3); 
    \draw (v0)--(w4);
    \draw (w3)--(w4); 
    \draw (w1)--(w4);
    \draw (w2)--(w3); 
\coordinate(t) at($2*(x)+-1*(y)+(shift2)$); 
\draw[dotted] (v0)--($6*(t)$);
\coordinate(rt) at($1*(x)+-2*(y)+(shift2)$); 
\draw[dotted] (v0)--($6*(rt)$);
\end{tikzpicture}
\]

Next we apply the rotation theorem to $\Lambda_2$ and we get the algebra $\rho(\Lambda_2)$. Then we have 
 $\df(\Sigma(\rho(\Lambda_2)))=(\df_0,\df_1,\df_\infty)$, where 
$$\df_i=\left\{\begin{array}{ll}
(14141\cdots)&\mbox{if $i=0$},\\
(\cdots14141714141\cdots)&\mbox{if $i=1$},\\
(\cdots1414144 )&\mbox{if $i=\infty$}.
\end{array}\right.$$
By Definition \ref{define rotation}, 
$\rho$ induces a linear transformation $\left[\begin{smallmatrix}0&1\\ -1&4\end{smallmatrix}\right]$ of $\mathbb{R}^2$ and we have 
$(2,-1)\left[\begin{smallmatrix}0&1\\ -1&4\end{smallmatrix}\right]=(1,-2)$ and 
$(1,-2)\left[\begin{smallmatrix}0&1\\ -1&4\end{smallmatrix}\right]=(2,-7)$.  
Hence the complement of $\Sigma(\rho(\Lambda_2))$ is $\R_{>0}(1,-2)\sqcup\R_{>0}(2,-7)$.
\[
\begin{tikzpicture}[baseline=0mm]
\coordinate(0) at(0:0);
\coordinate(x) at(0:0.53);
\coordinate(y) at(90:0.53);
\node at(0.165,0.165) {$\scriptstyle +$};
\node at(-0.165,-0.165) {$\scriptstyle -$};
\coordinate(shift3) at($0*(x)+0*(y)$); 
\node(rho_lambda2) at(shift3) {}; 
\coordinate(L) at($-3*(x)+0*(y)+(shift3)$); 
\node at(L) {$\Sigma(\rho(\Lambda_2))$};  

\coordinate(v0) at($0*(x)+0*(y)+(shift3)$); 
\draw[fill=black] (v0) circle [radius = 0.55mm]; 
\coordinate(v1) at($1*(x)+0*(y)+(shift3)$); 
\coordinate(v2) at($4*(x)+-1*(y)+(shift3)$); 
\coordinate(v3) at($3*(x)+-1*(y)+(shift3)$); 
\coordinate(v4) at($8*(x)+-3*(y)+(shift3)$); 
\coordinate(v5) at($5*(x)+-2*(y)+(shift3)$); 
\coordinate(v6) at($12*(x)+-5*(y)+(shift3)$); 
    \draw (v0)--(v1); 
\coordinate(u0) at($0*(x)+0*(y)+(shift3)$); 
\coordinate(u1) at($0*(x)+-1*(y)+(shift3)$); 
\coordinate(u2) at($1*(x)+-1*(y)+(shift3)$); 
\coordinate(u3) at($4*(x)+-3*(y)+(shift3)$); 
\coordinate(u4) at($3*(x)+-2*(y)+(shift3)$); 
\coordinate(u5) at($8*(x)+-5*(y)+(shift3)$); 
\coordinate(u6) at($5*(x)+-3*(y)+(shift3)$); 
\coordinate(u7) at($12*(x)+-7*(y)+(shift3)$); 
    \draw (v1)--(u2);
\coordinate(w1) at($1*(x)+0*(y)+(shift3)$); 
\coordinate(w2) at($0*(x)+-1*(y)+(shift3)$); 
\coordinate(w3) at($-1*(x)+0*(y)+(shift3)$); 
\coordinate(w4) at($0*(x)+1*(y)+(shift3)$); 
    \draw (v0)--(w3); 
    \draw (v0)--(w4);
    \draw (w3)--(w4); 
    \draw (w1)--(w4);
    \draw (w2)--(w3); 

\coordinate(a0) at($0*(x)+0*(y)+(shift3)$); 
\coordinate(a1) at($0*(x)+-1*(y)+(shift3)$); 
\coordinate(a2) at($1*(x)+-4*(y)+(shift3)$); 
\coordinate(a3) at($4*(x)+-15*(y)+(shift3)$); 
\coordinate(a4) at($3*(x)+-11*(y)+(shift3)$); 
    \draw (v0)--(a1); 
    \draw (a0)--(a2); 
    \draw (a0)--(a3); 
    \draw (a0)--(a4); 
    \draw (a1)--(a2)--(a3)--(a4);

\coordinate(b0) at($0*(x)+0*(y)+(shift3)$);
\coordinate(b1) at($1*(x)+-3*(y)+(shift3)$); 
\coordinate(b2) at($4*(x)+-13*(y)+(shift3)$); 
\coordinate(b3) at($3*(x)+-10*(y)+(shift3)$); 
    \draw (b0)--(b1); 
    \draw (b0)--(b2); 
    \draw (b0)--(b3); 
    \draw (b1)--(b2)--(b3);

\coordinate(c0) at($0*(x)+0*(y)+(shift3)$);
\coordinate(c1) at($1*(x)+-3*(y)+(shift3)$); 
\coordinate(c2) at($3*(x)+-8*(y)+(shift3)$); 
\coordinate(c3) at($2*(x)+-5*(y)+(shift3)$); 
\coordinate(c4) at($5*(x)+-12*(y)+(shift3)$); 
\coordinate(c5) at($3*(x)+-7*(y)+(shift3)$); 
    \draw (c0)--(c1); 
    \draw (c0)--(c2); 
    \draw (c0)--(c3); 
    \draw (c0)--(c4); 
    \draw (c0)--(c5); 
    \draw (c1)--(c2)--(c3)--(c4)--(c5);

\coordinate(d0) at($0*(x)+0*(y)+(shift3)$);
\coordinate(d1) at($1*(x)+-1*(y)+(shift3)$); 
\coordinate(d2) at($3*(x)+-4*(y)+(shift3)$); 
\coordinate(d3) at($2*(x)+-3*(y)+(shift3)$); 
\coordinate(d4) at($5*(x)+-8*(y)+(shift3)$); 
\coordinate(d5) at($3*(x)+-5*(y)+(shift3)$); 
\coordinate(d6) at($7*(x)+-12*(y)+(shift3)$); 
\coordinate(d7) at($4*(x)+-7*(y)+(shift3)$); 
    \draw (d0)--(d1); 
    \draw (d0)--(d2); 
    \draw (d0)--(d3); 
    \draw (d0)--(d4); 
    \draw (d0)--(d5); 
    \draw (d0)--(d6); 
    \draw (d0)--(d7); 
    \draw (d1)--(d2)--(d3)--(d4)--(d5)--(d6)--(d7); 

\coordinate(t1) at($2*(x)+-1*(y)+(shift3)$); 
\coordinate(t2) at($1*(x)+-2*(y)+(shift3)$); 
\draw[dotted] (v0)--($6*(t2)$);
\coordinate(t3) at($2*(x)+-7*(y)+(shift3)$); 
\draw[dotted] (v0)--($2*(t3)$);

\end{tikzpicture}
\]
Now we can again apply Theorem \ref{theorem:gluing2} to $\Lambda$ and 
$\rho(\Lambda_2)$, and we get the algebra 
$$\Lambda_3:=\Lambda \circ\rho(\Lambda_2).$$
Then we have 
$\df(\Sigma(\Lambda_3))=(\df_0,\df_1,\df_2,\df_\infty)$, where 
$$\df_i=\left\{\begin{array}{ll}
(14141\cdots)&\mbox{if $i=0$},\\
(\cdots14141714141\cdots)&\mbox{if $i=1,2$},\\
(\cdots1414144 )&\mbox{if $i=\infty$},
\end{array}\right.$$
and the complement of $\Sigma(\Lambda_3)$ is $\bigsqcup_{i=0}^2\R_{>0}u_i$, where $u_i=(2,-1)\left[\begin{smallmatrix}0&1\\ -1&4\end{smallmatrix}\right]^i$ for $0\leq i\leq 2$. 
In particular, 
$\Sigma(\Lambda_3)$ has three connected components. 
\[
\begin{tikzpicture}[baseline=0mm]
\coordinate(0) at(0:0);
\coordinate(x) at(0:0.53);
\coordinate(y) at(90:0.53);
\node at(0.165,0.165) {$\scriptstyle +$};
\node at(-0.165,-0.165) {$\scriptstyle -$};
\coordinate(shift3) at($0*(x)+0*(y)$); 
\node(lambda3) at(shift3) {}; 
\coordinate(L) at($-2.5*(x)+0*(y)+(shift3)$); 
\node at(L) {$\Sigma(\Lambda_3)$};  

\coordinate(v0) at($0*(x)+0*(y)+(shift3)$); 
\draw[fill=black] (v0) circle [radius = 0.55mm]; 
\coordinate(v1) at($1*(x)+0*(y)+(shift3)$); 
\coordinate(v2) at($4*(x)+-1*(y)+(shift3)$); 
\coordinate(v3) at($3*(x)+-1*(y)+(shift3)$); 
\coordinate(v4) at($8*(x)+-3*(y)+(shift3)$); 
\coordinate(v5) at($5*(x)+-2*(y)+(shift3)$); 
\coordinate(v6) at($12*(x)+-5*(y)+(shift3)$); 
    \draw (v0)--(v1); 
    \draw (v0)--(v2); 
    \draw (v0)--(v3); 
    \draw (v0)--(v4);
    \draw (v0)--(v5); 
    \draw (v0)--(v6);
    \draw (v1)--(v2)--(v3)--(v4)--(v5)--(v6);
\coordinate(u0) at($0*(x)+0*(y)+(shift3)$); 
\coordinate(u1) at($0*(x)+-1*(y)+(shift3)$); 
\coordinate(u2) at($1*(x)+-1*(y)+(shift3)$); 
\coordinate(u3) at($4*(x)+-3*(y)+(shift3)$); 
\coordinate(u4) at($3*(x)+-2*(y)+(shift3)$); 
\coordinate(u5) at($8*(x)+-5*(y)+(shift3)$); 
\coordinate(u6) at($5*(x)+-3*(y)+(shift3)$); 
\coordinate(u7) at($12*(x)+-7*(y)+(shift3)$); 
    \draw (u0)--(u1); 
    \draw (u0)--(u2); 
    \draw (u0)--(u3); 
    \draw (u0)--(u4);
    \draw (u0)--(u5); 
    \draw (u0)--(u6);
    \draw (u0)--(u7); 
    \draw (u2)--(u3)--(u4)--(u5)--(u6)--(u7); 
\coordinate(w1) at($1*(x)+0*(y)+(shift3)$); 
\coordinate(w2) at($0*(x)+-1*(y)+(shift3)$); 
\coordinate(w3) at($-1*(x)+0*(y)+(shift3)$); 
\coordinate(w4) at($0*(x)+1*(y)+(shift3)$); 
    \draw (v0)--(w3); 
    \draw (v0)--(w4);
    \draw (w3)--(w4); 
    \draw (w1)--(w4);
    \draw (w2)--(w3); 

\coordinate(a0) at($0*(x)+0*(y)+(shift3)$); 
\coordinate(a1) at($0*(x)+-1*(y)+(shift3)$); 
\coordinate(a2) at($1*(x)+-4*(y)+(shift3)$); 
\coordinate(a3) at($4*(x)+-15*(y)+(shift3)$); 
\coordinate(a4) at($3*(x)+-11*(y)+(shift3)$); 
    \draw (v0)--(a1); 
    \draw (a0)--(a2); 
    \draw (a0)--(a3); 
    \draw (a0)--(a4); 
    \draw (a1)--(a2)--(a3)--(a4);

\coordinate(b0) at($0*(x)+0*(y)+(shift3)$);
\coordinate(b1) at($1*(x)+-3*(y)+(shift3)$); 
\coordinate(b2) at($4*(x)+-13*(y)+(shift3)$); 
\coordinate(b3) at($3*(x)+-10*(y)+(shift3)$); 
    \draw (b0)--(b1); 
    \draw (b0)--(b2); 
    \draw (b0)--(b3); 
    \draw (b1)--(b2)--(b3);

\coordinate(c0) at($0*(x)+0*(y)+(shift3)$);
\coordinate(c1) at($1*(x)+-3*(y)+(shift3)$); 
\coordinate(c2) at($3*(x)+-8*(y)+(shift3)$); 
\coordinate(c3) at($2*(x)+-5*(y)+(shift3)$); 
\coordinate(c4) at($5*(x)+-12*(y)+(shift3)$); 
\coordinate(c5) at($3*(x)+-7*(y)+(shift3)$); 
    \draw (c0)--(c1); 
    \draw (c0)--(c2); 
    \draw (c0)--(c3); 
    \draw (c0)--(c4); 
    \draw (c0)--(c5); 
    \draw (c1)--(c2)--(c3)--(c4)--(c5);

\coordinate(d0) at($0*(x)+0*(y)+(shift3)$);
\coordinate(d1) at($1*(x)+-1*(y)+(shift3)$); 
\coordinate(d2) at($3*(x)+-4*(y)+(shift3)$); 
\coordinate(d3) at($2*(x)+-3*(y)+(shift3)$); 
\coordinate(d4) at($5*(x)+-8*(y)+(shift3)$); 
\coordinate(d5) at($3*(x)+-5*(y)+(shift3)$); 
\coordinate(d6) at($7*(x)+-12*(y)+(shift3)$); 
\coordinate(d7) at($4*(x)+-7*(y)+(shift3)$); 
    \draw (d0)--(d1); 
    \draw (d0)--(d2); 
    \draw (d0)--(d3); 
    \draw (d0)--(d4); 
    \draw (d0)--(d5); 
    \draw (d0)--(d6); 
    \draw (d0)--(d7); 
    \draw (d1)--(d2)--(d3)--(d4)--(d5)--(d6)--(d7); 

\coordinate(t1) at($2*(x)+-1*(y)+(shift3)$); 
\draw[dotted] (v0)--($6*(t1)$);
\coordinate(t2) at($1*(x)+-2*(y)+(shift3)$); 
\draw[dotted] (v0)--($6*(t2)$);
\coordinate(t3) at($2*(x)+-7*(y)+(shift3)$); 
\draw[dotted] (v0)--($2*(t3)$);
\end{tikzpicture}
\]
By applying the same argument repeatedly, we get the desired assertions.
\end{proof}

\section*{Acknowledgments} 
T.A is supported by JSPS Grants-in-Aid for Scientific Research JP19J11408. A.H is supported by JSPS Grant-in-Aid for Scientists Research (C) 20K03513. O.I is supported by JSPS Grant-in-Aid for Scientific Research (B) 16H03923, (C) 18K3209 and (S) 15H05738. R.K is supported by JSPS Grant-in-Aid for Young Scientists (B) 17K14169. Y.M is supported by Grant-in-Aid for Scientific Research (C) 20K03539. 

The authors would like to thank Karin Baur for useful discussions on fans and friezes.

\end{document}